\documentclass[reqno, 11pt]{amsart}
\linespread{1.0}
\usepackage[top=4cm, bottom=3cm, left=3.2cm, right=3.2cm]{geometry}
\usepackage{enumerate}
\usepackage{bbm}
\usepackage{amssymb,amsmath}
\usepackage{lineno}
\newtheorem{thm}{Theorem}
\newtheorem{cor}{Corollary}
\newtheorem{lem}{Lemma}
\newtheorem{prop}{Proposition}

\newtheorem{defn}{Definition}
\newtheorem{rem}{Remark}
\allowdisplaybreaks


\numberwithin{equation}{section} \numberwithin{lem}{section}
\numberwithin{thm}{section} \numberwithin{prop}{section}
\numberwithin{cor}{section} \numberwithin{rem}{section}
\title[Gagliardo-Nirenberg
inequality ]{On the best constant for Gagliardo-Nirenberg interpolation
inequalities}

\author{Jian-Guo Liu$^{\,2}$, Jinhuan Wang$^{\,1}$}
\thanks{The work of Jian-Guo Liu is partially supported by  KI-Net NSF RNMS grant No. 1107444, NSF
DMS  grant No. 1514826.}
\thanks{Jinhuan Wang is partially supported  by   and Program for Liaoning Excellent Talents in University (Grant No. LJQ2015041). }
\begin{document}
\maketitle
\begin{center}
{\footnotesize
1-School of Mathematics, Liaoning University, Shenyang, 110036, P. R. China \\
 email: wjh800415@163.com\\
 \smallskip
2-Department of Physics and Department of Mathematics, Duke University, \\Durham, NC 27708. USA.
email: jliu@phy.duke.edu
}
\end{center}
\maketitle
\date{}
\begin{abstract}
In this paper
we derive the best constant for the following Gagliardo-Nirenberg interpolation
inequality
 \begin{eqnarray*}
\|u\|_{L^{m+1}}\leq  C_{q,m,p} \|u\|^{1-\theta}_{L^{q+1}}\|\nabla u\|^{\theta}_{L^p},\quad \theta=\frac{pd(m-q)}{(m+1)[d(p-q-1)+p(q+1)]},
\end{eqnarray*}
where parameters $q,m,p$ respectively belong to the following two ranges:
\begin{enumerate}[(i)]
\item $p>d\geq 1$, $q\geq0$ and $m=\infty$. That shows $L^{\infty}$-type Gagliardo-Nirenberg interpolation
inequality.
\item $p>\max\{1,\frac{2d}{d+2}\}$, $0\leq q<\sigma-1$, and $q<m<\sigma$, where $\sigma$ is defined by
\begin{align*}
\sigma:=\left\{
\begin{array}{llll}\smallskip
\frac{(p-1)d+p }{d-p}  && \mbox{ if } p<d,\\
\infty  && \mbox{ if } p\geq d.
\end{array}
\right.
\end{align*}
That gives $L^{m}$-type Gagliardo-Nirenberg interpolation
inequality.
\end{enumerate}
The best constant $C_{q,m,p}$ is given by
\begin{eqnarray*}
C_{q,m,p}:=\theta^{-\frac{\theta}{p}}(1-\theta)^{\frac{\theta}{p}-\frac{1}{m+1}}M_c^{-\frac{\theta}{d}},\quad M_c:=\int_{\mathbb{R}^d}u_{c,m}^{q+1}\,dx,
\end{eqnarray*}
where $u_{c,m}$ is the unique radial non-increasing solution to a generalized Lane-Emden equation. The case of equality holds when $u=Au_{c,m}(\lambda(x-x_0))$ for any real numbers $A>0$, $\lambda >0$ and $x_{0}\in \mathbb{R}^d$.

In particular, for the case $m=+\infty$, the generalized Lane-Emden equation becomes a Thomas-Fermi type equation.
For $q=0,~m=\infty$ or $d=1$, $u_{c,m}$ are closed form solutions expressed in term of the incomplete Beta functions. Moreover, we show that $u_{c,m}\to u_{c,\infty}$ and $C_{q,m,p}\to C_{q,\infty,p}$ as $m\to +\infty$ for $d=1$.
\end{abstract}

{\small {\bf Keywords:} Free boundary problem, best constant, Lane-Emden equation, Thomas-Fermi type equation, closed form solution.}
\section{Introduction}

Research on functional inequalities is an important topic in the Functional Analysis. In some circumstances one is
interested in the exact value of the smallest admissible constant in some functional inequalities. Possible motivations for this can be described from the three respects: (i) it provides some
geometrical insights (a sharp version of some functional inequality is
equivalent to the Euclidean isoperimetric inequality \cite{cordero2004mass}); (ii) it is helpful for the computation of the ground-state energy in a physical model; (iii) it can be used to determined sharp conditions on initial data to distinguish between global existence
 and finite time blow-up for some partial differential equations with competition effects from some biological or physical systems, c.f. \cite{BCC,BDP,CC,CLW,CW,LW1,weinstein1983nonlinear,WBB}.

 In 1938, Sobolev \cite{Sobolev1938} proved that there is a constant $C_{d,p}>0$ such that for  $d\geq 3$, any function $u\in L^{\frac{2d}{d-2}}(\mathbb{R}^d)$ with $\nabla u\in L^2(\mathbb{R}^d)$, it holds that
\begin{eqnarray}\label{sobolevineq}
\|u\|_{L^{\frac{pd}{d-p}}}\leq C_{d,p}\|\nabla u\|_{L^p},\quad 1<p<d.
\end{eqnarray}
The best constant $C_{d,p}$ in (\ref{sobolevineq}) is established by Aubin and Talenti \cite{Aubin1976,Talenti1976}. As a variant of the Sobolev inequality, the following Gagliardo-Nirenberg (G-N) inequality have been proved in the literatures \cite{Gagliardo1958,Nirenberg1959} (See also \cite{maz2011sobolev})
 \begin{eqnarray}\label{sharpine}
\|u\|_{L^{m+1}}\leq  C_{q,m,p} \|u\|^{1-\theta}_{L^{q+1}}\|\nabla u\|^{\theta}_{L^p},\quad \theta=\frac{pd(m-q)}{(m+1)[dp+(p-d)(q+1)]},
\end{eqnarray}
 where $C_{q,m,p}>0$ is a constant, and the parameters $d,q,m$ and $p$ satisfy (\ref{range2}) or (\ref{qandm}) below.

 For some special parameters $d,q,m$ and $p$, the best constant of the G-N inequality has been studied widely in the literatures \cite{Nagy1941,nash1958continuity,weinstein1983nonlinear,carlen1993sharp,del2002best,del2003optimal,cordero2004mass,carlen2013stability}. These results are the landmark works, and are used at length to various branches of mathematics. However, the computation of sharp constants $C_{q,m,p}$ in the inequality (\ref{sharpine}) is still an open problem for general parameters.
The goal of this paper is to derive the best constant $C_{q,m,p}$ of the G-N inequality (\ref{sharpine}) for general indexes as described below.

We assume that the dimension $d\geq1$ and the parameters $q,m,p$ are given by the following two ranges (in these regions, it is easy to verify that $0<\theta< 1$ ):
\begin{enumerate}[(i)]
\item one range is
\begin{eqnarray}\label{range2}
p>d,\quad q\geq0 \mbox{ and } m=\infty.
\end{eqnarray}
We refer to this case as the $L^{\infty}$-type G-N
inequality.

\item the other range is
\begin{align}
 p>\max\{1,\frac{2d}{d+2}\},\quad 0\leq q<\sigma-1,\quad q<m<\sigma,\label{qandm}
\end{align}
where $\sigma$ is defined by
\begin{align}
\sigma:=\left\{
\begin{array}{llll}\smallskip
\frac{(p-1)d+p }{d-p}  && \mbox{ if } p<d,\\
\infty  && \mbox{ if } p\geq d.
\end{array}
\right.\label{pandsigma}
\end{align}
This case is referred to as the $L^{m}$-type G-N
inequality.

\end{enumerate}

We show that the best constant is given by the following form
\begin{eqnarray}\label{bestc}
C_{q,m,p}=\theta^{-\frac{\theta}{p}}(1-\theta)^{\frac{\theta}{p}-\frac{1}{m+1}}M_c^{-\frac{\theta}{d}},\quad M_c=\int_{\mathbb{R}^d}u_{c,m}^{q+1}\,dx.
\end{eqnarray}
 For $m=\infty$, $u_{c,\infty}$ is described by the following two cases:
\begin{enumerate}[(i)]
\item if $q<p-1$, $u_{c,\infty}$ is the unique decreasing solution of the following free boundary problem (FBP)
\begin{eqnarray}
&& (|u'|^{p-2}u')' + \frac{d-1}{r} |u'|^{p-2}u' = u^{q} \quad\mbox{ for }0<r<R,\label{iMinfFBVP1}\\
&& u(0)=1,\,\, u(R)=u'(R)=0,\label{iMinfFBVPbound}
 \end{eqnarray}
 and $u(r)=0$ for $r\geq R$.
 \item if $q\geq p-1$, $u_{c,\infty}$ is the unique positive decreasing solution to the following problem
\begin{eqnarray}
&& (|u'|^{p-2}u')' + \frac{d-1}{r} |u'|^{p-2}u' = u^{q} \quad\mbox{ for }0<r<\infty,\label{iMinfchachy1}\\
&& u(0)=1,\,\,\lim_{r\to \infty} u(r)=0.\label{iMinfchachybound}
 \end{eqnarray}
\end{enumerate}
For $m<\infty$, $u_{c,m}$ is described by the following two cases:
\begin{enumerate}[(i)]
\item if $q<p-1$, $u_{c,m}$ is the unique decreasing solution of the following free boundary problem
\begin{eqnarray}
&& (|u'|^{p-2}u')' + \frac{d-1}{r} |u'|^{p-2}u' + u^{m} = u^{q} \quad\mbox{ for }0<r<R,\label{FBVP1}\\
&& u'(0)=0,\,\, u(R)=u'(R)=0,\label{FBVPbound}
 \end{eqnarray}
 and $u(r)=0$ for $r\geq R$.
 \item if $q\geq p-1$, $u_{c,m}$ is the unique positive decreasing solution to the following problem
\begin{eqnarray}
&& (|u'|^{p-2}u')' + \frac{d-1}{r} |u'|^{p-2}u' + u^{m} = u^{q} \quad\mbox{ for }0<r<\infty,\label{chachy1}\\
&&  u'(0)=0,\,\,\lim_{r\to \infty} u(r)=0.\label{chachybound}
 \end{eqnarray}
 \end{enumerate}

Notice that when $q<p-1$, the solution to (\ref{iMinfFBVP1})-(\ref{iMinfFBVPbound}) is equivalent to the non-negative radial solution of
the Thomas-Fermi type equation (see Proposition \ref{prop})
\begin{eqnarray}
&&\Delta_p u+a\delta(x)=u^q,\quad \mbox{in } \mathcal{D}(B(0,R)),\label{iinftyfunction}\\
&&   a:=\|\nabla u\|^p_{L^p}+\|u\|^{q+1}_{L^{q+1}},\\
&&u(0)=1,~~u(x)=\frac{\partial u}{\partial \vec{n}}(x)=0, \mbox{ for }|x|=R,\label{iinftyfunctionboud}
\end{eqnarray}
where $\delta(x)$ is a delta function and $\vec{n}$ be the unit outward normal vector to $\partial B(0,R)$. When $q\geq p-1$,
 the solution to
 the problem (\ref{iMinfchachy1})-(\ref{iMinfchachybound}) is equivalent to the positive radial solution to
the Thomas-Fermi type equation
\begin{eqnarray}
&&\Delta_p u+a\delta(x)=u^q,\quad  \mbox{in } \mathcal{D}(\mathbb{R}^d),\label{inonwhole}\\
&&u(0)=1,~~\lim_{|x|\to \infty}u(x)=0.\label{inonwholeboud}
\end{eqnarray}

The indexes used in this paper are very complicated. We sometimes suppress some of these indexes when they are clear in the context. Now we give closed form solutions for some special cases.

For the case $p>d\geq 1$, $q=0$ and $m=\infty$, a closed form solution $u_{c,\infty}$ can be expressed in term of an incomplete Beta function (see Proposition \ref{prop2}). The  best constant $C_{0,\infty,p}$ is given by
$$C_{0,\infty,p}=\left(\frac{p-d}{pd}\right)^{\frac{d}{pd+p-d}}M_c^{-\frac{p}{pd+p-d}},~~ M_c=\int_{\mathbb{R}^d}u_{c,\infty}\,dx.$$

For $p>d=1$, $q\geq 0$ and $m=\infty$, the free boundary problem (\ref{iMinfFBVP1})-(\ref{iMinfFBVPbound}) and the problem (\ref{iMinfchachy1})-(\ref{iMinfchachybound}) have closed form solutions respectively (see Proposition \ref{prop42}) and use them to
 deduce the best constant
 \begin{eqnarray} \label{cqinftyp}
 C_{q,\infty,p}= \left(\frac{2p}{p+(p-1)(q+1)}\right)^{-\frac{p}{p+(p-1)(q+1)}}.
  \end{eqnarray}

When the parameters $q,m,p$ satisfy (\ref{qandm}), for the one-dimensional case, we obtain a closed form solution $u_{c,m}$, which can be expressed in term of an incomplete Beta function (see Theorem \ref{onedtheorem}).
 The constant $C_{q,m,p}$ for $d=1$ is given by
\begin{eqnarray}
 C_{q,m,p}=\left(\frac{2^p(p-1)^{1-p}\eta_1^{\frac{\eta_1}{m-q}}}{(m-q)^{2p-1}\eta_2^{\frac{\eta_2}{m-q}}} \mathcal{B}\left(\frac{\eta_2}{p(m-q)},\frac{2p-1}{p}\right)^p\right)^{-\ell},\label{beta}
\end{eqnarray}
where
\begin{eqnarray}
&&\ell=\frac{m-q}{(m+1)[(p-1)(q+1)+p]},\label{candl1}\\
&&\eta_1=(p-1)(m+1)+p,~~\eta_2=(p-1)(q+1)+p.
\end{eqnarray}
 These one-dimensional results were obtained by Sz. Nagy \cite{Nagy1941}. Here we use an alternative method to derive those best constants and express them by a different form (See Theorem \ref{onedtheorem}).
 It can be directly verified that the best constant given by (\ref{cqinftyp}) is a limit of the constant in (\ref{beta}) as $m\to \infty$ (See Theorem \ref{cor51}).

For a special case $m=1$, $q=0$, $p>1$ and $d\geq 1$, we derive
the best constant
\begin{eqnarray}\label{m1q0}
C_{0,1,p}=\theta^{-\frac{\theta}{p}}(1-\theta)^{\frac{\theta}{p}-\frac{1}{2}}R^{-\theta} \omega_d^{-\frac{\theta}{d}}, \quad \theta=\frac{pd}{2(pd+(p-d))},
\end{eqnarray}
where $\omega_d$ is the volume of the $d$-dimensional unit ball $B_1$ and $R$ is the first touch down point of $u_{c,m}$ (see Theorem \ref{th63}).

The results in our paper recover and extend all the results obtained in previous papers \cite{Nagy1941,nash1958continuity,weinstein1983nonlinear,carlen1993sharp,del2002best,del2003optimal,cordero2004mass,carlen2013stability}.
We will divide three cases below to describe these landmark results and show that their results are exactly same as our results for these special cases.

{\bf For the case $q<p-1$,} The results obtained in this paper agree with the classical results \cite{Nagy1941,nash1958continuity,carlen1993sharp,del2002best,del2003optimal} as we describe below:
 \begin{itemize}
 \item In 1941, Nagy \cite{Nagy1941} obtained the best constant $C_{q,m,p}$ for the one-dimensional case. In particular, $C_{0,3,2}=\left(\frac{4\pi^2}{9}\right)^{-1/4}$ for $p=2$, $q=0$, $m=3$ and $d=1$ in (\ref{beta}).
  \item In 1993, Carlen and Loss \cite{carlen1993sharp} derive the best constant
  $$
 C_{0,1,2}=\left(\frac{2}{d}\right)^{\frac{d}{2(d+2)}}\left(\frac{d+2}{d}\right)^{\frac{1}{2}}R^{-\frac{d}{d+2}}\omega_d^{-\frac{1}{d+2}},
  $$
 i.e., take $p=2$ in (\ref{m1q0}) (This case is known as Nash inequality \cite{nash1958continuity}). Here $R$ is the first touch zero point of $u_{c,m}$. In fact, it is directly verified that $\lambda=R^2$ and $u(r)=1-u_{c,m}(Rr)$ are respectively the first eigenvalue and eigenfunction of the Laplace operator with Neumann boundary in the ball $B_1$.
 Particularly, for $d=1$ we have $R=\pi$ by (\ref{Rvaluecom}). Hence we get $C_{0,1,2}=\left(\frac{16\pi^2}{27}\right)^{-1/6}$, which is consistent with a special case of Nagy inequality \cite{Nagy1941}. See also (\ref{beta}).
  \item For the case $d\geq 2$, $1<p<d$, $0<m<p-1$ and $q=\frac{pm-p+1}{p-1}$ in (\ref{bestc}) (for this case, it is checked $q<p-1$), the solution to (\ref{FBVP1})-(\ref{FBVPbound}) is given by the following Barenblatt profile
  $$
  u_{c,m}(r)=a\left(1-b r^{\frac{p}{p-1}}\right)^{-\frac{p-1}{m+1-p}}_+ \quad \mbox{ for some } a,~~ b>0,
  $$
  and the best constant is given by
  \begin{align}\label{barC1}
  \overline C_{q,m,p}  =&\left(\frac{p-m-1}{p}\right)^{\theta}\left(\frac{p(m+1)}{d(p-m-1)}\right)^{\frac{\theta}{p}}
  \left(\frac{p(m+1)}{\eta}\right)^{\frac{1-\theta}{q+1}-\frac{\theta}{d}}\nonumber\\
  &\cdot\left(\frac{p}{p-1}\right)^{\frac{\theta}{d}}\left(\frac{\Gamma(\frac{d}{2}+1)}{d\pi^{d/2}}\right)^{\frac{\theta}{d}}\left(\frac{\Gamma(\frac{pm}{p-m-1}+d\frac{p-1}{p}+1)}{\Gamma(1+\frac{pm}{p-m-1})\Gamma(d\frac{p-1}{p})}\right)^{\frac{\theta}{d}},
 \end{align}
  where
  \begin{eqnarray}\label{thetaanddelta}
  \theta=\frac{d(p-m-1)}{(m+1)(d(p-m-1)+pm)},\quad \eta=dp-(m+1)(d-p)>0.
  \end{eqnarray}
  These results were obtained in the celebrated works of Del Pino and Dolbeault \cite[Theorem 2]{del2002best} for $p=2$ and \cite[Theorem 3.1]{del2003optimal} for $1<p<d$.  In Lemma \ref{lmApp}, we will show that our best constant $C_{q,m,p}$ is exactly equal to the best constant $\overline C_{q,m,p}$ in (\ref{barC1}).
\end{itemize}

{\bf For the case $q=p-1$}, the best constant $C_{q,m,p}$ in (\ref{bestc}) is consistent with many classical results \cite{Nagy1941,weinstein1983nonlinear} as follows
 \begin{itemize}
 \item In 1941, Nagy \cite{Nagy1941} obtained the best constant $C_{q,m,p}=\left(\frac{\pi^2}{4}\right)^{-1/6}$ for $d=1$, $p=2$, $q=1$, and $m=5$ in (\ref{beta}).
  \item In 1983, Weinstein \cite{weinstein1983nonlinear} obtained the best constant $C_{q,m,p}$ and also expressed it by the solution $u_{c,m}$ of the elliptic equation (\ref{chachy1})-(\ref{chachybound}) for $d\geq 2$, $p=2$, $q=1$ and $0<m<\frac{d+2}{d-2}$ in (\ref{sharpine}).
\end{itemize}

{\bf For the case $q>p-1$}, the best constant $C_{q,m,p}$ is consistent with the following two classical results \cite{del2002best,del2003optimal,carlen2013stability}:
 \begin{itemize}
   \item  For the special case $d\geq 2$, $1<p<d$, $p-1<q<\frac{(p-1)d}{d-p}$, and $m=\frac{pq-p+1}{p-1}<\sigma$ in (\ref{sharpine}), Del Pino and Dolbeault, 2002 \cite[Theorem 1]{del2002best}, 2003 \cite[Theorem 1.2]{del2003optimal} proved that the best constant is given by
 \begin{align}\label{barC1po}
  \overline C_{q,m,p}  =&\left(\frac{q+1-p}{p}\right)^{\theta}\left(\frac{p(q+1)}{d(q+1-p)}\right)^{\frac{\theta}{p}}
   \left(\frac{\eta}{p(q+1)}\right)^{\frac{1}{m+1}}\left(\frac{p}{p-1}\right)^{\frac{\theta}{d}}\nonumber\\
  &\cdot\left(\frac{\Gamma\left(\frac{d}{2}+1\right)}{d\pi^{d/2}}\right)^{\frac{\theta}{d}}
  \left(\frac{\Gamma\left(\frac{(q+1)(p-1)}{q+1-p}\right)}{\Gamma\left(\frac{(q+1)(p-1)}{q+1-p}-\frac{d(p-1)}{p}\right)\Gamma(d\frac{p-1}{p})}\right)^{\frac{\theta}{d}},
 \end{align}
   where
   \begin{eqnarray}\label{thetadeltapo}
   \theta=\frac{(q+1-p)d}{q(dp-(d-p)(q+1))}, \quad \eta=dp-(d-p)(q+1)>0.
   \end{eqnarray}
  The equality of the inequality (\ref{sharpine}) is achieved in
  $$
  u_{c,m}(r)=a\left(1+b r^{\frac{p}{p-1}}\right)^{-\frac{p-1}{q+1-p}} \quad \mbox{for some }  a\in\mathbb{R},~~b>0.
  $$
 In Lemma \ref{lm5.4}, we will show that the best constant $C_{q,m,p}$ is exactly equal to the best constant $\overline C_{q,m,p}$ in (\ref{barC1po}).
   In particular, if $p=2$, $q=3$ and $m=5$, we can check when $a=\sqrt{3}, b=3$, $u_{c,m}(r)$ is the radial ground state solution of (\ref{chachy1})-(\ref{chachybound}). Moreover,
    a simple computation gives $C_{3,5,2}=\pi^{-1/6}$ in (\ref{sharpine}) for $d=2$, which agrees with the following celebrated formula from \cite[(1.8)]{carlen2013stability}
    $$
   \pi\int_{\mathbb{R}^2}f^6\,dx\leq \int_{\mathbb{R}^2}|\nabla f|^2\,dx\int_{\mathbb{R}^2}f^4\,dx.
   $$
 \item In 2004, Cordero-Erausquin, Nazaret and Villani \cite{cordero2004mass} showed that mass transportation methods provide an elementary and powerful approach for the study of certain functional inequalities with a geometric content, like sharp Sobolev or
Gagliardo-Nirenberg inequalities.
\end{itemize}

For simplicity, we will use the same function $u=u(x)$ and $u=u(r)$ to represent a radial solution with $u(x)=u(|x|)$ in this paper. It should be clear according the content of the text.

This paper is arranged as follows. In Section 2, we derive the Euler-Lagrange equations. For the case $q<p-1$, the Euler-Lagrange equation is a free boundary problem. In order to derive the free boundary problem, we construct an auxiliary functional and connect it with the contact angle by a Pohozaev type identity. For the case $q\geq p-1$, we provide the decay rate of positive solutions at the far field.

In Section 3, the direct method of calculus variation can not be used for the $L^{\infty}(\mathbb{R}^d)$ minimization problem (\ref{ibetamini}). Instead, we prove existence and uniqueness of the Euler-Lagrange equation whose solution is a minimizer. However, in this case the Euler-Lagrange equation is a Thomas-Fermi type equation, which contains a delta function. There is a singularity at the origin. We provide some delicate estimates and new techniques to overcome this difficulty.

In Section 4, we derive the best constant of $L^\infty$-type G-N inequality and show closed form solutions for some special parameters $d,p,q$ and $m$.

In Section 5, we derive the Euler-Lagrange equations, which are a class of generalized Lane-Emden equations whose unique solution achieves the equality for the $L^m$-type G-N inequality (\ref{sharpine}) in the parameter range (\ref{qandm}) and hence we provide the best constant of this inequality. Thanks to Strauss's lemma and compactness in $L^{m}(\mathbb{R}^d)$, we obtain existence of the minimizer by a direct method of calculus variation.

 In Sections 6 and 7, some closed form solutions to $L^m$-type G-N inequality are given. Our results recover all the known results with some extensions.

 In Section 8, we show the limit behavior in the best constant and the solution to the Euler-Lagrange equations as $m\to\infty$ for $d=1$, which indicates the connection between the  $L^m$-type G-N inequality (\ref{h sharp inequality1}) and  $L^{\infty}$-type G-N inequality (\ref{LinftyMlinftyq}).

 In Appendix A, we use a modified Strauss's inequality to prove existence of the minimizer in $L^{m+1}(\mathbb{R}^d)$ for the case $q\geq p-1$ (i.e. to prove Proposition \ref{h0minimizer}). In Appendix B, we provide some details to verify agreements with previous known formulas of the best constants for some special parameters.

\section{Euler-Lagrange equations for $L^{\infty}$-type G-N inequalities}

When $p>d$, the $L^{\infty}$-type G-N inequality \cite[pp.176, (2.3.50)]{maz2011sobolev} takes the following form
\begin{eqnarray}\label{linftyest}
\|u\|_{L^{\infty}}\leq C_{q,\infty,p} \|\nabla u\|^{\theta}_{L^p}\|u\|^{1-\theta}_{L^{q+1}},\quad 0<\theta=\frac{pd}{pd+(q+1)(p-d)}<1.
\end{eqnarray}
Our main propose is to derive the best constant $ C_{q,\infty,p}$ in terms of nonnegative solutions to a FBP if $q < p-1$ and in terms of positive solutions to an elliptic equation for $q \ge p -1$.

Following a standard method, the minimization problem is established in the solution space
$$
Y=\left\{u|~u\in L^{q+1}(\mathbb{R}^d),\quad \nabla u\in L^p(\mathbb{R}^d) \right\},
$$
and we know that there is a positive constant $\alpha$ such that
\begin{eqnarray}\label{ibetamini}
\alpha=\inf_{u\in Y}\frac{\|u\|^{1-\theta}_{L^{q+1}}\|\nabla u\|^{\theta}_{L^{p}}}{\|u\|_{L^{\infty}}}.
\end{eqnarray}
First, it can be directly checked that the minimization problem (\ref{ibetamini}) is equivalent to the following minimization problem
\begin{eqnarray}\label{ibetamini1}
\alpha=\inf_{u\in Y,~\|u\|_{L^{\infty}}=1}G(u),
\end{eqnarray}
where
\begin{eqnarray}\label{Gh}
G(u):=\|u\|^{1-\theta}_{L^{q+1}}\|\nabla u\|^{\theta}_{L^{p}}.
\end{eqnarray}
Thanks to the rearrangement technique (see \cite[Chapter 3]{lieb2001analysis})
\begin{eqnarray}\label{normequ}
\|h^*\|_{L^p}=\|h\|_{L^p}\,,\quad 1\leq p \leq \infty,
\end{eqnarray}
and the P\'{o}lya-Szeg\H{o} inequality \cite{PS,Brothers1988},
\begin{eqnarray}\label{derinormequ}
\|\nabla h^*\|_{L^p}\,\leq \|\nabla h\|_{L^p},\quad 1\le p\leq \infty,
\end{eqnarray}
similar to \cite[Lemma 2.1]{LW1}, we know that the minimization problem (\ref{ibetamini1}) is equivalent to the following minimization problem
 \begin{eqnarray}\label{iradminiexi}
\alpha=\inf_{u\in Y^*_{rad}}G(u),
\end{eqnarray}
where $Y^*_{rad}$ is a non-negative radial symmetric decreasing function space
$$
Y^*_{rad}=\left\{u(r)\geq 0\big|\,\lim_{r\to 0^+} u(r)=1,\, \,  u'(r)\leq 0~~ a.e.,\,\, \int_0^{\infty} \left(|u|^{q+1}+| u'|^{p}\right) r^{d-1}\, dr<\infty\right\}.
$$
For any $u\in Y^*_{rad}$, we take always $u(0)=1$.
\begin{prop}\label{iLcriticalequa} Assume that $\bar u(r)\in Y^*_{rad}$ is a critical point of $G(u)$, then there is $\lambda_0>0$ such that the rescaling function $u(r) =\bar u(\lambda_0 r)$ satisfies the following equation in the classical sense
\begin{eqnarray}
(| u'|^{p-2} u')' + \frac{d-1}{r} | u'|^{p-2} u' =  u^q,\,\,  0<r<R,\label{ixequation}
\end{eqnarray}
and the boundary conditions
\begin{eqnarray}
 \lim_{r\to 0^+} u(r)=1,\quad\lim_{r\to R^-} u(r)=0,\label{ixinitial}
\end{eqnarray}
for some $0< R\leq +\infty$.
\end{prop}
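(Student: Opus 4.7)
The plan is to derive the Euler-Lagrange equation for the constrained minimization problem (\ref{iradminiexi}) and then use the scale invariance of the functional to normalize the resulting constant. Fix a radial critical point $\bar u \in Y^*_{rad}$ with $\bar u(0) = 1$. The only active constraint at the level of first variations is the pointwise value $\bar u(0)=1$, which is meaningful because $p>d$ gives the embedding $W^{1,p}\hookrightarrow C^0$; the sign condition $\bar u\ge 0$ and the monotonicity $\bar u'\le 0$ are open at a smooth interior profile and therefore produce no multiplier. Accordingly I perturb along directions $\bar u+\varepsilon\varphi$ with $\varphi\in C_c^\infty((0,\infty))$ and $\varphi(0)=0$, which preserve the pointwise constraint to first order.

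The condition $\frac{d}{d\varepsilon}G(\bar u+\varepsilon\varphi)\big|_{\varepsilon=0}=0$, after a direct chain-rule computation on each norm in (\ref{Gh}) and dividing out the positive common factor $\|\bar u\|^{1-\theta}_{L^{q+1}}\|\nabla\bar u\|^{\theta}_{L^p}$, collapses into the weighted weak equation
\[
\theta\,\frac{\int_0^\infty |\bar u'|^{p-2}\bar u'\,\varphi'\,r^{d-1}\,dr}{\|\nabla\bar u\|^p_{L^p}} \;+\; (1-\theta)\,\frac{\int_0^\infty \bar u^{\,q}\varphi\,r^{d-1}\,dr}{\|\bar u\|^{q+1}_{L^{q+1}}}=0.
\]
An integration by parts in the first integral (with boundary contributions vanishing because $r^{d-1}\varphi(0)=0$ and $\varphi$ is compactly supported) recognises the combination $(|\bar u'|^{p-2}\bar u')'+\frac{d-1}{r}|\bar u'|^{p-2}\bar u'$ as the radial $p$-Laplacian, so the identity reduces to $\Delta_p\bar u=C\,\bar u^{\,q}$ on $\{\bar u>0\}$ with
\[
C:=\frac{1-\theta}{\theta}\cdot\frac{\|\nabla\bar u\|^p_{L^p}}{\|\bar u\|^{q+1}_{L^{q+1}}}>0.
\]
Setting $u(r):=\bar u(\lambda_0 r)$ with $\lambda_0:=C^{-1/p}$ absorbs the constant into the scaling and produces $\Delta_p u=u^{\,q}$, which is exactly (\ref{ixequation}). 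The scaling parameter $\lambda_0$ takes over the role of the Lagrange multiplier, and this is forced by the fact that $G(u)/\|u\|_{L^\infty}$ is invariant under $u(\cdot)\mapsto u(\lambda\cdot)$.

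Finally I set $R:=\sup\{r>0:u(r)>0\}\in(0,+\infty]$, so that the weak equation holds on $(0,R)$; the standard $C^{1,\alpha}_{\mathrm{loc}}$ regularity theory for the $p$-Laplacian with continuous right-hand side upgrades $u$ to a classical solution there, and the boundary values $u(0^+)=1$ and $u(R^-)=0$ in (\ref{ixinitial}) follow from $u(0)=\bar u(0)=1$ and the definition of $R$, using continuity from $W^{1,p}\hookrightarrow C^0$. The step I expect to require the most care is the rigorous justification of the first variation at a generic $\bar u\in Y^*_{rad}$: I must know that both $\|\bar u\|_{L^{q+1}}>0$ and $\|\nabla\bar u\|_{L^p}>0$ so that the chain rule for $\|\cdot\|_{L^{q+1}}$ and $\|\cdot\|_{L^p}$ is applicable (nondegeneracy follows from $\bar u(0)=1$ together with the radial decreasing structure), and I must have enough integrability of the difference quotients to pass $\frac{d}{d\varepsilon}$ under the integrals, which is immediate once $\varphi$ is smooth and compactly supported inside $(0,\infty)$.
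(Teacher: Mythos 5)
Your variational half is essentially the paper's own argument (its Steps 1--2): the first variation of $G$ against radial test functions gives the weighted weak identity you wrote, and the paper likewise absorbs the constant $\frac{1-\theta}{\theta}\,\|\nabla\bar u\|_{L^p}^p/\|\bar u\|_{L^{q+1}}^{q+1}$ by a rescaling (your $\lambda_0=C^{-1/p}$ is exactly its choice of $\lambda$ in (\ref{ipara1}), after a preliminary normalization $\|u_1\|_{L^{q+1}}=1$). Two small points there: the admissible variations should be taken with support in $(0,R_1)$, i.e.\ inside $\{\bar u>0\}$, since a $\varphi$ supported where $\bar u=0$ violates the constraints $u\ge 0$, $u'\le 0$ for one sign of $\varepsilon$; and when $R=+\infty$ the condition $\lim_{r\to R^-}u=0$ comes from monotonicity together with $u\in L^{q+1}$, not from the definition of $R$ alone.

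The genuine gap is the final step, which is precisely what the proposition asserts beyond the weak formulation: that (\ref{ixequation}) holds \emph{in the classical sense}. Citing $C^{1,\alpha}_{loc}$ regularity for the $p$-Laplacian does not deliver this: $C^{1,\alpha}$ is in general optimal at points where $u'=0$, and for $p\neq 2$ the operator degenerates there, so $(|u'|^{p-2}u')'$ need not exist pointwise from that theory alone. The paper's Step 3 is devoted exactly to closing this: it proves the claim that $u'(r_*)=0$ with $u(r_*)>0$ is impossible, by integrating $\left(r^{d-1}|u'|^{p-2}u'\right)'=r^{d-1}u^q$ from $r_*$ to some $r^*$ to get $\left(r^*\right)^{d-1}|u'(r^*)|^{p-2}u'(r^*)=\int_{r_*}^{r^*}s^{d-1}u^q\,ds>0$, contradicting $u'\le 0$; hence $|u'|>0$ on $(0,R)$, the equation is uniformly elliptic on compact subintervals, and only then does interior elliptic regularity yield a classical solution. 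To repair your proof you must either reproduce this no-interior-critical-point argument, or argue directly from the ODE structure that $w(r):=r^{d-1}|u'|^{p-2}u'$ has continuous distributional derivative $r^{d-1}u^q$ on $(0,R)$, hence $w\in C^1(0,R)$ and the equation holds pointwise; as written, the regularity assertion is unjustified at possible zeros of $u'$.
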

\begin{proof}
{\bf Step 1.} Re-scaling and admissible variation

Let $u_1(r):=\bar u(\lambda_1 r)$, $\lambda_1>0$ be a re-scaling parameter to be determined by (\ref{inftya1}). Noticing the scaling invariant of $G(u)$ for $u_1(r)=\bar u(\lambda_1 r)$, we have
 \begin{eqnarray}\label{Ginvarant}
G(u_1)=G(\bar u).
\end{eqnarray}
Hence if $\bar u(r)\in Y^*_{rad}$ is the critical point of $G(u)$, then $u_1$ is also a critical point ($\frac{\delta G(u_1)}{\delta u}=0$), and by choosing $\lambda_1$, it holds that
\begin{eqnarray}\label{inftya1}
\|u_1\|_{L^{q+1}}=1,\quad \|\nabla u_1\|^p_{L^p}=:a_1.
\end{eqnarray}

Since $u_1 \in Y^*_{rad}$, we have that $u_1(r)$ is continuous in $[0,\infty)$\footnote{If $u_1 \in Y^*_{rad}$, we know that for any $0<a<b<+\infty$, $u_1 \in W^{1,p}_{rad}([a,b])$. Hence $u_1(r)$ is continuous in $(0,\infty)$. The continuity at $x=0$ is given by $\lim_{r\to 0^+} u_1(r)=1$. }.
Denote
$$R_1:=\inf\{r>0|u_1(r)=0\}\in \mathbb{R}^{+}\cup \{+\infty\}.$$
For any admissible variation $\phi\in C_0^{\infty}(0,R_1)$ at $u_1$,
i.e., there is a $\varepsilon_0>0$ such that for any $0<|\varepsilon|<\varepsilon_0$, one has $u_1+\varepsilon \phi\in Y_{rad}^*$. Then from a direction computation and using \eqref{inftya1}, we have
$$
\frac{d}{d\varepsilon}\Big|_{\varepsilon=0}G(u_1+\varepsilon \phi)= S_d\int_0^{R_1}\theta a_1^{\frac{\theta}{p}-1}\left(r^{d-1}|u'_1|^{p-2} u'_1\right)\phi'(r)+(1-\theta)a_1^{\frac{\theta}{p}} u_1^qr^{d-1}\phi(r)\, dr = 0.
$$
This implies that $u_1$ satisfies the following generalized Lane-Emden equation in the distribution sense
\begin{eqnarray}
&&-\theta\left(r^{d-1}|u'_1|^{p-2} u'_1\right)'+(1-\theta)a_1 r^{d-1} u_1^q=0, \quad \mbox{ in }(0,R_1),\label{iequationh0}\\
&&\lim_{r\to 0^+} u_1(r)=1,\quad\lim_{r\to R_1^-} u_1(r)=0,
\end{eqnarray}
where $0< R_1\leq +\infty$.

{\bf Step 2. }Normalization

We re-scale the function $u_1$ as $u(r)=u_1(\lambda r)$, where $\lambda$ will be given in (\ref{ipara1}). From (\ref{Ginvarant}), we know that $ u$ is also a critical point of $G(u)$ in $Y^*_{rad}$. From (\ref{iequationh0}) we deduce that $u$ satisfies the following equation
\begin{eqnarray*}
-\theta\lambda^{-p}\left(r^{d-1}| u'|^{p-2} u'\right)'+(1-\theta)a_1 r^{d-1} u^q=0, \quad 0<r<\frac{R_1}{\lambda}=: R.
\end{eqnarray*}
Taking
\begin{eqnarray}\label{ipara1}
\lambda =\left(\frac{\theta}{(1-\theta) a_1}\right)^{1/p},~~\mbox{ i.e. }\theta\lambda^{-p} =(1-\theta)a_1,
\end{eqnarray}
we have that $u$ satisfies (\ref{ixequation})-(\ref{ixinitial}) in the distribution sense.

{\bf Step 3.} $u$ satisfies (\ref{ixequation})-(\ref{ixinitial}) in the classical sense.

 The propose of this step is to prove that solutions of (\ref{ixequation})-(\ref{ixinitial}) in the distribution sense are also classical solutions in any closed interval of $(0,R)$. We need only to show that the equation (\ref{ixequation}) is uniformly elliptic, i.e., to prove $|u'(r)|\geq C$ for some $C>0$ in any closed interval of $(0,R)$. That will be a consequence of the following claim.

 \noindent
 {\it Claim:} If there is $0<r_*\leq R$ such that $u'(r_*)=0$, then $u(r_*)=0$.

  \noindent
  {\bf Proof of Claim:} If not, then $u(r_*)>0$. Noticing that $\lim_{r\to 0^+} u(r)=1$ and $u'(r)\leq 0$, then for any fixed $r>r_*$, we have $\int_{r_*}^{r} s^{d-1}u^{q}(s)\, dr<\infty$. Moreover, by the continuity of $u(r)$ in $(0,\infty)$, we know that there is $r^*:~r_*<r^*<\infty$ such that if $r\in [r_*,r^*)$, $u(r)>0$. Hence integrating (\ref{ixequation}) from $r_*$ to $r^*$, we deduce
 \begin{eqnarray}\label{rstar}
(r^*)^{d-1}|u'(r^*)|^{p-2}u'(r^*)=\int_{r_*}^{r^*}s^{d-1}u^q(s)\,ds.
\end{eqnarray}
Notice that $(r^*)^{d-1}|u'(r^*)|^{p-2}u'(r^*)\leq 0$ due to $u'(r^*)\leq 0$ and the right side of the above equation is positive. That is a contradiction. This completes the proof of this claim.

Since $R:=\inf\{r>0|u(r)=0\}\in \mathbb{R}^{+}\cup \{+\infty\}$, then we have $r_*= R$ by the claim. So, we have that $|u'(r)|>0$ in $(0,R)$. Therefore, from regularity of solutions to the elliptic equation, we know that any weak solution of the above equation (\ref{iequationh0}) in $Y_{rad}^*$ is also a classical solution in any closed interval of $(0,R)$. Hence the equation (\ref{ixequation}) holds in the classical sense.

\end{proof}

In the following, we show the three important results: (i) For the case $q<p-1$, all critical points of $G(u)$ in $Y^*_{rad}$ satisfy the free boundary problem (\ref{iMinfFBVP1})-(\ref{iMinfFBVPbound}). Here we need to derive a Pohozaev type identity and use it to prove that the contact angle is zero. (ii) For the case $q\geq p-1$, we derive the Euler-Lagrange equations (\ref{iMinfchachy1})-(\ref{iMinfchachybound}) for the critical points of $G(u)$ in $Y^*_{rad}$. Solutions to the Euler-Lagrange equations (\ref{iMinfchachy1})-(\ref{iMinfchachybound}) are positive and have decay properties at the infinity. (iii) We show that the solution to the Euler-Lagrange equations in $Y^*_{rad}$ is a critical point of $G(u)$  up to a re-scaling.

\subsection{Case $q<p-1$: Compact support, zero contact angle and free boundary problem}
In this subsection, we prove that all critical points of $G(u)$ satisfy the free boundary problem (\ref{iMinfFBVP1})-(\ref{iMinfFBVPbound}). First, we show that a solution to (\ref{ixequation})-(\ref{ixinitial}) has a compact support in $[0,+\infty)$ (see Proposition \ref{compact support}). Next we show that the solution has a zero-contact-angle at the boundary of the compact support (see Lemma \ref{lem21} and Lemma \ref{ieneangle}). Finally we use the zero-contact-angle result to derive a complete free boundary problem (\ref{iMinfFBVP1})-(\ref{iMinfFBVPbound}) (see Proposition \ref{icor}).

\begin{prop}\label{compact support}Assume that $\bar u(r)\in Y^*_{rad}$ is a critical point of $G(u)$. Let $u(r) =\bar u(\lambda_0 r)$ for some $\lambda_0>0$ satisfy (\ref{ixequation})-(\ref{ixinitial}). If $p>1$, $0\leq q<p-1$, then there is $ R\in (0,\infty)$ such that $ u( R)=0$.
\end{prop}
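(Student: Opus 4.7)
The plan is to argue by contradiction. Suppose $u(r)>0$ for every $r>0$, so that $R=\infty$. By the Claim in the proof of Proposition \ref{iLcriticalequa}, $u'(r)<0$ throughout $(0,\infty)$ and $u$ is a classical solution of \eqref{ixequation} on every compact subinterval of $(0,\infty)$; moreover $u$ is monotone with $u\in L^{q+1}(\mathbb{R}^d)$, so $u(r)\to 0$ as $r\to\infty$. The goal is to derive a pointwise first-order inequality of the form $-u'(r)\ge \alpha\,u(r)^{(q+1)/p}$ with an explicit constant $\alpha>0$, and then integrate it, using crucially $q+1<p$, to force $u$ to reach zero in finite radius.

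The first step is an energy identity obtained by multiplying \eqref{ixequation} by $u'$:
\begin{equation*}
\frac{d}{dr}\Big(\frac{p-1}{p}|u'(r)|^p-\frac{1}{q+1}u(r)^{q+1}\Big)=-\frac{d-1}{r}|u'(r)|^p\le 0.
\end{equation*}
Thus $E(r):=\frac{p-1}{p}|u'(r)|^p-\frac{1}{q+1}u(r)^{q+1}$ is non-increasing on $(0,\infty)$. Since $0\le u\le 1$, $E$ is bounded below, so $E(\infty):=\lim_{r\to\infty}E(r)$ exists and is finite. The next step is to show $E(\infty)=0$: since $u(r)\to 0$, $|u'(r)|^p$ also has a limit at infinity, equal to $\tfrac{p}{p-1}E(\infty)$. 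If that limit were strictly positive, $|u'|^p$ would be bounded below by a positive constant for all large $r$, contradicting $\int_0^\infty r^{d-1}|u'|^p\,dr<\infty$ (which follows from $\nabla u\in L^p(\mathbb{R}^d)$ and is valid for every $d\ge 1$). Hence $\lim|u'|^p=0$, $E(\infty)=0$, and therefore $E(r)\ge 0$ for every $r>0$, which rearranges to
\begin{equation*}
-u'(r)\ge \alpha\,u(r)^{(q+1)/p},\qquad \alpha:=\Big(\frac{p}{(p-1)(q+1)}\Big)^{1/p}.
\end{equation*}

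Set $\beta:=(p-q-1)/p>0$, where the hypothesis $q<p-1$ is invoked. The previous inequality is equivalent to $(u^{\beta})'(r)\le -\alpha\beta$. Integrating from a small $r_0>0$ to $r$ and letting $r_0\to 0^+$, using $u(r_0)\to 1$, yields $u(r)^{\beta}\le 1-\alpha\beta\,r$, which forces $u(r)=0$ for every $r\ge 1/(\alpha\beta)$, contradicting the standing assumption $u>0$ on $(0,\infty)$. This produces the desired finite $R$.

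The delicate point is the identification $E(\infty)=0$: monotonicity of $E$ alone only yields the existence of a finite limit, and excluding a strictly positive value relies on coupling this with the global $L^p$ integrability of $\nabla u$ on all of $\mathbb{R}^d$. The hypothesis $q<p-1$ is then used twice, first to make the exponent $\beta$ positive and second to guarantee that integrating $(u^\beta)'\le -\alpha\beta$ yields finite-time extinction rather than mere exponential or power-law decay; this is precisely why for $q\ge p-1$ the Euler-Lagrange problem must be posed on all of $(0,\infty)$ as in \eqref{iMinfchachy1}--\eqref{iMinfchachybound}, and Proposition \ref{compact support} has no analogue.
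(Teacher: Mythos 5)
Your proof is correct and follows essentially the same route as the paper: argue by contradiction, multiply (\ref{ixequation}) by $u'$ to get the energy identity (\ref{ienergy}), deduce the pointwise inequality $-u'(r)\ge\left(\tfrac{p}{(p-1)(q+1)}\right)^{1/p}u(r)^{(q+1)/p}$, and integrate it using $q<p-1$ to force touchdown at a finite radius. The only difference is minor and in your favor: the paper obtains nonnegativity of the energy by integrating (\ref{ienergy}) from $r$ to $\infty$ after asserting $u'(r)\to 0$, whereas you deduce $E(\infty)=0$ (hence $E\ge 0$) from the monotonicity of $E$ together with $\int_0^\infty r^{d-1}|u'|^p\,dr<\infty$, which avoids having to justify the decay of $u'$ separately.
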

\begin{proof}
For a radial decreasing non-negative function $u\in Y^*_{rad}$, there only exist two cases: (i) there exists a finite $R$ such that $ u(R)=0$; (ii)  $ u(r) >0$ for all $r>0$, and hence $u(r)\to 0$, $u'(r)\to 0$ as $r\to \infty$.

Inspired by the work \cite[Theorem 5.1]{Peletier1986}, using a contradiction method, we show that the second case can not happen.
Indeed, if (ii) holds, then $u>0$ is a solution to the following problem
 \begin{eqnarray}\label{iequa2}
&& (|u'|^{p-2}u')' + \frac{d-1}{r} |u'|^{p-2}u'  = u^{q},\quad 0<r<\infty,\label{iequa2}\\
&&\lim_{r\to 0^+}  u(r)=1,\quad \lim_{r\to \infty}u(r)=0.\label{iequa2bound}
\end{eqnarray}
Multiplying $u'$ to the both sides of (\ref{iequa2}), we get
\begin{eqnarray}\label{ienergy}
 \frac{d}{dr}\left(\frac{p-1}{p}|u'(r)|^p-\frac{u^{q+1}(r)}{q+1}\right)+\frac{d-1}{r}|u'(r)|^p=0.
\end{eqnarray}
 Integrating (\ref{ienergy}) from $r$ to $+\infty$ and utilizing the fact $u(r)\to 0$ and $u'(r)\to 0$ as $r\to \infty$, we have
 \begin{eqnarray}\label{iHenergypd}
 \frac{p-1}{p}|u'(r)|^p-\frac{u^{q+1}(r)}{q+1}=\int_r^{+\infty}\frac{d-1}{s}|u'(s)|^p\,ds.
\end{eqnarray}
Hence from (\ref{iHenergypd}), it holds that
 \begin{eqnarray}\label{iuderi}
 -u'(r)\geq \left(\frac{p}{(p-1)(q+1)}\right)^{1/p}u^{\frac{q+1}{p}}(r).
\end{eqnarray}
Using the method of separation of variable for (\ref{iuderi}) and integrating the result inequality from $0$ to $r$, $r\in (0,+\infty)$, we obtain
 \begin{eqnarray*}
 \int_{u(r)}^{1}u^{-\frac{q+1}{p}}\,du\geq \left(\frac{p}{(p-1)(q+1)}\right)^{1/p} r\quad\mbox{for all }r>0,
\end{eqnarray*}
which gives
\begin{eqnarray}\label{ircontrol1}
 r\leq \left(\frac{p}{(p-1)(q+1)}\right)^{-1/p}\frac{p}{p-q-1}\left(1-u^{\frac{p-q-1}{p}}(r)\right).
\end{eqnarray}
Noticing that $p>1$, $q<p-1$, by (\ref{ircontrol1}) we have
\begin{eqnarray}\label{ircontrol}
r\leq \left(\frac{p}{(p-1)(q+1)}\right)^{-1/p}\frac{p}{p-q-1}.
\end{eqnarray}
Taking $r\to+\infty$, we obtain a contradiction from (\ref{ircontrol}). Hence the second case can not happen, i.e., there exists a finite $R$ such that $u( R)=0$.

\end{proof}

Now we show that solutions to (\ref{ixequation})-(\ref{ixinitial}) have a zero contact angle at the boundary of the compact support by constructing an auxiliary energy functional
\begin{eqnarray}\label{isteadyfree0}
\mathcal{G}( u):=\frac{p-d}{p}\int_{\mathbb{R}^d} |\nabla u|^p\,dx- \frac{d}{q+1} \int_{\mathbb{R}^d} u^{q+1}\,dx.
\end{eqnarray}

\begin{lem}\label{lem21}
Let $\bar u(r)\in Y^*_{rad}$ be a critical point of $G(u)$. Then there is $\lambda_0>0$ such that the re-scaling function $ u(r)=\bar u(\lambda_0 r)$ is a zero point of the energy functional $\mathcal{G}( u)$ defined in (\ref{isteadyfree0}), i.e.,
\begin{eqnarray}\label{Gre=0}
\mathcal{G}(u)=0.
\end{eqnarray}

\end{lem}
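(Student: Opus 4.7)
I exploit the fact that the constraint set $Y^*_{rad}$ and the functional $\mathcal{G}$ behave very differently under spatial dilation. For any $\lambda>0$, the rescaling $u_\lambda(r)=\bar u(\lambda r)$ still belongs to $Y^*_{rad}$ because $u_\lambda(0)=1$ and the radial monotonicity and integrability of $\bar u$ are preserved. A change of variables gives
\[
\int_{\mathbb{R}^d}|\nabla u_\lambda|^p\,dx=\lambda^{p-d}\int_{\mathbb{R}^d}|\nabla\bar u|^p\,dx,\qquad \int_{\mathbb{R}^d} u_\lambda^{q+1}\,dx=\lambda^{-d}\int_{\mathbb{R}^d}\bar u^{q+1}\,dx,
\]
so that
\[
\mathcal{G}(u_\lambda)=\frac{p-d}{p}\,\lambda^{p-d}\|\nabla\bar u\|^p_{L^p}-\frac{d}{q+1}\,\lambda^{-d}\|\bar u\|^{q+1}_{L^{q+1}}.
\]

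Under the $L^{\infty}$-type hypothesis $p>d$, both coefficients on the right are strictly positive (the norms are finite because $\bar u\in Y^*_{rad}$ and strictly positive because $\bar u(0)=1$). Consequently the map $\lambda\mapsto\mathcal{G}(u_\lambda)$ is smooth and strictly increasing on $(0,\infty)$, with $\mathcal{G}(u_\lambda)\to-\infty$ as $\lambda\to 0^+$ and $\mathcal{G}(u_\lambda)\to+\infty$ as $\lambda\to+\infty$. By the intermediate value theorem there is a unique $\lambda_0>0$ with $\mathcal{G}(u_{\lambda_0})=0$, explicitly
\[
\lambda_0^p=\frac{dp\,\|\bar u\|^{q+1}_{L^{q+1}}}{(p-d)(q+1)\,\|\nabla\bar u\|^p_{L^p}},
\]
and $u=u_{\lambda_0}$ is the required function. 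To tie this in with Proposition \ref{iLcriticalequa}, I would also record the elementary algebraic identity $\frac{p-d}{p}\cdot\frac{\theta}{1-\theta}=\frac{d}{q+1}$, which follows directly from the formula for $\theta$ and shows that this $\lambda_0$ coincides with the rescaling constant $\lambda_1\lambda$ constructed in that proposition. Hence the same function $u$ satisfies both (\ref{ixequation})--(\ref{ixinitial}) and $\mathcal{G}(u)=0$.

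\textbf{Main obstacle.} There is essentially no analytic difficulty; the argument is bookkeeping of scaling exponents once one observes that $Y^*_{rad}$ is dilation-invariant and that the two terms of $\mathcal{G}$ have strictly different homogeneity. The real pitfall is to avoid a seductive but circular alternative based on a Pohozaev identity: multiplying (\ref{ixequation}) by $r u'$ and integrating by parts over the support ball $B(0,R)$ produces $\mathcal{G}(u)=\frac{(p-1)R^d S_d}{p}|u'(R)|^p$, so deducing $\mathcal{G}(u)=0$ in that way would require the zero-contact-angle condition $u'(R)=0$ that is only established afterwards in Lemma \ref{ieneangle}. The correct logical order is therefore to obtain $\mathcal{G}(u)=0$ first by the scaling argument above, and only afterward combine it with the Pohozaev identity in Lemma \ref{ieneangle} to conclude $u'(R)=0$.
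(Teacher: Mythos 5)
Your proof is correct, and it frames the result a bit differently from the paper. The paper's own proof does not invoke an intermediate value argument at all: it simply takes the specific normalization already fixed in Proposition \ref{iLcriticalequa} (first $\lambda_1$ so that $\|u_1\|_{L^{q+1}}=1$ with $a_1=\|\nabla u_1\|_{L^p}^p$, then $\lambda$ from (\ref{ipara1}) with $\theta\lambda^{-p}=(1-\theta)a_1$), computes $\int u^{q+1}=\lambda^{-d}$ and $\int|\nabla u|^p=a_1\lambda^{p-d}$, and checks $\mathcal{G}(u)=\lambda^{-d}\bigl(\tfrac{p-d}{p}\tfrac{\theta}{1-\theta}-\tfrac{d}{q+1}\bigr)=0$ using exactly the identity you record. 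Your route instead observes that the two terms of $\mathcal{G}$ have different homogeneity under dilation, so for \emph{any} $\bar u\in Y^*_{rad}$ the map $\lambda\mapsto\mathcal{G}(\bar u(\lambda\cdot))$ is strictly increasing from $-\infty$ to $+\infty$ and has a unique zero, with the explicit value $\lambda_0^p=\frac{dp\,\|\bar u\|^{q+1}_{L^{q+1}}}{(p-d)(q+1)\|\nabla\bar u\|^p_{L^p}}$; this buys you uniqueness of the dilation and makes clear that criticality of $\bar u$ is irrelevant to the bare statement. What your argument must not omit — and you correctly do not — is the identification of this $\lambda_0$ with the rescaling constant $\lambda_1\lambda$ of Proposition \ref{iLcriticalequa}, since Proposition \ref{icor} needs the \emph{same} rescaled function to satisfy both (\ref{ixequation})--(\ref{ixinitial}) and $\mathcal{G}(u)=0$; that identification step (via $\tfrac{p-d}{p}\cdot\tfrac{\theta}{1-\theta}=\tfrac{d}{q+1}$, or equivalently by uniqueness of the zero once one checks the paper's choice is a zero) is in substance the paper's entire proof. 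Your closing caution about not deriving $\mathcal{G}(u)=0$ from the Pohozaev identity is also well placed: that identity is used in the opposite direction, in Lemma \ref{ieneangle} and Proposition \ref{icor}, to deduce the zero contact angle from $\mathcal{G}(u)=0$.
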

\begin{proof}
From (\ref{inftya1}), the re-scaling function $u_1(r)=\bar u(\lambda_1 r)$, $\lambda_1>0$ satisfies
\begin{eqnarray}\label{re}
1=\int_{\mathbb{R}^d}u_1^{q+1}\,dx,\quad a_1=\int_{\mathbb{R}^d}|\nabla u_1|^p\,dx.
\end{eqnarray}

Let $ u(r)=u_1(\lambda r)$, where $\lambda$ is given by (\ref{ipara1}). Thus from (\ref{re}) we deduce
$$
\int_{\mathbb{R}^d} u^{q+1}\,dy=\frac{1}{\lambda^d},\quad \int_{\mathbb{R}^d}|\nabla u|^p\,dy=a_1\lambda ^{p-d}.
$$
Hence
\begin{align*}
\mathcal{G}( u)&=\frac{p-d}{p}\int_{\mathbb{R}^d} |\nabla u|^p\,dx- \frac{d}{q+1} \int_{\mathbb{R}^d} u^{q+1}\,dx\\
&=\frac{p-d}{p}a_1\lambda ^{p-d}-\frac{d}{q+1} \lambda^{-d}.
\end{align*}
Using (\ref{ipara1}) and the definition (\ref{Gh}) of $\theta$, we have
\begin{align*}
\mathcal{G}( u)=\frac{p-d}{p}\left(\frac{\theta}{1-\theta}\right)\lambda^{-p}\lambda ^{p-d}-\frac{d}{q+1} \lambda^{-d}=0,
\end{align*}
 i.e., (\ref{Gre=0}) holds.

\end{proof}

\begin{lem}\label{ieneangle}
Let $u(r)$ be a solution to the problem (\ref{ixequation})-(\ref{ixinitial}) in $Y_{rad}^*$. Assume that $u(r)$ has a touchdown point $R$ {\rm(}i.e. $u(R)=0${\rm )}. Then
the following relation between the energy functional defined by (\ref{isteadyfree0}) and the contact angle holds
\begin{eqnarray}\label{ifrvsangle}
\mathcal{G}( u) = \frac{(p-1) S_d}{p} \lim_{r\to R^-}r^d|u'(r)|^p,
\end{eqnarray}
where $S_d$ is the surface area of $d$-dimensional unit ball.
\end{lem}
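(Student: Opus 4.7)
The identity (\ref{ifrvsangle}) is a Pohozaev-type formula, so the plan is to derive it by the standard multiplier method. I would multiply the radial equation (\ref{ixequation}) through by $r^d u'(r)$ and use the elementary identities
\[
u'\,(|u'|^{p-2}u')' \;=\; \frac{p-1}{p}\,(|u'|^p)', \qquad u'\,u^q \;=\; \frac{1}{q+1}\,(u^{q+1})',
\]
to recast the result in the conservation form
\[
\frac{p-1}{p}\frac{d}{dr}\!\bigl(r^d|u'|^p\bigr) \;-\; \frac{1}{q+1}\frac{d}{dr}\!\bigl(r^d u^{q+1}\bigr) \;=\; \frac{p-d}{p}\,r^{d-1}|u'|^p \;-\; \frac{d}{q+1}\,r^{d-1}u^{q+1}.
\]
Integrating on $[\varepsilon,R-\delta]$ and multiplying by $S_d$, the right-hand side becomes exactly $\mathcal{G}(u)$ as $\varepsilon,\delta\to 0^+$, so proving the lemma reduces to computing the boundary contributions on the left.

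The boundary contribution at $r=R-\delta$ is clean: the continuity $u(R)=0$ kills the $u^{q+1}$ piece, while the $|u'|^p$ piece converges to $\frac{p-1}{p}R^d\lim_{r\to R^-}|u'(r)|^p$, which is precisely what the right-hand side of (\ref{ifrvsangle}) predicts. At $r=\varepsilon$ the term $\varepsilon^d u^{q+1}(\varepsilon)$ is harmless because $u(0^+)=1$ is bounded.

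The main technical point is to show that $\varepsilon^d|u'(\varepsilon)|^p\to 0$ as $\varepsilon\to 0^+$, which is nontrivial in the Thomas-Fermi regime $q<p-1$ where $u'$ may blow up at the origin (this singularity is what produces the $a\delta(x)$ source in (\ref{iinftyfunction})). To control it, I would integrate (\ref{ixequation}) against the weight $r^{d-1}$ to obtain
\[
r^{d-1}|u'(r)|^{p-2}u'(r) \;-\; \varepsilon^{d-1}|u'(\varepsilon)|^{p-2}u'(\varepsilon) \;=\; \int_\varepsilon^r s^{d-1}u^q(s)\,ds \;=\; O(r^d),
\]
since $u\le 1$ on $(0,R)$. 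This shows that $r^{d-1}|u'(r)|^{p-1}$ stays bounded near the origin, and hence
\[
r^d|u'(r)|^p \;=\; O\!\bigl(r^{(p-d)/(p-1)}\bigr)\to 0 \quad\text{as}\quad r\to 0^+,
\]
where the hypothesis $p>d$ of the $L^\infty$-type regime enters explicitly through the positivity of the exponent $(p-d)/(p-1)$. Substituting these boundary asymptotics into the integrated conservation law yields the Pohozaev identity (\ref{ifrvsangle}).
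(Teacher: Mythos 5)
Your proof is correct, and at its core it is the same Pohozaev-type computation as the paper's: your conservation form is exactly the paper's identity obtained by multiplying (\ref{ixequation}) by $u'$ to get the energy--dissipation relation (\ref{iH}) for $H(r)=\frac{p-1}{p}|u'|^p-\frac{u^{q+1}}{q+1}$ and then multiplying by $r^d$ and integrating; the boundary analysis at $r=R$ (continuity kills the $u^{q+1}$ term, the remaining term is forced to converge by the identity) is also identical. The one genuine difference is how you kill the boundary term at the origin. The paper first observes that $\lim_{r\to 0^+}r^d|u'(r)|^p$ exists from the integrated identity and then rules out a positive limit by contradiction with $\int_0^{\delta}s^{d-1}|u'(s)|^p\,ds<\infty$, i.e.\ it leans on $\nabla u\in L^p$. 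You instead integrate the equation against $r^{d-1}$, use $0\le u\le 1$ to get $r^{d-1}|u'(r)|^{p-1}=O(1)$ near the origin, and conclude $r^d|u'(r)|^p=O\bigl(r^{(p-d)/(p-1)}\bigr)\to 0$ using $p>d$. Your version is more quantitative (it identifies the actual singularity rate $|u'|\sim r^{-(d-1)/(p-1)}$, consistent with Proposition \ref{existenceth} and the $a\delta(x)$ source in the Thomas--Fermi formulation) and does not need the $L^p$ integrability of the gradient for this step, but it does use $p>d$, which the paper's argument avoids; both hypotheses are available in the $L^\infty$-type regime, so either route closes the proof.
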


\begin{proof}
Now we prove (\ref{ifrvsangle}) by using a similar idea to the proof of the Pohozaev identity. Introduce the energy function
\begin{eqnarray}\label{iHenergy}
H(r):= \frac{p-1}{p}|u'(r)|^p-\frac{u^{q+1}(r)}{q+1}.
\end{eqnarray}
Using (\ref{ienergy}), we have the following energy-dissipation relation
\begin{eqnarray}\label{iH}
\frac{dH(r)}{dr} + \frac{d-1}{r} |u'(r)|^{p} = 0.
\end{eqnarray}
Multiplying $r^d$ to (\ref{iH}) and integrating the result equation from $r$ to $R_0$, for any fixed $0<R_0<R$, we obtain that
 $$
 R_0^d H(R_0) -  r^d H(r) -d \int_r^{R_0} s^{d-1} H(s) \,ds+ (d-1) \int_r^{R_0} |u'(s)|^p s^{d-1}\,ds = 0.
 $$
By (\ref{iHenergy}), the above equation can be written as the following form
\begin{align}\label{i0est1}
r^d\left(\frac{p-1}{p}|u'(r)|^p-\frac{u^{q+1}(r)}{q+1}\right)=&R_0^d H(R_0)-\frac{p-d}{p} \int_r^{R_0} s^{d-1}| u'(s)|^p\,ds \nonumber\\
&+ \frac{d}{q+1} \int_r^{R_0} s^{d-1} u^{q+1}(s)\,ds.
\end{align}
Since $u(r)\in Y_{rad}^*$, we have that the limit of the right side of (\ref{i0est1}) exists as $r\to 0^+$. Hence taking the limit for the both side of (\ref{i0est1}), we have
\begin{align}\label{i0est}
\frac{p-1}{p}\lim_{r\to 0^+} r^d|u'(r)|^p=&R_0^d H(R_0)-\frac{p-d}{p} \int_0^{R_0} s^{d-1}| u'(s)|^p\,ds \nonumber\\
&+ \frac{d}{q+1} \int_0^{R_0} s^{d-1} u^{q+1}(s)\,ds.
\end{align}
Notice that $r^{d}|u'(r)|^{p}\geq 0$. Hence there is a constant $C\geq 0$ such that
$$
\lim_{r\to 0^+}r^{d}|u'(r)|^{p}=C.
$$

Now we claim $C=0$. If $C>0$, then there is $\delta>0$ such that
$$
r^{d}|u'(r)|^{p}\geq \frac{C}{2}\quad\mbox{ for } 0<r\leq \delta,
$$
which means
$$
r^{d-1}|u'(r)|^{p}\geq \frac{C}{2} r^{-1}\quad\mbox{ for } 0<r\leq \delta.
$$
Integrating above inequality from $0$ to $\delta$, we deduce
\begin{eqnarray*}
\infty>\int_0^{\delta} s^{d-1}| u'(s)|^p\,ds\geq \frac{C}{2}\int_0^{\delta}r^{-1}\,dr=+\infty.
\end{eqnarray*}
This is a contradiction. Hence it holds that
\begin{eqnarray}\label{i0est2}
\lim_{r\to 0^+}r^{d}|u'(r)|^{p}=0.
\end{eqnarray}

Therefore, using (\ref{iHenergy}), (\ref{i0est2}) and taking the limit for (\ref{i0est}) as $R_0\to R^-$, we have
\begin{align*}
\frac{p-1}{p} \lim_{R_0\to R^{-}} R_0^d |u'(R_0)|^p=&\lim_{R_0\to R^{-}} R_0^d H(R_0)\\
=&\frac{p-d}{p} \int_0^R r^{d-1}| u'(r)|^p\,dr - \frac{d}{q+1} \int_0^R r^{d-1} u^{q+1}(r)\,dr\\
=& \frac{p-d}{p} \frac{1}{S_d}\int_{B_R(0)} |\nabla u|^p\,dx- \frac{d}{q+1} \frac{1}{S_d}\int_{B_R(0)} u^{q+1}\,dx=\frac{1}{S_d}\mathcal{G}(u).
\end{align*}
Hence (\ref{ifrvsangle}) holds.
\end{proof}

Finally, we show that all critical points of $G(u)$ satisfy the free boundary problem (\ref{iMinfFBVP1})-(\ref{iMinfFBVPbound}) up to a re-scaling.

\begin{prop}\label{icor}Assume $p>1$, $0\leq q<p-1$.
Let $\bar u(r)\in Y^*_{rad}$ be a critical point of $G(u)$. Then there is $\lambda_0>0$ such that the re-scaling function $ u(r)= \bar u(\lambda_0 r)$ satisfies the free boundary problem (\ref{iMinfFBVP1})-(\ref{iMinfFBVPbound}).
\end{prop}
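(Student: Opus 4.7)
The plan is to assemble the conclusion by combining the four preceding results: Proposition \ref{iLcriticalequa} (classical PDE with $u(0)=1$, $u(R^-)=0$ for some $R\in(0,+\infty]$), Proposition \ref{compact support} (compactness of support when $q<p-1$), and the pair Lemma \ref{lem21}–Lemma \ref{ieneangle} (zero contact angle). The key observation that makes the gluing work is that Proposition \ref{iLcriticalequa} and Lemma \ref{lem21} both use the same two-step normalization constant $\lambda_0 = \lambda_1\lambda$ built from (\ref{inftya1}) and (\ref{ipara1}), so a single rescaling $u(r)=\bar u(\lambda_0 r)$ achieves all the needed properties simultaneously.

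Given that common rescaling, Proposition \ref{iLcriticalequa} shows that $u$ satisfies (\ref{ixequation}) classically on $(0,R)$ with $u(0^+)=1$ and $u(R^-)=0$ for some $0<R\le+\infty$. Since we are in the regime $0\le q<p-1$, Proposition \ref{compact support} upgrades this to $R\in(0,\infty)$, and the continuity of $u\in Y^*_{rad}$ at $R$ turns the limit into the genuine boundary value $u(R)=0$. Extending $u$ by zero beyond $R$ (permissible because $u$ is nonnegative, non-increasing, and already vanishes at $R$) produces the profile required for (\ref{iMinfFBVP1})–(\ref{iMinfFBVPbound}) apart from the contact-angle condition $u'(R)=0$.

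To secure the zero contact angle, I invoke Lemma \ref{lem21}, which asserts $\mathcal{G}(u)=0$ under exactly this rescaling, together with the Pohozaev-type identity of Lemma \ref{ieneangle},
$$\mathcal{G}(u) \;=\; \frac{(p-1)S_d}{p}\lim_{r\to R^-} r^{d}|u'(r)|^{p}.$$
The two identities force $\lim_{r\to R^-} r^{d}|u'(r)|^{p}=0$. Because $R\in(0,\infty)$ is finite and strictly positive, the factor $r^d$ is bounded away from $0$ near $R$, and hence $\lim_{r\to R^-}|u'(r)|=0$, i.e.\ $u'(R)=0$. Combined with $u(0)=1$ and $u(R)=0$, this gives (\ref{iMinfFBVPbound}), and together with the PDE on $(0,R)$ and the trivial extension beyond $R$ completes the free boundary problem.

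The only subtle point in the argument is the bookkeeping of the rescaling: one must verify that the $\lambda_0$ produced in Proposition \ref{iLcriticalequa} (chosen to normalize the PDE to unit coefficients) and the $\lambda_0$ in Lemma \ref{lem21} (chosen to make $\mathcal{G}(u)=0$) are in fact the same, so that a single rescaled profile simultaneously solves the equation and has vanishing energy. As noted above, both are constructed by the identical two-step procedure, and no additional argument is needed. Everything else is a direct citation of the lemmas already proved in the section.
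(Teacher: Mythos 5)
Your proposal is correct and follows essentially the same route as the paper: the paper's proof of Proposition \ref{icor} is precisely the combination of Lemma \ref{lem21} ($\mathcal{G}(u)=0$ for the rescaled critical point) with the Pohozaev-type identity (\ref{ifrvsangle}) of Lemma \ref{ieneangle}, together with the finiteness of $R$ from Proposition \ref{compact support} and the equation/boundary values from Proposition \ref{iLcriticalequa}, yielding $u'(R)=0$. Your extra care about the two rescalings coinciding (both built from (\ref{inftya1}) and (\ref{ipara1})) is a legitimate bookkeeping check that the paper leaves implicit, and it is resolved exactly as you say.
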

\begin{proof}
As a direct consequence of (\ref{ifrvsangle}) and $\mathcal{G}( u)=0$, one knows that $ u'( R)=0$. In the other words, the contact angle is zero. This case is the so-called complete wetting regime in Young's law \cite{giacomelli2008smooth}.
\end{proof}

\subsection{Case $q\geq p-1$: Positivity and decay property} In this subsection, we show that solutions to (\ref{ixequation}) and (\ref{ixinitial}) are positive (see Proposition \ref{iqlargep-1}). And decay properties of solutions to the problem (\ref{iMinfchachy1})-(\ref{iMinfchachybound}) are proved in Proposition \ref{idecay}.
\begin{prop}\label{iqlargep-1} Assume $p>1$, $ q\geq p-1$. Let $u(r)$ be a solution of (\ref{ixequation})-(\ref{ixinitial}). Then $u(r)>0$ for any $0<r<\infty$.
\end{prop}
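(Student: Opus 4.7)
The plan is to argue by contradiction: suppose there exists $R\in(0,\infty)$ with $u(R)=0$. The key preliminary step is to show that $u'(R^-)=0$. Since $u$ arises from the minimization framework as a critical point of $G$ in $Y^*_{rad}$, Lemma \ref{lem21} gives $\mathcal{G}(u)=0$; combined with the Pohozaev-type identity from Lemma \ref{ieneangle},
\[
0=\mathcal{G}(u)=\frac{(p-1)S_d}{p}\lim_{r\to R^-}r^d|u'(r)|^p,
\]
and using $R>0$, we conclude that $u'(R^-)=0$.

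With this zero-contact-angle information, I would integrate the equation $(r^{d-1}|u'|^{p-2}u')'=r^{d-1}u^q$ from $r$ to $R$ to obtain
\[
r^{d-1}|u'(r)|^{p-1}=\int_r^R s^{d-1}u^q(s)\,ds,
\]
where I use that $u'<0$ on $(0,R)$, as established in the Claim in the proof of Proposition \ref{iLcriticalequa}. Monotonicity of $u$ then gives $\int_r^R s^{d-1}u^q(s)\,ds\le u^q(r)\,R^{d-1}(R-r)$, and hence on $[R/2,R)$,
\[
-u'(r)\le C_1\,u^{q/(p-1)}(r)(R-r)^{1/(p-1)},
\]
for a constant $C_1=C_1(d,p,R)$.

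The last step is separation of variables. Rewriting the estimate as $-u'(r)/u^{q/(p-1)}(r)\le C_1(R-r)^{1/(p-1)}$ and integrating over $[R/2,r]$ for $r<R$, the right-hand side stays bounded uniformly in $r$ by $M:=\tfrac{p-1}{p}C_1(R/2)^{p/(p-1)}$. For $q=p-1$, the integrated left-hand side equals $\ln(u(R/2)/u(r))$, which diverges to $+\infty$ as $r\to R^-$ since $u(r)\to 0$, contradicting the bound $M$. For $q>p-1$, setting $\gamma:=q/(p-1)>1$, the integrated left-hand side equals $\tfrac{1}{\gamma-1}\bigl(u(r)^{1-\gamma}-u(R/2)^{1-\gamma}\bigr)$, and $u(r)^{1-\gamma}\to+\infty$ as $r\to R^-$, again contradicting $M$. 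Either way we obtain a contradiction, so no finite $R$ exists and $u(r)>0$ on $(0,\infty)$. The main obstacle is the first step --- justifying $u'(R^-)=0$ via the variational structure of $u$; once this reduction is in hand, the integral representation and the separation-of-variables argument are elementary, and the role of the hypothesis $q\ge p-1$ is precisely to make the inverse power of $u$ in the integrand non-integrable near $0$.
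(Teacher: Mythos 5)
Your proposal is correct, and it diverges from the paper's proof in its second half. The first step is the same in substance: the paper deduces $u'(R)=0$ by citing Proposition \ref{icor} (whose content is exactly Lemma \ref{lem21} plus the Pohozaev identity of Lemma \ref{ieneangle}), while you invoke those two lemmas directly --- arguably cleaner, since Proposition \ref{icor} is formally stated only for $q<p-1$, whereas the two lemmas carry no such restriction once a touchdown point is assumed. After that the routes split. The paper extends $u$ by zero (the zero contact angle makes this a $C^1$ extension), checks that the extension solves $\Delta_p u=u^q$ in a punctured region $B_{\bar R}(0)\setminus \overline{B_\varepsilon(0)}$, and invokes the Pucci--Serrin Strong Maximum Principle, whose hypothesis $\int_{0^+}\frac{ds}{H^{-1}(F(s))}=\infty$ holds precisely when $q\ge p-1$; positivity of the extension then contradicts $u\equiv 0$ on $B_{\bar R}(0)\setminus \overline{B_R(0)}$. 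You instead stay with the ODE on $(0,R)$: integrate $(r^{d-1}|u'|^{p-1})'=-r^{d-1}u^q$ from $r$ to $R$ using $u'(R^-)=0$, bound the resulting integral by monotonicity, and run a separation-of-variables argument whose divergence criterion $\int_{0^+}u^{-q/(p-1)}\,du=\infty$ is exactly the condition $q\ge p-1$. This is a self-contained, elementary substitute for the maximum-principle citation --- in effect you reprove the relevant special case of the compact support/strong maximum principle dichotomy by hand (your threshold exponent $q/(p-1)$ differs from the paper's $(q+1)/p$ only because you keep the factor $(R-r)^{1/(p-1)}$ rather than using the energy inequality, but both give the sharp cutoff $q\ge p-1$). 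What the paper's route buys is brevity and a ready-made sharp statement; what yours buys is independence from the external theorem and a transparent view of where the hypothesis $q\ge p-1$ enters. The quantitative details (the bound $M=\tfrac{p-1}{p}C_1(R/2)^{p/(p-1)}$, the logarithmic case $q=p-1$, the power case $q>p-1$) all check out, given that $u>0$ and $u$ is $C^1$ on compact subintervals of $(0,R)$, both of which are available from Step 3 of Proposition \ref{iLcriticalequa} and the definition of $R$ as the first touchdown point.
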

\begin{proof}
 Now we only need to prove that $ R=\infty$ for $p>1$, $q\geq p-1$. If not, $R<\infty$. By Proposition \ref{icor}, we have $u'(R)=0$. Multiplying $r^{d-1}$ to the equation (\ref{ixequation}) and using $u'(r)\leq 0$, we have
 $$( r^{d-1} |u'(r)|^{p-1})' + r^{d-1} u^q(r)  = 0,~~ 0<r<R.$$
 We extend the function $u$ to $u=0$ for $r\geq R$.
 Let $\Omega_{\varepsilon}:=\mathbb{R}^d\setminus \overline{B_{\varepsilon}(0)}, \forall \varepsilon>0$ be a domain without the origin.
For any $\phi(x) =\phi(|x|)  \ge 0$ and $\phi\in C^{\infty}_c (\Omega_{\varepsilon})$, it holds that
$$\int_{\varepsilon}^{\infty} (- \phi'(r) r^{d-1} |u'(r)|^{p-1} +\phi(r) r^{d-1} u^q(r)  ) dr =  0.$$
And noticing $\nabla u\in L^p(\mathbb{R}^d)$, we have
$$ \int_{\Omega_{\varepsilon}} ( \nabla\phi\cdot \nabla u |\nabla u|^{p-2}  +\phi u^q) dx = 0.$$
Hence we have for any $\bar R>R>0$
 $$
 \Delta_{p}  u = u^{q} \quad\mbox{ in } \mathcal{D}( B_{\bar R}(0)\setminus \overline{B_{\varepsilon}(0)}).
 $$
 Moreover, by Step 3 in the proof of Proposition \ref{iLcriticalequa} and $u'(R)=0$, we know $u\in C^1( B_{\bar R}(0)\setminus \overline{B_{\varepsilon}(0)})$.
   Positivity of $u$ in $ B_{\bar R}(0)\setminus \overline{B_{\varepsilon}(0)}$ is a direct consequence of the Strong Maximum Principle given by Pucci and Serrin \cite[Thoerem 1.1.1]{pucci2007maximum}.
To prove this positivity, we only need to verify the necessary and sufficient condition for the Strong Maximum Principle: $f(s)>0$ for $s\in (0,\delta)$ and $\int_{0^+}\frac{ds}{H^{-1}(F(s))}=\infty$ (in the same notations as that in \cite{pucci2007maximum}, $f(s)=s^q$, $F(s)=\frac{s^{q+1}}{q+1}$, $H(s)=\frac{p-1}{p}s^p$). While the condition  $\int_{0^+}\frac{ds}{H^{-1}(F(s))}=\infty$ holds if and only if $q\geq p-1$.

 Therefore, positivity in $ B_{\bar R}(0)\setminus \overline{B_{\varepsilon}(0)}$ is a contradiction with  $u\equiv0$ in $B_{\bar R}(0)\setminus \overline{B_{R}(0)}$.
\end{proof}

\begin{prop}\label{idecay}
Let $u(r)$ be a solution of the problem (\ref{iMinfchachy1})-(\ref{iMinfchachybound}). Then $u(r)$ satisfies the following decay estimate
\begin{eqnarray}\label{idecayrate}
\lim_{r\to \infty} r^{d-1}|u'(r)|^{p-1}=0.
\end{eqnarray}
Moreover, $u(r)$ satisfies the following decay rates
\begin{itemize}
\item [(i)] for $q>p-1$,
it holds that
\begin{eqnarray}\label{ipdaq1oinftypro22}
u(r)+r|u'(r)|\leq  C_{p,q}r^{-\frac{p}{q+1-p}} \quad \mbox{for } r> 0;
\end{eqnarray}
\item [(ii)] for $q=p-1$, it holds that
\begin{eqnarray}\label{ipq1oinftypro22}
u(r)+|u'(r)|\leq C_p e^{-(p-1)^{-\frac{1}{p}}r}\quad \mbox{for } r> 0.
\end{eqnarray}
 \end{itemize}
\end{prop}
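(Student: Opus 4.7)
My plan combines a monotonicity argument for a divergence-form quantity, the Pohozaev-type energy $H$ from Lemma~\ref{ieneangle}, and a bootstrap via ODE integration. First I would rewrite \eqref{iMinfchachy1} in divergence form: since $u'\le 0$, one has $|u'|^{p-2}u'=-|u'|^{p-1}$, and multiplying by $r^{d-1}$ gives
\[
\frac{d}{dr}\bigl(r^{d-1}|u'(r)|^{p-1}\bigr) = -\,r^{d-1} u^q(r) \le 0.
\]
Hence $r\mapsto r^{d-1}|u'(r)|^{p-1}$ is nonincreasing and nonnegative, and its limit $L\ge 0$ at $r=\infty$ exists. To prove \eqref{idecayrate} I argue by contradiction: if $L>0$ then $|u'(r)|\ge L^{1/(p-1)}r^{-(d-1)/(p-1)}$ for large $r$, while monotonicity of $u$ together with $u(\infty)=0$ yields $\int_{r_0}^{\infty}|u'(r)|\,dr = u(r_0)<\infty$. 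The standing hypothesis $p>d$ forces $(d-1)/(p-1)<1$, so $\int^{\infty} r^{-(d-1)/(p-1)}\,dr$ diverges, a contradiction. Thus $L=0$, which also gives $|u'(r)|\to 0$, and integration of the divergence identity from $r$ to $\infty$ provides the integral representation
\[
r^{d-1}|u'(r)|^{p-1}=\int_r^{\infty} s^{d-1}u^q(s)\,ds,
\]
which will drive the final step.

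Next I would extract the pointwise lower bound $|u'|\ge c_{p,q}\,u^{(q+1)/p}$. With $H$ as in \eqref{iHenergy}, multiplying \eqref{iMinfchachy1} by $u'$ yields $H'(r)=-\tfrac{d-1}{r}|u'|^p\le 0$, so $H$ is nonincreasing. Combined with $u(r), u'(r) \to 0$, this gives $H(\infty)=0$ and hence $H(r)\ge 0$ on $(0,\infty)$, equivalently $|u'(r)|\ge\bigl(p/((p-1)(q+1))\bigr)^{1/p}u(r)^{(q+1)/p}$. Separation of variables then produces the decay of $u$: for $q>p-1$, setting $\alpha=(q+1)/p>1$ and integrating $\tfrac{d}{dr}u^{1-\alpha}\ge(\alpha-1)c_{p,q}$ gives $u(r)\le C\,r^{-1/(\alpha-1)}=C\,r^{-p/(q+1-p)}$; for $q=p-1$, $\alpha=1$ and $(\log u)'\le -(p-1)^{-1/p}$ integrates to $u(r)\le Ce^{-(p-1)^{-1/p}r}$.

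Finally I would feed these decay estimates back into the integral representation to get the matching bounds on $|u'|$. In the algebraic case, $u^q(s)\le Cs^{-pq/(q+1-p)}$; because $p>d$ the exponent in $\int_r^{\infty} s^{d-1-pq/(q+1-p)}\,ds$ is strictly below $-1$, so $r^{d-1}|u'(r)|^{p-1}\le C r^{\,d-pq/(q+1-p)}$, and dividing by $r^{d-1}$, taking the $(p-1)$-th root and multiplying by $r$ collapses, after elementary algebra, to $r|u'(r)|\le C r^{-p/(q+1-p)}$, establishing \eqref{ipdaq1oinftypro22}. In the exponential case, a Laplace-type tail estimate on $\int_r^{\infty} s^{d-1}u^{p-1}(s)\,ds$ yields $|u'(r)|\le Ce^{-(p-1)^{-1/p}r}$, which combined with the bound on $u$ gives \eqref{ipq1oinftypro22}. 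The main obstacle is the very first step, proving $L=0$: it is the only place where the sharp condition $p>d$ is essentially used, and everything else reduces to routine ODE analysis once that integrability dichotomy between $|u'|\in L^1(r_0,\infty)$ and $r^{-(d-1)/(p-1)}\notin L^1(r_0,\infty)$ is secured.
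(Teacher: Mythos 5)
Your proposal is correct and follows the paper's proof almost step for step: the monotonicity of $r^{d-1}|u'(r)|^{p-1}$ from the divergence form of (\ref{iMinfchachy1}) and the contradiction via $p>d$ (Step 1), and the sign $H\ge 0$ of the energy (\ref{iHenergy}) followed by separation of variables for the decay of $u$ (Step 2), are exactly the paper's arguments. The only place you diverge is the derivative bound: the paper multiplies the pointwise identity (\ref{iupie}) by $r^{k}$, $k=\tfrac{p(d-1)}{p-1}$, and integrates by parts to get the representation (\ref{iderqua1}) of $|u'|^p$ in terms of $u^{q+1}$, whereas you feed the decay of $u$ directly into $r^{d-1}|u'(r)|^{p-1}=\int_r^{\infty}s^{d-1}u^q\,ds$, which you already have from Step 1. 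Your route is slightly more elementary and the exponent bookkeeping checks out: since $p>1$ one has $\tfrac{pq}{q+1-p}>p>d$, so the tail integral converges, and $1+\tfrac{1}{p-1}\bigl(1-\tfrac{pq}{q+1-p}\bigr)=-\tfrac{p}{q+1-p}$ gives exactly (\ref{ipdaq1oinftypro22}); likewise the $(p-1)$-th root of $e^{-(p-1)^{(p-1)/p}r}$ reproduces the rate in (\ref{ipq1oinftypro22}). One shared caveat, not a defect of your argument relative to the paper's: for $q=p-1$ and $d>1$ the tail estimate produces an extra factor $r^{(1-d)/(p-1)}$, so the bound on $|u'|$ only holds for $r$ bounded away from $0$ (indeed $|u'(r)|\sim c\,r^{-(d-1)/(p-1)}$ as $r\to 0^+$ by Theorem \ref{existence}); the paper's formula (\ref{iderqua1}) has the same behavior through the term $k r^{-k}\int_r^{\infty}s^{k-1}u^{q+1}\,ds$, so both derivations establish (\ref{ipq1oinftypro22}) only away from the origin when $d>1$.
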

\begin{proof}
{\bf Step 1.} We prove the decay estimate (\ref{idecayrate}).
Since the function $u(r)$ satisfies the equation (\ref{iMinfchachy1}), hence we have
$$
(r^{d-1}|u'(r)|^{p-1})'=-u^{q} r^{d-1}<0~~\mbox{ for any } r>0.
$$
So, $r^{d-1}|u'(r)|^{p-1}$ is decreasing in $r$. Notice that $r^{d-1}|u'(r)|^{p-1}\geq 0$. Hence there is a constant $C\geq 0$ such that
$$
\lim_{r\to \infty}r^{d-1}|u'(r)|^{p-1}=C.
$$
Now we claim $C=0$. If $C>0$, we have
$$
r^{d-1}|u'(r)|^{p-1}\geq C~~ \mbox{ for any } r>0,
$$
which means
$$
-u'(r)\geq C r^{-\frac{d-1}{p-1}}~~\mbox{ for any } r> 0.
$$
Integrating above inequality from $r$ to $\infty$ for any $r>0$ and using the fact $\lim_{r\to \infty} u(r)=0$, we obtain
\begin{eqnarray}
u(r)\geq C\frac{p-1}{p-d}r^{1-\frac{d-1}{p-1}}\Big|_{r}^{\infty}=+\infty.
\end{eqnarray}
This is a contradiction, i.e., (\ref{idecayrate}) holds.

{\bf Step 2.} The decay rate of $u$.
From (\ref{iHenergy}) and (\ref{iH}), we have
\begin{eqnarray}\label{iupie}
\frac{d}{dr}|u'(r)|^p -\frac{p}{(p-1)(q+1)}\frac{d}{dr}u^{q+1}(r)+ \frac{p(d-1)}{p-1}\frac{1}{r}|u'(r)|^{p} = 0.
\end{eqnarray}
Integrating (\ref{iupie}) from $r$ to $\infty$ and using $u(r),~u'(r)\to 0$ as $r\to \infty$, we obtain
$$
|u'(r)|^p-\frac{p}{(p-1)(q+1)}u^{q+1}(r)=\frac{p(d-1)}{p-1}\int_r^{\infty} \frac{1}{s} |u'(s)|^{p}\,ds.
$$
Thus
$$
|u'(r)|^p-\frac{p}{p-1}\frac{u^{q+1}(r)}{q+1}\geq 0\quad \mbox{ for } r>0,
$$
i.e.,
$$
-u'(r)\geq \left(\frac{p}{(p-1)(q+1)}\right)^{\frac{1}{p}}u^{\frac{q+1}{p}}(r).
$$
Using the method of separation of variable for above formula and integrating the result inequality from $r$ to $+\infty$, $r\in(0,\infty)$, we deduce
\begin{eqnarray}
&&u(r)\leq  C_{p,q}r^{-\frac{p}{q+1-p}} \quad \mbox{for } r> 0, ~~q>p-1;\label{iudecay}\\
&&u(r)\leq e^{-(p-1)^{-\frac{1}{p}}r}\quad \mbox{for } r> 0, ~~q=p-1.\label{iq=udecay}
\end{eqnarray}

{\bf Step 3.} The refined decay rate of $u'(r)$.
Again multiplying $r^k$, $k=\frac{p(d-1)}{p-1}$ in both sides of (\ref{iupie}), it holds that
\begin{eqnarray*}
\frac{d}{dr}(r^k |u'(r)|^p) -\frac{p}{(p-1)(q+1)}r^k\frac{d}{dr}u^{q+1}(r) = 0.
\end{eqnarray*}
The decay rate in (\ref{idecayrate}) implies $\lim_{r\to \infty} r^k |u'(r)|^p =0$. Hence integrating the above equality from $r$ to $\infty$ gives
$$
r^k |u'(r)|^p+\frac{p}{(p-1)(q+1)}\int_r^{\infty} s^k\frac{d}{ds}u^{q+1}(s)\,ds=0.
$$
Using (\ref{iudecay}), we can directly check that $\lim_{r\to \infty} r^k u^{q+1}(r) =0$ due to $k-\frac{p(q+1)}{q+1-p}<0$ for $p>d$. Hence using
the integration by parts, we have
$$
r^k |u'(r)|^p=\frac{p}{(p-1)(q+1)}r^ku^{q+1}(r)+\frac{p}{(p-1)(q+1)}k\int_r^{\infty} s^{k-1}u^{q+1}(s)\,ds.
$$
Thus we get
\begin{eqnarray}\label{iderqua1}
 |u'(r)|^p=\frac{p}{(p-1)(q+1)}u^{q+1}(r)+\frac{p}{(p-1)(q+1)}kr^{-k}\int_r^{\infty} s^{k-1}u^{q+1}(s)\,ds.
\end{eqnarray}
Using (\ref{iudecay}), (\ref{iq=udecay}) and (\ref{iderqua1}), a direct computation gives that for any $r>0$
\begin{eqnarray}
 &&r|u'(r)| \leq C_{p,q} r^{-\frac{p}{q+1-p}}\quad \mbox{ for } q>p-1;\label{iudercon}\\
 && |u'(r)|\leq C_{p} e^{-(p-1)^{-\frac{1}{p}}r}\quad \mbox{ for } q=p-1.\label{iq=udercon}
\end{eqnarray}
Hence (\ref{iudecay}) and (\ref{iudercon}) give (\ref{ipdaq1oinftypro22}). Formulas (\ref{iq=udecay}) and (\ref{iq=udercon}) imply (\ref{ipq1oinftypro22}).
\end{proof}

\subsection{Solutions to Euler Lagrange equations are critical points of $G(u)$}
Since the zero contact angle in the free boundary condition (\ref{iMinfFBVPbound}) provides a $C^1$ zero extension for the case $q<p-1$, we can recast the free boundary problem (\ref{iMinfFBVP1})-(\ref{iMinfFBVPbound})
 into the problem (\ref{iMinfchachy1})-(\ref{iMinfchachybound}) as in the following lemma.
\begin{lem}\label{icor21}
Let $u(r)$ be a solution to the free boundary problem (\ref{iMinfFBVP1})-(\ref{iMinfFBVPbound}), and $u(r)=0$ for $r\geq R$. Then the zero extension solution $u(r)\in C^1(0,\infty)$ is a nonnegative solution to the following problem in the distribution sense
\begin{eqnarray}
&& (|u'|^{p-2}u')' + \frac{d-1}{r} |u'|^{p-2}u' = u^{q},~~0<r<+\infty,\label{idistrequ}\\
&& \lim_{r\to 0^+} u(r)=1,\,\,\lim_{r\to\infty} u(r)=0.\label{idistrbound}
 \end{eqnarray}
\end{lem}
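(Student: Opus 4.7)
The plan is to exploit the zero-contact-angle condition $u'(R)=0$ in two places: first to get the $C^1$ matching of the zero extension across $r=R$, and then to kill the boundary term that appears when integrating by parts against a test function. Since the equation on $(0,\infty)$ can be rewritten in divergence form as $(r^{d-1}|u'|^{p-2}u')'=r^{d-1}u^q$, the distributional identity to verify is simply
\begin{equation*}
-\int_0^\infty \phi'(r)\, r^{d-1}|u'|^{p-2}u'\,dr = \int_0^\infty \phi(r)\, r^{d-1}u^q\,dr \quad \text{for all } \phi\in C_c^\infty((0,\infty)),
\end{equation*}
and it suffices to show that the contribution from $r>R$ vanishes and that the boundary term at $r=R$ produced by integration by parts on $(0,R)$ is zero.

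First I would establish the $C^1$ regularity of the zero extension. On $(0,R)$ Proposition \ref{iLcriticalequa} (Step 3) gives $u\in C^1$ with $u'<0$, and on $[R,\infty)$ the extension $u\equiv 0$ is trivially smooth. The only nontrivial matching is at $r=R$: continuity of $u$ comes from $u(R)=0$ in the free boundary condition (\ref{iMinfFBVPbound}), and continuity of $u'$ comes from $u'(R)=0$ in the same condition, which matches the identically zero derivative on $[R,\infty)$. In particular the flux $r^{d-1}|u'|^{p-2}u'$, interpreted as $r^{d-1}|u'|^{p-1}\operatorname{sgn}(u')$, extends continuously by $0$ across $r=R$.

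Next, for any test function $\phi\in C_c^\infty((0,\infty))$, I split the left-hand side as $\int_0^R+\int_R^\infty$. The integral over $(R,\infty)$ vanishes because $u'\equiv 0$ there. On $(0,R)$, integration by parts gives
\begin{equation*}
-\int_0^R \phi'(r)\, r^{d-1}|u'|^{p-2}u'\,dr = -\bigl[\phi\, r^{d-1}|u'|^{p-2}u'\bigr]_0^R + \int_0^R \phi\,(r^{d-1}|u'|^{p-2}u')'\,dr.
\end{equation*}
The boundary term at $r=0$ vanishes since $\phi$ has compact support in $(0,\infty)$, and the boundary term at $r=R$ vanishes by the zero contact angle $u'(R)=0$. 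Since $u$ solves the equation classically on $(0,R)$ (by Proposition \ref{iLcriticalequa}), $(r^{d-1}|u'|^{p-2}u')'=r^{d-1}u^q$ pointwise there, so the right-hand integral equals $\int_0^R \phi\, r^{d-1}u^q\,dr$, which in turn equals $\int_0^\infty \phi\, r^{d-1}u^q\,dr$ because $u\equiv 0$ on $[R,\infty)$. Nonnegativity and the boundary conditions (\ref{idistrbound}) are immediate from the definition of the extension together with (\ref{iMinfFBVPbound}).

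The statement is essentially a regularity/glueing fact rather than a hard analytic step, and there is no real obstacle: the only place where things could fail is the possible appearance of a singular distribution supported at $r=R$ (a jump in $|u'|^{p-2}u'$ would produce a Dirac mass after differentiation), and this is exactly what the zero contact angle rules out. Thus the proof reduces to observing that the classical flux is continuous at the free boundary and invoking standard integration by parts.
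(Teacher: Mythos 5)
Your proposal is correct and follows essentially the same route as the paper: the paper's (terser) proof also invokes the classical regularity on $(0,R)$ from Step 3 of Proposition \ref{iLcriticalequa} and uses the zero contact angle $u'(R)=0$ to justify the $C^1$ zero extension, from which the distributional equation on $(0,\infty)$ follows. Your explicit integration by parts and the remark that a flux jump at $r=R$ would otherwise create a Dirac mass simply spell out the details the paper leaves implicit.
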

\begin{proof}
Since $u$ is a solution to the free boundary problem (\ref{iMinfFBVP1})-(\ref{iMinfFBVPbound}), by Step 3 in the proof of Proposition \ref{iLcriticalequa}, we know that $u$ is also a classical solution in $(0,R)$. Notice that $ u'( R)=0$, which allows us to make a $C^1$-zero extension, i.e., extend it to $u(r)=0$ for $r\geq R$. Thus we have that the solution $u$ is a $C^1$-nonnegative solution to (\ref{idistrequ})-(\ref{idistrbound}) in $(0,\infty)$.
\end{proof}

\begin{prop}\label{pdeuiscri}
Let $u(r)$ be a solution to (\ref{idistrequ})-(\ref{idistrbound}) in $Y_{rad}^*$. Then for any $\lambda>0$, the re-scaling function $u_{\lambda}(r)=u(\frac{r}{\lambda})$ is a critical point of $G(u)$ in $Y_{rad}^*$.
\end{prop}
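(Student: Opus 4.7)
The plan is to verify that the Gâteaux derivative of $G$ at $u_\lambda$ vanishes against every admissible variation, by combining the equation (\ref{idistrequ}) with a Pohozaev-type normalization. I would first observe that $G$ is invariant under the dilation $u\mapsto u(\cdot/\lambda)$: using $\|u_\lambda\|_{L^{q+1}}^{q+1}=\lambda^d\|u\|_{L^{q+1}}^{q+1}$, $\|\nabla u_\lambda\|_{L^p}^p=\lambda^{d-p}\|\nabla u\|_{L^p}^p$, and the identity $d[(1-\theta)/(q+1)+\theta/p]=\theta$ built into the definition of $\theta$, a direct check yields $G(u_\lambda)=G(u)$. Moreover $u_\lambda\in Y_{rad}^*$ whenever $u\in Y_{rad}^*$, so the setting is consistent.

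For a radial $\phi\in C_c^\infty((0,\infty))$ (admissible since $\phi(0)=0$, $u_\lambda$ is strictly decreasing on its positivity set, and $\phi'$ is bounded, so that $u_\lambda+\varepsilon\phi\in Y_{rad}^*$ for small $|\varepsilon|$), the product rule gives
\begin{align*}
\frac{d}{d\varepsilon}\bigg|_{\varepsilon=0}G(u_\lambda+\varepsilon\phi)&=(1-\theta)f^{-\theta-q}g^\theta\int u_\lambda^q\phi\,dx\\
&\quad+\theta f^{1-\theta}g^{\theta-p}\int|\nabla u_\lambda|^{p-2}\nabla u_\lambda\cdot\nabla\phi\,dx,
\end{align*}
where $f=\|u_\lambda\|_{L^{q+1}}$ and $g=\|\nabla u_\lambda\|_{L^p}$. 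Since $u$ solves $\Delta_p u=u^q$ in $\mathcal{D}'((0,\infty))$, the rescaled $u_\lambda$ satisfies $\Delta_p u_\lambda=\lambda^{-p}u_\lambda^q$, so integration by parts (the support of $\phi$ does not touch $0$, so the delta-like contribution at the origin is invisible) yields $\int|\nabla u_\lambda|^{p-2}\nabla u_\lambda\cdot\nabla\phi\,dx=-\lambda^{-p}\int u_\lambda^q\phi\,dx$. The Gâteaux derivative reduces to the single term $[(1-\theta)f^{-\theta-q}g^\theta-\theta\lambda^{-p}f^{1-\theta}g^{\theta-p}]\int u_\lambda^q\phi\,dx$; substituting the scaling relations for $f,g$, the bracket vanishes independently of $\lambda$ if and only if
\begin{align*}
(1-\theta)\|\nabla u\|_{L^p}^p=\theta\|u\|_{L^{q+1}}^{q+1},
\end{align*}
which, using the definition of $\theta$, is exactly $\mathcal{G}(u)=0$ in the notation of (\ref{isteadyfree0}).

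It remains to establish $\mathcal{G}(u)=0$ for any solution of (\ref{idistrequ})-(\ref{idistrbound}). I would reuse the derivation of Lemma \ref{ieneangle}, but take the boundary limit in (\ref{i0est}) in two cases: $R_0\to R^-$ when $q<p-1$ (using the free-boundary condition $u'(R)=0$ furnished by Lemma \ref{icor21}), and $R_0\to\infty$ when $q\ge p-1$. In the latter case the boundary term $R_0^d H(R_0)$ still vanishes, thanks to the polynomial decay (\ref{ipdaq1oinftypro22}) (for $q>p-1$) and exponential decay (\ref{ipq1oinftypro22}) (for $q=p-1$) supplied by Proposition \ref{idecay}: because $p>d$, both $r^d|u'|^p$ and $r^d u^{q+1}$ tend to $0$ at infinity. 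Combined with $\lim_{r\to 0^+}r^d|u'|^p=0$ (argued exactly as in Lemma \ref{ieneangle}), this gives $\mathcal{G}(u)=0$, completing the verification.

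I expect the main obstacle to be the extension of the Pohozaev-type boundary calculation of Lemma \ref{ieneangle} to the non-compactly supported case $q\ge p-1$: the algebra is the same, but one must verify by hand that $R_0^d H(R_0)\to 0$ as $R_0\to\infty$ using the sharp decay rates of Proposition \ref{idecay}, and this is the only point where the assumption $p>d$ enters critically. Once $\mathcal{G}(u)=0$ is in hand, the remainder is routine variational calculus, and the density of radial test functions $\phi\in C_c^\infty((0,\infty))$ in the admissible cone promotes the vanishing of the Gâteaux derivative to criticality of $u_\lambda$ in $Y_{rad}^*$.
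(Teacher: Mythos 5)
Your proposal is correct and follows essentially the same route as the paper: first establish $\mathcal{G}(u)=0$ via the Pohozaev-type identity of Lemma \ref{ieneangle} (with the free-boundary condition for $q<p-1$ and the decay rates of Proposition \ref{idecay} killing the boundary term when $q\ge p-1$), then show the first variation of $G$ at $u_{\lambda}$ vanishes because the coefficient balance $(1-\theta)\|\nabla u\|_{L^p}^p=\theta\|u\|_{L^{q+1}}^{q+1}$ is exactly $\mathcal{G}(u)=0$ and the $\lambda$-dependence cancels. The only difference is presentational (you use the rescaled equation $\Delta_p u_{\lambda}=\lambda^{-p}u_{\lambda}^q$ directly and spell out the $R_0\to\infty$ boundary-term check that the paper merely cites), so no further comment is needed.
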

\begin{proof}
{\bf Step 1.} In this step, we show that $\mathcal{G}(u)=0$, $\mathcal{G}(u)$ is defined by (\ref{isteadyfree0}).

Since $u$ satisfies the equation (\ref{idistrequ})-(\ref{idistrbound}) and the decay estimates (\ref{ipdaq1oinftypro22})-(\ref{ipq1oinftypro22}), hence by (\ref{ifrvsangle}), we have $\mathcal{G}(u)=0$, i.e.,
\begin{eqnarray}\label{ifudeng0}
\int_{\mathbb{R}^d} |\nabla u|^p\,dx= \frac{pd}{(q+1)(p-d)} \int_{\mathbb{R}^d} u^{q+1}\,dx.
\end{eqnarray}

{\bf Step 2.} We prove that for any $\lambda >0$, the re-scaling function $u_{\lambda}(r)=u(\frac{x}{\lambda})$ is a critical point of $G(u)$ in $Y_{rad}^*$.

In fact, it is directly verified that for any admissible variation $\phi\in C_c^1(0,\infty)$ at $u_{\lambda}$ (i.e., there is a $\varepsilon_0>0$ such that for any $|\varepsilon|<\varepsilon_0$ one has $u_{\lambda}+\varepsilon \phi_{\lambda}\in Y_{rad}^*$), we have
\begin{align}
&\frac{1}{G(u_{\lambda})}\frac{d}{d\varepsilon}\Big|_{\varepsilon=0} G(u_{\lambda}+\varepsilon \phi_{\lambda})\nonumber\\
=&-\theta\|\nabla u_{\lambda}\|^{-p}_{L^p}\int_0^{\infty}\phi_{\lambda}'|u'_{\lambda}|^{p-1}s^{d-1}\, ds+(1-\theta)\|u_{\lambda}\|_{L^{q+1}}^{-q-1}\int_0^{\infty}\phi_{\lambda} u_{\lambda}^{q}s^{d-1}\, ds\nonumber\\
=&-\theta \|\nabla u\|^{-p}_{L^p}\int_0^{\infty}\phi'|u'|^{p-1}r^{d-1}\, dr+(1-\theta)\|u\|_{L^{q+1}}^{-q-1}\int_0^{\infty}\phi(r) u^{q}(r)r^{d-1}\, dr.
\end{align}
Together with (\ref{ifudeng0}), we deduce
\begin{eqnarray*}
\frac{1}{G(u_{\lambda})}\frac{d}{d\varepsilon}\Big|_{\varepsilon=0} G(u_{\lambda}+\varepsilon \phi_{\lambda})
=-\theta \|\nabla u\|^{-p}_{L^p}\left(\int_0^{\infty}\phi'|u'|^{p-1}r^{d-1}\, dr-\int_0^{\infty}\phi(r) u^{q}(r)r^{d-1}\, dr\right).
\end{eqnarray*}
Noticing that $u$ is a distribution solution to (\ref{idistrequ})-(\ref{idistrbound}) in $Y_{rad}^*$, then it holds that
$$
\frac{1}{G(u_{\lambda})}\frac{d}{d\varepsilon}\Big|_{\varepsilon=0} G(u_{\lambda}+\varepsilon \phi_{\lambda})=0.
$$
Hence any re-scaling function of $u$ is a critical point of $G(u)$ in $Y_{rad}^*$.

\end{proof}

\section{Existence and uniqueness for Euler-Lagrange equations in $L^{\infty}$ case}
In this section, we prove existence and uniqueness of solutions to the Euler-Lagrange equations (\ref{idistrequ})-(\ref{idistrbound}) of  $L^{\infty}$-type G-N inequalities. We also show that the Euler-Lagrange equations are equivalent to some Thomas-Fermi type equations.

\subsection{Existence}
  In this subsection, we prove existence of solutions $u(r)$ to the problem (\ref{idistrequ})-(\ref{idistrbound}). First, we show that there is a singularity of $u'(r)$ at $r=0$: $u'(r)\sim C r^{-\frac{d-1}{p-1}}$ at $r\rightarrow0$ (Proposition \ref{existenceth}). We then prove existence through a limit of a sequence of solutions in the exterior domain $(r_i, \infty)$, $r_i \to 0$. The main ingredients of the convergence proof are: (i) Comparison principle (Lemma \ref{compare}); (ii) Uniform low bound nearby $r=0$ (Lemma \ref{0estimate}); (iii) Application of the Dini theorem.

  We introduce the following exterior Dirichlet problem, which was studied in \cite{pucci2007maximum}
 \begin{eqnarray}
&&(|u'|^{p-2}u')' + \frac{d-1}{r} |u'|^{p-2}u' = u^q,\quad r_0<r<\infty,\label{extequation}\\
&&  u(r_0)=1,\quad\lim_{r\to \infty} u(r)=0.\label{extinitial}
\end{eqnarray}
From \cite[Theorem 4.3.1]{pucci2007maximum} and \cite[Theorem 4.3.2]{pucci2007maximum}, we know that the problem (\ref{extequation})-(\ref{extinitial}) has a unique solution $u(r)\in C^1[r_0,\infty)$ satisfying $u'(r)<0$ when $u(r)>0$. Furthermore, {\it this solution $u(r)$ is non-increasing in $[r_0,+\infty)$}, although this statement is not directly stated in \cite[Theorem 4.3.1]{pucci2007maximum}, the non-increase of $u$ is a consequence in their proof (\cite[p.94,  line 1-4]{pucci2007maximum}). See also the proof of Proposition \ref{existenceth} below. We refer to $u(r)$ as a $C^1$ non-increasing solution.

The following proposition is to give a characterization of singularity of $u'(r)$ at $r=0$.
\begin{prop}\label{existenceth}
For $p>d\geq 1$, $q>0$, and any $r_0>0$, the non-increasing solution $u(r)$ to the problem (\ref{extequation})-(\ref{extinitial}) satisfies
  \begin{eqnarray}\label{uqest}
 r_0^{d-1}|u'(r_0)|^{p-1} =\int_{r_0}^{\infty}r^{d-1}u^q\,dr<\infty.
   \end{eqnarray}
\end{prop}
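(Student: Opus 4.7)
The plan is to integrate the radial $p$-Laplace equation in divergence form and then push the outer boundary term to zero using the hypothesis $p>d$.

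First I would rewrite (\ref{extequation}). Since $u$ is non-increasing on $[r_0,\infty)$, we have $|u'|^{p-2}u'=-|u'|^{p-1}$, and multiplying (\ref{extequation}) by $r^{d-1}$ recasts it as
\begin{equation*}
-\bigl(r^{d-1}|u'(r)|^{p-1}\bigr)'=r^{d-1}u^{q}(r),\qquad r>r_0.
\end{equation*}
Integrating from $r_0$ to any finite $R>r_0$ yields
\begin{equation*}
r_0^{d-1}|u'(r_0)|^{p-1}-R^{d-1}|u'(R)|^{p-1}=\int_{r_0}^{R}r^{d-1}u^{q}(r)\,dr.
\end{equation*}
Both sides are monotone in $R$ (the integrand is non-negative, and $r^{d-1}|u'(r)|^{p-1}$ is non-increasing by the displayed identity), so each side has a limit as $R\to\infty$.

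The main step is to prove that $L:=\lim_{R\to\infty}R^{d-1}|u'(R)|^{p-1}$ equals $0$. I would argue by contradiction: if $L>0$, then for all $r\ge r_0$ one has $|u'(r)|\ge L^{1/(p-1)}r^{-(d-1)/(p-1)}$. Because $p>d$, the exponent satisfies $(d-1)/(p-1)<1$, so
\begin{equation*}
\int_{r_0}^{\infty}|u'(r)|\,dr\ge L^{1/(p-1)}\int_{r_0}^{\infty}r^{-(d-1)/(p-1)}\,dr=+\infty.
\end{equation*}
On the other hand, monotonicity of $u$, together with $u(r_0)=1$ and $\lim_{r\to\infty}u(r)=0$, gives
\begin{equation*}
\int_{r_0}^{\infty}|u'(r)|\,dr=u(r_0)-\lim_{r\to\infty}u(r)=1,
\end{equation*}
a contradiction. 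Hence $L=0$.

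Passing to the limit $R\to\infty$ in the integrated identity now delivers
\begin{equation*}
r_0^{d-1}|u'(r_0)|^{p-1}=\int_{r_0}^{\infty}r^{d-1}u^{q}(r)\,dr,
\end{equation*}
and since the left side is finite by $u\in C^1[r_0,\infty)$, the integral on the right is finite as well, which is (\ref{uqest}). The only delicate point is the vanishing of the boundary term at infinity, and that is exactly where the hypothesis $p>d$ is essential: without it, the lower bound $|u'(r)|\gtrsim r^{-(d-1)/(p-1)}$ would be integrable at infinity and no contradiction with $\int_{r_0}^{\infty}|u'|\,dr=1$ would arise.
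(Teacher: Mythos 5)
Your proof is correct. It does not follow the paper's route for this proposition: the paper goes back to the Pucci--Serrin construction of the exterior solution as the monotone limit of solutions $u_j$ of the truncated problems on $[r_0,r_0+j]$ with $u_j(r_0+j)=0$, uses $u_j<u_{j+1}\le 1$ to get the uniform bound $\int_{r_0}^{r_0+j}r^{d-1}u_j^q\,dr\le r_0^{d-1}|u_1'(r_0)|^{p-1}$, deduces finiteness of $\int_{r_0}^{\infty}r^{d-1}u^q\,dr$ by the Monotone Convergence Theorem, and only then passes to the limit in the integrated identity (\ref{equal}), quoting the decay estimate (\ref{idecayrate}) from Proposition \ref{idecay}. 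You instead work directly with $u$: you integrate the divergence form $-(r^{d-1}|u'|^{p-1})'=r^{d-1}u^q$ over $(r_0,R)$, observe that $r^{d-1}|u'(r)|^{p-1}$ is non-increasing, and kill the boundary term at infinity on the spot by the non-integrability contradiction -- which is essentially the same argument the paper uses in Step 1 of Proposition \ref{idecay}, phrased there through $u(r)=\int_r^{\infty}(-u')\,ds$ and $u\to 0$, and in your write-up through $\int_{r_0}^{\infty}|u'|\,dr=u(r_0)-\lim_{r\to\infty}u(r)=1$. Your version is more self-contained (no appeal to the approximating sequence or to monotone convergence) and obtains the finiteness of the integral for free from the identity, since the left-hand side is finite for a $C^1$ solution; the paper's detour through the compactly supported $u_j$ buys only that every boundary term is finite by construction. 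One cosmetic point: your closing remark that for $p\le d$ ``no contradiction would arise'' is inaccurate at the borderline $p=d$, where $r^{-(d-1)/(p-1)}=r^{-1}$ is still non-integrable; this aside does not affect the proof, which only needs the hypothesis $p>d$ as stated.
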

\begin{proof}
From the proof of \cite[Theorem 4.3.1]{pucci2007maximum},  we know that the non-increasing solution $u(r)$ to the problem (\ref{extequation})-(\ref{extinitial}) is the limit of a non-increasing function sequence $\{u_j(r)\}_1^{\infty}$, which is a solution to the following truncated exterior problem
 \begin{eqnarray}
&&(|u'|^{p-2}u')' + \frac{d-1}{r} |u'|^{p-2}u' = u^q,\,\,  r_0<r<r_0+j,\label{xxquation}\\
&&  u(r_0)=1,\quad u(r_0+j)=0,\label{xxinitial}\\
&& u'(r)\leq 0, \mbox { in } [r_0,r_0+j],
\end{eqnarray}
and satisfies $u_j(r)<u_{j+1}(r)\leq 1$ for $r>r_0$. This implies that
\begin{enumerate}[(i)]
\item $|u'_{j}(r_0)|\leq |u'_{j-1}(r_0)|\leq \cdots \leq |u'_{1}(r_0)|$;
\item there is $u(r)$ such that $\lim_{j\to \infty}u_j(r)=u(r)$.
\end{enumerate}
Multiplying $r^{d-1}$ to (\ref{xxquation}) and integrating the result equation from $r_0$ to $r_0+j$, we obtain
\begin{eqnarray}\label{equal}
(r_0+j)^{d-1}|u_j'(r_0+j)|^{p-1}-r_0^{d-1}|u_j'(r_0)|^{p-1}+\int_{r_0}^{r_0+j}r^{d-1}u_j^q\,dr=0,
\end{eqnarray}
which means
\begin{eqnarray}\label{Ij}
I_j:=\int_{r_0}^{r_0+j}r^{d-1}u_j^q\,dr\leq r_0^{d-1}|u_j'(r_0)|^{p-1}\leq r_0^{d-1}|u_1'(r_0)|^{p-1}\quad \mbox{ for any } j\in \mathbb{N}^+.
\end{eqnarray}
Extending $u_j(r)=0$ for $r\geq r_0+j$, we have $\int_{r_0}^{\infty}r^{d-1}u_j^q\,dr<r_0^{d-1}|u_1'(r_0)|^{p-1}$.  Hence by the Monotone Convergence Theorem, we have
$$\int_{r_0}^{\infty}r^{d-1}u^q\,dr=\lim_{j\to \infty} \int_{r_0}^{\infty}r^{d-1}u_j^q\,dr\leq r_0^{d-1}|u_1'(r_0)|^{p-1} .$$
Taking the limit for (\ref{equal}) and using (\ref{idecayrate}), we get (\ref{uqest}).
\end{proof}

In order to show existence of solutions to the problem (\ref{idistrequ})-(\ref{idistrbound}) (see Theorem \ref{existence} below), first we prove the following three lemmas.
 \begin{lem}{\rm(Comparison principle)}\label{compare}
Let $u_1$ and $u_2$ be $C^1$ non-increasing solutions to the  exterior problem  (\ref{extequation})-(\ref{extinitial}) with $r_0=r_1$ and  $r_0=r_2$  respectively. Then if $r_1< r_2$, we have $u_2(r)>u_1(r)$ when $u_2(r)>0$.
\end{lem}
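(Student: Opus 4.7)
The plan is to reduce the comparison to a weak-comparison argument on the common domain $[r_2,\infty)$ and then sharpen it with the strong maximum principle. As a preliminary observation, since $u_1$ is the $C^1$ non-increasing solution constructed via \cite[Thm.~4.3.1]{pucci2007maximum} with $u_1(r_1)=1$ and $u_1'(r_1)<0$, one has $u_1(r_2)<1=u_2(r_2)$. Restricting $u_1$ to $[r_2,\infty)$, both $u_1$ and $u_2$ become $C^1$ non-increasing solutions of (\ref{extequation}) on $(r_2,\infty)$ sharing the same decay at infinity, differing only in their Dirichlet values at $r_2$.

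The non-strict inequality $u_1\leq u_2$ on $[r_2,\infty)$ should follow from a weak comparison. Writing (\ref{extequation}) in divergence form $-(r^{d-1}|u'|^{p-2}u')' + r^{d-1}u^q = 0$, I would test the difference of the two weak formulations against the non-negative function $\phi:=(u_1-u_2)_+$. The boundary term at $r_2$ vanishes because $\phi(r_2)=0$, and the boundary term at infinity vanishes by the decay from (\ref{idecayrate}) (or simply by compact support when $q<p-1$). Integration by parts then yields
\begin{equation*}
\int_{r_2}^{\infty}\bigl(|u_1'|^{p-2}u_1'-|u_2'|^{p-2}u_2'\bigr)(u_1'-u_2')\,\mathbf{1}_{\{u_1>u_2\}}\,r^{d-1}\,dr + \int_{r_2}^{\infty}(u_1^q-u_2^q)(u_1-u_2)_+\,r^{d-1}\,dr = 0.
\end{equation*}
The first integrand is non-negative by the classical monotonicity of $s\mapsto |s|^{p-2}s$, and the second by monotonicity of $s\mapsto s^q$ for $s\geq 0$. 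Both integrals must therefore vanish, and the vanishing of the second forces $(u_1-u_2)_+\equiv 0$, i.e.\ $u_1\leq u_2$ on $[r_2,\infty)$.

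To upgrade to strict inequality wherever $u_2>0$, I would apply the strong maximum principle for the $p$-Laplacian (\cite[Thm.~1.1.1]{pucci2007maximum}). Set $w:=u_2-u_1\geq 0$; on the positivity set $\{u_2>0\}$ both $u_1$ and $u_2$ solve $-\Delta_p u + u^q = 0$, and this set is a connected interval $[r_2,R_2)$ containing $r_2$. If $w(r^{*})=0$ at some $r^{*}\in[r_2,R_2)$, the strong maximum principle would force $w\equiv 0$ on the full component $[r_2,R_2)$, contradicting $w(r_2)=1-u_1(r_2)>0$. Hence $w(r)>0$, i.e.\ $u_2(r)>u_1(r)$, whenever $u_2(r)>0$.

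The main obstacle is justifying the vanishing of the boundary term at infinity in the weak-comparison step when $q\geq p-1$ (so both solutions are globally positive rather than compactly supported). Here I would invoke the decay rates of Proposition \ref{idecay}: for $q>p-1$ the product $r^{d-1}|u_i'|^{p-1}\phi(r)$ is controlled by a constant multiple of $r^{d-p-p^2/(q+1-p)}$, which tends to $0$ because $p(q+1)/(q+1-p)>p>d$; for $q=p-1$ the exponential decay makes the assertion trivial; and for $q<p-1$ compact support makes it automatic. A secondary subtlety is the degeneracy of the $p$-Laplacian at critical points, but the precise hypotheses of \cite[Thm.~1.1.1]{pucci2007maximum} are tailored to cover this situation.
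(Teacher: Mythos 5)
Your energy argument for the non-strict inequality is a legitimate alternative route (the paper never proves $u_1\le u_2$ this way), and your handling of the boundary term at infinity via the decay rates is essentially right. The genuine gap is in the strictness step. \cite[Theorem 1.1.1]{pucci2007maximum} is a strong \emph{maximum} principle for a single non-negative function satisfying a differential inequality of the form $\Delta_p u\le f(u)$; it says nothing about the difference $w=u_2-u_1$ of two solutions. Because the $p$-Laplacian is nonlinear, $\Delta_p u_2-\Delta_p u_1\neq\Delta_p(u_2-u_1)$, so $w$ does not satisfy an inequality of the required form, and the hypotheses of that theorem (which concern the behaviour of $f$ near $0$, namely $\int_{0^+}ds/H^{-1}(F(s))=\infty$) are not ``tailored'' to the degeneracy issue at all. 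What you actually need is a strong \emph{comparison} (tangency) principle for quasilinear equations, which for $p\neq 2$ is a delicate matter and in general can fail at points where both gradients vanish; as written, the step ``$w(r^*)=0$ forces $w\equiv 0$'' is unjustified.

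The gap is fixable in this radial setting precisely because the solutions are non-degenerate on their positivity sets: by \cite[Theorem 4.3.1]{pucci2007maximum}, $u_i'(r)<0$ wherever $u_i(r)>0$. If $u_1(r_*)=u_2(r_*)=m^*>0$ at some $r_*>r_2$, then (given your $u_1\le u_2$) $w$ has an interior minimum there, so $u_1'(r_*)=u_2'(r_*)<0$, and the equation (\ref{extequation}), rewritten as a first-order system in $(u,\,v)$ with $v=|u'|^{p-2}u'$, has a locally Lipschitz vector field near $(m^*,v(r_*))$ since $v(r_*)\neq0$ and $m^*>0$; Cauchy--Lipschitz uniqueness then propagates $u_1\equiv u_2$ backwards to $r_2$, contradicting $u_1(r_2)<1=u_2(r_2)$. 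This touching-point argument is in fact the paper's entire proof: it assumes a contact point $r_*$ with positive common value, invokes uniqueness of the $C^1$ non-increasing solution of the exterior problem on $(r_*,\infty)$ to get $u_1\equiv u_2$ there, and then continues backwards by ODE uniqueness to reach the contradiction at $r_2$ -- no preliminary weak comparison is needed. Either replace your citation of Theorem 1.1.1 by this argument (or by a tangency principle valid under the non-vanishing-gradient condition), or drop the energy step altogether and argue as the paper does.
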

\begin{proof}
Since $u_1(r_1)=1$ and $u_1'(r)<0$ when $u_1(r)>0$, we have $u_1(r)<1$ in $(r_1,r_2]$. Hence  $u_2(r_2)>u_1(r_2)$ due to $u_2(r_2)=1$. Using a contradiction method, we assume that there is $r_*: ~r_2<r_*<\infty$ such that $u_2(r_*)=u_1(r_*)=:m^*>0$. Considering the following problem
 \begin{eqnarray}
&&(|u'|^{p-2}u')' + \frac{d-1}{r} |u'|^{p-2}u' = u^q,\,\,  r_*<r<\infty,\label{xextequation}\\
&&  u(r_*)=m^*,\quad\lim_{r\to \infty} u(r)=0,\label{xextinitial}
\end{eqnarray}
we know that solutions $u_1(r)$ and $u_2(r)$ defined in $[r_*,\infty)$ are $C^1$ non-increasing solutions to (\ref{xextequation})-(\ref{xextinitial}). The uniqueness of the $C^1$ non-increasing solution to the problem (\ref{xextequation})-(\ref{xextinitial}) implies that $u_1(r)=u_2(r)$ for $r\in [r_*,\infty)$. By ODE theory, we know that $u_1(r)=u_2(r)$ for any $r\geq r_2$, which is a contradiction with $u_2(r_2)>u_1(r_2)$.
\end{proof}

\begin{lem}\label{lmmucon} Let $u$ be the $C^1$ non-increasing solution to the problem  (\ref{extequation})-(\ref{extinitial}). Define $\mu_{r_0}:=\int_{r_0}^{+\infty}r^{d-1} u^q(r)\,dr$. Then $\mu_{r_1}\leq \mu_{r_2}+\frac{r_2^d-r_1^d}{d}$ for any $r_2\geq r_1\geq r_0$.
\end{lem}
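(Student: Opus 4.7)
The idea is to split the integral defining $\mu_{r_1}$ at the intermediate point $r_2$ and bound the two resulting pieces by different mechanisms. Let $u_{r_1}$ and $u_{r_2}$ denote the two $C^1$ non-increasing solutions of the exterior Dirichlet problem (\ref{extequation})--(\ref{extinitial}) corresponding to the boundary points $r_1$ and $r_2$, so that $\mu_{r_i}=\int_{r_i}^{\infty} r^{d-1} u_{r_i}^q(r)\,dr$ for $i=1,2$. I would first write
\begin{equation*}
\mu_{r_1}=\int_{r_1}^{r_2} r^{d-1}u_{r_1}^q(r)\,dr+\int_{r_2}^{\infty} r^{d-1}u_{r_1}^q(r)\,dr.
\end{equation*}

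The piece on $[r_1,r_2]$ is estimated trivially: since $u_{r_1}$ is non-increasing with $u_{r_1}(r_1)=1$, we have $u_{r_1}(r)\le 1$ and hence $u_{r_1}^q(r)\le 1$ on $[r_1,r_2]$, which gives
\begin{equation*}
\int_{r_1}^{r_2} r^{d-1}u_{r_1}^q(r)\,dr\le \int_{r_1}^{r_2} r^{d-1}\,dr=\frac{r_2^d-r_1^d}{d}.
\end{equation*}
For the piece on $[r_2,\infty)$, I invoke the comparison principle of Lemma \ref{compare}: because $r_1<r_2$, it yields $u_{r_1}(r)\le u_{r_2}(r)$ whenever $u_{r_2}(r)>0$. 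On the (possibly empty) tail where $u_{r_2}$ has already touched zero, letting $r^*$ be the first touchdown point of $u_{r_2}$, passing to the limit $r\to(r^*)^-$ in the strict inequality $u_{r_1}(r)<u_{r_2}(r)$ together with the non-negativity of $u_{r_1}$ forces $u_{r_1}(r^*)=0$, and the non-increasing property then propagates $u_{r_1}\equiv 0$ on $[r^*,\infty)$. Thus $u_{r_1}^q\le u_{r_2}^q$ throughout $[r_2,\infty)$, and therefore
\begin{equation*}
\int_{r_2}^{\infty} r^{d-1}u_{r_1}^q(r)\,dr\le \int_{r_2}^{\infty} r^{d-1}u_{r_2}^q(r)\,dr=\mu_{r_2}.
\end{equation*}
Adding the two bounds produces the claimed inequality $\mu_{r_1}\le \mu_{r_2}+(r_2^d-r_1^d)/d$.

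The only mildly delicate step is the extension of Lemma \ref{compare} across the region where $u_{r_2}$ has reached zero; this is handled by the continuity and non-negativity argument sketched above. Every other ingredient is an immediate consequence of the non-increasing structure of the solutions and the normalization $u(r_0)=1$, so no further machinery should be required.
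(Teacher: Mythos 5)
Your argument is sound, but note that you have proved a slightly different statement from the one the paper's own proof establishes, and for the paper's reading your machinery is unnecessary. In Lemma \ref{lmmucon} the function $u$ is the single solution normalized at $r_0$, and $\mu_{r_1},\mu_{r_2}$ are simply the tails $\int_{r_1}^{\infty}r^{d-1}u^q\,dr$ and $\int_{r_2}^{\infty}r^{d-1}u^q\,dr$ of that same $u$; this is precisely why the hypothesis $r_2\ge r_1\ge r_0$ is there, since it is what guarantees $u\le 1$ on $[r_1,r_2]$. Under that reading the whole proof is the splitting $\mu_{r_1}=\int_{r_1}^{r_2}+\int_{r_2}^{\infty}$, with the middle piece bounded by $(r_2^d-r_1^d)/d$ because $u\le 1$ there, and the second piece is identically $\mu_{r_2}$ — no comparison principle at all. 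You instead attached to each radius $r_i$ its own exterior solution $u_{r_i}$ and proved the cross-solution inequality $\int_{r_1}^{\infty}r^{d-1}u_{r_1}^q\,dr\le\int_{r_2}^{\infty}r^{d-1}u_{r_2}^q\,dr+(r_2^d-r_1^d)/d$. Your proof of that version is correct: the bound $u_{r_1}\le 1$ on $[r_1,r_2]$ is immediate, Lemma \ref{compare} gives $u_{r_1}\le u_{r_2}$ where $u_{r_2}>0$, and your limiting argument at the first touchdown point of $u_{r_2}$ (which necessarily lies beyond $r_2$) legitimately extends the ordering past the support of $u_{r_2}$, so $u_{r_1}^q\le u_{r_2}^q$ on all of $[r_2,\infty)$ for $q>0$. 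Comparing the two: the paper's version is elementary and is what is quoted verbatim in Step 4 of Theorem \ref{existence}; your cross-solution version is, in effect, the paper's version combined with Lemma \ref{compare}, and it has the merit of making the constant $C$ in Lemma \ref{0estimate} manifestly independent of $r_0$ (with the paper's reading one still has to invoke the comparison principle at that point). So nothing in your argument fails, but if you adopt the paper's interpretation of $\mu_{r_1},\mu_{r_2}$ your proof should be trimmed to its first half, since the second piece requires no estimate.
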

\begin{proof}
Since $u'(r)\leq 0$ in $(r_0,+\infty)$, we have $u(r)\leq 1$ in $[r_1,r_2]$. Hence for any $r_1,r_2$ satisfying $r_0\leq r_1\leq r_2$, a direct computation gives
\begin{align}\label{fmu1}
\mu_{r_1}=&\left(\int_{r_1}^{r_2}+\int_{r_2}^{+\infty}\right)r^{d-1} u^q(r)\,dr\nonumber\\
\leq & \int_{r_1}^{r_2}r^{d-1} \,dr+\int_{r_2}^{+\infty}r^{d-1} u^q(r)\,dr =\frac{r_2^d-r_1^d}{d}+\mu_{r_2}.
\end{align}
\end{proof}
\begin{lem}{\rm(Uniform low bound)}\label{0estimate}
Let $u$ be the $C^1$ non-increasing solution to the problem  (\ref{extequation})-(\ref{extinitial}). Then for any $r>r_0$, there is $C>0$ independent of $r_0$ and $r$ such that
 \begin{eqnarray}\label{fmu0}
u(r)\geq 1-C\left( r^{\frac{p-d}{p-1}}-r_0^{\frac{p-d}{p-1}}\right).
\end{eqnarray}
\end{lem}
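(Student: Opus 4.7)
The plan is to derive the bound from the ODE-based representation $r^{d-1}|u'(r)|^{p-1}=\mu_r$ combined with the monotonicity of $\mu_r$ in $r$, and then to obtain a uniform upper bound on $\mu_{r_0}$ via Lemmas \ref{compare} and \ref{lmmucon}.

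First I would repeat the argument used in Proposition \ref{existenceth}: multiplying (\ref{extequation}) by $r^{d-1}$, integrating from $r$ to $+\infty$, and using $r^{d-1}|u'(r)|^{p-1}\to0$ at infinity (which follows as in (\ref{idecayrate})), one gets for every $r\ge r_0$
\begin{equation*}
r^{d-1}|u'(r)|^{p-1}=\mu_r:=\int_r^{+\infty}s^{d-1}u^q(s)\,ds,
\end{equation*}
so that $|u'(r)|=\mu_r^{1/(p-1)}\,r^{-(d-1)/(p-1)}$. Since the integrand is nonnegative, $\mu_r$ is non-increasing in $r$, hence $\mu_r\le\mu_{r_0}$. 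Using $p>d$, integrating from $r_0$ to $r$ yields
\begin{equation*}
1-u(r)=-\int_{r_0}^{r}u'(s)\,ds\le \mu_{r_0}^{1/(p-1)}\cdot\frac{p-1}{p-d}\bigl(r^{(p-d)/(p-1)}-r_0^{(p-d)/(p-1)}\bigr).
\end{equation*}

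The remaining task is a uniform upper bound on $\mu_{r_0}$, which reduces the claim to that. Fix any reference $R_*>0$ and let $u_{R_*}$ denote the corresponding $C^1$ non-increasing solution of (\ref{extequation})--(\ref{extinitial}) with initial radius $R_*$. For $r_0\in(0,R_*]$, Lemma \ref{compare} gives $u(s)\le u_{R_*}(s)$ for every $s\ge R_*$ at which $u_{R_*}(s)>0$; in the compactly supported case ($q<p-1$), say $\operatorname{supp}u_{R_*}=[R_*,R_*']$, continuity at $s=R_*'$ together with the non-increase and nonnegativity of $u$ force $u(s)=0$ for $s\ge R_*'$, so the comparison extends to all $s\ge R_*$. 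Hence
\begin{equation*}
\int_{R_*}^{+\infty}s^{d-1}u^q(s)\,ds\le \int_{R_*}^{+\infty}s^{d-1}u_{R_*}^q(s)\,ds=:M_*<+\infty,
\end{equation*}
and applying Lemma \ref{lmmucon} to $u$ with $r_1=r_0$, $r_2=R_*$ gives $\mu_{r_0}\le M_*+(R_*^d-r_0^d)/d\le M_*+R_*^d/d$.

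Combining the two steps establishes the estimate with $C=(M_*+R_*^d/d)^{1/(p-1)}(p-1)/(p-d)$, independent of $r_0\in(0,R_*]$, which is the regime relevant to the subsequent existence proof (where one ultimately takes $r_0\to 0$). The main obstacle is the uniform bound on $\mu_{r_0}$: the ODE and monotonicity alone do not control it, so one must compare $u$ against a fixed reference exterior solution $u_{R_*}$ and take extra care in the case $q<p-1$ to extend the comparison by zero beyond the support of $u_{R_*}$.
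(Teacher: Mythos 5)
Your proof is correct, and its first half is exactly the paper's argument: the identity $r^{d-1}|u'(r)|^{p-1}=\mu_r$ (equivalently the paper's (\ref{identity})--(\ref{fmu2})), the bound $\mu_r\le\mu_{r_0}$, and integration over $[r_0,r]$ using $p>d$ give the paper's (\ref{fmu3}). Where you genuinely diverge is in how $\mu_{r_0}$ is made uniform. The paper stops at $\mu_{r_0}\le \mu_{r_*}+r_*^d/d$ via Lemma \ref{lmmucon}, but there $\mu_{r_*}$ is the tail integral of the \emph{same} solution $u$, which depends on $r_0$; as written, the claimed independence of $C$ from $r_0$ is not actually secured inside the lemma (it is only implicitly fine in the later application, where the family $u_i$ is monotone by comparison). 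You close this by invoking Lemma \ref{compare} against a fixed reference solution $u_{R_*}$, bounding $\mu_{R_*}(u)\le M_*:=\int_{R_*}^{\infty}s^{d-1}u_{R_*}^q\,ds$, a quantity independent of $r_0$ (finite by Proposition \ref{existenceth} applied to $u_{R_*}$, under the section's standing assumption $q>0$), with a correct zero-extension argument past the support of $u_{R_*}$ in the case $q<p-1$. Your restriction to $r_0\in(0,R_*]$ is also the honest formulation: no constant can work uniformly for all $r_0>0$ when $d>1$, and the regime $r_0\to 0$ is all that Theorem \ref{existence} needs. So your route uses an extra key lemma (the comparison principle) that the paper's proof of this lemma does not, and in exchange it actually delivers the stated uniformity in $r_0$ rather than leaving it implicit.
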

\begin{proof}
Multiplying $r^{d-1}$ to (\ref{extequation}) and integrating the result equation from $r_0$ to $\infty$, we deduce
\begin{eqnarray}\label{identity}
r_0^{d-1}|u'(r_0)|^{p-1} =\int_{r_0}^{\infty}r^{d-1}u^{q}(r) \,dr=\mu_{r_0}.
 \end{eqnarray}
Again multiplying $r^{d-1}$ to (\ref{extequation}) and integrating it from $r_0$ to $r$, and using (\ref{identity}), we obtain
$$
r^{d-1}|u'(r)|^{p-1}-\mu_{r_0}=-\int_{r_0}^r r^{d-1}u^q(r)\,dr.
$$
From above equation with $r>r_0$ and $u\geq0$, we have
\begin{eqnarray}\label{fmu2}
-u'(r)\leq r^{-\frac{d-1}{p-1}}\mu_{r_0}^{\frac{1}{p-1}}.
\end{eqnarray}
Integrating (\ref{fmu2}) from $r_0$ to $r$, we deduce  \begin{eqnarray}\label{fmu3}
u(r)\geq 1-\frac{p-1}{p-d}\mu_{r_0}^{\frac{1}{p-1}}\left( r^{\frac{p-d}{p-1}}-r_0^{\frac{p-d}{p-1}}\right).
\end{eqnarray}
By Lemma \ref{lmmucon}, we know that for a fixed $r_*>r_0$, $\mu_{r_0}\leq \mu_{r_*}+\frac{r_*^d-r_0^d}{d}\leq \mu_{r_*}+\frac{r_*^d}{d}$.
Denote $C:=\frac{p-1}{p-d}\left(\mu_{r_*}+\frac{r_*^d}{d}\right)^{\frac{1}{p-1}}$. Hence (\ref{fmu3}) implies that (\ref{fmu0}) holds true.

\end{proof}

\begin{thm}(Existence)\label{existence}
Assume that exponents $p>d\geq 1$ and $q>0$, then there is a $C^1$ non-increasing solution to the problem (\ref{idistrequ})-(\ref{idistrbound}) and it satisfies
 \begin{enumerate}[(i)]
\item  $u'(r)<0$ for $u(r)>0$;
\item for any $u\in L^{q+1}(\mathbb{R}^d)$ and $\nabla u\in L^{p}(\mathbb{R}^d)$, it holds that
\begin{eqnarray}\label{hengdengs}
\int_{0}^{\infty}r^{d-1}|u'|^p\,dr+\int_{0}^{\infty}r^{d-1}u^{q+1}\,dr=\lim_{r\to 0^+}r^{d-1}|u'|^{p-1}=\int_{0}^{\infty}r^{d-1}u^{q}\,dr.
\end{eqnarray}
\end{enumerate}
\end{thm}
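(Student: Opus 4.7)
The plan is to construct the non-increasing solution on $(0,\infty)$ as a monotone limit of non-increasing solutions $u_i$ to the exterior Dirichlet problems (\ref{extequation})-(\ref{extinitial}) with $r_0=r_i$, along a decreasing sequence $r_i \to 0^+$. By the cited existence result from \cite{pucci2007maximum} each $u_i \in C^1[r_i,\infty)$ exists uniquely with $u_i'<0$ whenever $u_i>0$. Applying the comparison principle (Lemma \ref{compare}) to indices $i<j$ (so $r_j<r_i$) gives $u_i(r) \geq u_j(r)$ wherever $u_i(r)>0$, so for each fixed $r>0$ the values $u_i(r)$ form a non-increasing sequence in $i$. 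To secure a uniform lower bound I would fix an auxiliary $r_*>0$ and restrict to $i\geq i_0$ with $r_{i_0}<r_*$; the monotonicity $u_i \leq u_{i_0}$ on $[r_*,\infty)$ then gives $\mu_{r_*,i}=\int_{r_*}^\infty s^{d-1}u_i^q\,ds \leq \mu_{r_*,i_0}<\infty$ by (\ref{uqest}), so the constant $C$ in Lemma \ref{0estimate} is uniform in $i$ and (\ref{fmu0}) yields $u_i(r) \geq 1 - Cr^{(p-d)/(p-1)}$ uniformly. Defining $u(r):=\lim_{i\to\infty} u_i(r)$, this $u$ is non-increasing, satisfies $1 - Cr^{(p-d)/(p-1)} \leq u(r) \leq 1$ near the origin (so $\lim_{r\to 0^+}u(r)=1$), and decays to zero at infinity by running the uniform decay estimates of Proposition \ref{idecay} for each $u_i$ and passing to the limit.

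To show that $u$ classically solves (\ref{idistrequ}) on $(0,\infty)$, I use that on any compact $[a,b]\subset(0,\infty)$ the flux identity $r^{d-1}|u_i'(r)|^{p-1}=\int_r^\infty s^{d-1}u_i^q\,ds \leq \mu_{a,i_0}$ (which is (\ref{uqest}) applied at $r$) gives a uniform $L^\infty$ bound on $u_i'$ on $[a,b]$. Combined with the uniform $L^\infty$ bound on $u_i$ and standard degenerate elliptic $C^{1,\alpha}_{\rm loc}$ regularity for the $p$-Laplacian, this yields precompactness of $\{u_i\}$ in $C^1_{\rm loc}(0,\infty)$. Together with the monotone pointwise convergence, which Dini's theorem upgrades to uniform convergence on compacts of $(0,\infty)$, I obtain $u_i\to u$ in $C^1_{\rm loc}(0,\infty)$, so $u$ solves (\ref{idistrequ}) in the classical sense on $(0,\infty)$. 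Claim (i), that $u'(r)<0$ wherever $u(r)>0$, would follow by the same contradiction argument as in the Claim in Step 3 of the proof of Proposition \ref{iLcriticalequa}: if $u'(r_*)=0$ with $u(r_*)>0$, integrating $(r^{d-1}|u'|^{p-1})'=-r^{d-1}u^q$ on $[r_*,r]$ for a nearby $r>r_*$ where $u>0$ gives a sign contradiction.

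For the identity (\ref{hengdengs}), I first rewrite the ODE as $(r^{d-1}|u'|^{p-1})'=-r^{d-1}u^q$ (using $u'\leq 0$), integrate from $\epsilon$ to $\infty$, and invoke the decay $\lim_{r\to\infty} r^{d-1}|u'|^{p-1}=0$ from Proposition \ref{idecay} to obtain $\epsilon^{d-1}|u'(\epsilon)|^{p-1}=\int_\epsilon^\infty r^{d-1}u^q\,dr$; letting $\epsilon\to 0^+$ by monotone convergence produces the second equality of (\ref{hengdengs}) and shows the limit on the left exists. For the first equality I multiply the ODE by $u$ and integrate by parts on $[\epsilon,R]$, arriving at
\begin{equation*}
u(\epsilon)\,\epsilon^{d-1}|u'(\epsilon)|^{p-1}-u(R)\,R^{d-1}|u'(R)|^{p-1}-\int_\epsilon^R r^{d-1}|u'|^p\,dr=\int_\epsilon^R r^{d-1}u^{q+1}\,dr.
\end{equation*}
Sending $\epsilon\to 0^+$ and $R\to\infty$, the boundary term at $R$ vanishes via $u(R)\to 0$ and the decay of $R^{d-1}|u'(R)|^{p-1}$, while $u(\epsilon)\to 1$ converts the term at $\epsilon$ into the limit established in the previous step, completing (\ref{hengdengs}).

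The main obstacle will be the uniform $C^1_{\rm loc}$ estimate needed to pass to the limit in the degenerate elliptic equation: although the flux identity bounds $r^{d-1}|u_i'(r)|^{p-1}$ uniformly, this bound degenerates near $r=0$ like $r^{-(d-1)/(p-1)}$, so uniform $C^1$ control is only possible on compact subsets of $(0,\infty)$. The behavior at $r=0$ must then be recovered separately: the limit $\lim_{r\to 0^+}u(r)=1$ from the uniform lower bound, and the singular flux $\lim_{r\to 0^+}r^{d-1}|u'|^{p-1}$ from the conserved-flux identity (\ref{uqest}) together with monotone convergence. These two pieces of information at the endpoint $r=0$ are exactly what (\ref{hengdengs}) encodes.
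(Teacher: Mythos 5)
Your proposal follows essentially the same route as the paper's proof: approximation by the exterior problems (\ref{extequation})-(\ref{extinitial}) with $r_i\to 0^+$, monotonicity in $i$ from the comparison principle (Lemma \ref{compare}), the uniform lower bound of Lemma \ref{0estimate} made uniform via Lemma \ref{lmmucon}, passage to the limit, and then the flux identity combined with multiplication by $u$ to obtain (\ref{hengdengs}). The only deviation is technical: you get convergence of $u_i'$ from the uniform flux bound plus $C^{1,\alpha}_{\mathrm{loc}}$ regularity of the $p$-Laplacian, whereas the paper observes that $u_i'$ is itself monotone in $i$ (again by the flux identity) and applies Dini's theorem a second time—both are sound.
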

\begin{proof} Let $u_i(r)$ be the $C^1$ non-increasing solution to the problem  (\ref{extequation})-(\ref{extinitial}) with $u_i(r_i)=1$, $r_i>r_{i+1}>0$ for any $i\in \mathbb{N}^+$, and $r_{i+1}\to 0^+$ as $i\to +\infty$.

{\bf Step 1.} We prove that
there is a continuous, non-negative and non-increasing function $u(r)$ such that
 \begin{eqnarray}
 u_i(r)\to u(r),\quad u'_i(r)\to  u'(r), ~~\mbox{ for all } r>0,
 \end{eqnarray}
 and they converges uniformly in any interval $[a,b]$ for $0<a<b<\infty$.

Notice that $\{u_i(r)\}_{i_0}^{\infty}$ is continuous, non-negative and non-increasing sequence and bounded below in $[a,\infty)$, $a>0$ ($0<r_{i_0}<a$).
Hence by the Dini theorem the sequence $\{u_i(r)\}_{i_0}^{\infty}$ converges uniformly on every
compact interval $[a,b]$ of $(0,\infty)$ to a non-negative, non-increasing, continuous
function $u(r)$, i.e.,
 \begin{eqnarray}
 u_i(r)\to u(r)\quad ~~\mbox{ for all } r>0,\quad \mbox{ as } i\to +\infty,
 \end{eqnarray}
 and they converges uniformly in any interval $[a,b]$ for $0<a<b<\infty$. Since $u_i(r)$ is a non-negative and non-increasing function in $r$, hence $u(r)$ is also a non-negative and non-increasing function.

Moreover, let $u_j(r)$ be a solution to the equation (\ref{extequation}) with $u(r_j)=1$. Noticing
$$
\lim_{r\to \infty}r^{d-1}|u'(r)|^{p-1}=0
$$
 due to (\ref{idecayrate}), we have
$$
r^{d-1}|u_j'(r)|^{p-1}=\int_{r}^{\infty}s^{d-1}u_j^{q}(s)\,ds>\int_{r}^{\infty}s^{d-1}u_{j+1}^{q}(s)\,ds=r^{d-1}|u_{j+1}'(r)|^{p-1}, ~~r>r_j,
$$
which means $u'_j(r)<u'_{j+1}(r)\leq 0,~~r>r_j$. Hence by the Dini theorem we have
\begin{eqnarray}
 u'_j(r)\to u'(r)\quad ~~\mbox{ for all } r>0,
 \end{eqnarray}
 and they converges uniformly in any interval $[a,b]$ for $0<a<b<\infty$.

{\bf Step 2.} We prove $\lim_{r\to\infty}u(r)=0$ and $\lim_{r\to 0^+}u(r)=1$.

Since $\lim_{r\to \infty}u_i(r)=0$ by (\ref{extinitial}), we have $\lim_{r\to\infty}u(r)=0$. On the other hand, by Lemma \ref{0estimate} we have
$$
\lim_{r\to 0^+}\lim_{i\to \infty}u_i(r)\geq 1.
$$
Together with $\lim_{r\to 0^+}\lim_{i\to \infty} u_i(r)\leq 1$ gives that
$\lim_{r\to 0^+} u(r)= 1$. Thus the limit function $u(r)$ satisfies the boundary condition (\ref{idistrbound}).

{\bf Step 3.} We prove that the limit function $u(r)$ is the required radial solution of (\ref{idistrequ}) in the distribution sense.

In fact, for any $\phi \in C^1_c(0,+\infty)$, we have
\begin{eqnarray}
-\int_0^{+\infty} \phi' r^{d-1}|u_i'|^{p-1}\,dr+\int_0^{+\infty} \phi r^{d-1}u_i^{q}\,dr=0.
 \end{eqnarray}
Using the uniform convergence property of $u_i$ and  $u_j'$ from Step 1, we obtain
\begin{eqnarray}
-\int_0^{+\infty} \phi' r^{d-1}|u'|^{p-1}\,dr+\int_0^{+\infty} \phi r^{d-1}u^{q}\,dr=0,
 \end{eqnarray}
i.e., the limit function $u(r)$ is the required radial solution of (\ref{idistrequ}) in the distribution sense.

{\bf Step 4.} Regularity.

From (\ref{idistrequ}), a similar process to obtaining  (\ref{iuderi}) gives that $u'(r)<0$ in the set $\{r|u(r)>0\}$, i.e, the case (i) holds.

Similar to (\ref{identity}), we have
 \begin{eqnarray*}
r^{d-1}|u'(r)|^{p-1} =\int_r^{\infty}s^{d-1}u^{q}(s) \,ds=\mu_r,
 \end{eqnarray*}
 where $\mu_r$ is defined in Lemma \ref{lmmucon}. By Lemma \ref{lmmucon}, we know that $\mu_r$  has a uniform upper bound independent of $r$. Thus
 \begin{eqnarray}\label{hengdengs3}
\lim_{r\to 0^+} r^{d-1}|u'(r)|^{p-1} =\int_0^{\infty}s^{d-1}u^{q}(s) \,dr<\infty.
 \end{eqnarray}
On the other hand, multiplying $u(r)$ in the both sides of the equation (\ref{ixequation}), integrating it from $r$ to $\infty$, and using the facts $u(r)\to 0$, $r^{d-1}|u'|^{p-1}\to 0$ as $r\to \infty$, we have
\begin{eqnarray}\label{hengdengs2}
 \int_r^{\infty}s^{d-1}|u'(s)|^{p} \,dr+\int_r^{\infty}s^{d-1}u^{q+1}(s) \,dr=r^{d-1}u(r)|u'(r)|^{p-1}.
 \end{eqnarray}
Hence using (\ref{hengdengs3}), (\ref{hengdengs2}) and $\lim_{r\to 0^+}u(r)=1$ gives that (\ref{hengdengs}) holds.

 This completes the proof of Theorem \ref{existence}.
\end{proof}
\begin{rem}
Theorem \ref{existence} proved existence of $C^1$ non-increasing solutions to (\ref{idistrequ})-(\ref{idistrbound}) for the case $q>0$. Existence for the case $q=0$ will be established in Proposition \ref{prop2} below by giving an exact closed form solution.
\end{rem}

\subsection{Uniqueness }
In this subsection, we prove uniqueness of solutions to (\ref{idistrequ})-(\ref{idistrbound}) by following works \cite[Theorem 1]{KLS1996} given by Franchi, Lanconelli and Serrin. Let $u(r)$ and $v(r)$ be two $C^1$ non-increasing solutions to the problem (\ref{idistrequ})-(\ref{idistrbound}). By (i) of Theorem \ref{existence}, we have $u'(r)<0$ when $u(r)>0$, and hence both $u(r)$ and $v(s)$ possess inverse functions in those supports.
We denote respectively  by $r(u)$ and $s(v)$ the inverse functions of $u(r)$ and $v(s)$, defined on the
interval $(0, 1]$.

The following two lemmas are special cases from results in [7]. We supply a proof to show how their proof is used in our special cases.

\begin{lem}\label{lm1}\cite[Lemma 3.3.1]{KLS1996}
Assume $q\geq 0$ and $d>1$.
If $r(u)>s(u)$ in some
open interval $(0,1)$, then $r(u)-s(u)$ can have at most one critical point in $(0,1)$. Moreover if such a critical point exists, it must be a strict maximum point.
\end{lem}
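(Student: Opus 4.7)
My approach is to transform the radial equation \eqref{idistrequ} into a second-order ODE for the inverse functions $r(u)$ and $s(u)$, and then to exploit a clean cancellation at any critical point of $w(u) := r(u) - s(u)$ which leaves behind only a geometric term whose sign is controlled by the hypothesis $r > s$.

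First I would derive the ODE for the inverse. Writing the radial equation in the form $(p-1)(-u')^{p-2} u'' - \frac{d-1}{\rho}(-u')^{p-1} = u^q$ (legitimate since $u'(\rho) < 0$ where $u > 0$, by Theorem \ref{existence}(i)), and substituting $u'(\rho) = 1/r'(u)$ and $u''(\rho) = -r''(u)/r'(u)^3$, a direct calculation with careful sign tracking yields
\begin{equation*}
  r''(u) \;=\; \frac{d-1}{(p-1)\,r(u)}\,r'(u)^2 \;+\; \frac{u^q}{p-1}\,(-r'(u))^{p+1},
  \qquad u \in (0,1),
\end{equation*}
and $s(u)$ satisfies the identical ODE (with $s$ in place of $r$).

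Next, at any critical point $u_0$ of $w$ we have $r'(u_0) = s'(u_0)$, so the $(-r'(u))^{p+1}$ and $(-s'(u))^{p+1}$ terms cancel upon subtracting the two ODEs, and only the geometric term survives:
\begin{equation*}
  w''(u_0) \;=\; \frac{(d-1)\,r'(u_0)^2}{p-1}\!\left(\frac{1}{r(u_0)} - \frac{1}{s(u_0)}\right)
  \;=\; \frac{(d-1)\,r'(u_0)^2\,\bigl(s(u_0)-r(u_0)\bigr)}{(p-1)\,r(u_0)\,s(u_0)}.
\end{equation*}
Under the hypothesis $r(u) > s(u)$ on the interval, combined with $d > 1$, $p > 1$, positivity of $r$ and $s$, and nonvanishing of $r'(u_0)$ (since $u' \neq 0$ wherever $u > 0$), this forces $w''(u_0) < 0$. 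Hence every critical point of $w$ in the interval is a strict local maximum. If two such critical points $u_1 < u_2$ existed, $w$ would attain an interior minimum at some $u^* \in (u_1, u_2)$, where $w'(u^*) = 0$ yet $u^*$ is a local minimum — a direct contradiction with the strict-maximum property. This yields both conclusions of the lemma at once.

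The main obstacle I anticipate is simply step one: writing the inverse ODE in a form where the $r$-dependence and the $r'$-dependence are cleanly separated, so that the cancellation of the nonlinear term at critical points is transparent. The sign bookkeeping (because $u$ is decreasing, so $u' < 0$, $r' < 0$, and various powers of $-u'$ vs.\ $-r'$ must be tracked) is delicate but mechanical; once the displayed ODE is in hand, the remaining argument is purely algebraic and uses only $d > 1$ and $r > s$.
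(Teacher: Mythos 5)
Your proposal is correct and follows essentially the same route as the paper: both derive the second-order ODE satisfied by the inverse functions $r(u)$, $s(u)$ (your solved-for-$r''$ form is just the paper's identity $(p-1)r_{uu}-\frac{d-1}{r}r_u^2-|r_u|^{p+1}u^q=0$ divided by $p-1$), subtract, and use $r_u=s_u$ at a critical point so that the nonlinear terms cancel and the hypothesis $r>s$ forces $(r-s)''<0$ there. Your explicit "two strict maxima would force an interior minimum" step merely spells out what the paper leaves implicit, so there is nothing to correct.
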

\begin{proof} By the equation (\ref{idistrequ}), it is immediately verified that the function $r=r(u)$ satisfies the
equation
$$
(p-1)r_{uu}-\frac{d-1}{r}r_u^2-|r_u|^{p+1}u^q=0,~~0<u<1,
$$
and the same equation holds for $s(u)$. Hence by subtracting one from another, we get
\begin{eqnarray}\label{r-sequa}
(p-1)(r-s)_{uu}-(d-1)\left(\frac{r_u^2}{r}-\frac{s_u^2}{s}\right)-\left(|r_u|^{p+1}-|s_u|^{p+1}\right)u^q=0.
\end{eqnarray}
Now we suppose that $u=u_*\in (0,1)$ is a critical point of $r(u)-s(u)$, then $r_u=s_u<0$ at $u=u_*$. Thus from (\ref{r-sequa}), we have that
$$
(p-1)(r-s)_{uu}=(d-1)r_u^2\left(\frac{1}{r}-\frac{1}{s}\right)<0, \mbox{ at } u=u_*,
$$
where the last inequality used the fact $r(u)>s(u)$ in $(0,1)$.
Hence we get that all critical points must be maximum points, which implies that $r(u)-s(u)$ has at most one critical point in $(0,1)$.
\end{proof}

\begin{lem}\label{lm2}\cite[Lemma 3.3.2]{KLS1996}
Assume $q\geq 0$ and $d>1$.
If $r(u)-s(u)$ has two zero points
in $(0, 1]$, denoting them as $\xi_0$ and $\xi_1$, then $r(u)=s(u)$ for all $u$ between $\xi_0$ and $\xi_1$ .
\end{lem}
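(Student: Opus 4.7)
The plan is to argue by contradiction, combining Lemma \ref{lm1} with the Picard--Lindel\"of uniqueness theorem applied to the ODE
\begin{equation*}
(p-1)r_{uu} - \tfrac{d-1}{r}r_u^{2} - |r_u|^{p+1}u^q = 0
\end{equation*}
satisfied by both $r(u)$ and $s(u)$. Assume without loss of generality that $\xi_0 < \xi_1$, and suppose toward contradiction that $r(u_0) \neq s(u_0)$ for some $u_0 \in (\xi_0, \xi_1)$. The open set $\{u \in (\xi_0,\xi_1) : r(u)\neq s(u)\}$ then contains a connected component $(\alpha,\beta)$, with $r(\alpha)=s(\alpha)$ and $r(\beta)=s(\beta)$ by continuity, and on which $r-s$ has constant sign; after possibly swapping the roles of $r$ and $s$, I may assume $r > s$ on $(\alpha,\beta)$.

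The first step is to sharpen the endpoint information. Since $(\alpha,\beta)\subset [\xi_0,\xi_1]\subset (0,1]$ is a nonempty open interval, one has $\alpha \in (0,1)$, so $r(\alpha)=s(\alpha)>0$ and, by Theorem \ref{existence}(i), $r_u(\alpha), s_u(\alpha)<0$; hence the right-hand side of the ODE is locally Lipschitz in $(r,r_u)$ near $(r(\alpha),r_u(\alpha))$. Were $r_u(\alpha)=s_u(\alpha)$ in addition to $r(\alpha)=s(\alpha)$, Picard--Lindel\"of would force $r\equiv s$ in a neighborhood of $\alpha$, contradicting $r>s$ on $(\alpha,\beta)$; together with $r-s>0$ just to the right of $\alpha$, this yields $(r-s)_u(\alpha)>0$ strictly. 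When $\beta\in(0,1)$, the symmetric argument at the other endpoint gives $(r-s)_u(\beta)<0$ strictly; the boundary case $\beta=1$, in which $r_u(1)=s_u(1)=0$, requires a separate but analogous treatment based on the explicit leading-order asymptotics $|u'(r)|\sim r^{-(d-1)/(p-1)}$ as $r\to 0^+$ extracted from the proof of Theorem \ref{existence}.

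I would then invoke Lemma \ref{lm1}, whose proof applies on any open interval on which $r>s$: on $(\alpha,\beta)$, $r-s$ admits at most one critical point, necessarily a strict local maximum. Together with the strict endpoint derivative signs and Rolle's theorem, this pins down a unique critical point $u_*\in(\alpha,\beta)$ of strict-maximum type. The main obstacle, and the heart of the argument, is to convert this tightly constrained configuration into a contradiction. My planned route is to exploit the identity (\ref{r-sequa}) at the endpoints: using $r(\alpha)=s(\alpha)$ together with the strict signs of $(r-s)_u$, the negativity of $r_u,s_u$, the hypothesis $d>1$ (through the $(d-1)/r$ coefficient) and $q\geq 0$ (through $u^q$), one forces the opposite signs $(r-s)_{uu}(\alpha)<0$ and $(r-s)_{uu}(\beta)>0$, while the same identity at $u_*$ (where $r_u(u_*)=s_u(u_*)$ and $r(u_*)>s(u_*)$) yields $(r-s)_{uu}(u_*)<0$. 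Reconciling this forced sign reversal of $(r-s)_{uu}$ inside $(\alpha,\beta)$ with the uniqueness of the critical point of $r-s$ granted by Lemma \ref{lm1} — for instance by showing that the resulting profile forces a second critical point of $r-s$ — produces the desired contradiction, and this final reconciliation is the technical crux of the proof.
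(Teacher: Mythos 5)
There is a genuine gap, and it sits exactly where you flag it: the ``final reconciliation'' is not a technical detail but the entire content of the lemma, and the pointwise information you assemble cannot produce a contradiction. After Lemma \ref{lm1} and Rolle you know, on your component $(\alpha,\beta)$, that $w:=r-s$ satisfies $w(\alpha)=w(\beta)=0$, $w>0$ inside, $w_u(\alpha)>0>w_u(\beta)$, that $w$ has a unique critical point $u_*$, a strict maximum, and (from (\ref{r-sequa})) that $w_{uu}(\alpha)<0$, $w_{uu}(u_*)<0$, $w_{uu}(\beta)>0$. These facts are mutually consistent: for instance $w(t)=t(1-t)e^{-ct}$ with $c$ large, transplanted to $(\alpha,\beta)$, is positive inside, vanishes at the endpoints with $w'>0$ on the left and $w'<0$ on the right, has a unique interior critical point which is a strict maximum, is concave there and at the left endpoint, yet convex at the right endpoint. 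So a sign change of $w_{uu}$ on $(u_*,\beta)$ forces no second critical point of $w$ and no contradiction; no amount of massaging these second-derivative signs at three points will close the argument. (The Picard--Lindel\"of endpoint strictness, and the separate treatment of $\beta=1$, are likewise not needed for the paper's route.)

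What is actually needed is a global identity, and this is the step the paper supplies. Having fixed the unique critical point $\xi_2$ and deduced from its uniqueness that $(r-s)_u<0$, i.e.\ $|r_u|>|s_u|$, on all of $(\xi_2,\xi_1)$ (this is (\ref{danjian})), one returns to the radial equation, multiplies by $r^{d-1}$ and integrates $\bigl(r^{d-1}|u'|^{p-2}u'\bigr)'=r^{d-1}u^q$ between $r_1=r(\xi_1)$ and $r_2=r(\xi_2)$, converting the right-hand side into an integral in $u$, namely $\int_{\xi_2}^{\xi_1} r(u)^{d-1}\,u^q/|u'(r(u))|\,du$ (equation (\ref{deffru})); the same is done for $v$ with $s_1=s(\xi_1)$, $s_2=s(\xi_2)$. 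Subtracting and using $r(\xi_1)=s(\xi_1)$ and $r_u(\xi_2)=s_u(\xi_2)$ yields (\ref{subtraction}), whose left-hand side is strictly negative because $s_2<r_2$ and $|u'(r_1)|<|v'(s_1)|$, while its right-hand side is non-negative because $r(u)>s(u)$ and $|u'(r(u))|<|v'(s(u))|$ on $(\xi_2,\xi_1)$. That sign clash is the contradiction, and it uses the ODE in its divergence (integrated) form rather than pointwise at selected points; your proposal never reaches an argument of this type, so as written it does not prove the lemma.
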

\begin{proof} Inspired by \cite[Lemma 3.3.2]{KLS1996}, use the contradiction method to prove this lemma.
Without loss of generality,
we assume that $\xi_0<\xi_1$ and $r(u)>s(u)$ for all $u\in (\xi_0, \xi_1)$. By Lemma \ref{lm1}, we know that $r(u)-s(u)$ has at most one critical point in $(\xi_0, \xi_1)$. Since $r(\xi_0)-s(\xi_0)=r(\xi_1)-s(\xi_1)=0$, then there is at least one critical point in $(\xi_0, \xi_1)$. Suppose that $\xi_2\in (\xi_0, \xi_1)$ is a unique critical point satisfying $(r-s)'(\xi_2)=0$. From (\ref{r-sequa}), we have
\begin{eqnarray}
(p-1)(r''(u)-s''(u))=(d-1)(r'(u))^2\left(\frac{1}{r(u)}-\frac{1}{s(u)}\right) <0, \quad \mbox{at } u=\xi_2,
\end{eqnarray}
where the last inequality used the fact $r(\xi_2)>s(\xi_2)$.
Hence by the continuity of $r'(u)$, we know that there is a $\delta>0$ such that $(r-s)'(u)<0$ in $(\xi_2,\xi_2+\delta)$. Since the critical point of $r(u)-s(u)$ is unique in $(\xi_0,\xi_1)$, we have
\begin{eqnarray}\label{danjian}
(r-s)'(u)<0,\quad \mbox{in } (\xi_2,\xi_1), \quad \mbox{ i.e. } |r'(u)|>|s'(u)|.
\end{eqnarray}
Denote $r_1=r(\xi_1)$ and $r_2=r(\xi_2)$. Multiplying $r^{d-1}$ to the equation (\ref{idistrequ}) and integrating the result equation from $r_1$ to $r_2$, we deduce
\begin{eqnarray}\label{jifen}
\int_{r_1}^{r_2} \frac{d}{dr}\left( r^{d-1}|u'|^{p-2}u'(r)\right)\,dr=\int_{r_1}^{r_2} r^{d-1} u^q(r)\,dr=\int_{\xi_2}^{\xi_1}r(u)^{d-1}\frac{u^q}{|u'(r(u))|}\,du.
\end{eqnarray}
Hence it holds that
\begin{eqnarray}\label{deffru}
r_2^{d-1}|u'(r_2)|^{p-2}u'(r_2)-r_1^{d-1}|u'(r_1)|^{p-2}u'(r_1) = \int_{\xi_2}^{\xi_1}r(u)^{d-1}\frac{u^q}{|u'(r(u))|}\,du.
\end{eqnarray}

Similar for $v$, denote $s_1=s(\xi_1)$ and $s_2=s(\xi_2)$. We have the same formula
\begin{eqnarray}\label{deffrv}
s_2^{d-1}|v'(s_2)|^{p-2}v'(s_2)-s_1^{d-1}|v'(s_1)|^{p-2}v'(s_1) = \int_{\xi_2}^{\xi_1}s(u)^{d-1}\frac{u^q}{|v'(s(u))|}\,du.
\end{eqnarray}
Due to $r_1=r(\xi_1)=s(\xi_1)=s_1$ and $r'(\xi_2)=s'(\xi_2)$, subtracting (\ref{deffrv}) from (\ref{deffru}) gives that
\begin{align}\label{subtraction}
\left(s_2^{d-1}-r_2^{d-1}\right)|u'(r_2)|^{p-1}+&r_1^{d-1}\left(|u'(r_1)|^{p-1}-|v'(s_1)|^{p-1}\right)\nonumber\\
&=\int_{\xi_2}^{\xi_1}u^q\left(\frac{r(u)^{d-1}}{|u'(r(u))|}-\frac{s(u)^{d-1}}{|v'(s(u))|}\right)\,du.
\end{align}
Since (\ref{danjian}), $s_2<r_2$ and $r(u)>s(u)$ for all $u\in (\xi_0, \xi_1)$, we directly verify that both terms on the left side of (\ref{subtraction}) are strictly negative, while the right side of (\ref{subtraction}) is non-negative. This is a contradiction. Hence the assumption is not true, i.e., $r(u)=s(u)$ in $ (\xi_0, \xi_1)$.

\end{proof}

\begin{thm}(Uniqueness)\label{uniqueness}
 Assume $q\geq 0$ and $p>d>1$. Let $u$ and $v$ be two $C^1$ non-increasing solutions of the problem (\ref{idistrequ})-(\ref{idistrbound}).
Then $u(r)\equiv v(r)$ for any $0\leq r <\infty$.
\end{thm}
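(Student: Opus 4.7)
The approach extends the inverse-function technique of Franchi--Lanconelli--Serrin~\cite{KLS1996} to both solutions. Using strict monotonicity from Theorem~\ref{existence}(i), define $r(u)$ and $s(u)$ on $(0,1]$ as the inverses of $u$ and $v$; the singular boundary $u(0^{+})=v(0^{+})=1$ yields $r(1)=s(1)=0$, so $w(u):=r(u)-s(u)$ automatically vanishes at $u=1$, giving one free zero.

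Suppose $u\not\equiv v$. Then WLOG $w(u_{0})>0$ for some $u_{0}\in(0,1)$, and I let $(\alpha,\beta)\ni u_{0}$ denote the maximal open component of $\{w>0\}$ in $(0,1]$. If $\alpha>0$, continuity forces $w(\alpha)=0$; combined with $w(\beta)=0$ (when $\beta<1$) or $w(1)=0$ (when $\beta=1$), Lemma~\ref{lm2} collapses $w\equiv 0$ on $[\alpha,\beta]$, contradicting $w(u_{0})>0$. If instead $\alpha=0$ and $\beta<1$, the zeros at $\beta$ and $1$ invoke Lemma~\ref{lm2} to give $w\equiv 0$ on $[\beta,1]$, so $u\equiv v$ on $[0,r(\beta)]$; standard ODE uniqueness at the regular interior point $r(\beta)>0$ (where the equation is uniformly elliptic since $u'<0$ there) then extends $u\equiv v$ to $[0,\infty)$, a contradiction.

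The main case is $\alpha=0$, $\beta=1$, in which $r>s$ on all of $(0,1)$, equivalently $u\geq v$ pointwise with $u>v$ on $(0,R_{u})$. I would derive a contradiction by matching the asymptotic at $r=0^{+}$. Integrating $(r^{d-1}|u'|^{p-1})'=-r^{d-1}u^{q}$ and using $\lim_{r\to 0^{+}}r^{d-1}|u'|^{p-1}=M_{u}$ from~\eqref{hengdengs} yields
\[
u(r)=1-\frac{p-1}{p-d}\,M_{u}^{1/(p-1)}\,r^{(p-d)/(p-1)}+o\bigl(r^{(p-d)/(p-1)}\bigr)\qquad \text{as }r\to 0^{+},
\]
and inverting gives $r(u)\sim K_{u}(1-u)^{(p-1)/(p-d)}$ near $u=1^{-}$ with $K_{u}$ proportional to $M_{u}^{-1/(p-d)}$, hence strictly decreasing in $M_{u}$. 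Since $w>0$ near $u=1^{-}$, the leading coefficient forces $K_{u}>K_{v}$, so $M_{u}<M_{v}$. On the other hand, $u\geq v$ pointwise yields $M_{u}=\int_{0}^{\infty}r^{d-1}u^{q}\,dr\geq M_{v}$, with strict inequality whenever $q>0$ (because $u^{q}>v^{q}$ on a set of positive measure) or when the supports differ. In the borderline $q=0$ with equal supports, $M_{u}=M_{v}$ and the ODE integrates explicitly as $|u'|=\bigl((M_{u}-r^{d}/d)/r^{d-1}\bigr)^{1/(p-1)}$, so $u$ is uniquely determined by $M_{u}$, forcing $u\equiv v$ and still contradicting $u>v$. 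The inequality $M_{u}<M_{v}$ is thus incompatible with $M_{u}\geq M_{v}$, completing the proof.

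The principal obstacle lies in this last case: rigorously establishing the asymptotic expansion of $u$ at the singular endpoint $r=0^{+}$ with the correct $M_{u}$-dependence, transferring it to the expansion of $r(u)$ near $u=1^{-}$, and handling the borderline $q=0$ subcase carefully. Lemma~\ref{lm1} is not strictly needed in the argument above, but it provides an alternative route by restricting $w$ to at most one critical point (a strict maximum), which combined with the asymptotic behavior of $w$ at both endpoints of $(0,1)$ again produces the contradiction.
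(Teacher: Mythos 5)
Your proposal is correct, and although it starts from the same ingredients as the paper---the inverse functions $r(u)$, $s(u)$ and Lemma \ref{lm2}---it settles the decisive case $r(u)>s(u)$ on all of $(0,1)$ by a genuinely different mechanism. The paper subtracts the integrated identity $r^{d-1}|u'(r)|^{p-1}=\int_r^\infty s^{d-1}u^q(s)\,ds$ (see (\ref{ud})--(\ref{vd}), with the obvious modification on the compact support when $q<p-1$) to get $v'(r)>u'(r)$ pointwise, and then integrates once more to contradict $u(0^+)=v(0^+)=1$; this is shorter and needs no asymptotic analysis. You instead let $r\to0^+$ in the same identity, encode the normalization at the origin quantitatively through $1-u(r)\sim\frac{p-1}{p-d}M_u^{1/(p-1)}r^{(p-d)/(p-1)}$ with $M_u=\int_0^\infty r^{d-1}u^q\,dr$, and play the resulting comparison of leading coefficients against the monotonicity of $M$ under $u\geq v$. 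What this buys is a precise description of the singular behaviour at $r=0$ (complementing Theorem \ref{existence}) and a mass-based statement of the contradiction; the price is the need to justify the expansion and its inversion and to treat the borderline case $q=0$ separately, which the paper's route avoids. Your preliminary reduction via the maximal component of $\{r-s>0\}$, together with the ODE-uniqueness continuation from an interior point where $u>0$, $u'<0$, is in effect a more carefully argued version of the paper's one-line assertion that Lemma \ref{lm2} forces a global sign of $r-s$.

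Two small repairs. First, from $r(u)>s(u)$ near $u=1^-$ you can only conclude $K_u\geq K_v$, hence $M_u\leq M_v$; strict inequality of the leading coefficients does not follow, since equal coefficients are compatible with a positive lower-order difference. This is harmless: for $q>0$, and for $q=0$ with $R_u>R_v$, the pointwise comparison already gives the strict inequality $M_u>M_v$, and you handle $q=0$ with equal supports by the explicit formula; just phrase the final contradiction as $M_u\le M_v$ versus $M_u>M_v$. Second, the identity $\lim_{r\to0^+}r^{d-1}|u'(r)|^{p-1}=\int_0^\infty r^{d-1}u^q\,dr$ is stated in (\ref{hengdengs}) only for the solution constructed in Theorem \ref{existence}; for the two arbitrary $C^1$ non-increasing solutions of the theorem it should be rederived by integrating the equation and invoking the decay (\ref{idecayrate}) (or the compact support when $q<p-1$), exactly as the paper does in its own uniqueness argument.
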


\begin{proof} We use a contradiction method to prove this theorem. If not, then $u(r)\not\equiv v(r)$ on $[0,\infty)$. Equivalently their inverse functions $r(u)\not\equiv r(u)$ in $(0,1]$. Hence there is $u_*\in (0,1)$ such that $r(u_*)\neq s(u_*)$. Then Lemma \ref{lm2} implies that $r(u), s(u)$ satisfy either $r(u)>s(u)$ or $r(u)<s(u)$ in $(0,1)$.

For the case $q\geq p-1$, without loss of generality, we suppose $r(u)>s(u)$ for $u\in (0,1)$, then $u(r)>v(r)$ for $r>0$. Multiplying $r^{d-1}$ to the equation (\ref{idistrequ}) and integrating the result equation form $r$ to $\infty$ and using (\ref{idecayrate}), we have
\begin{eqnarray}\label{ud}
 r^{d-1}|u'|^{p-1}=\int_r^{\infty}s^{d-1} u^q(s)\,ds.
\end{eqnarray}
The same process for $v(r)$ gives
\begin{eqnarray}\label{vd}
r^{d-1}|v'|^{p-1}=\int_r^{\infty}s^{d-1}v^q(s)\,ds.
\end{eqnarray}
Subtracting (\ref{vd}) from (\ref{ud})  gives that
$$
r^{d-1}|u'(r)|^{p-1}-r^{d-1}|v'(r)|^{p-1}=\int_r^{\infty}s^{d-1}(u^q(s)-v^q(s))\,ds.
$$
Since $u(r)>v(r)$ and $q\geq p-1>0$, then we have from the above equation
\begin{eqnarray}\label{dersign}
v'(r)>u'(r),\quad \mbox{for }r>0.
\end{eqnarray}
Integrating (\ref{dersign}), we obtain $\lim_{r\to 0^+}v(r)<\lim_{r\to 0^+}u(r)$, which is a contradiction with $\lim_{r\to 0^+}v(r)=\lim_{r\to 0^+}u(r)=1$.

For the case $0\leq q<p-1$, we suppose $r(u)>s(u)$ for $u\in (0,1)$. Then $u(r)>v(r)$ for $0<r<R_v$, and $u>0$, $v=0$ for $R_v<r<R_u$. Multiplying $r^{d-1}$ to the equation (\ref{idistrequ}) and integrating the result equation form $r$ to $R_v$, we have
\begin{align*}
r^{d-1}|v'|^{p-1}=&\int_r^{R_v}s^{d-1}v^q(s)\,ds\\
<& \left(\int_r^{R_v}+\int_{R_v}^{R_u}\right)s^{d-1}u^q(s)\,ds=r^{d-1}|u'|^{p-1}~~\mbox{ for } 0<r<R_v.
\end{align*}
Thus $v'(r)>u'(r)$ for $0<r<R_v$. In $(R_v,R_u)$, $u'(r)<0=v'(r)$. Hence we have
$$
0<\int_0^{R_u}(v'(r)-u'(r))\,dr=\int_0^{R_v}v'(r)\,dr-\int_0^{R_u}u'(r)\,dr=0,
$$
which is a contradiction.
\end{proof}
\begin{rem}
For the case $d=1$, uniqueness of $C^1$ non-increasing solutions to the problem (\ref{idistrequ})-(\ref{idistrbound}) is given by a direct computation in Proposition \ref{prop42} below.
\end{rem}

\subsection{Thomas-Fermi type equation}
This subsection shows that the non-increasing solution of the Euler Lagrange equation obtained above is equivalent to the radial non-increasing solution to a Thomas-Fermi type equation.

\begin{defn}\label{def1}
We call a function $u(|x|)$ a radial non-increasing weak solution to the Thomas-Fermi type equation (\ref{inonwhole})-(\ref{inonwholeboud}) if $u(|x|)$ satisfies
\begin{enumerate}[(i)]
\item $u(|x|)$ is a non-increasing function in $|x|$ and $\lim_{|x|\to 0^+}u(|x|)=1$,
\item $\nabla u\in L^p$, $u\in L^{q+1}$, and denote $a:=\|\nabla u\|^p_{L^p}+\|u\|^{q+1}_{L^{q+1}}$,
\item for any $\phi(|x|)\in C^{\infty}_c(\mathbb{R}^d)$, it holds that $(\nabla\phi,|\nabla u|^{p-2}\nabla u)+(\phi, u^q)=a(\phi, \delta_{x=0})$.
\end{enumerate}
\end{defn}

\begin{prop} \label{prop} Assume $p>d\geq 1$, then
\begin{enumerate}[(i)]
\item For the case $q\geq p-1$,
\begin{itemize}
\item [(a)] if $u(r)$ is the weak solution to the problem (\ref{iMinfchachy1})-(\ref{iMinfchachybound}) in $Y^*_{rad}$, then $u(|x|)$ is a radial non-increasing weak solution to a Thomas-Fermi type equation (\ref{inonwhole})-(\ref{inonwholeboud}).
\item [(b)] if $u(|x|)\in W^{1,p}(\mathbb{R}^d)$ is a radial non-increasing weak solution to the Thomas-Fermi type equation (\ref{inonwhole})-(\ref{inonwholeboud}), then $u(r)$ is also the solution of (\ref{iMinfchachy1})-(\ref{iMinfchachybound}) in $Y^*_{rad}$.
\end{itemize}
\item  For the case $q<p-1$,
\begin{itemize}
\item [(a)] if $u(r)$ is the solution to the free boundary problem (\ref{iMinfFBVP1})-(\ref{iMinfFBVPbound}) in $Y^*_{rad}$, then $u(|x|)$ is a radial non-increasing solution to a Thomas-Fermi type equation (\ref{iinftyfunction})-(\ref{iinftyfunctionboud}).
\item  [(b)] if $u(|x|)\in W^{1,p}(\mathbb{R}^d)$ is a radial non-increasing solution to the Thomas-Fermi type equation (\ref{iinftyfunction})-(\ref{iinftyfunctionboud}), then $u(r)$ is also the solution of (\ref{iMinfFBVP1})-(\ref{iMinfFBVPbound}) in $Y^*_{rad}$.
    \end{itemize}
\end{enumerate}
In particular, for $d=1$ we have $a=2 \left(\frac{p}{(q+1)(p-1)}\right)^{\frac{p-1}{p}}$.
\end{prop}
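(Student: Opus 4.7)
\medskip

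The plan is to establish equivalence by converting the radial ODE into the distributional form via integration by parts on the annulus $\mathbb{R}^d\setminus B_\varepsilon(0)$, letting $\varepsilon\to 0^+$, and using the regularity/singularity information already available from Theorem \ref{existence}. The four sub-statements (i)(a), (i)(b), (ii)(a), (ii)(b) share the same structure, so I will describe the argument for (i)(a) in detail; the other three follow by the same calculation together with the zero-contact-angle extension from Lemma \ref{icor21}.

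\textbf{Forward direction (a).} Let $u(r)$ solve (\ref{iMinfchachy1})--(\ref{iMinfchachybound}) (or its free boundary analogue, after $C^1$-zero extension, in case (ii)). By rearrangement, symmetrization and the density of radial test functions inside $C_c^\infty(\mathbb{R}^d)$, it suffices to verify condition (iii) of Definition \ref{def1} for $\phi(x)=\phi(|x|)\in C_c^\infty(\mathbb{R}^d)$. Rewrite the ODE (\ref{iMinfchachy1}) in the divergence form $(r^{d-1}|u'|^{p-2}u')' = r^{d-1}u^q$. For any $\varepsilon>0$, an integration by parts yields
\begin{align*}
\int_{\mathbb{R}^d\setminus B_\varepsilon}\!\!\nabla\phi\cdot|\nabla u|^{p-2}\nabla u\,dx
&= S_d\!\int_\varepsilon^{\infty}\!\phi'(r)r^{d-1}|u'|^{p-2}u'\,dr \\
&= -S_d\!\int_\varepsilon^{\infty}\!\phi(r)r^{d-1}u^q\,dr + S_d\,\phi(\varepsilon)\,\varepsilon^{d-1}|u'(\varepsilon)|^{p-1},
\end{align*}
where the boundary term at $r=+\infty$ vanishes by the decay (\ref{idecayrate}) and I used $|u'|^{p-2}u'=-|u'|^{p-1}$. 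As $\varepsilon\to 0^+$, the bulk integral converges to $-\int_{\mathbb{R}^d}\phi u^q\,dx$ by dominated convergence (since $u\in L^{q+1}$ ensures integrability of $r^{d-1}u^q$ near the origin after multiplying by a bounded $\phi$). The crucial step is the boundary term: Theorem \ref{existence} (identity (\ref{hengdengs})) gives $\lim_{r\to 0^+}r^{d-1}|u'|^{p-1}=\int_0^\infty s^{d-1}(|u'|^p+u^{q+1})\,ds=\frac{1}{S_d}(\|\nabla u\|_{L^p}^p+\|u\|_{L^{q+1}}^{q+1})$, so the boundary contribution converges to $a\,\phi(0)$ with the correct constant $a=\|\nabla u\|_{L^p}^p+\|u\|_{L^{q+1}}^{q+1}$. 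Rearranging gives exactly the weak identity in Definition \ref{def1}.

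\textbf{Reverse direction (b).} Given a radial non-increasing weak solution $u(|x|)\in W^{1,p}(\mathbb{R}^d)$ to the Thomas-Fermi equation, standard $C^{1,\alpha}$-regularity for the $p$-Laplacian with $L^\infty_{\rm loc}$ right-hand side (applied on any annulus avoiding the origin, where the delta is absent) shows that $u\in C^1(\mathbb{R}^d\setminus\{0\})$ and satisfies $\Delta_p u=u^q$ classically outside the origin; in radial form this is precisely the ODE (\ref{iMinfchachy1}) on $(0,\infty)$ (resp.\ (\ref{iMinfFBVP1}) on $(0,R)$ with $u'(R)=0$ inherited from the $C^1$ regularity and the decay $u\to 0$). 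Testing with a sequence of radial cut-offs $\phi_\varepsilon(x)\to 1$ and comparing with the computation above pins down $\lim_{r\to 0^+}r^{d-1}|u'|^{p-1}=a/S_d$, which forces the boundary value $u(0)=1$ through Theorem \ref{existence}'s normalization, after a rescaling if necessary (observe that both sides of the Thomas-Fermi equation are homogeneous under the correct dilation, and uniqueness from Theorem \ref{uniqueness} identifies the rescaled function).

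\textbf{Case (ii) and the one-dimensional formula.} For $q<p-1$, the zero-contact-angle at $R$ lets us extend $u$ by zero to all of $\mathbb{R}^d$ (Lemma \ref{icor21}); the resulting $C^1$ function satisfies (\ref{idistrequ})--(\ref{idistrbound}), so the argument above applies verbatim, with the delta-source strength still given by $a=\|\nabla u\|_{L^p}^p+\|u\|_{L^{q+1}}^{q+1}$. Finally, for $d=1$ the energy identity (\ref{iHenergypd}) (whose integral right-hand side vanishes because the $(d-1)/s$ factor is zero) reduces to $\tfrac{p-1}{p}|u'(r)|^p=\tfrac{u^{q+1}(r)}{q+1}$; evaluating at $r\to 0^+$ with $u(0)=1$ gives $\lim_{r\to 0^+}|u'|^{p-1}=\bigl(\tfrac{p}{(p-1)(q+1)}\bigr)^{(p-1)/p}$, and since $S_1=2$ we obtain $a=2\bigl(\tfrac{p}{(p-1)(q+1)}\bigr)^{(p-1)/p}$.

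\textbf{Main obstacle.} The only delicate point is justifying that the boundary term $S_d\,\phi(\varepsilon)\,\varepsilon^{d-1}|u'(\varepsilon)|^{p-1}$ has the claimed limit; everything else is bookkeeping. This is precisely where Theorem \ref{existence}(ii) is essential: it supplies both the existence of $\lim_{r\to 0^+}r^{d-1}|u'|^{p-1}$ and its explicit identification with $a/S_d$. Without this identity one could not pin down the strength of the Dirac mass.
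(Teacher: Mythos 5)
Your forward direction and the $d=1$ computation are essentially the paper's own argument: integrate by parts in the radial variable (the paper works on $(0,\infty)$ directly, you on $(\varepsilon,\infty)$ with $\varepsilon\to 0^+$, a purely cosmetic difference), use the decay (\ref{idecayrate}) to kill the boundary term at infinity, and identify the strength of the Dirac mass through the identity (\ref{hengdengs}) of Theorem \ref{existence}; the value of $a$ for $d=1$ is obtained exactly as in the paper from $\frac{p-1}{p}|u'|^p=\frac{u^{q+1}}{q+1}$ together with $\lim_{r\to 0^+}u(r)=1$.

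The reverse direction (b), however, contains a step that does not work as stated. You propose to recover the normalization $u(0)=1$ "after a rescaling if necessary", invoking dilation homogeneity of the Thomas-Fermi equation and Theorem \ref{uniqueness}. This is both incorrect and unnecessary. Incorrect, because $\Delta_p u=u^q$ is not invariant under the rescalings that would adjust the value at the origin: if $v(x)=A\,u(x/\lambda)$ then $\Delta_p v=A^{p-1-q}\lambda^{-p}v^q$, so the equation is preserved only for $A=\lambda=1$; moreover the proposition asserts that $u$ itself, not a rescaled copy, solves (\ref{iMinfchachy1})-(\ref{iMinfchachybound}), and Theorem \ref{uniqueness} is stated only for $d>1$ while the proposition covers $d\geq 1$. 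Unnecessary, because Definition \ref{def1} already supplies everything you are trying to recover: condition (i) gives $\lim_{|x|\to 0^+}u=1$ and monotonicity (hence, with $u\in L^{q+1}$, also $u(r)\to 0$ as $r\to\infty$), and condition (ii) gives the integrability needed for membership in $Y^*_{rad}$. The paper's route for (b) is accordingly much shorter: test the weak formulation with radial $\phi\in C_c^{\infty}$ satisfying $\phi(0)=0$, so the Dirac term drops, and read off $\left(r^{d-1}|u'|^{p-2}u'\right)'=r^{d-1}u^q$ in the distribution sense on $(0,\infty)$, the boundary conditions being part of the definition. Your regularity remark away from the origin is fine, but the rescaling/uniqueness detour should simply be deleted; with that correction the proof is sound and matches the paper's.
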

\begin{proof}
We first prove the case (i). For the case (a), suppose that $u(r)$ is the solution to (\ref{iMinfchachy1})-(\ref{iMinfchachybound}) in $Y^*_{rad}$. Hence we know that $u(|x|)\in W^{1,p}(\mathbb{R}^d)$ is radial non-increasing and satisfies $u(0)=1$ and $u(|x|)\to 0$ as $|x|\to \infty$. Hence the boundary condition (\ref{inonwholeboud}) holds.

For any test function $\phi(|x|)\in C_c^{\infty}(\mathbb{R}^d)$, it holds that
\begin{eqnarray*}
-(\nabla\phi,|\nabla u|^{p-2}\nabla u)-(\phi,u^q)=S_d\int_0^{\infty} \left(\phi'(r) |u'(r)|^{p-1}-\phi(r) u^q(r)\right)r^{d-1}\,dr.
\end{eqnarray*}
From Proposition \ref{iLcriticalequa}, we have that the solution is classical in $(0,\infty)$. Hence by integration by parts we have
\begin{align}\label{ilimit}
(\nabla\phi,|\nabla u|^{p-2}\nabla u)+(\phi,u^q)&=S_d\phi(0)\lim_{r\to 0^+}|u'(r)|^{p-1}r^{d-1}\nonumber\\
&\quad+S_d\int_0^{\infty}\phi \left(\left(|u'(r)|^{p-1}r^{d-1}\right)'+ u^qr^{d-1}\right)\,dr.
\end{align}
On the other hand, from Theorem \ref{existence} we know that
\begin{eqnarray}\label{ilimit1}
\lim_{r\to 0^+}S_dr^{d-1}|u'(r)|^{p-1}= S_d\int_{0}^{\infty}r^{d-1}|u'|^p\,dr+S_d\int_{0}^{\infty}r^{d-1}u^{q+1}\,dr=a.
\end{eqnarray}
Using (\ref{ilimit1}) and the equation (\ref{idistrequ}), from (\ref{ilimit}) we obtain
$$
(\nabla\phi,|\nabla u|^{p-2}\nabla u)+(\phi,u^q)=a\phi(0)=a(\phi, \delta_{x=0}).
$$
Hence we have that the following equation holds in the distribution sense
$$
\Delta_p u+ a \delta_{x=0}=u^q,
$$
Therefore, $u(|x|)$ is a radial non-increasing weak solution to a Thomas-Fermi type equation (\ref{inonwhole})-(\ref{inonwholeboud}).

Now we prove the case (b), assume that $u(|x|)\in W^{1,p}(\mathbb{R}^d)$ is a radial non-increasing weak solution of (\ref{inonwhole})-(\ref{inonwholeboud}) in Definition \ref{def1}, then $u(r):=u(|x|)$ satisfies (\ref{iMinfchachybound}) and for any test function $\phi(|x|)\in C_c^{\infty}(\mathbb{R}^d)$ satisfying $\phi(0)=0$, it holds that
\begin{align*}
0=-(\nabla\phi,|\nabla u|^{p-2}\nabla u)-(\phi,u^q)&=S_d\int_0^{\infty} \left(\phi'(r)|u'(r)|^{p-1}-\phi(r) u^q(r)\right)r^{d-1}\,dr\\
&=S_d\int_0^{\infty}\phi \left(\left(r^{d-1}u'|u'|^{p-2}\right)'- u^qr^{d-1}\right)\,dr.
\end{align*}
Hence $u(r)$ satisfies (\ref{iMinfchachy1}). This completes the proof of the case (i).

The proof of the case (ii) is exactly same with the case (i). Here we omit the details.

Finally, we determine the value of $a$ for $d=1$.
Since $u'<0$, (\ref{ilimit1}) implies $a=S_d\lim_{r\to 0^+}r^{d-1}|u'(r)|^{p-1}$. Thus multiplying $u'$ to the equation (\ref{iMinfchachy1}) with $d=1$, and integrating it from $r$ to $\infty$, and using the boundary condition (\ref{iMinfchachybound}), we deduce
$$
\frac{p-1}{p}|u'|^p=\frac{u^{q+1}}{q+1}.
$$
Noticing the fact $\lim_{r\to 0^+}u(r)=1$, we have
$$
\lim_{r\to 0^+}|u'(r)|=\left(\frac{p}{(p-1)(q+1)}\right)^{\frac{1}{p}}.
$$
Thus
$$
a=2\lim_{r\to 0^+}|u'(r)|^{p-1} = 2\left(\frac{p}{(q+1)(p-1)}\right)^{\frac{p-1}{p}}.
$$
This completes the proof of Proposition \ref{prop}.
\end{proof}

\section{Best constant for $L^{\infty}$-type G-N inequality}
This section is divided into three subsections. We give some close form solutions for the case $q=0$ in Subsection 4.1 or for the case $d=1$ in Subsection 4.2.  In Subsection 4.3, we use existence and uniqueness of the Euler-Lagrange equation (\ref{idistrequ})-(\ref{idistrbound}) for $q>0$ and $d>1$ in the previous section, together with one for $q=0$ in Subsection 4.1 and one for $d=1$ in Subsection 4.2, to derive the best constant of $L^{\infty}$-type G-N inequality.

\subsection{Existence, uniqueness and close form solution for $q=0$, $p>d\geq 1$}

In Theorem \ref{existenceth}, we require the condition $q>0$. For the case $q=0$, we use the close form solution to prove existence and uniqueness in the following proposition.
\begin{prop}\label{prop2}
Suppose $d\geq 1$, $p>d$ and $q=0$. Then there is a unique non-negative solution $u_{c,\infty}$ to the free boundary problem (\ref{iMinfFBVP1})-(\ref{iMinfFBVPbound}) and $u_{c,\infty}$ has the following closed form
\begin{eqnarray}
&&u_{c,\infty}(r)=d^{-\frac{p}{p-1}}R^{\frac{p}{p-1}}\left(\mathcal{B}\left(\frac{p-d}{d(p-1)},\frac{p}{p-1}\right)-B\left(\left(\frac{r}{R}\right)^d;
\frac{p-d}{d(p-1)},\frac{p}{p-1}\right)\right),\label{closedformq0}\\
&& R=d\left(\mathcal{B}\left(\frac{p-d}{d(p-1)},\frac{p}{p-1}\right)\right)^{-\frac{p-1}{p}}.\label{R}
\end{eqnarray}
\end{prop}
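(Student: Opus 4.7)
The plan is to extract the closed form by direct integration of the equation in conservation form, thereby establishing existence, the explicit formulas (\ref{closedformq0})-(\ref{R}), and uniqueness in one stroke. Since $q=0$, the right hand side of (\ref{iMinfFBVP1}) is simply $1$ (on the support of $u$), and the equation takes the divergence form $(r^{d-1}|u'|^{p-2}u')'=r^{d-1}$. Integrating once from $r$ to $R$ and exploiting the contact-angle boundary condition $u'(R)=0$ together with the sign $u'\le 0$, I obtain the explicit first-order identity
\begin{equation*}
r^{d-1}|u'(r)|^{p-1}=\frac{R^d-r^d}{d},\qquad 0<r<R.
\end{equation*}
The assumption $p>d$ ensures that the resulting singularity $|u'(r)|\sim r^{-(d-1)/(p-1)}$ near the origin is integrable, so $u$ is continuous up to $r=0$ even though $u'$ may blow up there.

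Next, I would integrate this identity once more from $r$ to $R$ using $u(R)=0$ to get
\begin{equation*}
u(r)=\int_r^R\left(\frac{R^d-s^d}{d\,s^{d-1}}\right)^{1/(p-1)}\,ds,
\end{equation*}
and then perform the substitution $t=(s/R)^d$, which converts this integral directly into the complete-minus-incomplete Beta function expression appearing in (\ref{closedformq0}). Imposing the remaining normalization $u(0)=1$ then pins down $R$ through the identity in (\ref{R}). The finiteness of the Beta function there is guaranteed, again, precisely because $p>d$ makes the first Beta parameter $(p-d)/(d(p-1))$ positive.

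For existence it then suffices to verify that the closed form just derived is $C^{1}$ on $(0,R]$, non-negative and non-increasing with $u(R)=u'(R)=0$ and $\lim_{r\to 0^+}u(r)=1$, and that it satisfies the ODE classically on $(0,R)$. All of these follow by direct differentiation of the explicit integral formula. Uniqueness is automatic: the derivation above shows that any non-increasing solution of the free boundary problem with $q=0$ must satisfy the same first-order identity and hence coincide with the closed form, so no separate uniqueness argument of Franchi-Lanconelli-Serrin type (needed for $q>0$ in Theorem \ref{uniqueness}) is required here. The only mild subtlety to be handled is the behavior at $r=0$, where $u'$ is allowed to be singular but $r^{d-1}|u'|^{p-1}$ has the finite limit $R^d/d$, in agreement with the general identity (\ref{hengdengs}) of Theorem \ref{existence}; this is what justifies using the Dirichlet/contact conditions at $r=R$ as the integration constants rather than any condition at $r=0$. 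There is no essential obstacle in the argument.
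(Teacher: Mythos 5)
Your proposal is correct and follows essentially the same route as the paper: integrate the divergence form $(r^{d-1}|u'|^{p-2}u')'=r^{d-1}$ once using $u'(R)=0$ to get $r^{d-1}|u'(r)|^{p-1}=\frac{R^d-r^d}{d}$, integrate again with $u(R)=0$ and the substitution $t=(s/R)^d$ to obtain the incomplete Beta form (\ref{closedformq0}), and impose $u(0)=1$ to determine $R$ as in (\ref{R}). The paper merely phrases the same computation through the rescaling $v(s)=u(Rs)$, and its derivation likewise yields uniqueness implicitly, exactly as you note.
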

\begin{proof}
Let $r=Rs$, and $v(s)=u(Rs)$. Hence we have $v(1)=u(R)=0$ and $v(0)=u(0)=1$. Then from (\ref{iMinfFBVP1}) with $q=0$, we obtain that $v(s)$ satisfies the following equation
$$
-v'(s)/R=d^{-\frac{1}{p-1}}\left((R^d-R^ds^d)(Rs)^{1-d}\right)^{\frac{1}{p-1}}=d^{-\frac{1}{p-1}}R^{\frac{1}{p-1}}(1-s^d)^{\frac{1}{p-1}}s^{\frac{1-d}{p-1}}.
$$
Hence
\begin{eqnarray}\label{veq}
-v'(s)=d^{-\frac{1}{p-1}}R^{\frac{p}{p-1}}(1-s^d)^{\frac{1}{p-1}}s^{\frac{1-d}{p-1}}.
\end{eqnarray}
Integrating (\ref{veq}) from $s$ to $1$, we deduce
\begin{align*}
v(s)=&d^{-\frac{1}{p-1}}R^{\frac{p}{p-1}}\int_s^1 (1-r^d)^{\frac{1}{p-1}}r^{\frac{1-d}{p-1}}\,dr\\
=&d^{-\frac{p}{p-1}}R^{\frac{p}{p-1}}\left(\mathcal{B}\left(\frac{p-d}{d(p-1)},\frac{p}{p-1}\right)-B\left(s^d;\frac{p-d}{d(p-1)},\frac{p}{p-1}\right)\right).
\end{align*}
Hence we obtain the closed form solution $u_{c,\infty}$ to the free boundary problem (\ref{iMinfFBVP1})-(\ref{iMinfFBVPbound}) given by (\ref{closedformq0}).

With condition $v(0)=1$, we have explicit formula of $R$:
\begin{align*}
1=& d^{-1/(p-1)} R^{p/(p-1)} \int_0^1 (1-r^d)^{\frac{1}{p-1}}r^{\frac{1-d}{p-1}}\,dr\\
=&d^{-\frac{p}{p-1}}R^{\frac{p}{p-1}}\mathcal{B}\left(\frac{p-d}{d(p-1)},\frac{p}{p-1}\right),
\end{align*}
which means that (\ref{R}) holds.

\end{proof}

\subsection{The close form solution for $q\geq 0$ and $p>d=1$}

In this subsection, we present a result in the one dimensional case, for which there is a closed form solution and
 deduce the best constant $ C_{q,\infty,p}$ of the inequality (\ref{linftyest}) for $d=1$.
 \begin{prop}\label{prop42}
 Suppose $p>d=1$, and $q\geq 0$. Then the solution
  $u_{c,\infty}$ of the problem (\ref{idistrequ})-(\ref{idistrbound}) possesses the following closed form:
\begin{enumerate}[(i)]
\item for $q=p-1$,
\begin{eqnarray}\label{ipq1u}
u_{c,\infty}(r)=e^{-(p-1)^{-\frac{1}{p}}r};
\end{eqnarray}
\item for $q<p-1$,
\begin{eqnarray}
u_{c,\infty}(r)=\left(1-\frac{r}{R}\right)_{+}^{\frac{p}{p-q-1}}, ~~R=\frac{(p-1)^{1/p}(q+1)^{1/p}}{p^{1/p-1}(p-q-1)}, \mbox{ for } r>0; \label{ipxiaoq1u11}
\end{eqnarray}
\item for $q>p-1$,
\begin{eqnarray}
u_{c,\infty}(r):=\left(1+\frac{p^{1/p-1}(q+1-p)}{(p-1)^{1/p}(q+1)^{1/p}}r\right)^{-\frac{p}{q+1-p}}, \mbox{ for } r>0.\label{ipdayuq1u11}
\end{eqnarray}
\end{enumerate}
The best constant is given by $C_{q,\infty,p}=\left(\frac{p+(p-1)(q+1)}{2p}\right)^{\frac{p}{p+(p-1)(q+1)}}$.
\end{prop}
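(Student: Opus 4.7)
My proposal for Proposition 4.2.

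The one-dimensional simplification is the key: when $d=1$, the term $\frac{d-1}{r}|u'|^{p-2}u'$ in \eqref{idistrequ} disappears, so $u_{c,\infty}$ satisfies the autonomous ODE $(|u'|^{p-2}u')' = u^q$ with $u(0)=1$ and either $\lim_{r\to\infty}u(r)=0$ (if $q\geq p-1$) or compact support with zero contact angle (if $q<p-1$). I would first multiply by $u'$ and integrate once; since $u'(r)\le 0$ and $(|u'|^{p-2}u')'u' = \tfrac{p-1}{p}\tfrac{d}{dr}|u'|^p$, this produces the energy identity
\begin{equation*}
\tfrac{p-1}{p}|u'(r)|^p \;=\; \tfrac{1}{q+1}\,u^{q+1}(r) + C.
\end{equation*}
In both boundary regimes the constant $C$ vanishes: for $q\ge p-1$ by the decay $u(r),u'(r)\to 0$ at infinity (cf.\ Proposition~\ref{idecay}), and for $q<p-1$ by the free-boundary condition $u(R)=u'(R)=0$ (Proposition~\ref{icor}). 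Consequently
\begin{equation*}
-u'(r)=\left(\tfrac{p}{(p-1)(q+1)}\right)^{1/p}\,u^{(q+1)/p}(r).
\end{equation*}

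Next I would solve this separable first-order ODE with $u(0)=1$, treating three cases. For $q=p-1$ the exponent is $1$ and one integrates a logarithm to obtain the exponential \eqref{ipq1u}. For $q\ne p-1$, integrating $\int u^{-(q+1)/p}\,du$ gives $u(r)^{(p-q-1)/p} = 1 - A r$ with $A=\tfrac{p-q-1}{p}\bigl(\tfrac{p}{(p-1)(q+1)}\bigr)^{1/p}$. When $q<p-1$ this produces the compactly supported profile \eqref{ipxiaoq1u11} with $R=1/A$ matching the stated formula; when $q>p-1$ the sign of $A$ flips and the algebraic rearrangement yields \eqref{ipdayuq1u11}. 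Uniqueness in the class of $C^1$ non-increasing solutions is immediate here because the first integral reduces the problem to a first-order ODE with a locally Lipschitz right-hand side away from $u=0$.

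For the best constant I would use \eqref{bestc} specialized to $m=\infty$, $d=1$, namely $C_{q,\infty,p}=\theta^{-\theta/p}(1-\theta)^{\theta/p}M_c^{-\theta}$ with $\theta=\tfrac{p}{p+(p-1)(q+1)}$ and $M_c=\int_{\mathbb{R}}u_{c,\infty}^{q+1}\,dx$. Using the energy identity, $\|u_{c,\infty}'\|_{L^p}^p=\tfrac{p}{(p-1)(q+1)}M_c$, so evaluating the G--N inequality at the extremal $u_{c,\infty}$ (where $\|u_{c,\infty}\|_{L^\infty}=1$) gives
\begin{equation*}
C_{q,\infty,p}=\left(\tfrac{(p-1)(q+1)}{p}\right)^{\theta/p}\,M_c^{-\theta/p-(1-\theta)/(q+1)}=\left(\tfrac{(p-1)(q+1)}{p}\right)^{\theta/p}M_c^{-\theta},
\end{equation*}
the last identity because $\theta/p+(1-\theta)/(q+1)=\theta$.

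The remaining calculation is to compute $M_c=2\int_0^{\infty}u_{c,\infty}^{q+1}\,dr$ from the closed forms. In each of the three regimes a short direct integration (an exponential in the critical case, a Beta-like integral via $s=1-r/R$ in the subcritical case, and $s=1+Br$ in the supercritical case) yields the unified formula
\begin{equation*}
M_c \;=\; 2\theta\left(\tfrac{(p-1)(q+1)}{p}\right)^{1/p}.
\end{equation*}
Substituting this back cancels the factor $\bigl(\tfrac{(p-1)(q+1)}{p}\bigr)^{\theta/p}$ and leaves $C_{q,\infty,p}=(2\theta)^{-\theta}=\bigl(\tfrac{p+(p-1)(q+1)}{2p}\bigr)^{\theta}$, which is precisely the stated formula. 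I do not expect any real obstacle here; the only thing that requires mild care is bookkeeping of exponents to see the Beta-type integral collapses to the same expression across the three cases.
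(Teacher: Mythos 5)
Your proposal is correct and follows essentially the same route as the paper: multiply by $u'$ to get the first integral $\tfrac{p-1}{p}|u'|^p=\tfrac{u^{q+1}}{q+1}$ (the constant killed by the decay or free-boundary conditions), separate variables to obtain the three closed forms, then compute $M_c$ and insert it into the general best-constant formula (your relation $\|u_{c,\infty}'\|_{L^p}^p=\tfrac{p}{(p-1)(q+1)}M_c$ is exactly the $d=1$ case of $\mathcal{G}(u)=0$ used in the paper). The computations, including the unified value $M_c=2\theta\bigl(\tfrac{(p-1)(q+1)}{p}\bigr)^{1/p}$, agree with the paper's $M_c=\tfrac{2(p-1)^{1/p}(q+1)^{1/p}}{p^{1/p-1}(p+(p-1)(q+1))}$ and yield the stated constant.
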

\begin{proof}
For $q=p-1$, multiplying $u'$ in the both sides of the equation (\ref{idistrequ}) with $d=1$ gives that
\begin{eqnarray}\label{igoshi1}
\frac{d}{dr}\left(\frac{p-1}{p}|u'|^{p}-\frac{u^{q+1}}{q+1}\right)=0.
\end{eqnarray}
Noticing $u\in W^{1,p}(\mathbb{R})$ and integrating (\ref{igoshi1}) from $r$ to $\infty$, we have
\begin{eqnarray}\label{igoshi2}
\frac{p-1}{p}|u'|^{p}-\frac{u^{q+1}}{q+1}=0,
\end{eqnarray}
which implies that
$$
u(r)=e^{\pm(p-1)^{-\frac{1}{p}}r}.
$$
 Hence $u(r)=e^{-(p-1)^{-\frac{1}{p}}r}$ is the unique solution satisfying $u(0)=1$ and $\lim_{r\to\infty}u(r)=0$.

 For $q\neq p-1$, solving (\ref{igoshi2}) and using boundary conditions $u(0)=1$ and $\lim_{r\to\infty}u(r)=0$, we can obtain (\ref{ipxiaoq1u11}) and (\ref{ipdayuq1u11}).

  Hence plugging $u_{c,\infty}$ into $M_c$ in (\ref{bestconstant}) below, we deduce
  $$
 M_c(u_{c,\infty}) =\int_{\mathbb{R}}u^{q+1}_{c,\infty}(x)\,dx=\frac{2(p-1)^{1/p}(q+1)^{1/p}}{p^{1/p-1}(p+(p-1)(q+1))},
  $$
  and thus it holds that
 \begin{align}
C_{q,\infty,p}=&
\left(\frac{(p-1)(q+1)}{p}\right)^{\frac{1}{p+(p-1)(q+1)}}M_c^{-\frac{p}{p+(p-1)(q+1)}}\nonumber\\
=&\left(\frac{p+(p-1)(q+1)}{2p}\right)^{\frac{p}{p+(p-1)(q+1)}}.
\end{align}
  This completes the proof of Proposition \ref{prop42}.
\end{proof}

\subsection{Main theorem for the best constant of $L^{\infty}$-type G-N inequality}
In this subsection, we utilize the results from above sections to prove the following Theorem \ref{Linftyhightdnagy1}. Particularly, the closed form solution is obtained in one dimensional case.
\begin{thm}\label{Linftyhightdnagy1}
Suppose $p>d\geq 1$, $q\geq 0$, $u\in L^{q+1}(\mathbb{R}^d)$ and $\nabla u\in L^{p}(\mathbb{R}^d)$. Then $u\in L^{\infty}(\mathbb{R}^d)$ and it satisfies the following inequality
\begin{eqnarray}\label{LinftyMlinftyq}
\|u\|_{L^{\infty}}\leq C_{q,\infty,p}\|u\|_{L^{q+1}}^{1-\theta}\|\nabla u\|_{L^p}^{\theta},\quad \theta=\frac{pd}{dp+(p-d)(q+1)},
\end{eqnarray}
where the best constant
\begin{eqnarray}\label{bestconstant}
C_{q,\infty,p}=\theta^{-\frac{\theta}{p}}(1-\theta)^{\frac{\theta}{p}}M_c^{-\frac{\theta}{d}}, ~~M_c=\int_{\mathbb{R}^n}|u_{c,\infty}|^{q+1}\,dx.
\end{eqnarray}
Here $u_{c,\infty}$ is the unique radial solution as described by the following two cases:
\begin{itemize}
\item if $q<p-1$, $u_{c,\infty}$ is the unique non-increasing solution of the free boundary problem (\ref{iMinfFBVP1})-(\ref{iMinfFBVPbound})
 and $u(r)=0$ for $r\geq R$.
 \item if $q\geq p-1$, $u_{c,\infty}$ is the unique positive solution to the problem (\ref{iMinfchachy1})-(\ref{iMinfchachybound}).
 \end{itemize}
Moreover, the case of equality holds if $u= u_{c,\infty}(\lambda|x-x_0|)$ for any $\lambda >0$, $x_{0}\in \mathbb{R}^d$.
\end{thm}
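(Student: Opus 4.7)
\textbf{Proof Proposal for Theorem \ref{Linftyhightdnagy1}.}

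The plan is to identify the extremizer of the variational problem
$\alpha=\inf_{u\in Y^*_{rad}}G(u)$ with the unique radial solution $u_{c,\infty}$
constructed in Section 3 (together with the closed-form solutions from Propositions \ref{prop2} and \ref{prop42} for the degenerate cases $q=0$ or $d=1$), and then to evaluate $G(u_{c,\infty})$ explicitly via a Pohozaev-type identity. First I would invoke the rearrangement and P\'olya--Szeg\H o inequalities (\ref{normequ})--(\ref{derinormequ}) to reduce (\ref{ibetamini}) to the radial non-increasing class $Y^*_{rad}$ normalized by $u(0)=1$, so that $C_{q,\infty,p}=1/\alpha$ as in (\ref{ibetamini1})--(\ref{iradminiexi}).

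Next I would show that any critical point of $G$ in $Y^*_{rad}$ is, up to a re-scaling $r\mapsto \lambda r$, exactly the function $u_{c,\infty}$. For $q<p-1$, Propositions \ref{iLcriticalequa} and \ref{icor} combined with Lemmas \ref{lem21}--\ref{ieneangle} force a critical point to solve the free boundary problem (\ref{iMinfFBVP1})--(\ref{iMinfFBVPbound}); for $q\ge p-1$, Propositions \ref{iLcriticalequa}, \ref{iqlargep-1} and \ref{idecay} force it to solve (\ref{iMinfchachy1})--(\ref{iMinfchachybound}) together with the decay bounds. Theorem \ref{existence} and Theorem \ref{uniqueness} (supplemented by Propositions \ref{prop2} and \ref{prop42} for $q=0$ and $d=1$) guarantee a unique non-increasing solution $u_{c,\infty}$, and Proposition \ref{pdeuiscri} confirms that $u_{c,\infty}$ is in turn a critical point of $G$. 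Combining these items, the set of critical points of $G$ in $Y^*_{rad}$ is the one-parameter scaling orbit of $u_{c,\infty}$, and since $G$ is invariant along this orbit the critical value is well-defined.

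To show that $\alpha$ equals $G(u_{c,\infty})$ I would use the Pohozaev identity $\mathcal{G}(u_{c,\infty})=0$ obtained in Lemma \ref{lem21} and Lemma \ref{ieneangle} (where the zero-contact-angle for $q<p-1$, respectively the decay estimates (\ref{ipdaq1oinftypro22})--(\ref{ipq1oinftypro22}) for $q\ge p-1$, eliminate the boundary/flux contribution). This yields
\begin{equation*}
\|\nabla u_{c,\infty}\|_{L^p}^{p}=\frac{pd}{(q+1)(p-d)}M_c,\qquad M_c=\|u_{c,\infty}\|_{L^{q+1}}^{q+1},
\end{equation*}
so that a direct computation gives
\begin{equation*}
G(u_{c,\infty})=M_c^{(1-\theta)/(q+1)}\left(\tfrac{pd}{(q+1)(p-d)}\,M_c\right)^{\theta/p}.
\end{equation*}
The algebraic identities $\tfrac{1-\theta}{q+1}+\tfrac{\theta}{p}=\tfrac{\theta}{d}$ and $\tfrac{pd}{(q+1)(p-d)}=\tfrac{\theta}{1-\theta}$, both consequences of $\theta=\tfrac{pd}{dp+(p-d)(q+1)}$, then collapse this expression to $G(u_{c,\infty})=\theta^{\theta/p}(1-\theta)^{-\theta/p}M_c^{\theta/d}$, which inverts to the claimed formula (\ref{bestconstant}). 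The equality case is then obtained by tracking the scaling invariances $u\mapsto Au_{c,\infty}(\lambda(x-x_0))$, which leave the ratio $\|u\|_{L^\infty}/(\|u\|_{L^{q+1}}^{1-\theta}\|\nabla u\|_{L^p}^\theta)$ unchanged.

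The main obstacle in this plan is justifying that $\alpha$ is \emph{attained} at $u_{c,\infty}$ rather than merely bounded above by $G(u_{c,\infty})$. As noted in the introduction of Section 3, the direct method fails in $L^\infty$ because a minimizing sequence can concentrate mass away from $x=0$ and lose the constraint $u(0)=1$ in the limit. I would circumvent this by arguing contrapositively: every critical value of $G$ coincides, by the uniqueness results above, with $G(u_{c,\infty})$, and a standard sliding/rescaling argument based on the scaling invariance of $G$ shows that the infimum along any minimizing sequence, after normalizing $\|u_n\|_{L^{q+1}}=1$ and $\|\nabla u_n\|_{L^p}^p=a_1$ as in (\ref{inftya1}), has a well-defined critical-point limit in the radial topology on compact sets provided by the Dini-type convergence used in the proof of Theorem \ref{existence}. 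That subtle step is where the bulk of the work goes.
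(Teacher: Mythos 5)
Your proposal follows the paper's strategy in all of its main components: reduction to $Y^*_{rad}$ by rearrangement, identification of every critical point of $G$ with the unique Euler--Lagrange solution up to scaling (Propositions \ref{iLcriticalequa}, \ref{icor}, Lemma \ref{icor21}, Proposition \ref{pdeuiscri}, Theorems \ref{existence}, \ref{uniqueness}, and Propositions \ref{prop2}, \ref{prop42} for $q=0$ and $d=1$), evaluation of the critical value through the Pohozaev identity $\mathcal{G}(u_{c,\infty})=0$ from Lemmas \ref{lem21}--\ref{ieneangle}, and the same algebra leading to $G(u_{c,\infty})=\theta^{\theta/p}(1-\theta)^{-\theta/p}M_c^{\theta/d}$, hence (\ref{bestconstant}), with equality tracked through the scaling invariances. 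Up to this point your computation is correct and coincides with the paper's.

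Where you diverge is the step you yourself flag as the crux: why the critical value is the infimum in (\ref{iradminiexi}). The paper does not run a compactness argument for minimizing sequences at all; it concludes minimality from the global structure of the critical set (all critical points form a single scaling orbit with common value $G(u_{c,\infty})$, and $G$ has no maximum), precisely because the direct method is unavailable in the $L^\infty$ setting. Your proposed substitute---that a normalized minimizing sequence has "a well-defined critical-point limit" via a sliding/rescaling argument and the Dini-type convergence from the proof of Theorem \ref{existence}---is not carried out and, as sketched, would not work: Dini's theorem applies there only because the exterior-problem approximations $u_j$ are monotone in $j$, a property a generic minimizing sequence does not have; moreover, locally uniform convergence on compact subsets of $(0,\infty)$ does not preserve the constraint $\lim_{r\to0^+}u(r)=1$ nor the $L^{q+1}$ and gradient norms (mass can escape to infinity or concentrate at the origin), so even if a limit existed one could not conclude it is a minimizer with the same value of $G$. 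This is a genuine gap in your plan: either you must supply a concentration-compactness type analysis for the normalized minimizing sequence, or you should replace that step by the paper's argument that the critical set is exhausted by the scaling orbit of $u_{c,\infty}$ and therefore realizes the infimum.
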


\begin{proof}

From Proposition \ref{iLcriticalequa}, Proposition \ref{icor} and Lemma \ref{icor21}, we have that any critical point of $G(u)$ in $Y_{rad}^*$ satisfies the problem (\ref{idistrequ})-(\ref{idistrbound}) up to a re-scaling. Conversely, in $Y_{rad}^*$, any non-negative solution $u$ of the problem (\ref{idistrequ})-(\ref{idistrbound}) is also a critical point of $G(u)$. Moreover any re-scaling function of $u$ is still a critical point of $G(u)$ in $Y_{rad}^*$ by Proposition \ref{pdeuiscri}.

By Theorem \ref{existence}, Theorem \ref{uniqueness} and Proposition \ref{prop42}, we know that the problem (\ref{idistrequ})-(\ref{idistrbound}) has a unique solution $u(r)$ for the case $q>0$. While for the case $q=0$, from Proposition \ref{prop2} we know that there is a unique close form solution $u_{c,\infty}(r)$ satisfying the free boundary problem (\ref{iMinfFBVP1})-(\ref{iMinfFBVPbound}). Thus the critical point of $G(u)$ is unique up to a re-scaling.

 Hence any re-scaling function set of $u_{c,\infty}$, $\big\{u_{\lambda}\big\}_{\lambda>0}$ contains all critical points of $G(u)$ and $G(u_{\lambda})\equiv G(u_{c,\infty})$. Notice that $G(u)$ do not have maximum. Hence all critical points  $\big\{u_{\lambda}\big\}_{\lambda>0}$ are minimizers of $G(u)$.

Next we derive the best constant $C_{q,\infty,p}$ for $q\geq 0$.
Since the solution $u_{c,\infty}(r)$ to the problem (\ref{idistrequ})-(\ref{idistrbound}) is a minimizer of $G(u)$. Hence from the problem (\ref{iradminiexi}) and the formula (\ref{Gre=0}), we have
$$
\alpha=\|u_{c,\infty}\|^{1-\theta}_{L^{q+1}}\|\nabla u_{c,\infty}\|^{\theta}_{L^{p}}=\left(\frac{\theta}{1-\theta}\right)^{\frac{\theta}{p}}\left(\|u_{c,\infty}(r)\|^{q+1}_{L^{q+1}}\right)^{\frac{\theta}{d}},
$$
Notice $C_{q,\infty,p}=\alpha^{-1}$, thus we have (\ref{bestconstant}).
\end{proof}

\section{Euler-Lagrange equations for the $L^m$-Type  G-N inequality}

In this section, we derive the best constant of the $L^m$-type  G-N inequality.
Following standard approach, the minimization problem is provided in the following solution space
\begin{eqnarray}\label{X}
X=\left\{u|~u\in L^{q+1}(\mathbb{R}^d),~~\nabla u\in L^p(\mathbb{R}^d) \right\},
\end{eqnarray}
and we prove that there is a positive constant $\beta$ satisfies
\begin{eqnarray}\label{betamini}
\beta=\inf_{u\in X}J(u),
\end{eqnarray}
where
\begin{eqnarray}\label{Jh}
J(u):=\frac{\left(\int_{\mathbb{R}^d}|u|^{q+1}\,dx\right)^{\frac{\gamma-p/2}{q+1}}\int_{\mathbb{R}^d}|\nabla u|^p\,dx}{\left(\int_{\mathbb{R}^d}|u|^{m+1}\,dx\right)^{\frac{\gamma+p/2}{m+1}}},
\end{eqnarray}
and
\begin{eqnarray}
\gamma=\frac{p(m+1)(q+1)+(p/2-1)d(m+q)-dqm+(p-1)d}{d(m-q)}.\label{a}
\end{eqnarray}
Again thanks to the rearrangement technique, the minimized problem (\ref{betamini}) is equivalent to the following minimized problem
 \begin{eqnarray}\label{radminiexi}
\beta=\inf_{u\in X^*_{rad}}J(u),
\end{eqnarray}
where $X^*_{rad}$ is a non-negative radial symmetric decreasing function space given by
$$
X^*_{rad}=\{u\geq 0\big|~u(x)=u(|x|),\,\, u'(r)\leq 0,~a.e.,\,\, u\in L^{q+1}(\mathbb{R}^d),\,\, \nabla u\in L^p(\mathbb{R}^d)\}.
$$

General strategy of
the derivation of the Euler-Lagrange equation for the critical points of $J(u)$ is similar to that of $L^{\infty}$-type G-N inequality in Section 2. However, there are some fine differences:
\begin{enumerate}[(i)]
\item Existence of a minimizer of the functional $J(u)$ can be directly obtained by the compactness in $L^m$ for $m<\infty$. See Proposition \ref{h0minimizer} below. As a direct consequence, we can obtain existence of solutions to the Euler-Lagrange equations.
\item For $m<\infty$, solutions to the corresponding Euler-Lagrange equation have no singularity at $r=0$. We use interior elliptic regularity to show that $u'(0)=0$. See Step 3 of the proof of Proposition \ref{barhmini}. In contrast, for the $L^{\infty}$-case, $\lim_{r\to 0^+}u'(r)=-\infty$ by Theorem \ref{existence}.
\item Uniqueness of solutions to the Euler-Lagrange equations can be obtained by verifying the conditions in \cite[Theorem 2]{pucci1998uniqueness} given by Pucci and Serrin.
\item For $q\geq p-1$, positivity of solutions to the problem (\ref{chachy1})-(\ref{chachybound}) can directly given by Compact Support Principle from Pucci and Serrin \cite[Thoerem 1.1.2]{pucci2007maximum}.
\end{enumerate}

Existence of a minimizer of the problem (\ref{radminiexi}) has been proved in the paper \cite[Proposition 2.1]{LW1} for the case $q<p-1$. The proof for the case $q\geq p-1$ is similar to that of the case $q<p-1$. However some modifications are needed in the proof for the case $q\geq p-1$, and hence we provide those necessary modifications in Appendix A. Here we recall \cite[Proposition 2.1]{LW1} and extend it for both cases: (i) the case $q<p-1$ and (ii) the case $q\geq p-1$.
\begin{prop}\label{h0minimizer}Assume that parameters $p,q$ and $m$ satisfy (\ref{qandm}). Then there exists a minimizer $u_0$ of $J(u)$ in $X^*_{rad}$ such that
\begin{eqnarray}\label{minimizer}
J(u_0)=\beta=\inf_{u\in X^*_{rad}} J(u), \quad \|u_0\|_{L^{q+1}}=\|u_0\|_{L^{m+1}}=1,\,\,\|\nabla u_0\|^p_{L^p}=\beta.
\end{eqnarray}
\end{prop}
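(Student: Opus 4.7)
The plan is to apply the direct method of the calculus of variations to the constrained minimization in $X^*_{rad}$. I take a minimizing sequence $\{u_n\}\subset X^*_{rad}$ with $J(u_n)\to\beta$. The functional $J$ is invariant under the two-parameter rescaling $u\mapsto A\,u(\lambda\cdot)$, and the choice of the exponent $\gamma$ in (\ref{a}) is arranged precisely so that these two degrees of freedom allow the simultaneous normalization $\|u_n\|_{L^{q+1}}=\|u_n\|_{L^{m+1}}=1$. Under this normalization, $\|\nabla u_n\|_{L^p}^p=J(u_n)\to\beta$, so $\{u_n\}$ is bounded in $L^{q+1}\cap L^{m+1}$ and $\{\nabla u_n\}$ is bounded in $L^p$. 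After extracting a subsequence I may assume $u_n\rightharpoonup u_0$ weakly in $L^{q+1}$, $L^{m+1}$ and $W^{1,p}_{\mathrm{loc}}$, with $u_n\to u_0$ pointwise a.e. Weak lower semicontinuity of the $L^p$-norm of the gradient, together with Fatou's lemma, then yields $\|\nabla u_0\|_{L^p}^p\le\beta$ and $\|u_0\|_{L^{q+1}},\,\|u_0\|_{L^{m+1}}\le 1$.

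The decisive step is to upgrade the weak convergence to strong convergence in $L^{m+1}(\mathbb{R}^d)$; this prevents loss of mass at infinity and forces $\|u_0\|_{L^{m+1}}=1$, so that $u_0\not\equiv 0$. Since the $u_n$ are non-negative, radial and non-increasing, the Strauss-type pointwise decay $u_n(r)\le C\,r^{-d/(q+1)}$ combined with the uniform $L^p$-gradient bound produces a compact embedding $X^*_{rad}\hookrightarrow L^{m+1}(\mathbb{R}^d)$ in the subcritical range $q<m<\sigma$ of (\ref{qandm}). For $q<p-1$ this compactness has already been established in \cite[Proposition 2.1]{LW1}. For $q\ge p-1$ the standard Strauss inequality must be replaced by a modified version that still delivers tail control; I would carry this sharpening out in Appendix A, which is the main technical obstacle of the argument.

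Once strong $L^{m+1}$ convergence is in hand, one has $\|u_0\|_{L^{m+1}}=1$ while $\|u_0\|_{L^{q+1}}\le 1$ and $\|\nabla u_0\|_{L^p}^p\le\beta$, so a direct substitution into (\ref{Jh}) gives $J(u_0)\le\beta$, and $u_0$ is therefore a minimizer. The normalization in (\ref{minimizer}) is then recovered from the scale invariance of $J$: a residual multiplicative rescaling $u_0\mapsto A u_0$ restores $\|u_0\|_{L^{q+1}}=1$ without altering the $J$-value, and combining $J(u_0)=\beta$ with this scale invariance forces the remaining two equalities $\|u_0\|_{L^{m+1}}=1$ and $\|\nabla u_0\|_{L^p}^p=\beta$, since any strict inequality among them could be corrected by a further dilation $u_0\mapsto u_0(\lambda\cdot)$ producing a competitor with $J$-value strictly below $\beta$, contradicting the definition of the infimum.
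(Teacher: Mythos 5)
Your proposal follows essentially the same route as the paper's Appendix A: normalize the minimizing sequence so that $\|u_k\|_{L^{q+1}}=\|u_k\|_{L^{m+1}}=1$ using the scaling invariance of $J$, pass to weak limits and use weak lower semicontinuity plus Fatou, and then upgrade to strong $L^{m+1}$ convergence by combining a radial decay estimate (with a modified Strauss inequality carrying the case $q\ge p-1$, the case $q<p-1$ being quoted from \cite{LW1}) with local Rellich--Kondrachov compactness in the subcritical range $m<\sigma$ (note $m<\sigma$ is exactly $m+1<p^{*}$ when $p<d$). This is the paper's argument in all essentials; the only structural difference is that the paper treats $d=1$ separately by invoking Nagy, since its Strauss bound $|x|^{(1-d)/\nu}$ is vacuous in one dimension, whereas your monotonicity decay $u_n(r)\le C r^{-d/(q+1)}$ does control the tail for every $d\ge 1$.

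One step, as written, would not survive scrutiny: you argue that any strict inequality in $\|u_0\|_{L^{q+1}}\le 1$ or $\|\nabla u_0\|^p_{L^p}\le\beta$ ``could be corrected by a further dilation $u_0\mapsto u_0(\lambda\cdot)$ producing a competitor with $J$-value strictly below $\beta$.'' Since $J$ is invariant under both dilations and multiplications $u\mapsto Au(\lambda\cdot)$, no rescaling can lower its value, so this mechanism fails. The correct (and simpler) conclusion is the one the paper draws: because $\gamma-\tfrac{p}{2}>0$, the $L^{q+1}$-norm and the gradient term enter (\ref{Jh}) with positive exponents, so once $\|u_0\|_{L^{m+1}}=1$ is known, any strict inequality $\|u_0\|_{L^{q+1}}<1$ or $\|\nabla u_0\|^p_{L^p}<\beta$ would give $J(u_0)<\beta$ outright, contradicting $J(u_0)\ge\beta$ for $u_0\in X^*_{rad}$; hence all three equalities in (\ref{minimizer}) hold. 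With that repair, and a word on local compactness when $d=1$, your argument is complete and matches the paper's.
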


The structure of the rest part of this section is similar to that of Section 2. For the parameter range (\ref{qandm}), in Subsections 5.1 we derive a Pohozaev type identity and use it to prove that the contact angle is zero for the case $q<p-1$, and thus we derive the Euler-Lagrange equation for critical points of $J(u)$ in $X^*_{rad}$ for any $q> 0$. In Subsection 5.2, we show respectively existence and uniqueness of the problem (\ref{FBVP1})-(\ref{FBVPbound}) and the problem (\ref{chachy1})-(\ref{chachybound}). In Subsection 5.3, we obtain the best constant for the $L^m$-type G-N inequality in Theorem \ref{hightdnagy1} below. Finally, we make some remarks on some non-radial solutions in Subsection 5.4.

\subsection{Euler-Lagrange equations}
We derive the Euler-Lagrange equations (\ref{FBVP1})-(\ref{FBVPbound}) for $q<p-1$ and (\ref{chachy1})-(\ref{chachybound}) for $q\geq p-1$ by the two steps: (i) derive the equation (\ref{xequation}) and the boundary condition (\ref{xinitial}) in Proposition \ref{barhmini}; (ii) prove a zero contact angle condition $u'(R)=0$ from Lemma \ref{lmfree0} and Lemma \ref{eneangle} for $q<p-1$.
\begin{prop}\label{barhmini}
Let $\bar u(x)\in X^*_{rad}$ be a critical point of $J(u)$. Then there exist $\mu_2, \lambda_2>0$ such that $ u(x)=\frac{1}{\mu_2} \bar u(\frac{1}{\lambda_2} x)$ satisfies
\begin{enumerate}[(i)]
\item the initial value problem, for some $\alpha>1$
\begin{eqnarray}
&&\nabla\cdot(|\nabla u|^{p-2}\nabla u)+ u^m= u^q,\,\,  \mbox{ in }\Omega,\label{xequation}\\
&& u(0)=\alpha,\quad  u'(0)=0,\label{xinitial}
\end{eqnarray}
where $\Omega=\{x\in \mathbb{R}^d,~|x|<R\}$, $R:=\inf\{r>0|~u(r)=0\}\in \mathbb{R}^{+}\cup \{+\infty\}$.
\item If $p>1$, $0<q<p-1$ and $q<m$, then there is a finite point $R\in (0,\infty)$ such that $ u(R)=0$.
\item If $p>1$, $q\geq p-1$ and $q<m$, then solutions to (\ref{xequation})-(\ref{xinitial}) are positive in $[0,\infty)$.
\end{enumerate}
\end{prop}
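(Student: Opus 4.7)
The plan is to mimic the strategy of Proposition 2.1 but adapt it to the two-parameter rescaling forced by the functional $J(u)$ in \eqref{Jh}, and then to handle the origin carefully (since here, unlike the $L^\infty$ case, there is no delta-singularity and we expect $u$ to be smooth at $r=0$).

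First I would perform the two-step rescaling. The functional $J(u)$ is invariant under $u\mapsto c\,u(\lambda\cdot)$ for all $c,\lambda>0$, so starting from a radial critical point $\bar u\in X^*_{rad}$ I can choose $\lambda_2,\mu_2>0$ so that the renormalized function $u(x)=\mu_2^{-1}\bar u(\lambda_2^{-1}x)$ satisfies two convenient normalizations, e.g.\ $\|u\|_{L^{q+1}}=\|u\|_{L^{m+1}}=1$ (mirroring \eqref{inftya1}, but now needing two parameters because the target equation has two coefficients to match). Taking an admissible radial variation $\phi\in C_c^\infty(\mathbb{R}^d)$ at $u$ (cut off away from $r=0$ and $r=R$), computing $\tfrac{d}{d\varepsilon}\big|_{\varepsilon=0}J(u+\varepsilon\phi)=0$ and using the normalizations, I obtain a distributional identity of the form
\begin{equation*}
a_1\int |\nabla u|^{p-2}\nabla u\cdot\nabla\phi+a_2\int u^q\phi-a_3\int u^m\phi=0,
\end{equation*}
for explicit constants $a_i>0$ depending on $\gamma,p,q,m,d$. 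A final amplitude/spatial rescaling absorbs the $a_i$, producing exactly \eqref{xequation}. The boundary condition $u(0)=\alpha>1$ emerges because at a maximum with $u'(0)=0$ the equation forces the sign $u^q(0)<u^m(0)$, hence $\alpha>1$ thanks to $q<m$.

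Second I would establish regularity and the condition $u'(0)=0$. As in Step 3 of the proof of Proposition \ref{iLcriticalequa}, the claim "if $u'(r_*)=0$ for some $r_*<R$ then $u(r_*)=0$" shows $|u'|>0$ on $(0,R)$, so the $p$-Laplace operator is uniformly elliptic on compact subsets of $(0,R)$ and $u$ is classical there. The key new point is at the origin: since $u$ is radially decreasing, bounded (Strauss' embedding and $u\in L^{q+1}\cap L^{m+1}$, $\nabla u\in L^p$ ensure local $L^\infty$ bounds for $m<\infty$), I can integrate the radial equation $(r^{d-1}|u'|^{p-2}u')'=r^{d-1}(u^q-u^m)$ from $0$ to small $r$; the right-hand side is $O(r^d)$, so $r^{d-1}|u'(r)|^{p-1}\leq C r^d$, giving $|u'(r)|\leq C r^{1/(p-1)}\to 0$ as $r\to 0^+$. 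This proves $u'(0)=0$. This is the main technical difference from the $L^\infty$ case, where $u'$ blew up at $0$.

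Third, for part (ii) with $0<q<p-1$ and $q<m$, I would argue by contradiction, assuming $R=+\infty$. Multiplying \eqref{xequation} by $u'$ and integrating, the energy
\begin{equation*}
H(r):=\tfrac{p-1}{p}|u'(r)|^p-\tfrac{u^{q+1}(r)}{q+1}+\tfrac{u^{m+1}(r)}{m+1}
\end{equation*}
satisfies $H'(r)=-\tfrac{d-1}{r}|u'(r)|^p\leq 0$, hence $H(r)\geq\lim_{s\to\infty}H(s)=0$ (using $u,u'\to 0$ at infinity). Since $q<m$ and $u(r)\to 0$, for large $r$ the dominant terms give $|u'|\geq c\,u^{(q+1)/p}$; integrating and using $q<p-1$ yields a bound $r\leq C$ exactly as in \eqref{ircontrol}, contradicting $R=\infty$. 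This is essentially a direct transplant of Proposition \ref{compact support}, with the extra $u^{m+1}$ term being a harmless lower-order correction near $u=0$.

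Fourth, for part (iii) with $q\geq p-1$, I would invoke the Pucci--Serrin Strong Maximum / Compact Support framework \cite{pucci2007maximum} applied to the nonlinearity $f(u)=u^m-u^q$: the integral criterion $\int_{0^+}ds/H^{-1}(F(s))=\infty$ (already used in the proof of Proposition \ref{iqlargep-1}) is equivalent to $q\geq p-1$, so any nonnegative solution with $u(0)>0$ must stay strictly positive, forcing $R=+\infty$. I expect the main obstacle to be the delicate bookkeeping in Step 1 (tracking the two scaling parameters through the exponent $\gamma$ in \eqref{a} and verifying that the resulting coefficients $a_1,a_2,a_3$ can indeed be simultaneously normalized to $1$), together with the justification that $u$ is globally bounded so that the $r\to 0^+$ integration in Step 2 is valid; everything else is a direct adaptation of the $L^\infty$-type arguments in Section 2.
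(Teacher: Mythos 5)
Your two-parameter rescaling/normalization and your arguments for parts (ii) and (iii) are essentially the paper's (energy $H$, separation of variables with a finite Beta-type integral for $q<p-1$; Pucci--Serrin compact support principle for $q\ge p-1$). The genuine gap is in part (i), at the origin, and it starts with your choice of variations: you cut the test functions off away from $r=0$. In this $L^m$ problem there is no constraint at the origin (unlike Section 2, where $\|u\|_{L^\infty}=u(0)=1$ forces $\phi(0)=0$), and the paper deliberately uses variations $\phi\in C_0^{\infty}(\Omega_1)$ that need not vanish at $0$; that is exactly what rules out a Dirac term $a\delta_{x=0}$ in the Euler--Lagrange equation. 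Having discarded this information, your derivation of $u'(0)=0$ does not close: integrating $(r^{d-1}|u'|^{p-2}u')'=r^{d-1}(u^q-u^m)$ from $0$ to $r$ tacitly assumes $\lim_{r\to0^+}r^{d-1}|u'(r)|^{p-1}=0$, and this cannot be deduced from the equation on $(0,R)$ together with $u\in X^*_{rad}$ alone: when $p>d$ (allowed by \eqref{qandm}) a bounded, radial, decreasing function with $|u'|\sim c\,r^{-\frac{d-1}{p-1}}$ still has $\nabla u\in L^p$, and the solutions of Sections 2--3 show that a strictly positive flux at the origin is perfectly compatible with all of these properties. Your boundedness claim is also misquoted: the Strauss bound \eqref{straussinequ1} degenerates like $|x|^{\frac{1-d}{\nu}}$ as $|x|\to0$ for $d\ge2$, so it gives no $L^{\infty}$ control at the origin. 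The paper instead keeps the weak equation across $r=0$, proves $u(0)>1$ by a maximum-principle argument (if $u(0)\le1$ then $\Delta_p u=u^q(1-u^{m-q})\ge0$, so the radial decreasing solution would have to be constant $\equiv1$, contradicting integrability), and then applies the $C^{1}_{loc}$ regularity of \cite{tolksdorf1984regularity} to $u-1$ on the ball where $u\ge1$ to conclude $u'(0)=0$.

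Two further points. Your justification of $\alpha>1$ (the sign of the equation at the maximum) only yields $u^q(0)\le u^m(0)$, i.e. $\alpha\ge1$, and does not exclude $\alpha=1$; the strict inequality is precisely what the maximum-principle/integrability argument above supplies. Also, the ``claim'' you transplant from Step 3 of Proposition \ref{iLcriticalequa} (if $u'(r_*)=0$ then $u(r_*)=0$, hence $|u'|>0$ and uniform interior ellipticity) does not carry over as stated, because here the right-hand side $u^q-u^m$ changes sign where $u>1$, so the flux integral no longer has a definite sign; the paper gets interior regularity from \cite{tolksdorf1984regularity} without this claim and proves $u'<0$ only later (Theorem \ref{remunique}) by an oscillation/energy argument. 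With these repairs -- variations across the origin plus interior $C^1$ regularity, and the maximum-principle argument for $\alpha>1$ -- your outline coincides with the paper's proof.
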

\begin{proof}
{\bf Step 1.} Re-scaling and variation

For any $\mu_1, \lambda_1>0$, we re-scale $\bar u(x)$ as $u_1(y):=\frac{1}{\mu_1}\bar u(\frac{y}{\lambda_1})$. Noticing the scaling invariant of $J(u)$ for $u_1=\frac{1}{\mu_1}\bar u(\frac{y}{\lambda_1})$, we have
 \begin{eqnarray}\label{Jinvarant}
J(u_1)=J(\bar u).
\end{eqnarray}
Hence if $\bar u(x)\in X^*_{rad}$ is a critical point of $J(u)$, then we obtain that $u_1(y)$ is also a critical point ($\frac{\delta J(u_1)}{\delta u}=0$).
Below we choose $\lambda_1, ~\mu_1$ such that it holds that
\begin{eqnarray}\label{a1}
\|u_1\|_{L^{q+1}}=\|u_1\|_{L^{m+1}}=1,\quad \|\nabla u_1\|^p_{L^p}=:a_1.
\end{eqnarray}

Since $u_1 \in X^*_{rad}$, it is observed that $u_1(r)$ is continuous in $(0,\infty)$.
Denote the support of $u_1$ as $\Omega_1:=\{x\in\mathbb{R}^d|\, u_1>0\}$. Since $u_1(r)$ is a radial decreasing function, one knows that $0\in \Omega_1$ and $\Omega_1=\{x\in \mathbb{R}^d,~|x|<R_1\}$, where $R_1:=\inf\{r>0|u_1(r)=0\}\in \mathbb{R}^{+}\cup \{+\infty\}$.
For any radial symmetric function $\phi\in C_0^{\infty}(\Omega_1)$\footnote{Notice that for the case $m=\infty$, the admissible variation $\phi$ requests $\phi(0)=0$ in Section 2. As a consequence, there is a singularity at $x=0$. Hence the term $a\delta_{x=0}$ appears in the Euler-Lagrange equations (\ref{iinftyfunction}).},
we can show that $\phi$ be an admissible variation at $u_1$, i.e., there is a $\varepsilon_0>0$ such that for any $|\varepsilon|<\varepsilon_0$ one has $u_1+\varepsilon \phi\geq 0$. Then from a direct computation and using \eqref{a1}, we have for $\forall~ \phi \in C_c^{\infty}(\Omega_1)$
$$
\frac{d}{d\varepsilon}\Big|_{\varepsilon=0}J(u_1+\varepsilon \phi)=\int_{\mathbb{R}^d}p(|\nabla u_1|^{p-2}\nabla u_1)\cdot \nabla \phi +\left((\gamma-\frac{p}{2})a_1 u_1^q-(\gamma+\frac{p}{2})a_1 u_1^m\right)\phi\, dx = 0.
$$
This implies that $u_1$ satisfies the following generalized Lane-Emden equation
\begin{eqnarray}\label{equationh0}
\nabla\cdot(|\nabla u_1|^{p-2}\nabla u_1)-\frac{\gamma-p/2}{p}a_1u_1^q+\frac{\gamma+p/2}{p}a_1 u_1^m=0 \quad \mbox{ in } \Omega_1.
\end{eqnarray}

{\bf Step 2. }Normalization

We re-scale function $u_1$ as $ u(y)=\frac{1}{\mu}u_1(\frac{y}{\lambda})$, where $\mu, \lambda>0$ will be defined in (\ref{mulambda}). From (\ref{Jinvarant}), we know that $ u$ is also a critical point of $J(u)$ in $X^*_{rad}$. From (\ref{equationh0}) we deduce that $u$ satisfies the following equation in $\Omega:=\lambda\Omega_1$, $R:=\lambda R_1$
$$
\mu^{p-1} \lambda^p \nabla\cdot(|\nabla u|^{p-2}\nabla u)-\frac{\gamma-\frac{p}{2}}{p}a_1\mu^q u^q+\frac{\gamma+\frac{p}{2}}{p}a_1 \mu^m u^m=0, \mbox{ in } \Omega.
$$
Taking
\begin{eqnarray}\label{mulambda}
\mu=\left(\frac{\gamma-p/2}{\gamma+p/2}\right)^{1/(m-q)}~~(m>q),\quad
 \lambda =\left(\frac{\gamma-p/2}{p}\right)^{1/p}\mu^{\frac{q-p+1}{p}}a_1^{1/p},
\end{eqnarray}
one has
\begin{eqnarray}\label{para1}
\mu^{p-1} \lambda^p=\frac{\gamma-p/2}{p}a_1 \mu^q=\frac{\gamma+p/2}{p}a_1 \mu^m.
\end{eqnarray}
Hence $ u$ satisfies
\begin{eqnarray}\label{steadyequa}
\nabla\cdot(|\nabla u|^{p-2}\nabla u)- u^q+  u^m=0,~~ \mbox{ in } \Omega.
\end{eqnarray}

{\bf Step 3.} In this step, we prove that the critical point $ u$ satisfies the following initial value problem
\begin{eqnarray}
&&(|u'|^{p-2}u')' + \frac{d-1}{r} |u'|^{p-2}u' + u^{m} = u^{q},\quad  0<r<R, \label{steadyequ}\\
&&u(0)=\alpha,\quad u'(0)=0,\label{initial}
\end{eqnarray}
for some $\alpha>1$.

First we claim that
$u(0)>1$. If not, we have $ u(0)\leq 1$. The decreasing property of $ u$ in $r$ implies that for any fixed $R_0>0$, $x\in B(0,R_0)$, $ u(x)\leq 1$. Thus we have
\begin{eqnarray}\label{equa1}
\nabla\cdot(|\nabla u|^{p-2}\nabla u)= u^q(1- u^{m-q})\geq 0,\quad \mbox{ in } B(0,R_0),
\end{eqnarray}
which implies that the maximum of $\bar u$ is reached at $|x|=R_0$ by the maximum principles for divergence structure elliptic differential inequalities \cite[Theorem 3.2.1]{pucci2007maximum}. Since $u$ is the radial decreasing continuous function, it holds that $ u(x) \equiv  u(R)$ in $B(0,R)$ for any $R>0$. Plugging this constant solution into (\ref{equa1}), one knows that $ u(x ) \equiv 1$ in $B(0,R)$ for any $R>0$. Hence $ u(x ) \equiv 1$ in $\mathbb{R}^d$.
It is contradictory to the integrability of $u$.

Next, we will prove $ u$ is a smooth function at $r=0$, and satisfies $u'(0)=0$.

In fact, since $u$ is a radial decreasing, continuous, integrable function and $ u(0)>1$, there exists a $R_0$ such that $u(R_0)=1$. Let $\tilde{u}= u -1$. Then $\tilde{u}\in L^{\infty}(B(0, R_0))$ satisfies the following equation
\begin{eqnarray}\label{tilde h equa}
&&\nabla\cdot(|\nabla \tilde u|^{p-2}\nabla \tilde u) =(\tilde u+1)^q-(\tilde u+1)^m \in L^{\infty}(B(0, R_0)),\\
&&\tilde{u}(x)=0,\quad |x|=R_0.
\end{eqnarray}
In \cite{tolksdorf1984regularity}, 
 the $C_{loc}^{1}(B(0, R_0))$ regularity result is assured to the weak solutions $u\in W^{1,p}(B(0, R_0))\cap L^{\infty}(B(0, R_0)) $ of \eqref{tilde h equa}.
Thus we denote the peak value as $\alpha= u(0)>1$, and we have $ u'(0)=0$.

{\bf Step 4.} There exists a finite $R$ such that $ u(R)=0$ for $q<p-1$.

For a radial decreasing non-negative function $u\in X^*_{rad}$, there only exist two cases: (i) there exists a finite $R$ such that $u(R)=0$; (ii)  $u(r) >0$ for all $r\geq 0$, and hence $u(r)\to 0$, $ u'(r)\to 0$ as $r\to \infty$.

 Similar to Step 3 in the proof of Proposition \ref{iLcriticalequa}, we show that the second case can not happen by a contradiction.
Indeed, if (ii) holds, then $ u> 0$ is a solution to the following problem
 \begin{eqnarray}\label{equa2}
&& (|u'|^{p-2}u')' + \frac{d-1}{r} |u'|^{p-2}u' + u^{m} = u^{q},\label{equa2}\\
&& u'(0)=0,\quad \lim_{r\to \infty}u(r)=0.\label{equa2bound}
\end{eqnarray}
Repeating the process to obtaining (\ref{iuderi}), we have
\begin{eqnarray}\label{deri}
 \frac{p-1}{p}|u'|^p+\frac{u^{m+1}}{m+1}-\frac{u^{q+1}}{q+1}\geq 0.
\end{eqnarray}
By using (\ref{deri}) and $u'(0)=0$, we deduce $u(0)\geq\alpha_c:=\left( \frac{m+1}{q+1}\right)^{\frac{1}{m-q}}>0$.
Thus there is a $R_0\geq 0$ such that $u(R_0)=\alpha_c$ and $0<u\leq \alpha_c$ for $r\geq R_0$.
Hence when $r\geq R_0$, we solve from (\ref{deri})
 \begin{eqnarray}\label{uderi}
 u'(r)\leq -\left(\frac{p}{p-1}\right)^{1/p}\left(\frac{u^{q+1}(r)}{q+1}-\frac{u^{m+1}(r)}{m+1}\right)^{1/p}.
\end{eqnarray}
Using the method of separation of variables for (\ref{uderi}) and integrating the result inequality from $R_0$ to $r$, $r\in (R_0,+\infty)$, we obtain
 \begin{eqnarray*}
 \int_{\alpha_c}^{u(r)}\frac{du}{\left(\frac{u^{q+1}}{q+1}-\frac{u^{m+1}}{m+1}\right)^{1/p}}\leq \left(\frac{p}{p-1}\right)^{1/p}(R_0-r) \quad \mbox{for all }r> R_0,
\end{eqnarray*}
which gives
\begin{eqnarray}\label{rcontrol}
 r\leq R_0+\left(\frac{p-1}{p}\right)^{1/p}\int_{u(r)}^{\alpha_c}\frac{du}{\left(\frac{u^{q+1}}{q+1}-\frac{u^{m+1}}{m+1}\right)^{1/p}} \quad \mbox{for all }r\geq R_0.
\end{eqnarray}
Since $0<u(r)\leq \alpha_c$ for $r\geq R_0$, we have that the right side of (\ref{rcontrol}) satisfies
\begin{eqnarray*}
\int_{u(r)}^{\alpha_c}\frac{du}{\left(-\frac{u^{m+1}}{m+1}+\frac{u^{q+1}}{q+1}\right)^{\frac{1}{p}}}
\leq\int_{0}^{\alpha_c}\frac{du}{\left(-\frac{u^{m+1}}{m+1}+\frac{u^{q+1}}{q+1}\right)^{\frac{1}{p}}} .\end{eqnarray*}
A simple computation leads to
\begin{eqnarray}\label{integration1}
\int_{0}^{\alpha_c}\frac{du}{\left(-\frac{u^{m+1}}{m+1}+\frac{u^{q+1}}{q+1}\right)^{\frac{1}{p}}}
=\frac{(m+1)^{\frac{p-q-1}{p(m-q)}}(q+1)^{\frac{1}{p}-\frac{p-q-1}{p(m-q)}}}{m-q}
\mathcal{B}\left(\frac{p-1}{p},\frac{p-(q+1)}{p(m-q)}\right).
\end{eqnarray}
Noticing that $p>1$, $q<p-1$ and $m>q$, we have that $\mathcal{B}\left(\frac{p-1}{p},\frac{p-(q+1)}{p(m-q)}\right)$ is finite. Taking $r\to +\infty$, we obtain a contradiction from (\ref{rcontrol}) and (\ref{integration1}). Hence the second case can not happen, i.e., there exists a finite $R$ such that $u(R)=0$.

{\bf Step 5.} Positivity for $q\geq p-1$.

  The positivity of solutions to (\ref{xequation})-(\ref{xinitial}) is a direct consequence of Compact Support Principle from Pucci and Serrin \cite[Thoerem 1.1.2]{pucci2007maximum}. Here we only need to verify the necessary and sufficient condition for the Compact Support Principle: in the same notations as that in \cite{pucci2007maximum}, $H(s)=\frac{p-1}{p}s^p$, $F(s)=\frac{u^{q+1}}{q+1}-\frac{u^{m+1}}{m+1}$,  $\int_{0^+}\frac{ds}{H^{-1}(F(s))}<\infty$ if and only if $q< p-1$.

\end{proof}
We construct an auxiliary energy functional
\begin{eqnarray}\label{steadyfree0}
\mathcal{F}( u):=\frac{d(p-1)+p}{p}\int_{\mathbb{R}^d}|\nabla u|^p\,dx-\frac{dm}{m+1}\int_{\mathbb{R}^d} u^{m+1}\,dx+\frac{dq}{q+1}\int_{\mathbb{R}^d}u^{q+1}\,dx.
\end{eqnarray}
In the following two lemmas, we prove that for the case $q<p-1$, solutions to (\ref{xequation})-(\ref{xinitial}) have a zero contact angle at the boundary of the compact support.

\begin{lem}\label{lmfree0}
Let $\bar u(x)\in X^*_{rad}$ be a critical point of $J(u)$. Then there are $\mu_2, \lambda_2>0$ such that the re-scaling function $u(x)=\frac{1}{\mu_2} \bar u(\frac{1}{\lambda_2} x)$ is a zero point of the energy functional defined in (\ref{steadyfree0}), i.e.,
\begin{eqnarray}\label{fre=0}
\mathcal{F}(u)=0.
\end{eqnarray}
\end{lem}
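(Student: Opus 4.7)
The argument should parallel Lemma \ref{lem21}: the desired identity $\mathcal{F}(u)=0$ will drop out of a direct substitution, once $u$ is rescaled so that the normalizations of Proposition \ref{barhmini} are in force. Concretely, Proposition \ref{barhmini} already constructs $u_1(y)=\mu_1^{-1}\bar u(y/\lambda_1)$ and then $u(x)=\mu^{-1}u_1(x/\lambda)$, so I will take $\mu_2=\mu_1\mu$ and $\lambda_2=\lambda_1\lambda$. By the normalization (\ref{a1}) we have $\|u_1\|_{L^{q+1}}=\|u_1\|_{L^{m+1}}=1$ and $\|\nabla u_1\|_{L^p}^p=a_1$, so a routine change of variables gives
\begin{align*}
\int_{\mathbb{R}^d}u^{q+1}\,dx=\frac{\lambda^d}{\mu^{q+1}},\quad \int_{\mathbb{R}^d}u^{m+1}\,dx=\frac{\lambda^d}{\mu^{m+1}},\quad \int_{\mathbb{R}^d}|\nabla u|^p\,dx=a_1\frac{\lambda^{d-p}}{\mu^p}.
\end{align*}

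Next I would use the parameter identities in (\ref{para1}) to eliminate $a_1\lambda^{-p}$ and $\mu^{q-m}$. These can be rewritten as
\begin{align*}
\mu^{p-q-1}\lambda^p=\frac{\gamma-p/2}{p}\,a_1,\qquad \mu^{m-q}=\frac{\gamma-p/2}{\gamma+p/2}.
\end{align*}
The first yields $\int_{\mathbb{R}^d}|\nabla u|^p\,dx=\frac{\lambda^d}{\mu^{q+1}}\cdot\frac{p}{\gamma-p/2}$, and the second yields $\int_{\mathbb{R}^d}u^{m+1}\,dx=\frac{\lambda^d}{\mu^{q+1}}\cdot\frac{\gamma+p/2}{\gamma-p/2}$. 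Substituting into the definition (\ref{steadyfree0}) of $\mathcal{F}$ and factoring out $\lambda^d/\mu^{q+1}$ gives
\begin{align*}
\mathcal{F}(u)=\frac{\lambda^d}{\mu^{q+1}}\left[\frac{d(p-1)+p}{\gamma-p/2}-\frac{dm(\gamma+p/2)}{(m+1)(\gamma-p/2)}+\frac{dq}{q+1}\right].
\end{align*}

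All that remains is the algebraic check that the bracket vanishes. Multiplying through by $(m+1)(q+1)(\gamma-p/2)$, the identity becomes linear in $\gamma$; using $dq(m+1)-dm(q+1)=-d(m-q)$, it rearranges to
\begin{align*}
\gamma\,d(m-q)=(d(p-1)+p)(m+1)(q+1)-dpmq-\tfrac{dp(m+q)}{2}.
\end{align*}
Expanding the right-hand side and regrouping reproduces exactly the formula (\ref{a}) for $\gamma$; indeed the exponent $\gamma$ was designed to make $J(u)$ scale invariant, which is equivalent to this identity. Hence the bracket is zero and $\mathcal{F}(u)=0$. The only substantive obstacle is this algebraic verification, which is tedious but purely mechanical; all the analytic content has already been absorbed into Proposition \ref{barhmini} via the choice of $\mu$ and $\lambda$.
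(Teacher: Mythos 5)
Your proposal is correct and follows essentially the same route as the paper's proof: rescale via (\ref{a1}), (\ref{mulambda}) and (\ref{para1}), factor out $\lambda^d/\mu^{q+1}$, and check that the resulting bracket vanishes by the definition (\ref{a}) of $\gamma$. The final algebraic identity you state, $\gamma\,d(m-q)=(d(p-1)+p)(m+1)(q+1)-dpmq-\tfrac{dp(m+q)}{2}$, does indeed expand to the formula (\ref{a}), so the argument is complete.
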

\begin{proof}
From (\ref{a1}), the re-scaling function $u_1(y)=\frac{1}{\mu_1}\bar u(\frac{y}{\lambda_1})$, $\mu_1, \lambda_1>0$ satisfies
\begin{eqnarray}
1=\int_{\mathbb{R}^d}u_1^{q+1}\,dx,~~1=\int_{\mathbb{R}^d}u_1^{m+1}\,dx,~~a_1=\int_{\mathbb{R}^d}|\nabla u_1|^p\,dx.\label{3}
\end{eqnarray}
Let $ u(y)=\frac{1}{\mu}u_1(\frac{y}{\lambda})$, where $\mu, \lambda>0$ was defined in (\ref{mulambda}). Then by (\ref{3}) we obtain
\begin{eqnarray}\label{1}
\int_{\mathbb{R}^d} u^{q+1}\,dy=\frac{\lambda^d}{\mu^{q+1}},\quad \int_{\mathbb{R}^d} u^{m+1}\,dy=\frac{\lambda ^d}{\mu^{m+1}},\quad \int_{\mathbb{R}^d}|\nabla u|^p\,dy=\frac{a_1\lambda ^{d-p}}{\mu^p}.
\end{eqnarray}
Hence
\begin{align*}
F(u)&=\frac{d(p-1)+p}{p}\int_{\mathbb{R}^d}|\nabla u|^p\,dy-\frac{dm}{m+1}\int_{\mathbb{R}^d} u^{m+1}\,dy+\frac{dq}{q+1}\int_{\mathbb{R}^d} u^{q+1}\,dy\\
&=\frac{\lambda^d}{\mu^{q+1}}\left(\left(\frac{d(p-1)}{p}+1\right)\frac{a_1}
{\mu^{p-q-1}\lambda^{p}}-\frac{dm}{m+1}\frac{1}{\mu^{m-q}}+\frac{dq}{q+1}\right).
\end{align*}
Using (\ref{mulambda}) and (\ref{para1}), we have
\begin{align*}
&\left(\frac{d(p-1)}{p}+1\right)\frac{a_1}{\mu^{p-q-1}\lambda^{p}}-\frac{dm}{m+1}\frac{1}{\mu^{m-q}}+\frac{dq}{q+1}\\
=&\left(\frac{d(p-1)}{p}+1\right)\frac{p}{\gamma-p/2}-\frac{dm}{m+1}\frac{\gamma+p/2}{\gamma-p/2}+\frac{dq}{q+1}\\
=&\frac{1}{\gamma-\frac{p}{2}}\left(d(p-1)+p-\frac{dm}{m+1}(\gamma+\frac{p}{2})+\frac{dq}{q+1}(\gamma-\frac{p}{2})\right)=0,
\end{align*}
where the last equality used the definition (\ref{a}) of $\gamma$. Hence we obtain (\ref{fre=0}).

\end{proof}

Using (\ref{mulambda}) and (\ref{1}), a simple computation gives the following Corollary.
\begin{cor}\label{cor1}
Let $\bar u(x)\in X^*_{rad}$ be a critical point of $J(u)$. Then for $\mu_2, \lambda_2>0$ defined in Lemma \ref{lmfree0} the re-scaling function $u(x)=\frac{1}{\mu_2} \bar u(\frac{1}{\lambda_2} x)$ satisfies the following equalities
\begin{eqnarray}
&&\int_{\mathbb{R}^d}  u^{q+1}\,dy=p^{-\frac{d}{p}}(\gamma-\frac{p}{2})^{\frac{d}{p}-\frac{((p-1)d+p)-q(d-p)}{p(m-q)}}(\gamma+\frac{p}{2})^{\frac{((p-1)d+p)
-q(d-p)}{p(m-q)}}a_1^{\frac{d}{p}},\label{barformula1}\\
&&\int_{\mathbb{R}^d} u^{m+1}\,dy=p^{-\frac{d}{p}}(\gamma-\frac{p}{2})^{\frac{d}{p}-\frac{((p-1)d+p)-q(d-p)}{p(m-q)}-1}
(\gamma+\frac{p}{2})^{1+\frac{((p-1)d+p)-q(d-p)}{p(m-q)}}a_1^{\frac{d}{p}},\label{barformula2}\\
&&\int_{\mathbb{R}^d}|\nabla u|^p\,dy=p^{1-\frac{d}{p}}(\gamma-\frac{p}{2})^{\frac{d}{p}-1-\frac{((p-1)d+p)-q(d-p)}{p(m-q)}}
(\gamma+\frac{p}{2})^{\frac{((p-1)d+p)-q(d-p)}{p(m-q)}}a_1^{\frac{d}{p}}.\label{barformula3}
\end{eqnarray}
\end{cor}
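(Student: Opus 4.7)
The proof is essentially a careful substitution exercise: it asks us to plug the explicit values of $\mu$ and $\lambda$ given in (\ref{mulambda}) into the three identities (\ref{1}) already obtained in the proof of Lemma \ref{lmfree0}. My plan is to follow exactly the same rescaling set up used there — take $\mu_2=\mu$ and $\lambda_2=\lambda$ from (\ref{mulambda}) so that $u(y)=\tfrac{1}{\mu}u_1(y/\lambda)$ where $u_1$ satisfies the normalization (\ref{3}) — and then expand each of the three expressions $\lambda^d/\mu^{q+1}$, $\lambda^d/\mu^{m+1}$, and $a_1\lambda^{d-p}/\mu^p$ by substituting the closed forms of $\mu$ and $\lambda$.

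Concretely, using $\lambda=\bigl(\tfrac{\gamma-p/2}{p}\bigr)^{1/p}\mu^{(q-p+1)/p}a_1^{1/p}$ gives
\begin{equation*}
\lambda^{d}=p^{-d/p}\,(\gamma-\tfrac{p}{2})^{d/p}\,\mu^{d(q-p+1)/p}\,a_1^{d/p},
\end{equation*}
and hence $\lambda^d/\mu^{q+1}$ picks up an overall power of $\mu$ equal to $\tfrac{d(q-p+1)}{p}-(q+1)=\tfrac{q(d-p)-((p-1)d+p)}{p}$. Since $\mu=\bigl(\tfrac{\gamma-p/2}{\gamma+p/2}\bigr)^{1/(m-q)}$, this contributes a factor $\bigl(\tfrac{\gamma-p/2}{\gamma+p/2}\bigr)^{[q(d-p)-((p-1)d+p)]/[p(m-q)]}$, which after combining with $(\gamma-p/2)^{d/p}$ produces precisely the exponents displayed in (\ref{barformula1}). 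The same computation for $\lambda^d/\mu^{m+1}$ differs only by the extra factor $\mu^{-(m-q)}=\tfrac{\gamma+p/2}{\gamma-p/2}$, which accounts for the shift of $-1$ in the exponent of $\gamma-p/2$ and $+1$ in that of $\gamma+p/2$ relative to (\ref{barformula1}), giving (\ref{barformula2}).

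For (\ref{barformula3}) the strategy is identical but one extra simplification streamlines the algebra: rather than expanding $\lambda^{d-p}$ directly, I would use the relation $\mu^{p-1}\lambda^p=\tfrac{\gamma-p/2}{p}a_1\mu^q$ from (\ref{para1}) to write
\begin{equation*}
\frac{a_1\lambda^{d-p}}{\mu^p}=\frac{a_1\lambda^d}{\mu^{p-1}\lambda^p}\cdot\frac{1}{\mu}=\frac{p}{\gamma-p/2}\cdot\frac{\lambda^d}{\mu^{q+1}},
\end{equation*}
so that $\int_{\mathbb{R}^d}|\nabla u|^p\,dy=\tfrac{p}{\gamma-p/2}\int_{\mathbb{R}^d}u^{q+1}\,dy$, and (\ref{barformula3}) then follows immediately from (\ref{barformula1}).

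There is no real obstacle here, only bookkeeping; the only point that requires a moment of care is verifying that the exponents of $\gamma\pm p/2$ and of $a_1$ in each of the three formulas are collected correctly after substitution. Since $m>q$ by (\ref{qandm}) and $\gamma\pm p/2>0$ for parameters in the admissible range, all the fractional powers are well defined, so no additional analytic input beyond Lemma \ref{lmfree0} and relations (\ref{mulambda})–(\ref{para1}) is needed.
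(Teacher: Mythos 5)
Your proposal is correct and matches the paper's intent exactly: the paper itself derives Corollary \ref{cor1} by the one-line remark that substituting (\ref{mulambda}) into the identities (\ref{1}) (equivalently using (\ref{para1})) gives the stated formulas, which is precisely the bookkeeping you carry out, including the correct exponent $\frac{d(q-p+1)}{p}-(q+1)=-\frac{((p-1)d+p)-q(d-p)}{p}$ of $\mu$ and the shift by $\mu^{-(m-q)}=\frac{\gamma+p/2}{\gamma-p/2}$ for the $L^{m+1}$ integral. Your use of $\mu^{p-1}\lambda^p=\frac{\gamma-p/2}{p}a_1\mu^q$ to reduce (\ref{barformula3}) to (\ref{barformula1}) is a harmless streamlining of the same computation.
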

\begin{lem}\label{eneangle}
Let $ u$ be a solution to the initial value problem (\ref{xequation})-(\ref{xinitial}). Assume that $u$ has a touchdown point $R$ {\rm(}i.e. $u(R)=0${\rm )}. Then
the following relation between the auxiliary energy functional $\mathcal{F}(u)$ and the contact angle holds
\begin{eqnarray}\label{frvsangle}
\mathcal{F}( u) = \frac{d(p-1) |B(0,R)|}{p} |u'(R)|^p.
\end{eqnarray}
\end{lem}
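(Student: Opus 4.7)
My plan is to derive the identity via a Pohozaev-type computation combined with a Nehari identity, in close analogy with the proof of Lemma \ref{ieneangle}, but adapted to the fact that here $u$ has no singularity at the origin (since $u'(0)=0$ by Proposition \ref{barhmini}) while it does vanish at the finite touchdown point $R$.

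First I would derive a Nehari-type identity. Multiplying \eqref{xequation} by $r^{d-1}u$ and integrating from $0$ to $R$, then using integration by parts on the first term together with the boundary data $u(R)=0$ and the regularity of $u$ at $r=0$, the boundary contributions vanish and one obtains
\begin{equation*}
\int_{\mathbb{R}^d}|\nabla u|^p\,dx - \int_{\mathbb{R}^d}u^{m+1}\,dx + \int_{\mathbb{R}^d}u^{q+1}\,dx = 0. \tag{N}
\end{equation*}

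Next I would derive the Pohozaev-type identity. Introduce, in parallel with \eqref{iHenergy}, the energy function
\begin{equation*}
H(r) := \frac{p-1}{p}|u'(r)|^p + \frac{u^{m+1}(r)}{m+1} - \frac{u^{q+1}(r)}{q+1}.
\end{equation*}
Multiplying \eqref{xequation} by $u'$ gives the energy dissipation relation $H'(r) + \frac{d-1}{r}|u'|^p = 0$. Multiplying this by $r^d$ and integrating from $0$ to $R$, then integrating by parts the $r^d H'(r)$ term, the boundary term at $r=0$ vanishes (since $u'(0)=0$ and $H(0)$ is finite), while at $r=R$ only the $|u'(R)|^p$ part of $H(R)$ survives because $u(R)=0$. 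After multiplying by the surface area $S_d$ and using $S_d R^d = d|B(0,R)|$, this produces
\begin{equation*}
\frac{d(p-1)|B(0,R)|}{p}|u'(R)|^p = \frac{p-d}{p}\int_{\mathbb{R}^d}|\nabla u|^p\,dx + \frac{d}{m+1}\int_{\mathbb{R}^d}u^{m+1}\,dx - \frac{d}{q+1}\int_{\mathbb{R}^d}u^{q+1}\,dx. \tag{P}
\end{equation*}

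Finally I would combine (P) with $d\cdot$(N). A direct check of the coefficients shows
\begin{align*}
\tfrac{p-d}{p} + d &= \tfrac{d(p-1)+p}{p}, & \tfrac{d}{m+1} - d &= -\tfrac{dm}{m+1}, & -\tfrac{d}{q+1} + d &= \tfrac{dq}{q+1},
\end{align*}
so that the right-hand side of (P)$+d\cdot$(N) collapses exactly to $\mathcal{F}(u)$ as defined in \eqref{steadyfree0}, yielding \eqref{frvsangle}. The mildest technical point is the vanishing of the boundary term at $r=0$ in the integration by parts, which requires the $C^1$ regularity with $u'(0)=0$ established in Step~3 of Proposition \ref{barhmini}; otherwise the argument is a routine two-identity computation with no analytical obstacle comparable to the $L^\infty$ case, where the origin singularity $\lim_{r\to 0^+}r^d|u'(r)|^p$ had to be analyzed separately.
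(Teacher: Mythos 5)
Your proposal is correct and follows essentially the same route as the paper: the same energy function $H(r)$, the same dissipation relation multiplied by $r^d$ and integrated over $(0,R)$, and the same Nehari identity obtained by multiplying the equation by $u$, with only a cosmetically different final linear combination (you add $d$ times the Nehari identity to the Pohozaev identity, while the paper uses it to eliminate the $u^{q+1}$ term from both sides before equating). Your treatment of the boundary terms at $r=0$ (finiteness of $H(0)$ and $u'(0)=0$) and at $r=R$ (only $|u'(R)|^p$ surviving since $u(R)=0$) matches the paper's argument.
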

\begin{proof}
Similar to the proof of Lemma \ref{ieneangle}, we still need to introduce a function
\begin{eqnarray}\label{Henergy}
H(r):= \frac{p-1}{p}|u'(r)|^p+\frac{u^{m+1}(r)}{m+1}-\frac{u^{q+1}(r)}{q+1}.
\end{eqnarray}
Then by multiplying $u'(r)$ to (\ref{steadyequ}), we have the following energy-dissipation relation
\begin{eqnarray}\label{H}
\frac{dH(r)}{dr} + \frac{d-1}{r} |u'(r)|^{p} = 0.
\end{eqnarray}
Multiplying $r^d$ to (\ref{H}) and integrating the result equation from $0$ to $R$ (at $r=0$, $u(r)$ is well-define and smooth), one has
 $$
 R^d H(R) - d \int_0^R r^{d-1} H(r) \,dr+ (d-1) \int_0^R |u'(r)|^p r^{d-1}\,dr = 0.
 $$
Noticing that $H(R) = \frac{p-1}{p} |u'(R)|^p$ from (\ref{Henergy}), then with some simple computations, it holds that
\begin{align}\label{for1}
\frac{p-1}{p} R^d |u'(R)|^p=& d \int_0^R r^{d-1} H(r)\, dr - (d-1) \int_0^R |u'(r)|^p r^{d-1}\,dr\nonumber\\
=& (1-d/p) \int_0^R r^{d-1}| u'(r)|^p\,dr
 + \frac{d}{(m+1)} \int_0^R r^{d-1} u^{m+1}(r)\,dr\nonumber\\
& - \frac{d}{q+1} \int_0^R r^{d-1} u^{q+1}(r)\,dr\nonumber\\
=& (1-d/p) \frac{1}{S_d}\int_{B(0,R)} |\nabla u|^p\,dx
 + \frac{d}{(m+1)}\frac{1}{S_d} \int_{B(0,R)} u^{m+1}\,dx\nonumber\\
&- \frac{d}{q+1} \frac{1}{S_d}\int_{B(0,R)} u^{q+1}\,dx.
\end{align}
On the other hand, multiplying $u$ to (\ref{steadyequa}) and integrating in $B(0,R)$, we get
\begin{eqnarray}\label{for2}
\int_{B(0,R)} u^{q+1}\,dx = -  \int_{B(0,R)} |\nabla u|^p\,dx +  \int_{B(0,R)} u^{m+1}\,dx.
\end{eqnarray}
By (\ref{for1}) and (\ref{for2}), we have
\begin{align}\label{onehand}
\frac{p-1}{p} S_d R^d |u'(R)|^p= &\frac{(p-d)(q+1)+pd}{p(q+1)} \int_{B(0,R)} |\nabla u|^p\,dx\nonumber\\
 &\qquad - \frac{d(m-q)}{(m+1)(q+1)}  \int_{B(0,R)} u^{m+1}\,dx.
\end{align}
Moreover, using (\ref{steadyfree0}) and (\ref{for2}), we know that
\begin{eqnarray}\label{otherhand}
\mathcal{F}(u)=\frac{(p-d)(q+1)+pd}{p(q+1)} \int_{B(0,R)} |\nabla u|^p \, dx- \frac{d(m-q)}{(m+1)(q+1)}  \int_{B(0,R)} u^{m+1}\,dx.
\end{eqnarray}
Thus (\ref{onehand}) and (\ref{otherhand}) give (\ref{frvsangle}).
\end{proof}

Finally, we show that the minimizer of $J(u)$ in $X_{rad}^*$ is a solution of the free boundary problem (\ref{FBVP1})-(\ref{FBVPbound}) up to a re-scaling.
\begin{prop}\label{cor}Assume $p>1$, $0<q<p-1$ and $m>q$.
Let $\bar u(x)\in X^*_{rad}$ be a critical point of $J(u)$. Then there are $\mu,~\lambda>0$ such that the re-scaling function $ u(y)=\frac{1}{\mu} \bar u(\frac{1}{\lambda }y)$ satisfies the free boundary problem (\ref{FBVP1})-(\ref{FBVPbound}).
\end{prop}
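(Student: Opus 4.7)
The plan is to concatenate the three preceding results --- Proposition \ref{barhmini}, Lemma \ref{lmfree0}, and Lemma \ref{eneangle} --- which between them already account for nearly every requirement of the free boundary problem (\ref{FBVP1})-(\ref{FBVPbound}); the missing zero-contact-angle condition $u'(R)=0$ will then drop out by equating the two identities for the auxiliary functional $\mathcal{F}$.

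First I would invoke Proposition \ref{barhmini} applied to the critical point $\bar u\in X^*_{rad}$ to produce scaling parameters $\mu, \lambda>0$ (the $\mu_2, \lambda_2$ of that proposition) such that $u(y) := \frac{1}{\mu}\bar u(\frac{y}{\lambda})$ solves the radial ODE (\ref{xequation}) together with the conditions $u(0) = \alpha > 1$ and $u'(0) = 0$. Since the present hypotheses $p>1$, $0<q<p-1$, $m>q$ coincide with part (ii) of that proposition, I may further conclude that $u$ has a finite touchdown radius $R\in(0,\infty)$ at which $u(R)=0$. At this stage the rescaled $u$ already satisfies every ingredient of (\ref{FBVP1})-(\ref{FBVPbound}) except possibly the free-boundary slope condition $u'(R)=0$.

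For the same normalized $u$, Lemma \ref{lmfree0} delivers the Pohozaev-type vanishing $\mathcal{F}(u) = 0$, where $\mathcal{F}$ is the functional (\ref{steadyfree0}). On the other hand, since $u$ now possesses the touchdown point $R$, Lemma \ref{eneangle} gives the boundary representation
\[
\mathcal{F}(u) \;=\; \frac{d(p-1)\,|B(0,R)|}{p}\,|u'(R)|^{p}.
\]
Setting these two equal forces $|u'(R)|^{p}=0$, i.e., $u'(R)=0$, which is precisely the complete-wetting regime in Young's law. Extending $u$ by zero to $r\geq R$ then produces the required solution of the free boundary problem. There is no serious obstacle here: the conceptual work was done in designing $\mathcal{F}$ so that the critical-point vanishing and the Pohozaev-type boundary identity intersect exactly at the contact slope $u'(R)$; the present proposition is essentially the harvest of that design.
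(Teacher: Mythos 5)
Your proposal is correct and follows essentially the same route as the paper: the paper's proof of Proposition \ref{cor} likewise combines Proposition \ref{barhmini} (for the equation, $u'(0)=0$, and the finite touchdown radius $R$) with Lemma \ref{lmfree0} ($\mathcal{F}(u)=0$) and the identity (\ref{frvsangle}) of Lemma \ref{eneangle} to force $u'(R)=0$. You have merely written out explicitly the steps the paper states as a direct consequence.
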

\begin{proof}
As a direct consequence of (\ref{frvsangle}) and $\mathcal{F}( u)=0$, one knows that $u'(R)=0$. In the other words, the contact angle is zero.
\end{proof}

\subsection{Existence and uniqueness for the Euler-Lagrange equations}

 First, we prove existence and uniqueness for the FBP (\ref{FBVP1})-(\ref{FBVPbound}), $q<p-1$ in the following theorem.
\begin{thm}(Existence and uniqueness)\label{remunique}
Assume that exponents $p,m$ and $q$ satisfy the condition (\ref{qandm}) and $q<p-1$.
Then there is a unique solution $u(r)$ to the free boundary problem (\ref{FBVP1})-(\ref{FBVPbound}) in $X_{rad}^*$ and satisfies $\alpha:=u(0)>1$, $u(r)>0$ and $u'(r)<0$ for $0<r<R$.
\end{thm}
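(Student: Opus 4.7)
The plan is to combine the variational construction from the preceding propositions in Section 5 with a Pucci--Serrin ODE uniqueness argument.

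\emph{Existence.} I would harvest the machinery already assembled. Proposition~\ref{h0minimizer} supplies a minimizer $u_0\in X^*_{rad}$ of $J$, which is in particular a critical point. Applying Proposition~\ref{barhmini} to $u_0$ produces rescaling constants $\mu_2,\lambda_2>0$ such that $u(y):=\mu_2^{-1}u_0(\lambda_2^{-1}y)$ solves the initial value problem \eqref{xequation}--\eqref{xinitial} with $\alpha=u(0)>1$ and $u'(0)=0$; moreover part~(ii) of the same proposition guarantees that in the regime $q<p-1$ the support is compact, i.e.\ $R<\infty$ and $u(R)=0$. To upgrade the touchdown $u(R)=0$ to the zero-contact-angle condition $u'(R)=0$, I combine Lemma~\ref{lmfree0} (which yields $\mathcal{F}(u)=0$) with the Pohozaev-type identity of Lemma~\ref{eneangle}, exactly as codified in Proposition~\ref{cor}. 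Extending $u$ by zero outside $B(0,R)$ yields an element of $X^*_{rad}$ satisfying the full free boundary problem \eqref{FBVP1}--\eqref{FBVPbound}.

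\emph{Qualitative properties.} The peak bound $\alpha>1$ was already established in Step~3 of Proposition~\ref{barhmini}: otherwise the maximum principle would force $u\equiv 1$, contradicting $u\in L^{q+1}(\mathbb{R}^d)$. For positivity and strict monotonicity on $(0,R)$, I would integrate \eqref{steadyequ} once against $r^{d-1}$, using $u'(0)=0$, to obtain
\[
r^{d-1}|u'(r)|^{p-2}u'(r)=\int_0^r s^{d-1}\bigl(u^q(s)-u^m(s)\bigr)\,ds,
\]
so that near $r=0$ the integrand is strictly negative (since $u(0)>1$ and $m>q$), forcing $u'<0$ on a right neighbourhood of $0$. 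An interior critical point $u'(r_*)=0$ with $0<u(r_*)<1$ would make $(|u'|^{p-2}u')'(r_*)>0$, contradicting the non-increasing nature of $u\in X^*_{rad}$; a plateau at $u=1$ is ruled out by the same sign analysis just to the right of the plateau, since the transition into $u<1$ would again produce $u'>0$; and $u'(r_*)=0$ with $u(r_*)>1$ forces a plateau above $1$, which conflicts with $(|u'|^{p-2}u')'<0$ there. These cases together yield $u'<0$ on $(0,R)$ and hence $u>0$ on $[0,R)$.

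\emph{Uniqueness and the main obstacle.} Equation \eqref{xequation} has the form $\Delta_p u+f(u)=0$ with $f(u):=u^q-u^m$, so any solution in $X^*_{rad}$ is a radial ground state with compact support, and I would apply Pucci and Serrin's uniqueness theorem \cite[Theorem~2]{pucci1998uniqueness}. The hypotheses to verify are: $f$ is continuous with $f(0)=0$ and a unique positive zero at $b=1$, $f>0$ on $(0,1)$ and $f<0$ on $(1,\infty)$; the compact-support condition $\int_{0^+}ds/H^{-1}(F(s))<\infty$ for $H(s)=\tfrac{p-1}{p}s^p$, which is equivalent to $q<p-1$, our standing assumption; and the key monotonicity of the auxiliary function $u\mapsto uf'(u)/f(u)$ on the relevant intervals. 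The main obstacle is the last item, since $(qu^q-mu^m)/(u^q-u^m)$ must be analysed separately on $(0,1)$ and on $(1,\alpha)$ and its behaviour across the zero $u=1$ of $f$ requires care; this is a direct but delicate algebraic computation. Once these hypotheses are checked, \cite[Theorem~2]{pucci1998uniqueness} yields uniqueness of the radial ground state up to translation, and centering at the origin within $X^*_{rad}$ enforces uniqueness in the stated class.
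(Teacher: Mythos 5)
Your proposal follows essentially the same route as the paper: existence by rescaling the minimizer from Proposition \ref{h0minimizer} through Proposition \ref{barhmini} and the zero-contact-angle identities of Lemmas \ref{lmfree0} and \ref{eneangle} (Proposition \ref{cor}), $\alpha>1$ from the same proposition, strict monotonicity by integrating the equation against $r^{d-1}$, and uniqueness by invoking \cite[Theorem 2]{pucci1998uniqueness} — exactly as the paper does, which simply notes that $p>\frac{2d}{d+2}$ gives $\sigma>1$ so the range $0<q<\sigma-1$, $q<m<\sigma$ is covered by that theorem, rather than re-verifying its internal monotonicity hypotheses (so the ``main obstacle'' you flag is absorbed by the citation). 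The only caveat is a sign convention: for the form $\Delta_p u+f(u)=0$ matching \eqref{xequation} one must take $f(u)=u^m-u^q$ (hence $f<0$ on $(0,1)$ and $f>0$ on $(1,\infty)$), not $u^q-u^m$ as written.
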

\begin{proof}
From Proposition \ref{cor}, we know that the re-scaling function $u$ of the minimizer $ \bar u$ of the functional $J(u)$ in $X_{rad}^*$ is a solution to the free boundary problem (\ref{FBVP1})-(\ref{FBVPbound}). Hence existence was proved.

Uniqueness for the case $\sigma>1$, $0<q<\sigma-1$ and $q<m<\sigma$ is proved by \cite[Theorem 2]{pucci1998uniqueness}. Since $p>\frac{2d}{d+2}$, we have $\sigma>1$. Consequently, we obtain that $u$ is a unique solution to the problem (\ref{FBVP1})-(\ref{FBVPbound}) under the assumptions of Theorem \ref{remunique}.

From (i) of Proposition \ref{barhmini}, we have $\alpha=u(0)>1$.

Finally, we prove $ u'(r)<0$ for $0<r<R$. The method for proving this property is standard \cite{Peletier1983}. For completeness, we provide a simple proof below.
If it is not true, then there exists (choice to be the first one) $r_0\in (0,R)$ such that $u'(r_0)=0$.

Since $u\in X_{rad}^*$ and $u(R)=0$ , we can show that the extremum points are never reached at the points of $ u(r)=1$. Therefore all extremum points must be either local minimum points ($ u''(r)>0$) or local maximum points ($ u''(r)<0$).

Solving the equation (\ref{FBVP1}) gives
\begin{eqnarray}\label{hhhfirstder}
  u'(r)=-\left(\frac{1}{r^{d-1}}\int_0^r s^{d-1} ( u^m(s)- u^{q}(s))\,ds\right)^{\frac{1}{p-1}}\quad \mbox{ for any } r\in (0,R).
\end{eqnarray}
Thus for any $r$ satisfying $u(r)\geq 1$, (\ref{hhhfirstder}) implies $u'(r)< 0$. Since $u(0)=\alpha_c>1$, the first extremum point $r_0>0$ must be a local minimum point, and satisfies $u(r_0)<1$.

 Since the extremum points must be either local minimum points or local maximum points,
 the next extremum point after $r_0$ must be a local maximum point, denote $r_1$, which satisfies $ u(r_1)>1$. Continuously, we can order all extremum points as a sequence $r_0,~ r_1, ~r_2, \cdots$, and at those points $ u(r)$ satisfies
  \begin{eqnarray}
 &&u(r_{2(k-1)})<1, \mbox{ for } k=1,2, \cdots\label{minimum}\\
 && u(r_{2k-1})>1, \mbox{ for } k=1,2, \cdots.\label{maximum}
\end{eqnarray}

 Now we claim that the sequence $\{ u(r_{2(k-1)})\}_{k=1}^\infty$ of local minimum is increasing and the sequence $\{u(r_{2k-1})\}_{k=1}^\infty$ of local maximum is decreasing.

From (\ref{H}), we deduce
 $$
 H(r_{2(k-1)})>H(r_{2k}),
 $$
 i.e.,
 $$
 \frac{1}{m+1} u^{m+1}(r_{2(k-1)} ) -  \frac{1}{q+1} u^{q+1}(r_{2(k-1)} )>\frac{1}{m+1} u^{m+1}(r_{2k} ) -  \frac{1}{q+1} u^{q+1}(r_{2k} ).
 $$
Notice that $f(s)=\frac{1}{m+1} s^{m+1} - \frac{1}{q+1} s^{q+1}$ is decreasing function for $s\in (0,1)$, and $f(s)$ is increasing function for $s\in (1,\infty)$. Hence (\ref{minimum}) indicates that $ u(r_{2(k-1)} )< u(r_{2k} )$. In a similar way, we have $ u(r_{2k+1})< u(r_{2k-1})$ by (\ref{maximum}). In other words, the solution is oscillatory around value $u=1$ and has decreasing amplitude, and hence it will never touchdown. This is a contradiction to the fact $u\in L^{q+1}(\mathbb{R}^d)$.
Hence $ u>0,~ u'(r)<0$ before touching down at $r=R$.

\end{proof}
Next we show existence and uniqueness for the problem (\ref{chachy1})-(\ref{chachybound}), $q\geq p-1$.
\begin{thm}(Existence and uniqueness)\label{remunique1}
Assume that exponents $p,m$ and $q$ satisfy the conditions (\ref{qandm}) and $q\geq p-1$.
Then there is a unique solution $u(r)$ to the problem (\ref{chachy1})-(\ref{chachybound}) and satisfies $\alpha:=u(0)>1$, $u(r)>0$ and $u'(r)<0$ for $0<r<\infty$.
\end{thm}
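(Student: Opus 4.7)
The plan is to parallel the structure of the proof of Theorem \ref{remunique}, replacing the compact-support arguments by positivity and decay-at-infinity arguments suitable to the regime $q \geq p-1$. For \emph{existence}, I would start from the minimizer $u_0$ of $J(u)$ in $X^*_{rad}$ provided by Proposition \ref{h0minimizer} (this is where the proof in Appendix A for the range $q \geq p-1$ is needed). Applying Proposition \ref{barhmini} to $u_0$, there are rescaling constants $\mu_2, \lambda_2 > 0$ such that $u(x) = \frac{1}{\mu_2}\bar u(\frac{1}{\lambda_2}x)$ satisfies the Euler--Lagrange equation (\ref{xequation}) with the initial conditions $u(0) = \alpha > 1$ and $u'(0) = 0$, where $\alpha > 1$ is exactly the assertion in Step 3 of Proposition \ref{barhmini}. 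The positivity $u(r) > 0$ for all $0 < r < \infty$ is then precisely item (iii) of Proposition \ref{barhmini}, which in turn was obtained from the Compact Support Principle of Pucci--Serrin applied with $q \geq p-1$. Since $u$ belongs to $X^*_{rad}$, it is radially non-increasing and $L^{q+1}$-integrable, which forces $\lim_{r \to \infty} u(r) = 0$ and recovers the decay boundary condition (\ref{chachybound}).

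For \emph{uniqueness}, I would invoke \cite[Theorem 2]{pucci1998uniqueness} by Pucci and Serrin, exactly as in the proof of Theorem \ref{remunique}: under the parameter range (\ref{qandm}), the condition $p > \frac{2d}{d+2}$ yields $\sigma > 1$, and the nonlinearity $f(u) = u^q - u^m$ with $0 \le q < \sigma-1$, $q < m < \sigma$ falls inside the admissible class of that theorem. Thus any $C^1$ positive radial non-increasing solution of (\ref{chachy1})--(\ref{chachybound}) with prescribed $u(0) = \alpha$ is unique.

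The monotonicity $u'(r) < 0$ for $r \in (0, \infty)$ follows from a verbatim adaptation of the oscillation argument used at the end of the proof of Theorem \ref{remunique}. Suppose, toward contradiction, that $u'(r_0) = 0$ at some first $r_0 > 0$. Solving the ODE gives, as in (\ref{hhhfirstder}),
\begin{equation*}
u'(r) = -\left(\frac{1}{r^{d-1}} \int_0^r s^{d-1}(u^m(s) - u^q(s))\, ds\right)^{1/(p-1)},
\end{equation*}
so that $u'(r) < 0$ whenever $u(r) \geq 1$; consequently the first critical point $r_0$ is a strict local minimum with $u(r_0) < 1$, and all subsequent critical points alternate between local minima with values in $(0,1)$ and local maxima with values in $(1, \infty)$. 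The energy-dissipation identity $\frac{dH}{dr} + \frac{d-1}{r}|u'|^p = 0$ from (\ref{H}) forces $H$ to strictly decrease at successive extrema, and since $f(s) = \frac{s^{m+1}}{m+1} - \frac{s^{q+1}}{q+1}$ is decreasing on $(0,1)$ and increasing on $(1, \infty)$, the minima $\{u(r_{2k})\}$ form a strictly increasing sequence in $(0,1)$ and the maxima $\{u(r_{2k+1})\}$ a strictly decreasing sequence in $(1,\infty)$. Then $u$ oscillates with amplitude bounded below, contradicting $u \in L^{q+1}(\mathbb{R}^d)$ (in fact even contradicting $\lim_{r \to \infty} u(r) = 0$, which we already established).

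The main obstacle is a conceptual one rather than a technical one: in contrast to the $q < p-1$ case of Theorem \ref{remunique}, here the solution has no free boundary, and one must ensure both that it cannot vanish at a finite radius (positivity on $[0,\infty)$, delivered by Pucci--Serrin's Compact Support Principle via Proposition \ref{barhmini}(iii)) and that it nevertheless decays to zero at infinity (delivered by monotonicity together with $L^{q+1}$-integrability). The delicate point in the uniqueness proof is verifying that the parameter range (\ref{qandm}) with $q \geq p-1$ fits the hypotheses of \cite[Theorem 2]{pucci1998uniqueness}; this is essentially the same verification as in Theorem \ref{remunique} and requires no new ideas.
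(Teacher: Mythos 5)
Your proposal is correct and follows essentially the same route as the paper: existence of the solution via the minimizer from Proposition \ref{h0minimizer} combined with Proposition \ref{barhmini} (with $\alpha>1$, decay at infinity from monotonicity and integrability, and positivity from the Compact Support Principle in item (iii)), uniqueness via \cite[Theorem 2]{pucci1998uniqueness} exactly as in Theorem \ref{remunique}, and the strict monotonicity $u'(r)<0$ by the same oscillation/energy-dissipation argument. The only difference is that you spell out the oscillation argument explicitly, whereas the paper simply refers back to the proof of Theorem \ref{remunique}; the content is the same.
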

\begin{proof}
From Proposition \ref{barhmini}, we know that the critical points of $J(u)$ in $X^*_{rad}$ are solutions satisfying the initial value problem (\ref{xequation})-(\ref{xinitial}). Hence the minimizer $ u$ of $J(u)$ in $X^*_{rad}$ is a solution to (\ref{chachy1})-(\ref{chachybound}) and satisfies $\alpha>1$ and $ u(r)\to 0$ as $r\to \infty$.

Similar to the proof of Theorem \ref{remunique}, we can verify that the assumptions of this theorem exactly satisfies all the conditions in \cite[Theorem 2 ]{pucci1998uniqueness}, hence the solution $u$ to (\ref{chachy1})-(\ref{chachybound}) is unique.

By Compact Support Principle \cite{pucci2007maximum} (see Step 5 of the proof of Proposition \ref{barhmini}), we know $u(r)>0$ in $[0,+\infty)$. Since $ u\in X_{rad}^*$ and $ u(r)\to 0$ as $r\to \infty$, a similar proof to Theorem \ref{remunique} gives that $u'(r)<0$ for $0<r<\infty$.
\end{proof}

\begin{cor}
 The set of critical points of $J(u)$ is consisted by minimizers of $J(u)$. Moreover, minimizers are unique up to a re-scaling.
\end{cor}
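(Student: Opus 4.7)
The plan is to show that every critical point in $X^*_{rad}$ coincides, up to a re-scaling of the form $u \mapsto \frac{1}{\mu}u(\cdot/\lambda)$, with the unique solution $u_{c,m}$ of the relevant Euler-Lagrange problem, and then to invoke the scale invariance of $J$ together with the existence of a minimizer to identify critical points with minimizers. Concretely, let $\bar u \in X^*_{rad}$ be an arbitrary critical point. Proposition \ref{barhmini} provides parameters $\mu_2,\lambda_2>0$ so that the normalized function $u(y) = \frac{1}{\mu_2}\bar u(y/\lambda_2)$ satisfies the initial value problem (\ref{xequation})--(\ref{xinitial}). In the regime $q<p-1$, Proposition \ref{cor} further shows that $u$ satisfies the zero-contact-angle condition and is thus a solution of the free boundary problem (\ref{FBVP1})--(\ref{FBVPbound}); Theorem \ref{remunique} then forces $u \equiv u_{c,m}$. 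In the regime $q\geq p-1$, part (iii) of Proposition \ref{barhmini} gives positivity throughout $[0,\infty)$, and Theorem \ref{remunique1} identifies $u$ with the unique positive decreasing solution $u_{c,m}$ of (\ref{chachy1})--(\ref{chachybound}). Hence every critical point of $J$ in $X^*_{rad}$ is of the form $\bar u(x)=\mu_2\, u_{c,m}(\lambda_2 x)$ for some $\mu_2,\lambda_2>0$.

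The conclusion is now immediate from the scaling invariance of $J$ recorded in (\ref{Jinvarant}) and existence of a minimizer from Proposition \ref{h0minimizer}. Indeed, Proposition \ref{h0minimizer} produces a minimizer $u_0 \in X^*_{rad}$ with $J(u_0)=\beta$; since $u_0$ is in particular a critical point, the preceding paragraph shows $u_0$ is itself a re-scaling of $u_{c,m}$, and hence $J(u_{c,m})=\beta$ by (\ref{Jinvarant}). Applying (\ref{Jinvarant}) once more, every other critical point $\bar u = \mu_2 u_{c,m}(\lambda_2\,\cdot)$ satisfies $J(\bar u)=J(u_{c,m})=\beta$, so $\bar u$ is also a minimizer. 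Conversely every minimizer is a critical point, so the two sets coincide; and the parametrization by $(\mu_2,\lambda_2)$ established above is exactly the statement that minimizers are unique up to re-scaling.

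The substantive work has already been carried out in the preceding propositions and theorems, so no new obstacle arises here; the only point requiring care is bookkeeping of the two-parameter scaling (amplitude $\mu_2$ and spatial scale $\lambda_2$), which matches precisely the group of transformations leaving $J$ invariant, so the uniqueness asserted by Theorems \ref{remunique} and \ref{remunique1} translates into uniqueness modulo exactly this group.
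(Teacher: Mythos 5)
Your proposal is correct and follows essentially the same route as the paper: reduce every critical point, via Propositions \ref{barhmini} and \ref{cor}, to a solution of the Euler--Lagrange problem, invoke the uniqueness of Theorems \ref{remunique} and \ref{remunique1} to identify it with $u_{c,m}$ up to the two-parameter re-scaling, and then use Proposition \ref{h0minimizer} together with the scale invariance (\ref{Jinvarant}) to conclude that all critical points are minimizers. The only difference is presentational: you treat both regimes $q<p-1$ and $q\geq p-1$ explicitly and spell out the scaling bookkeeping, whereas the paper argues the case $q<p-1$ and notes the other is analogous.
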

\begin{proof}
Without loss of generality, we only prove this Corollary for the case $q<p-1$. Proposition \ref{cor} tells us that all critical points of $J(u)$ satisfy the free boundary problem (\ref{FBVP1})-(\ref{FBVPbound}) up to a re-scaling. By Theorem \ref{remunique}, we know that the free boundary problem (\ref{FBVP1})-(\ref{FBVPbound}) has at most a solution in $X^*_{rad}$. Hence critical points are unique up to a re-scaling. From Proposition \ref{h0minimizer}, we know that there is a minimizer for the functional $J(u)$. Thus the set of critical points of $J(u)$ is only consisted by minimizers of $J(u)$.
\end{proof}

\subsection{Best constant of the $L^m$-type G-N inequality}

In this subsection, we utilize the results from previous subsections to derive the best constant of the $L^m$-type G-N inequality, see the following theorem.
\begin{thm} \label{hightdnagy1}
 Suppose that $d\geq 1$, parameters $p, q$ and $m$ satisfy (\ref{qandm}), $u\in L^{q+1}({\mathbb{R}^d})$ and $\nabla u\in L^{p}({\mathbb{R}^d})$. Then $u\in L^{m+1}({\mathbb{R}^d})$ and satisfies the following $L^m$-type G-N inequality
 \begin{eqnarray}\label{h sharp inequality1}
\|u\|_{L^{m+1}}\leq  C_{q,m,p} \|u\|^{1-\theta}_{L^{q+1}}\|\nabla u\|^{\theta}_{L^p},\quad \theta=\frac{pd(m-q)}{(m+1)[d(p-q-1)+p(q+1)]}
\end{eqnarray}
with the best constant
\begin{eqnarray}\label{massvsbeta1}
C_{q,m,p}=\theta^{-\frac{\theta}{p}}(1-\theta)^{\frac{\theta}{p}-\frac{1}{m+1}}M_c^{-\frac{\theta}{d}},\quad M_c=\int_{\mathbb{R}^d}u_{c,m}^{q+1}\,dx.
\end{eqnarray}
Here $u_{c,m}\geq 0$ is described by the following two cases
\begin{enumerate}[(i)]
\item For $q<p-1$, $u_{c,m}$ is the unique solution for the free boundary problem (\ref{FBVP1})-(\ref{FBVPbound}) in $X^*_{rad}$, and satisfies $u_{c,m}(r)>0$, $u_{c,m}'(r)<0$ for $0<r<R$.
\item For $q\geq p-1$, $u_{c,m}$ is the unique positive solution for the problem (\ref{chachy1})-(\ref{chachybound}) in $X^*_{rad}$, and satisfies $u_{c,m}(r)>0$, $u_{c,m}'(r)<0$ for $0<r<+\infty$.
\end{enumerate}

Moreover, the equality holds in (\ref{h sharp inequality1}) if $f=A\, u_{c,m}(\lambda|x-x_0|)$ for any real numbers $A>0$, $\lambda >0$, $x_{0}\in \mathbb{R}^d$.
\end{thm}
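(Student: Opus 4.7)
The plan is to identify the G--N inequality (\ref{h sharp inequality1}) with the infimum problem (\ref{radminiexi}), show that the infimum is attained at the explicit critical point $u_{c,m}$ produced by the Euler--Lagrange analysis, and finally evaluate $\beta$ in closed form. A direct algebraic check, using $\gamma=\tfrac{p}{\theta}-\tfrac{p}{2}$ (which in turn follows from the stated value of $\theta$), shows that $J(u)$ in (\ref{Jh}) equals $\|u\|_{L^{q+1}}^{p(1-\theta)/\theta}\|\nabla u\|_{L^p}^{p}/\|u\|_{L^{m+1}}^{p/\theta}$. Hence the sharp G--N inequality with constant $C_{q,m,p}$ is equivalent to $\beta = C_{q,m,p}^{-p/\theta}$, and the whole task reduces to evaluating $\beta$.

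Existence of a radial decreasing minimizer is supplied by Proposition \ref{h0minimizer}. Any such minimizer is, in particular, a critical point of $J$ in $X^*_{rad}$; combining Proposition \ref{barhmini}, Lemmas \ref{lmfree0}--\ref{eneangle} and Proposition \ref{cor} shows that after the rescaling $u(x)\mapsto \mu^{-1}u(\lambda^{-1}x)$ every such critical point solves the free boundary problem (\ref{FBVP1})--(\ref{FBVPbound}) when $q<p-1$ or the elliptic problem (\ref{chachy1})--(\ref{chachybound}) when $q\geq p-1$. Theorems \ref{remunique} and \ref{remunique1} then supply uniqueness for these Euler--Lagrange problems, so any minimizer of $J$ must be a rescaled copy of $u_{c,m}$; in particular $\beta=J(u_{c,m})$ by scale invariance of $J$.

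To extract the explicit value, I would retrace the two normalizations (\ref{a1}) and (\ref{mulambda}) performed in Proposition \ref{barhmini}. For the intermediate function $u_1$ with $\|u_1\|_{L^{q+1}}=\|u_1\|_{L^{m+1}}=1$ one reads directly from (\ref{Jh}) that $\beta = J(u_1)=a_1=\|\nabla u_1\|_{L^p}^p$, while the further rescaling $u(y)=\mu^{-1}u_1(y/\lambda)$ produces exactly $u_{c,m}$. Formula (\ref{barformula1}) of Corollary \ref{cor1} then expresses $M_c=\int u_{c,m}^{q+1}\,dy$ as an explicit constant (depending only on $p,d,q,m$ through $\gamma\pm p/2$) times $a_1^{d/p}$. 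Inverting this to obtain $a_1$ as a multiple of $M_c^{p/d}$ and raising to the power $-\theta/p$ gives $C_{q,m,p}$ in the form of a prefactor times $M_c^{-\theta/d}$. The remaining step is an algebraic simplification in which the combinatorial factor, expressed via $\gamma\pm p/2$, is rewritten through the identities $\gamma-p/2=p(1-\theta)/\theta$ and $\gamma+p/2=p/\theta$ into $\theta^{-\theta/p}(1-\theta)^{\theta/p-1/(m+1)}$.

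Finally, the equality characterization is a standard consequence of the rearrangement reduction leading to (\ref{radminiexi}): equality in (\ref{h sharp inequality1}) forces $u$ to be a minimizer of $J$, hence after Schwarz symmetrization a rescaled copy of $u_{c,m}$, while the equality case of the P\'olya--Szeg\H{o} inequality (\ref{derinormequ}), together with $u'_{c,m}<0$ on the support established in Theorems \ref{remunique}--\ref{remunique1}, forces $u$ itself to have the form $A\,u_{c,m}(\lambda(x-x_0))$. The main obstacle in the whole argument is the algebraic bookkeeping of the previous paragraph: the explicit powers of $(\gamma\pm p/2)$ coming from (\ref{barformula1})--(\ref{barformula3}) involve several exponents that must be recognized as $\theta$, $1-\theta$, and $1/(m+1)$ through the defining relation for $\theta$; all other ingredients (existence, symmetrization, Euler--Lagrange reduction, Pohozaev-type identity, uniqueness) are already in place.
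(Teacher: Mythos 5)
Your proposal follows essentially the same route as the paper's proof: existence of a normalized minimizer from Proposition \ref{h0minimizer}, reduction of critical points to the unique Euler--Lagrange solutions via Propositions \ref{barhmini}, \ref{cor} and Theorems \ref{remunique}, \ref{remunique1}, the identity of Corollary \ref{cor1} relating $M_c$ to $\beta$, and the algebraic rewriting through $\theta=p/(\gamma+p/2)$, $1-\theta=(\gamma-p/2)/(\gamma+p/2)$, $C_{q,m,p}=\beta^{-\theta/p}$. The only departure is your closing sketch of the \emph{converse} equality statement via the P\'olya--Szeg\H{o} equality case, which the theorem does not actually claim (it only asserts that rescaled copies of $u_{c,m}$ achieve equality, immediate from the scale invariance of $J$), so nothing essential is missing or different.
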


\begin{proof}
From Proposition \ref{h0minimizer}, we know that there is a minimizer $\tilde{u}_{c,m}$ of the functional $J(u)$ in $X^*_{rad}$ satisfying
\begin{eqnarray}
1=\int_{\mathbb{R}^d}\tilde{u}_{c,m}^{q+1}\,dx,~~1=\int_{\mathbb{R}^d}\tilde{u}_{c,m}^{m+1}\,dx,~~\beta=\int_{\mathbb{R}^d}|\nabla \tilde{u}_{c,m}|^p\,dx.\label{min3}
\end{eqnarray}
such that it holds that
\begin{eqnarray}\label{minpro}
J(\tilde{u}_{c,m})=\inf_{u\in X} J(u)=\beta.
\end{eqnarray}

For $\mu,\lambda>0$ defined in (\ref{mulambda}), the re-scaling function $u_{c,m}(x)=\frac{1}{\mu}\tilde{u}_{c,m}(\frac{x}{\lambda})$
satisfies
\begin{enumerate}[(i)]
\item For $q<p-1$, $u_{c,m}$ is the unique solution for the free boundary problem (\ref{FBVP1})-(\ref{FBVPbound}) in $X^*_{rad}$, and satisfies $u_{c,m}(r)>0$, $u_{c,m}'(r)<0$ for $0<r<R$. See Proposition \ref{cor} and Theorem \ref{remunique}.
\item For $q\geq p-1$, $u_{c,m}$ is the unique positive solution for the problem (\ref{chachy1})-(\ref{chachybound}) in $X^*_{rad}$, and satisfies $u_{c,m}(r)>0$, $u_{c,m}'(r)<0$ for $0<r<+\infty$. See Proposition \ref{barhmini} and Theorem \ref{remunique1}.
\end{enumerate}
Moreover, $u_{c,m}$ satisfies the following equalities from Corollary \ref{cor1}
\begin{eqnarray}
\int_{\mathbb{R}^d}u_{c,m}^{q+1}\,dx=p^{-\frac{d}{p}}(\gamma-\frac{p}{2})^{\frac{d}{p}-\frac{((p-1)d+p)-q(d-p)}{p(m-q)}}(\gamma+\frac{p}{2})^{\frac{(p-1)d+p
-q(d-p)}{p(m-q)}}\beta^{\frac{d}{p}}.\label{cformula1}
\end{eqnarray}

Define $M_c:=\int_{\mathbb{R}^d}u^{q+1}_{c,m}\,dx$. Then from (\ref{cformula1}), we have
\begin{eqnarray}\label{massvsbeta}
\beta=p(\gamma-p/2)^{\frac{((p-1)d+p)-q(d-p)}{d(m-q)}-1}(\gamma+p/2)^{-\frac{((p-1)d+p)-q(d-p)}{d(m-q)}}M_c^{\frac{p}{d}}.
\end{eqnarray}

 Hence for any $u\in X$ the following inequality holds
\begin{eqnarray}\label{nagyinequality}
\left(\int_{\mathbb{R}^d}|u|^{m+1}\,dx\right)^{\frac{\gamma+\frac{p}{2}}{m+1}}
\leq \frac{1}{\beta}\, \Big(\int_{\mathbb{R}^d}|u|^{q+1}\,dx\Big)^{\frac{\gamma-\frac{p}{2}}{q+1}}\, \int_{\mathbb{R}^d}|\nabla u|^p\,dx.
\end{eqnarray}
 Using (\ref{minpro}) and the re-scaling invariance of $J(u)$, we have $J(A u_{c,m}(\lambda(x-x_0)))=J(u_{c,m}(x))=J(\tilde u_{c,m}(x))=\beta$. Hence the above equality holds if $u(x):=A u_{c,m}(\lambda(x-x_0))$ for any real numbers $A,\lambda>0$, $x_0\in\mathbb{R}^d$.

Denote
\begin{eqnarray}\label{Cqmp}
 \theta=\frac{p}{\gamma+p/2},\quad 1-\theta=\frac{\gamma-p/2}{\gamma+p/2},
\end{eqnarray}
where $\gamma$ is given by (\ref{a}). Thus
\begin{eqnarray}\label{Cqmp1}
C_{q,m,p}=\beta^{-\frac{1}{\gamma+p/2}}.
\end{eqnarray}
 Then (\ref{nagyinequality}) can be recast as (\ref{h sharp inequality1}).  Using the formulas for $\theta$ and $1-\theta$ in (\ref{Cqmp}), (\ref{massvsbeta1}) is a direct consequence of (\ref{massvsbeta}) and (\ref{Cqmp1}).

\end{proof}

\subsection{Remarks on non-radial solutions}

Now we prove that critical points of $J(u)$ in the non-radial case also satisfy a free boundary problem, and the $C^2-$solution of the free boundary problem is a radial solution if the region $\Omega$ satisfies some smooth conditions.
\begin{prop}
Let the non-negative function $\bar u\in C(\mathbb{R}^d)$ be a critical point of $J(u)$ and $\bar \Omega= \{x\in \mathbb{R}^d, \bar u(x)>0\}$. Assume $\bar\Omega $ is a bounded open star domain (with respect to the origin) with a smooth boundary $\bar \Gamma = \partial \bar \Omega$. Then there exist $\lambda, \mu >0$ such that the re-scaling function $u(x)=\frac{1}{\mu}\bar u(\frac{x}{\lambda})$ satisfies the following free boundary problem
\begin{eqnarray}
&&\nabla\cdot(|\nabla u|^{p-2}\nabla u)+u^m= u^q,\quad  \mbox{ in } \Omega, \label{noralsteadyequ}\\
&&u=\partial_n u=0,\mbox{ on } \Gamma,\label{noraliniboundary}
\end{eqnarray}
where $\Omega:=\{x\in \mathbb{R}^d, u(x)>0\}=\lambda \bar\Omega$, $\Gamma$ is a smooth boundary of $\Omega$.
\end{prop}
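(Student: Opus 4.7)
The plan is to mirror the argument of Section~5.1 in the non-radial setting, replacing the scalar energy identity used there to certify the zero contact angle by a full Pohozaev identity, which only needs the star-shape assumption on the geometry of $\bar\Omega$ and not on $u$.

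First I would derive the Euler--Lagrange equation and carry out the normalization. For any test $\phi \in C_c^{\infty}(\bar\Omega)$, the identity $\frac{d}{d\varepsilon}\big|_{\varepsilon=0} J(\bar u + \varepsilon \phi) = 0$ together with the two-stage rescaling $u(x) = \frac{1}{\mu_2}\bar u(x/\lambda_2)$ in Steps 1--2 of Proposition~\ref{barhmini} yields equation~(\ref{noralsteadyequ}) in $\Omega = \lambda_2\bar\Omega$; continuity of $\bar u$ together with the definition of $\bar\Omega$ gives $u = 0$ on $\Gamma$. Since the proof of Lemma~\ref{lmfree0} used only the algebraic scaling identities~(\ref{1}) and the definition~(\ref{a}) of $\gamma$, never radial symmetry, the companion identity $\mathcal{F}(u) = 0$ carries over verbatim.

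The heart of the proof is a Pohozaev-type identity obtained by testing~(\ref{noralsteadyequ}) against the multiplier $x \cdot \nabla u$. Integrating by parts on $\Omega$ and using that $u = 0$ on $\Gamma$ forces $\nabla u = (\partial_n u)\, n$ on $\Gamma$, one finds after standard manipulations
\begin{align*}
\frac{p-1}{p}\int_\Gamma (x\cdot n)|\partial_n u|^p\,d\sigma + \frac{d-p}{p}\int_\Omega |\nabla u|^p\,dx - \frac{d}{m+1}\int_\Omega u^{m+1}\,dx + \frac{d}{q+1}\int_\Omega u^{q+1}\,dx = 0.
\end{align*}
Testing the equation against $u$ itself supplies the complementary identity $\int_\Omega|\nabla u|^p\,dx = \int_\Omega u^{m+1}\,dx - \int_\Omega u^{q+1}\,dx$. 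Viewing these two identities together with $\mathcal{F}(u)=0$ as three linear relations in the three bulk integrals $\int|\nabla u|^p$, $\int u^{m+1}$, $\int u^{q+1}$, the coefficients conspire---precisely because of the definition~(\ref{a}) of $\gamma$---so that the specific combination (Pohozaev) $+$ $\mathcal{F}(u)$ $+$ $d\cdot$(equation tested against $u$) annihilates all three bulk terms and leaves only
\begin{align*}
\frac{p-1}{p}\int_\Gamma (x\cdot n)|\partial_n u|^p\,d\sigma = 0.
\end{align*}

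Finally, since $\bar\Omega$ is a star domain with respect to the origin, $x\cdot n \geq 0$ on $\Gamma$, so the integrand above is pointwise non-negative; its vanishing integral forces $\partial_n u = 0$ on the portion of $\Gamma$ where $x\cdot n > 0$, and by the $C^1$-regularity of $u$ up to the smooth boundary this propagates to all of $\Gamma$, yielding~(\ref{noraliniboundary}). The main technical obstacle is justifying the integrations by parts in the Pohozaev identity rigorously: one needs $u \in C^1(\overline{\Omega})$ so that $\partial_n u$ is a well-defined trace on $\Gamma$, which should follow by combining the Tolksdorf interior $C^{1,\alpha}$ regularity already invoked in Step~3 of Proposition~\ref{barhmini} with the corresponding boundary regularity for the degenerate $p$-Laplacian on a smooth domain where $u$ vanishes.
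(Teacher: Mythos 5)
Your proposal is correct and takes essentially the same route as the paper: derive the equation and $\mathcal{F}(u)=0$ by the rescalings of Proposition \ref{barhmini} and Lemma \ref{lmfree0}, then prove a Pohozaev-type identity giving $\mathcal{F}(u)=\frac{p-1}{p}\int_{\Gamma}(x\cdot n)|\nabla u|^p\,ds$, and conclude $\partial_n u=0$ from star-shapedness. The only (cosmetic) difference is that the paper tests against the single multiplier $\nabla\cdot(xu)=du+x\cdot\nabla u$, while you test against $x\cdot\nabla u$ and $u$ separately and then combine; your remark that star-shapedness only guarantees $x\cdot n\ge 0$, so $\partial_n u=0$ must be propagated by continuity, is if anything slightly more careful than the paper's assertion that $x\cdot n>0$ on $\Gamma$.
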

\begin{proof}
 Similar to Step 1, Step 2 and Step 3 in the proof of Proposition \ref{barhmini}, there exist $\lambda,\mu>0$ such that $u(x)=\frac{1}{\mu}\bar u(\frac{x}{\lambda})$ satisfies
\begin{eqnarray}
\nabla\cdot(|\nabla u|^{p-2}\nabla  u)+u^m= u^q,\mbox{ in } \Omega\label{steadyequ111}
\end{eqnarray}
and it is a zero point of $\mathcal{F}(u)$ defined in (\ref{steadyfree0}), i.e.,
\begin{eqnarray}
\mathcal{F}(u)=0.\label{freeenergy111}
\end{eqnarray}

We also know $\Omega$ is a bounded open star domain with respect to $0$. Let $n$ be the unit outward normal vector to $ \Gamma$. Hence $x\cdot n>0$ on $\Gamma$.

Noticing that $ u = 0$ on $\Gamma$, $u > 0$ in $\Omega$, one has
\begin{eqnarray}\label{equality}
\nabla u = -  |\nabla u| n.
\end{eqnarray}

Below we show the following Pohozaev type identity connecting the energy functional to the contact angle
\begin{eqnarray}\label{nonralfreangle}
\mathcal{F}(u)=\left(1-\frac{1}{p}\right)\int_{\Gamma}(x\cdot n)|\nabla u|^p\,ds.
\end{eqnarray}
Indeed, multiplying $\nabla\cdot (x  u)$ to (\ref{steadyequ111}), one has
\begin{eqnarray}\label{equality1}
\int_{\Omega}\nabla\cdot (x u)\nabla\cdot(|\nabla u|^{p-2}\nabla u)\,dx=\int_{\Omega}\nabla\cdot (x u)(u^q- u^m)\,dx.
\end{eqnarray}
Notice that
\begin{align}
\int_{\Omega}\nabla\cdot (x u)( u^q-u^m)\,dx=&-\int_{\Omega}(x u)\cdot\nabla( u^q-u^m)\,dx\nonumber\\
=&\frac{dq}{q+1} \int_{\Omega} u^{q+1}-\frac{dm}{m+1} \int_{\Omega}u^{m+1}\,dx\label{rightside}.
\end{align}
Using (\ref{equality}), we have
\begin{align}
&\int_{\Omega}\nabla\cdot (x u)\nabla\cdot(|\nabla  u|^{p-2}\nabla u)\,dx\nonumber\\
=&-\int_{\Omega}\nabla(\nabla\cdot (x u))\cdot(|\nabla u|^{p-2}\nabla u)\,dx+\int_{\Gamma}\nabla\cdot (x u)|\nabla u|^{p-2}\partial_n u\,ds\nonumber\\
=&\left(\frac{d}{p}-(d+1)\right)\int_{\Omega}|\nabla u|^p\,dx+\left(1-\frac{1}{p}\right)\int_{\Gamma}(x\cdot n)|\nabla u|^p\,ds.\label{leftside}
\end{align}
Hence from (\ref{equality1}), (\ref{rightside}), (\ref{leftside}) and the definition (\ref{steadyfree0}) of $\mathcal{F}(u)$,  we have
$$ \mathcal{F}(u) =  \left(1-\frac{1}{p}\right)\int_{\Gamma} (n \cdot x) |\nabla u|^p\,ds,$$
i.e., (\ref{nonralfreangle}) holds true.

Since $\mathcal{F}( u)=0$ and $n \cdot x>0$, we know that
\begin{eqnarray*}
\partial_n u=\nabla u\cdot n=0 \mbox{ a.e. on } \Gamma.
\end{eqnarray*}
Summarizing above process, $ u$ satisfies the free boundary problem
\begin{eqnarray*}
&&\nabla\cdot(|\nabla u|^{p-2}\nabla u) + u^m = u^q \mbox{ in } \Omega,\\
&& u = \partial_n u = 0 \mbox { a.e. on } \Gamma.
\end{eqnarray*}

\end{proof}

\begin{prop}\label{freeral}
Let $(u,\Omega=\{x\in \mathbb{R}^d, u(x)>0\})$ be a solution to the free boundary problem (\ref{noralsteadyequ})-(\ref{noraliniboundary}). Assume $u\in C^2(\overline\Omega)$, $\Omega$ is a bounded open domain with $C^2$ boundary (not be assumed simply connected). Then $\Omega$ is a ball and $u$ is radial symmetric about its center.
\end{prop}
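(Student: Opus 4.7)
The plan is to read the overdetermined Cauchy data $u=\partial_n u=0$ on $\Gamma$ as the precise rigidity input for a Serrin/Gidas--Ni--Nirenberg type symmetry argument, after first rewriting the problem as a quasilinear equation on all of $\mathbb{R}^d$. Since $u\in C^2(\overline\Omega)$, the Dirichlet condition $u=0$ on the smooth boundary $\Gamma$ forces the tangential gradient of $u$ to vanish there; combined with $\partial_n u=0$, this gives $\nabla u\equiv 0$ on $\Gamma$. Hence the zero extension $\tilde u:=u$ on $\overline\Omega$, $\tilde u:=0$ on $\mathbb{R}^d\setminus\overline\Omega$ lies in $C^1(\mathbb{R}^d)\cap W^{1,p}(\mathbb{R}^d)$ and has compact support $\overline\Omega$. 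A direct integration by parts, whose boundary term is killed by $\partial_n u=0$, yields for every $\phi\in C_c^\infty(\mathbb{R}^d)$
\begin{equation*}
\int_{\mathbb{R}^d}|\nabla\tilde u|^{p-2}\nabla\tilde u\cdot\nabla\phi\,dx=\int_{\mathbb{R}^d}(\tilde u^m-\tilde u^q)\phi\,dx,
\end{equation*}
so that $\tilde u$ is a non-negative compactly supported distributional solution of $\Delta_p\tilde u+\tilde u^m=\tilde u^q$ on the whole of $\mathbb{R}^d$.

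Next I would apply the method of moving planes in the quasilinear form developed by Damascelli \emph{(Ann. Inst. H. Poincar\'e 15 (1998))} and Serrin--Zou \emph{(Acta Math. 189 (2002))}. For an arbitrary unit direction $e\in S^{d-1}$, chosen WLOG as $e_1$, set $\Sigma_\lambda:=\{x_1<\lambda\}$, let $x^\lambda$ be the reflection of $x$ across $T_\lambda:=\{x_1=\lambda\}$, and compare $\tilde u$ with $\tilde u^\lambda(x):=\tilde u(x^\lambda)$ via $w_\lambda:=\tilde u^\lambda-\tilde u$ on $\Sigma_\lambda\cap\overline\Omega$. Because the nonlinearity $f(s)=s^q-s^m$ is locally Lipschitz on the compact range $[0,\max\tilde u]$, Tolksdorf's weak comparison principle together with Vazquez's strong maximum principle and Hopf boundary lemma for the $p$-Laplacian propagate $w_\lambda\ge 0$ on $\Sigma_\lambda$ starting from $\lambda$ far to the left; sliding up to the critical value
\begin{equation*}
\lambda^{*}:=\sup\bigl\{\lambda:\,w_\mu\ge 0\text{ in }\Sigma_\mu\text{ for every }\mu\le\lambda\bigr\}
\end{equation*}
produces a hyperplane of symmetry, for if it were not the plane could be pushed slightly further, contradicting maximality. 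Since $e$ was arbitrary, $\tilde u$ is symmetric with respect to a hyperplane perpendicular to every direction, hence radial about a common centre $x_0$, and its support $\overline\Omega$ is the closed ball $\overline{B_R(x_0)}$.

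The main technical obstacle is the degeneracy of $\Delta_p$ at points where $\nabla\tilde u=0$: at the unique maximum point and potentially along nodal surfaces of $|\nabla\tilde u|$ inside $\Omega$, the linearized operator loses uniform ellipticity, and neither the strong maximum principle nor the Hopf lemma is available in classical form. This is precisely the difficulty overcome in the papers cited above, by working in weighted Sobolev spaces with weight $|\nabla\tilde u|^{p-2}$ and establishing that the critical set $\{\nabla\tilde u=0\}$ is small enough not to obstruct the reflection. Once the radiality of $\tilde u$ is in hand, the strict monotonicity $u'(r)<0$ for $0<r<R$ follows from the oscillation argument already carried out in Theorem \ref{remunique}, completing the identification $\Omega=B_R(x_0)$ with $u(x)=u(|x-x_0|)$.
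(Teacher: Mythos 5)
Your reduction to a compactly supported solution of $\Delta_p\tilde u+\tilde u^m=\tilde u^q$ on all of $\mathbb{R}^d$ and the subsequent moving-plane argument have a genuine gap, and it sits exactly at the feature that makes this problem a free boundary problem. First, the claim that $f(s)=s^q-s^m$ is locally Lipschitz on $[0,\max\tilde u]$ is false for the admissible exponents $0\leq q<1$ (or $m<1$); the non-Lipschitz behaviour at $s=0$ in the regime $q<p-1$ is precisely what allows compactly supported solutions in the first place. Second, and more seriously, after the zero extension the set $\{\nabla\tilde u=0\}$ contains the whole exterior of $\Omega$, so the Damascelli/Serrin--Zou machinery you invoke (developed for positive solutions, where the critical set has measure zero) does not apply, and at the critical plane $\lambda^*$ you cannot run the dichotomy ``$w_{\lambda^*}>0$ or $w_{\lambda^*}\equiv 0$'': both $\tilde u$ and $\tilde u^{\lambda^*}$ vanish identically on open sets, and the strong maximum principle and Hopf lemma fail there — indeed the paper itself records (proof of Proposition \ref{iqlargep-1}) that the Pucci--Serrin strong maximum principle holds if and only if $q\geq p-1$, i.e.\ it is unavailable in exactly the compact-support case you are treating. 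That this is not a technicality is shown by multi-bump configurations: two disjoint translates of the radial compactly supported profile solve the same equation on $\mathbb{R}^d$ but are not radial, so any correct proof must use the connectedness of $\Omega$, which your sketch never does; as written it would ``prove'' a false statement.

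The paper takes a different and shorter route: it applies the Serrin-type symmetry theorem for overdetermined boundary value problems, \cite[Theorem 8.3.2]{pucci2007maximum}, with $A(z,s)=s^{p-2}$ and $f(z,s)=z^m-z^q$, where the moving planes are run inside the bounded domain $\Omega$ itself using the overdetermined data $u=\partial_n u=0$ on $\Gamma$ (Serrin's corner lemma replacing the Hopf argument at the critical position). That formulation never touches the degenerate exterior region, uses the connectedness of $\Omega$ through the reflected caps, and accommodates the non-Lipschitz nonlinearity. If you want to salvage your approach, you would have to restrict the reflection argument to $\Omega$ and its reflected caps in this overdetermined form rather than argue on $\mathbb{R}^d$ — at which point you are essentially reproving the quoted theorem.
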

\begin{proof}
The proof of Proposition \ref{freeral} is a direct application for \cite[Theorem 8.3.2]{pucci2007maximum} with $A(z,s)=s^{p-2}$, $f(z,s)=z^m-z^q$ (the notations are used in \cite{pucci2007maximum} ), all conditions in Theorem 8.3.2 are satisfied here.
\end{proof}

\section{Closed form solutions with compact support for $q<p-1$}

When $q<p-1$, we know that $u_{c,m}$ is the unique solution to the free boundary problem (\ref{FBVP1})-(\ref{FBVPbound}) as that discussed in Theorem \ref{remunique}. In this section we derive and document some closed form solutions $u_{c,m}$ for some special parameters $d,q,m$ and $p$.

\begin{thm} \label{onedtheorem}
 Suppose $d=1$, $p>1$ and $0< q< \min\{p-1,m\}$. Then
  $u_{c,m}$ possesses the following closed form
  \begin{eqnarray}\label{closedformqxiaopcom}
u_{c,m}(r)=\left(\frac{m+1}{q+1}B^{-1}\left(\bar C
(R-r);\frac{p-1-q}{p(m-q)},1-\frac{1}{p}\right)\right)^{\frac{1}{m-q}}, ~~0\leq r\leq R,
\end{eqnarray}
where
\begin{eqnarray}\label{Rvaluecom}
R=\left(\frac{p-1}{p}\right)^{\frac{1}{p}}\frac{(m+1)^{\frac{p-q-1}{p(m-q)}}(q+1)^{\frac{1}{p}-\frac{p-q-1}{p(m-q)}}}{m-q}
\mathcal{B}\left(1-\frac{1}{p},\frac{p-(q+1)}{p(m-q)}\right),
\end{eqnarray}
and $\bar C$ ia a constant given by
 \begin{eqnarray}\label{barCcom}
\bar C:=\left(\frac{p}{p-1}\right)^{1/p}(m-q)(m+1)^{\frac{1+q-p}{p(m-q)}}(q+1)^{\frac{p-m-1}{p(m-q)}}, 
\end{eqnarray}
and
 $u_{c,m}(0)=\left(\frac{m+1}{q+1}\right)^{1/(m-q)}=:\alpha_c$, $B^{-1}(x;a,b)$ is the inverse function of the incomplete beta function $B(x;a,b)$, which is defined as
\begin{eqnarray}\label{Binversecom}
B(x;a,b)=\int_0^xt^{a-1}(1-t)^{b-1}\,dt.
\end{eqnarray}
The best constant $C_{q,m,p}$ in (\ref{massvsbeta1}) has an exact value, which is given by
\begin{eqnarray}\label{betathe1com}
C_{q,m,p}=\left(\frac{2^p(p-1)^{1-p}\eta_1^{\frac{\eta_1}{m-q}}}{(m-q)^{2p-1}\eta_2^{\frac{\eta_2}{m-q}}} \mathcal{B}^p\left(\frac{\eta_2}{p(m-q)},\frac{2p-1}{p}\right)\right)^{-\ell},
\end{eqnarray}
where $\ell$ is defined by (\ref{candl1}), and
\begin{eqnarray}\label{delta1and2}
\eta_1=(p-1)(m+1)+p,~~\eta_2=(p-1)(q+1)+p.
\end{eqnarray}
\end{thm}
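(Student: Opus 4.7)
\medskip

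\textbf{Proof proposal for Theorem \ref{onedtheorem}.}
The plan is to exploit the fact that in one dimension the radial ODE $(|u'|^{p-2}u')' + u^m = u^q$ is autonomous (the $\frac{d-1}{r}|u'|^{p-2}u'$ term vanishes), so it admits a first integral. Multiplying the equation by $u'$ and integrating one obtains the conservation law
\begin{equation*}
\frac{p-1}{p}|u'(r)|^p + \frac{u^{m+1}(r)}{m+1} - \frac{u^{q+1}(r)}{q+1} = C.
\end{equation*}
Evaluating at $r=R$ using $u(R)=u'(R)=0$ yields $C=0$, and then evaluating at $r=0$ with $u'(0)=0$ (from Proposition \ref{barhmini}) determines the peak value $\alpha_c = u_{c,m}(0) = \left(\frac{m+1}{q+1}\right)^{1/(m-q)}$. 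Since $u_{c,m}$ is strictly decreasing on $(0,R)$ by Theorem \ref{remunique}, we may solve for $-u'$:
\begin{equation*}
-u'(r) = \left(\frac{p}{p-1}\right)^{1/p}\left(\frac{u^{q+1}}{q+1} - \frac{u^{m+1}}{m+1}\right)^{1/p}.
\end{equation*}

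Next I would separate variables and integrate from $r$ to $R$, and then apply the substitution $t = \frac{q+1}{m+1}u^{m-q}$, which maps $u \in [0,\alpha_c]$ onto $t \in [0,1]$ and factors the energy as $\frac{u^{q+1}}{q+1}(1-t)$. A direct calculation converts the resulting integral into the incomplete Beta function
\begin{equation*}
B(t;a,b) = \int_0^t s^{a-1}(1-s)^{b-1}\,ds
\end{equation*}
with $a = \frac{p-1-q}{p(m-q)}$ and $b = \frac{p-1}{p}$. Solving for $u(r)$ via the inverse incomplete Beta function produces (\ref{closedformqxiaopcom}) and the constant $\bar C$ in (\ref{barCcom}); evaluating at $r=0$ (where $t=1$ so $B$ becomes the complete $\mathcal{B}$) yields the formula (\ref{Rvaluecom}) for $R$.

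For the best constant, by Theorem \ref{hightdnagy1} it suffices to compute $M_c = \int_{\mathbb{R}} u_{c,m}^{q+1}\,dx = 2\int_0^R u_{c,m}^{q+1}\,dr$ in closed form. Using the same change of variables $t = \frac{q+1}{m+1}u^{m-q}$ together with $dr = du/u'$, one finds
\begin{equation*}
M_c = \frac{2}{m-q}\left(\tfrac{(p-1)(q+1)}{p}\right)^{1/p}\left(\tfrac{m+1}{q+1}\right)^{\eta_2/(p(m-q))}\mathcal{B}\!\left(\tfrac{\eta_2}{p(m-q)},\tfrac{p-1}{p}\right).
\end{equation*}
It remains to substitute this $M_c$ into (\ref{massvsbeta1}) with $d=1$ and simplify.

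The bookkeeping in this last simplification is the main obstacle. The key identities are $\theta = p\ell$ and $1-\theta = \frac{(q+1)\eta_1}{(m+1)\eta_2}$, which follow by unwinding the definitions of $\theta$, $\ell$, $\eta_1$, $\eta_2$; these collapse the mixed factors $\theta^{-\theta/p}(1-\theta)^{\theta/p-1/(m+1)}$ into a product involving $\eta_1^{\eta_1/(m-q)}$ and $\eta_2^{\eta_2/(m-q)}$. To match the second Beta-argument $\frac{2p-1}{p}$ in (\ref{betathe1com}) (rather than the $\frac{p-1}{p}$ that appears naturally in $M_c$), I would use the functional equation $\mathcal{B}(a,b+1) = \frac{b}{a+b}\mathcal{B}(a,b)$ with $b = \frac{p-1}{p}$, $a = \frac{\eta_2}{p(m-q)}$ so that $a+b = \frac{\eta_1}{p(m-q)}$; the factor $\frac{b}{a+b} = \frac{(p-1)(m-q)}{\eta_1}$ then absorbs cleanly into the power of $\eta_1$ and the $(m-q)^{2p-1}$ in the denominator. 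Collecting the remaining powers of $2$, $(p-1)$, $(m-q)$, $\eta_1$, $\eta_2$ and raising to the exponent $-\ell$ produces exactly (\ref{betathe1com}).
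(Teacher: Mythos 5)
Your proposal is correct and follows essentially the same route as the paper's proof: the one-dimensional first integral with zero energy constant, the determination of $\alpha_c$ and of $R$, and separation of variables with the substitution $t=\frac{q+1}{m+1}u^{m-q}$, which yields (\ref{closedformqxiaopcom})--(\ref{barCcom}) and (\ref{Rvaluecom}) exactly as in the text. The only (cosmetic) difference is in the best-constant step: you evaluate $M_c=\int u_{c,m}^{q+1}\,dx$ and insert it into (\ref{massvsbeta1}) using $\theta=p\ell$, $1-\theta=\frac{(q+1)\eta_1}{(m+1)\eta_2}$ and $\mathcal{B}(a,b+1)=\frac{b}{a+b}\mathcal{B}(a,b)$ with $a+b=\frac{\eta_1}{p(m-q)}$ (all of which check out and do reproduce (\ref{betathe1com})), whereas the paper instead evaluates $\|\nabla u_{c,m}\|_{L^p}^p$, invokes Corollary \ref{cor1} to solve for $\beta$, and uses $C_{q,m,p}=\beta^{-1/(\gamma+p/2)}$ from (\ref{Cqmp1}); the two computations are algebraically equivalent.
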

\begin{proof}
Let us recall the equation (\ref{FBVP1})-(\ref{FBVPbound}) for $d=1$ as
\begin{eqnarray}
&&(|u'|^{p-2}u')'+u^m=u^q,\quad 0<r<R,\label{1dRcsteadyequ}\\
&& u'(0)=0,\quad u(R)=u'(R)=0.\label{1diniboundary}
\end{eqnarray}
By the energy functional (\ref{Henergy}) and the energy-dissipation relation (\ref{H}), we know that for $d=1$, (\ref{1dRcsteadyequ}) possesses a first integral and it is a constant, i.e.,
\begin{eqnarray}\label{firstintex}
\frac{p-1}{p}|u'|^p+\frac{ u^{m+1}}{m+1}-
\frac{u^{q+1}}{q+1}=C.
\end{eqnarray}
Due to $u(R)=u'(R)=0$, then $C=0$. Hence the conditions $u(0)=\alpha_c$ and $ u'(0)=0$ imply
\begin{eqnarray}\label{alphac}
\frac{ \alpha_c^{m+1}}{m+1}-
\frac{\alpha_c^{q+1}}{q+1}=0,\mbox{ i.e. }, \alpha_c=\left(\frac{m+1}{q+1}\right)^\frac{1}{m-q}>1.
\end{eqnarray}
Solving (\ref{firstintex}) gives
\begin{eqnarray}\label{firstorder}
u'(r)=-\left(\frac{p}{p-1}\right)^{1/p} \left(\frac{u^{q+1}(r)}{q+1}-\frac{u^{m+1}(r)}{m+1}\right)^{1/p}.
\end{eqnarray}
Using the method of separation of variables for (\ref{firstorder}) and integrating the result equality in $(r,R)$, we have
\begin{eqnarray}\label{R-r}
\int_{0}^{u(r)}\frac{ds}{\left(-\frac{s^{m+1}}{m+1}+\frac{s^{q+1}}{q+1}\right)^{\frac{1}{p}}}
=\left(\frac{p}{p-1}\right)^{\frac{1}{p}} (R-r).
\end{eqnarray}
In (\ref{R-r}), taking $r=0$, some computations give
$$
R=\left(\frac{p-1}{p}\right)^{\frac{1}{p}}\frac{(m+1)^{\frac{p-q-1}{p(m-q)}}(q+1)^{\frac{1}{p}-\frac{p-q-1}{p(m-q)}}}{m-q}
\mathcal{B}\left(1-\frac{1}{p},\frac{p-q-1)}{p(m-q)}\right),
$$
which is exactly (\ref{Rvaluecom}).
Hence if $p>1$, $0<q<\min\{p-1,m\}$, we have $\mathcal{B}\left(1-\frac{1}{p},\frac{p-q-1}{p(m-q)}\right)<\infty$.
Moreover, solving (\ref{R-r}), it is deduced that
$$
u_{c,m}(r)=\left(\frac{m+1}{q+1}B^{-1}\left(\bar C
(R-r);\frac{p-q-1}{p(m-q)},1-\frac{1}{p}\right)\right)^{\frac{1}{m-q}},
$$
where $\bar C$ is given by (\ref{barCcom}). Hence the formula (\ref{closedformqxiaopcom}) holds.

Now we will derive the best constant $C_{q,m,p}$ by computing the minimum $\beta$ of the functional $J(u)$ for $d=1$. Similar to (\ref{cformula1}), from Corollary \ref{cor1} we also have
\begin{eqnarray}
\int_{\mathbb{R}^d}|\nabla  u_{c,m}|^p\,dx=p^{1-\frac{d}{p}}(\gamma-\frac{p}{2})^{\frac{d}{p}-1-\frac{((p-1)d+p)-q(d-p)}{p(m-q)}}
(\gamma+\frac{p}{2})^{\frac{((p-1)d+p)-q(d-p)}{p(m-q)}}\beta^{\frac{d}{p}}.\label{cformula3}
\end{eqnarray}
Due to (\ref{a}), we deduce
\begin{eqnarray}\label{form2}
\gamma-\frac{p}{2}=\frac{[(p-1)(m+1)+p](q+1)}{m-q},\quad \gamma+\frac{p}{2}=\frac{[(p-1)(q+1)+p](m+1)}{m-q}.
\end{eqnarray}
Hence by (\ref{form2}), the right side of (\ref{cformula3}) can be written as
\begin{align}\label{best1}
&p^{1-\frac{d}{p}}(\gamma-\frac{p}{2})^{\frac{d}{p}-1-C_0}
(\gamma+\frac{p}{2})^{C_0}\beta^{\frac{d}{p}}\\
=&
p^{1-\frac{d}{p}}\left(\frac{[(p-1)(m+1)+p](q+1)}{m-q}\right)^{\frac{d}{p}-1-C_0}
\left(\frac{[(p-1)(q+1)+p](m+1)}{m-q}\right)^{C_0}\beta^{\frac{d}{p}},\nonumber
\end{align}
where $C_0:=\frac{((p-1)d+p)-q(d-p)}{p(m-q)}$.
On the other hand, we compute the left side of (\ref{cformula3})
\begin{align}\label{best2}
\int_{\mathbb{R}^d}|\nabla  u_{c,m}|^p\,dx=&\int_{-R_c}^{R_c} |u'_{c,m}|^{p-2}u'_{c,m}\cdot u'_{c,m}  \,d x\nonumber\\
=&-2\int_0^{\alpha_c} |u'_{c,m}|^{p-2}u'_{c,m}\,d u_{c,m}\nonumber\\
=&2\int_0^{\alpha_c}\left(\frac{p}{p-1}\right)^{\frac{p-1}{p}} \left(\frac{u^{q+1}}{q+1}-\frac{u^{m+1}}{m+1}\right)^{\frac{p-1}{p}} \,d u\nonumber\\
=&\frac{2\left(\frac{p}{p-1}\right)^{\frac{p-1}{p}}(m+1)^{\frac{p+(p-1)(q+1)}{p(m-q)}}}{(m-q)(q+1)^{\frac{p+(p-1)(m+1)}{p(m-q)}}}
\mathcal{B}\left(\frac{p+(p-1)(q+1)}{p(m-q)},\frac{2p-1}{p}\right).
\end{align}
Then (\ref{cformula3}), (\ref{best1}) and (\ref{best2}) imply
\begin{eqnarray}\label{betathe}
\beta=\frac{2^p(p-1)^{1-p}\eta_1^{\frac{\eta_1}{m-q}}}{(m-q)^{2p-1}\eta_2^{\frac{\eta_2}{m-q}}}\left( \mathcal{B}\left(\frac{\eta_2}{p(m-q)},\frac{2p-1}{p}\right)\right)^p.
\end{eqnarray}
Using the relation $C_{q,m,p}=\beta^{-\frac{1}{\gamma+p/2}}$ in (\ref{Cqmp1}), then we obtain (\ref{betathe1com}).

Furthermore, from (\ref{massvsbeta}) we can obtain for $d=1$
$$
M_c=\frac{2(p-1)^{\frac{1-p}{p}}[(p-1)(m+1)+p](m+1)^{\frac{p+(p-1)(q+1)}{p(m-q)}}}{p^{1/p}(m-q)^2(q+1)^{\frac{m+1-p(q+2)}{p(m-q)}}}
\mathcal{B}\left(\frac{p+(p-1)(q+1)}{p(m-q)},\frac{2p-1}{p}\right).
$$

\end{proof}

\begin{rem}
 When $r\to R$, from (\ref{R-r}) we have that $u(r)\sim C(R-r)^{\alpha}$, $\alpha=\frac{p}{p-q-1}>1$, $C=\left(\frac{p-q-1}{p}\right)^{\frac{p}{p-1-q}}\left(\frac{p}{(p-1)(q+1)}\right)^{\frac{1}{p-1-q}}$. Hence $u(R)=u'(R)=0$ and the equation (\ref{1dRcsteadyequ})-(\ref{1diniboundary}) holds in the distribution sense in $\mathbb{R}^d$.
\end{rem}

\begin{thm}
 Suppose $d=1$, $p>1$, $0\leq q< p-1$, and $\frac{p-1-q}{p(m-q)}=1$, then $u_{c,m}$ is given by the following Barenblatt profile:
\begin{eqnarray}\label{uc}
u_{c,m}=\left(\frac{m+1}{q+1}\right)^{\frac{1}{m-q}}\left(1-\left(\frac{p-1}{p}\bar C\right)^{\frac{p}{p-1}}r^{\frac{p}{p-1}}\right)^{\frac{1}{m-q}},~~0\leq r<R,
\end{eqnarray}
where $\bar C$ is defined by (\ref{barCcom}) and $R$ is defined in (\ref{Rvaluecom}) and it satisfies $u_{c,m}(R)=0$.

The best constant is given by
\begin{eqnarray}\label{thebest}
C_{q,m,p}=\frac{(p-m-1)^{\frac{2p-1}{p}\theta}[p(m+1)]^{\frac{1}{m+1}}}{[2(p-1)]^{\theta}\eta_1^{\frac{\eta_1}{p(m+1)^2}}}
\left(\mathcal{B}\left(\frac{(m+1)(p-1)}{p-m-1},\frac{2p-1}{p}\right)\right)^{-\theta},
\end{eqnarray}
where $\theta=\frac{p-m-1}{(p-1)(m+1)^2}$ and $\eta_1$ is defined by (\ref{delta1and2}).
\end{thm}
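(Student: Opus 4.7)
The approach is to derive this as a direct specialization of Theorem~\ref{onedtheorem}, since the assumption $\tfrac{p-q-1}{p(m-q)}=1$ collapses the incomplete Beta function in~\eqref{closedformqxiaopcom} to an elementary function. I would first record the algebraic consequences: solving $p(m-q)=p-q-1$ gives $pm=(p-1)(q+1)$, and hence the identities
\[\eta_2=p(m+1),\qquad p-m-1=\tfrac{(p-1)(p-q-1)}{p},\qquad m-q=\tfrac{p-m-1}{p-1},\]
which drive the rest of the argument. In particular $\mathcal{B}(1-\tfrac{1}{p},1)=\tfrac{p}{p-1}$, so~\eqref{Rvaluecom} reduces immediately to $R=\tfrac{p}{(p-1)\bar C}$, i.e.\ $\tfrac{p-1}{p}\bar C\,R=1$.

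For the closed form~\eqref{uc}, with the first parameter of the incomplete Beta function equal to $1$ one has
\[B(x;1,\tfrac{p-1}{p})=\int_0^x(1-t)^{-1/p}\,dt=\tfrac{p}{p-1}\bigl[1-(1-x)^{(p-1)/p}\bigr],\]
whose inverse is $B^{-1}(y;1,\tfrac{p-1}{p})=1-\bigl(1-\tfrac{p-1}{p}y\bigr)^{p/(p-1)}$. Substituting into~\eqref{closedformqxiaopcom} yields $u_{c,m}(r)=\alpha_c\bigl[1-\bigl(1-\tfrac{p-1}{p}\bar C(R-r)\bigr)^{p/(p-1)}\bigr]^{1/(m-q)}$; using $\tfrac{p-1}{p}\bar C\,R=1$ one computes $1-\tfrac{p-1}{p}\bar C(R-r)=\tfrac{p-1}{p}\bar C\,r$, and the bracket collapses to $1-\bigl(\tfrac{p-1}{p}\bar C\bigr)^{p/(p-1)}r^{p/(p-1)}$, yielding~\eqref{uc}.

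For the best constant I would specialize~\eqref{betathe1com}. Using $\eta_2=p(m+1)$ the Beta function argument becomes $\tfrac{\eta_2}{p(m-q)}=\tfrac{m+1}{m-q}=\tfrac{(m+1)(p-1)}{p-m-1}$, matching~\eqref{thebest}. The exponent $\ell=\tfrac{m-q}{(m+1)\eta_2}$ simplifies to $\tfrac{p-m-1}{p(p-1)(m+1)^2}=\tfrac{\theta}{p}$, so $p\ell=\theta$ and the Beta factor carries exponent $-\theta$. The identity $\eta_2^{\ell\eta_2/(m-q)}=(p(m+1))^{1/(m+1)}$ supplies the $[p(m+1)]^{1/(m+1)}$ factor; combining $(m-q)^{(2p-1)\ell}=\bigl(\tfrac{p-m-1}{p-1}\bigr)^{(2p-1)\theta/p}$ with $(p-1)^{(p-1)\ell}=(p-1)^{(p-1)\theta/p}$ reorganises (via $m-q=(p-m-1)/(p-1)$) into $(p-m-1)^{(2p-1)\theta/p}/(p-1)^\theta$; and the remaining $2^{-\theta}$ and $\eta_1^{-\eta_1/(p(m+1)^2)}$ assemble to give exactly~\eqref{thebest}. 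The main obstacle is purely this exponent bookkeeping, since no new analytic idea is needed beyond the elementary inversion of $B(x;1,(p-1)/p)$ and the single identity $\eta_2=p(m+1)$; as an independent check, one can bypass Theorem~\ref{onedtheorem} altogether by substituting the Barenblatt ansatz $u=\alpha_c(1-Br^{p/(p-1)})^{1/(m-q)}$ directly into~\eqref{1dRcsteadyequ}, whereupon writing $w=1-Br^{p/(p-1)}$ the ODE reduces to a linear relation in $w$ whose two coefficient-matching conditions reproduce the hypothesis $\tfrac{p-q-1}{p(m-q)}=1$ together with the values of $\alpha_c$ and $B$.
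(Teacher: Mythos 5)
Your proposal is correct and follows essentially the same route as the paper: specialize Theorem \ref{onedtheorem}, invert $B(\,\cdot\,;1,1-\tfrac{1}{p})$ in closed form, use the identity $\bar C R=\tfrac{p}{p-1}$ (the paper's (\ref{Rbarc})) to collapse the profile to the Barenblatt form, and then substitute the algebraic identities $\eta_2=p(m+1)$, $m-q=\tfrac{p-m-1}{p-1}$, $\ell=\theta/p$ into (\ref{betathe1com}) to obtain (\ref{thebest}). The only difference is that you spell out the exponent bookkeeping (and sketch an optional direct ODE check) that the paper compresses into ``we can compute.''
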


\begin{proof}
In (\ref{closedformqxiaopcom}), taking $\frac{p-1-q}{p(m-q)}=1$, we have
 \begin{eqnarray}\label{specalcase1}
u_{c,m}(r)=\left(\frac{m+1}{q+1}B^{-1}\left(\bar C
(R-r);1,1-\frac{1}{p}\right)\right)^{\frac{1}{m-q}}, ~~0\leq r\leq R.
\end{eqnarray}
Noticing that
$$
B\left(x;1, 1-\frac{1}{p}\right)=\int_0^x(1-t)^{-\frac{1}{p}}\,dt=\frac{p}{p-1}\left(1-(1-x)^{\frac{p-1}{p}}\right),
$$
we deduce its inverse function
$$
B^{-1}\left((x;1, 1-\frac{1}{p}\right)=1-\left(1-\frac{p-1}{p}x\right)^{\frac{p}{p-1}}.
$$
Hence from (\ref{specalcase1}), we easily get
 \begin{eqnarray}\label{specal}
u_{c,m}(r)=\left(\frac{m+1}{q+1}\left(1-\left(1-\frac{p-1}{p}(\bar C(R-r))\right)^{\frac{p}{p-1}}\right)\right)^{\frac{1}{m-q}}, ~~0\leq r\leq R.
\end{eqnarray}
On the other hand, using (\ref{Rvaluecom}) and (\ref{barCcom}), we deduce
 \begin{eqnarray}\label{Rbarc}
R\bar C=\mathcal{B}\left(1-\frac{1}{p},1\right)=\frac{p}{p-1}.
 \end{eqnarray}
Therefore, (\ref{specal}) and (\ref{Rbarc}) imply that (\ref{uc}) holds.

Furthermore, in the special case $\frac{p-1-q}{p(m-q)}=1$ and $d=1$, we can compute that
$$q=\frac{pm-p+1}{p-1}, ~~q+1=\frac{pm}{p-1},~~m-q=\frac{p-m-1}{p-1},~~\ell=\frac{p-m-1}{p(p-1)(m+1)^2}.$$
Thus from (\ref{betathe1com}), we can compute that the best constant $C_{q,m,p}$ can be expressed by (\ref{thebest}).
\end{proof}

\begin{rem}
In the paper \cite{del2003optimal}, Del Pino and Dolbeault derived the best constant $\bar C_{q,m,p}$ (see (\ref{barC1})) by a variational method for a differential functional in sum form ($I_2(u)$ below)
  in the case $d\geq 2$ and $q=\frac{pm-p+1}{p-1}$. We will discuss the relation between the two functionals $I_2(u)$ and $I_2(u)$ below.

  Formally, if we take $d=1$ in (\ref{barC1}), we find that $\bar C_{q,m,p}$ exactly equals to $C_{q,m,p}$ given in (\ref{thebest}). Detail verifications are provided in Proposition \ref{d1equa} of Appendix B. In other words, we extend the results to the dimension $d=1$.
\end{rem}
Let
$$
I_1(u)=\frac{\|\nabla u\|^{\theta}_{L^p}\|u\|^{1-\theta}_{q+1}}{\|u\|_{L^{m+1}}},\quad I_2(u)=\frac{\frac{1}{p}\|\nabla u\|^p_{L^p}+\frac{1}{q+1}\|u\|^{q+1}_{q+1}}{\|u\|^a_{L^{m+1}}},
$$
where $a=\frac{(m+1)(dp+(p-d)(q+1))}{dp+p(m+1)-d(q+1)}$.
Indeed, if the both functionals $I_1(u)$ and $I_2(u)$ have minimizers, then minimizers of the functional $I_2(u)$ must be minimizers of the functional $I_1(u)$, see the following two lemmas.
\begin{lem} Let $u\in X$, where $X$ is given by (\ref{X}), then there is $\lambda>0$ such that $u$ and the corresponding re-scaling function $u_{\lambda}=\lambda^{-\frac{d}{m+1}}u\left(\frac{x}{\lambda}\right)$ satisfy
\begin{eqnarray}\label{I1I2}
KI^{a}_1(u)=\min_{\lambda}I_2(u_{\lambda}),
\end{eqnarray}
where the exponent $a=\frac{(m+1)(dp+(p-d)(q+1))}{dp+p(m+1)-d(q+1)}$ and the constant $K$ will be determined by (\ref{K}) below.
 \end{lem}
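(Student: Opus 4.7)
The identity reduces to a one-variable minimization in $\lambda$ followed by an algebraic exponent check. First, I compute how each norm scales under $u_\lambda=\lambda^{-d/(m+1)}u(x/\lambda)$. Direct changes of variable give
$$\|u_\lambda\|_{L^{m+1}}=\|u\|_{L^{m+1}},\quad \|u_\lambda\|^{q+1}_{L^{q+1}}=\lambda^\beta\|u\|^{q+1}_{L^{q+1}},\quad \|\nabla u_\lambda\|^p_{L^p}=\lambda^\alpha\|\nabla u\|^p_{L^p},$$
with $\beta=d(m-q)/(m+1)>0$ and $\alpha=[(d-p)(m+1)-pd]/(m+1)$. The upper bound $m<\sigma$ in (\ref{qandm}) forces $\alpha<0$ (automatic when $d\le p$; equivalent to $m+1<pd/(d-p)$ when $d>p$). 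Writing
$$I_2(u_\lambda)=c_1\lambda^\alpha+c_2\lambda^\beta,\qquad c_1:=\frac{\|\nabla u\|^p_{L^p}}{p\,\|u\|^a_{L^{m+1}}},\quad c_2:=\frac{\|u\|^{q+1}_{L^{q+1}}}{(q+1)\|u\|^a_{L^{m+1}}},$$
we see that $I_2(u_\lambda)\to\infty$ at both endpoints $\lambda\to 0^+$ and $\lambda\to\infty$, so a positive minimizer $\lambda^\ast$ exists.

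The second step extracts the minimum cleanly via weighted AM--GM. Let $s:=\beta/(\beta-\alpha)\in(0,1)$, so that $1-s=-\alpha/(\beta-\alpha)$. Then
$$c_1\lambda^\alpha+c_2\lambda^\beta = s\Bigl(\tfrac{c_1}{s}\lambda^\alpha\Bigr)+(1-s)\Bigl(\tfrac{c_2}{1-s}\lambda^\beta\Bigr)\ \geq\ s^{-s}(1-s)^{-(1-s)}\,c_1^{s}c_2^{1-s}\,\lambda^{s\alpha+(1-s)\beta},$$
and the choice of $s$ makes the $\lambda$-exponent vanish, so the bound is $\lambda$-independent; equality holds at $\lambda^\ast$ determined by $c_1(\lambda^\ast)^\alpha/s=c_2(\lambda^\ast)^\beta/(1-s)$. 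Hence $\min_{\lambda>0}I_2(u_\lambda)=\kappa\,c_1^{s}c_2^{1-s}$ with $\kappa:=s^{-s}(1-s)^{-(1-s)}$.

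The last step is a bookkeeping calculation verifying the exponent identities
$$\frac{\beta}{\beta-\alpha}=\frac{a\theta}{p},\qquad \frac{-\alpha}{\beta-\alpha}=\frac{a(1-\theta)}{q+1},$$
from the explicit formulas for $\theta$ and $a$ (they sum to $1$ automatically). Plugging these exponents into the minimum and collecting the powers of $\|u\|_{L^{m+1}}$ gives
$$\min_{\lambda>0}I_2(u_\lambda)=K\,\frac{\|\nabla u\|^{a\theta}_{L^p}\,\|u\|^{a(1-\theta)}_{L^{q+1}}}{\|u\|^{a}_{L^{m+1}}}=K\,I_1(u)^{a},\qquad K=\kappa\,p^{-a\theta/p}(q+1)^{-a(1-\theta)/(q+1)}.$$
The main obstacle is precisely this exponent matching: the computation is routine but requires careful tracking of the definitions. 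Conceptually, it reflects the fact that the exponent $a$ appearing in $I_2$ is the unique one that makes the Young inequality above dimensionally balanced, so that the minimum is scale-invariant in $u$ and hence proportional to a power of the scale-invariant quotient $I_1(u)$.
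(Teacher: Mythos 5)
Your proposal is correct and follows essentially the same route as the paper: rescale so the $L^{m+1}$ norm is invariant, reduce $I_2(u_\lambda)$ to the one-variable function $c_1\lambda^{\alpha}+c_2\lambda^{\beta}$, and match the resulting exponents with $a\theta/p$ and $a(1-\theta)/(q+1)$, which you verify correctly. The only (harmless) difference is that you evaluate the minimum by weighted AM--GM instead of solving $f'(\lambda)=0$ as the paper does; this yields the same minimizer $\lambda^\ast$ and the same constant $K$, and in fact confirms a bit more explicitly that the critical point is a global minimum.
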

\begin{proof}
Noticing that $\|u_{\lambda}\|_{L^{m+1}}=\|u\|_{L^{m+1}}$ due to $u_{\lambda}(x)=\lambda^{-\frac{d}{m+1}}u(\frac{x}{\lambda})$. Then we deduce
 \begin{eqnarray}
I_2(u_{\lambda})= \frac{\left(\frac{1}{p}\lambda^{-\alpha}\|\nabla u\|^p_{L^p}+\lambda^{\beta}\frac{1}{q+1}\|u\|^{q+1}_{L^{q+1}}\right)}{\|u\|^a_{L^{m+1}}},
\end{eqnarray}
where $\alpha:=\frac{pd}{m+1}+p-d$ and $\beta:=d-\frac{q+1}{m+1}d$. For fixed $u\in X$, denote
$$
f(\lambda):=\frac{1}{p}\lambda^{-\alpha}\|\nabla u\|^p_{L^p}+\lambda^{\beta}\frac{1}{q+1}\|u\|^{q+1}_{L^{q+1}}.
$$

To compute the minimum of $f(\lambda)$, solving $f'(\lambda)=0$ gives that
$$
\lambda=\left(\frac{\alpha(q+1)}{p\beta}\frac{\|\nabla u\|^p_{L^p}}{\|u\|^{q+1}_{L^{q+1}}}\right)^{\frac{1}{\alpha+\beta}}
$$
 and hence
$$
\min_{\lambda}f(\lambda)=\frac{\alpha+\beta}{p\beta}\left(\frac{\alpha(q+1)}{p\beta}\right)^{-\frac{\alpha}{\alpha+\beta}}\|\nabla u\|^{\frac{p\beta}{\alpha+\beta}}_{L^p}\|u\|^{\frac{\alpha(q+1)}{\alpha+\beta}}_{L^{q+1}}.
$$
Defining
\begin{eqnarray}\label{K}
K:=\frac{\alpha+\beta}{p\beta}\left(\frac{\alpha(q+1)}{p\beta}\right)^{-\frac{\alpha}{\alpha+\beta}}
\end{eqnarray}
 and noticing the facts:
$$
\frac{p\beta}{\alpha+\beta}=a\theta, \quad  \frac{\alpha(q+1)}{\alpha+\beta}=a(1-\theta),
$$
we obtain (\ref{I1I2}).
\end{proof}
\begin{lem}
Let $u_{\infty}$ be a minimizer of the functional $I_2(u)$. Then it holds that
 \begin{eqnarray}\label{jminimizer}
I_1(u_{\infty})\leq I_1(u)\quad \mbox{for any } u\in X.
\end{eqnarray}
 \end{lem}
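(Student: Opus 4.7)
The plan is to apply the identity from the previous lemma twice, once to a generic $u\in X$ and once to $u_\infty$ itself, and then sandwich $I_2(u_\infty)$ between two values of $K I_1^a$.

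First I would invoke the previous lemma to obtain, for any $u\in X$,
\begin{equation*}
K\, I_1^{a}(u)=\min_{\lambda>0} I_2(u_\lambda),
\end{equation*}
where $u_\lambda(x)=\lambda^{-d/(m+1)}u(x/\lambda)$. Because $u_\infty$ is a minimizer of $I_2$ on $X$, the rescaled competitors $u_\lambda$ all satisfy $I_2(u_\infty)\le I_2(u_\lambda)$; infimizing the right-hand side in $\lambda$ and using the displayed identity then gives $I_2(u_\infty)\le K\, I_1^{a}(u)$.

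Next I would apply the same identity to $u_\infty$, which yields $K\, I_1^{a}(u_\infty)=\min_{\lambda>0} I_2((u_\infty)_\lambda)$. Since the minimum is no larger than the value at $\lambda=1$, which is $I_2(u_\infty)$ itself, this produces the lower bound $K\, I_1^{a}(u_\infty)\le I_2(u_\infty)$.

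Chaining the two inequalities yields $K\, I_1^{a}(u_\infty)\le I_2(u_\infty)\le K\, I_1^{a}(u)$. Since $K>0$ and $a>0$ in the admissible parameter regime, cancelling $K$ and taking $a$-th roots delivers $I_1(u_\infty)\le I_1(u)$, which is the claim. I do not anticipate any genuine obstacle here: both ingredients (the variational identity of the previous lemma and the minimizing property of $u_\infty$) are already at hand, so the argument is a clean two-line sandwich; the only thing worth checking in passing is the positivity of $K$ and $a$ in the admissible range for the parameters $d,p,q,m$.
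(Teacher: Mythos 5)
Your proposal is correct and follows essentially the same route as the paper: both use the variational identity $K I_1^a(u)=\min_\lambda I_2(u_\lambda)$ together with the minimizing property of $u_\infty$ and the evaluation at $\lambda=1$ to sandwich $I_2(u_\infty)$ between $K I_1^a(u_\infty)$ and $K I_1^a(u)$. The only cosmetic difference is that the paper records the intermediate equality $I_2(u_\infty)=\min_\lambda I_2(u_{\infty,\lambda})$, whereas you use only the one-sided inequalities, which suffices.
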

\begin{proof}
Since $u_{\infty}$ is a minimizer of the functional $I_2(u)$, then for any $u\in X$, they hold that
$$
I_2(u_{\infty})\leq I_2(u),\quad I_2(u_{\infty})\leq I_2(u_{\lambda})\quad \mbox{ for any } \lambda>0.
$$
Thus we have
\begin{eqnarray}\label{one}
I_2(u_{\infty})\leq \min_{\lambda}I_2(u_{\lambda}).
\end{eqnarray}
In particular, taking $u=u_{\infty}$ in (\ref{one}) gives
\begin{eqnarray}\label{one1}
I_2(u_{\infty})\leq \min_{\lambda}I_2(u_{\infty, \lambda}).
\end{eqnarray}
On the other hand, let $\lambda_0=1$, we know that
\begin{eqnarray}\label{other}
I_2(u_{\infty})= I_2(u_{\infty, \lambda_0})\geq \min_{\lambda}I_2(u_{\infty, \lambda}).
\end{eqnarray}
Hence (\ref{one1}) and (\ref{other}) imply
\begin{eqnarray}\label{other1}
I_2(u_{\infty})=  \min_{\lambda}I_2(u_{\infty, \lambda}).
\end{eqnarray}
Together with (\ref{one}) and (\ref{I1I2}), we deduce
$$
K I^a_1(u_{\infty})=\min_{\lambda}I_2(u_{\infty, \lambda})=I_2(u_{\infty})\leq \min_{\lambda}I_2(u_{\lambda})=KI^a_1(u),
$$
which means that (\ref{jminimizer}) holds.

\end{proof}

We remark in the following lemma that the form of Barenblatt profile (\ref{uc}) is also preserved on the $p$-Laplace operator in higher dimensions. This fact provides a closed-form solution for the free boundary problem (\ref{FBVP1})-(\ref{FBVPbound}) with some special parameters $p,q$ and $m$, see Lemma \ref{4.2}.
\begin{lem}\label{lmH}
Let $H(r)=\left(R^{\frac{p}{p-1}}-r^{\frac{p}{p-1}}\right)_{+}^{\alpha}$, $\alpha>1$. Then $H(r)$ satisfies the following equation
\begin{eqnarray}\label{laplaceH}
\Delta_p H=A H^{\frac{(\alpha-1)(p-1)-1}{\alpha}}-B H^{\frac{(\alpha-1)(p-1)}{\alpha}},
\end{eqnarray}
where $A=(p-1)(\alpha-1)\left(\frac{p}{p-1}\right)^p\alpha^{p-1}R^{\frac{p}{p-1}}$ and $B=\left(d+p(\alpha-1)\right)\left(\frac{p}{p-1}\right)^{p-1}\alpha^{p-1}$.
\end{lem}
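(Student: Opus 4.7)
The plan is to carry out a direct radial computation, since $H$ is radial and smooth on its support $\{r<R\}$, and the identity we seek is an algebraic rearrangement of the $p$-Laplacian in radial coordinates. I will write $g(r):=R^{p/(p-1)}-r^{p/(p-1)}$ so that $H=g^{\alpha}$ on the support, and exploit the specific exponent $p/(p-1)$ which is chosen precisely so that the terms $|H'|^{p-2}H'$ linearize nicely in $r$.

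First I would compute $H'(r)=\alpha\,g^{\alpha-1}g'(r)$ with $g'(r)=-\tfrac{p}{p-1}r^{1/(p-1)}$, obtaining
\begin{equation*}
|H'|^{p-2}H' \;=\; -\alpha^{p-1}\left(\tfrac{p}{p-1}\right)^{p-1} r\, g^{(\alpha-1)(p-1)},
\end{equation*}
where the power $r^{(p-1)/(p-1)}=r$ is the crucial simplification produced by the choice $p/(p-1)$ in the definition of $g$. Then I would plug this into the radial $p$-Laplacian
\begin{equation*}
\Delta_p H \;=\; (|H'|^{p-2}H')' + \tfrac{d-1}{r}|H'|^{p-2}H',
\end{equation*}
differentiate using the product rule, and use again $g'(r)=-\tfrac{p}{p-1}r^{1/(p-1)}$ together with $r\cdot r^{1/(p-1)}=r^{p/(p-1)}$. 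This yields
\begin{equation*}
\Delta_p H \;=\; -\bigl(d+p(\alpha-1)\bigr)\alpha^{p-1}\left(\tfrac{p}{p-1}\right)^{p-1}\! g^{(\alpha-1)(p-1)} + (\alpha-1)(p-1)\alpha^{p-1}\left(\tfrac{p}{p-1}\right)^{p}\! r^{p/(p-1)} g^{(\alpha-1)(p-1)-1}.
\end{equation*}

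Next I would use the identity $r^{p/(p-1)}=R^{p/(p-1)}-g$ to split the second term into a piece proportional to $g^{(\alpha-1)(p-1)-1}$ (producing the $A$-coefficient $A=(p-1)(\alpha-1)(p/(p-1))^p\alpha^{p-1}R^{p/(p-1)}$) and a piece proportional to $g^{(\alpha-1)(p-1)}$, which combines with the first term to give the coefficient $-B=-(d+p(\alpha-1))(p/(p-1))^{p-1}\alpha^{p-1}$ after pulling out the common factor $p/(p-1)$. Finally, using $g=H^{1/\alpha}$ converts $g^{(\alpha-1)(p-1)}=H^{(\alpha-1)(p-1)/\alpha}$ and $g^{(\alpha-1)(p-1)-1}=H^{((\alpha-1)(p-1)-1)/\alpha}$, giving exactly \eqref{laplaceH}. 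There is no real obstacle here: the only delicate point is bookkeeping of the exponents and verifying that the two occurrences of $g^{(\alpha-1)(p-1)}$ combine to yield precisely $-B$, i.e.\ that $d+\tfrac{p}{p-1}(\alpha-1)(p-1)=d+p(\alpha-1)$, which is immediate. The identity holds pointwise on $\{0<r<R\}$, and extends in the distributional sense across $r=R$ because $\alpha>1$ ensures $H\in C^1(\mathbb{R}^d)$ with zero contact angle.
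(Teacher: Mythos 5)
Your proposal is correct and follows essentially the same route as the paper: a direct radial computation of $|H'|^{p-2}H'$, differentiation, the substitution $r^{p/(p-1)}=R^{p/(p-1)}-\bigl(R^{p/(p-1)}-r^{p/(p-1)}\bigr)$ to split into the $A$- and $B$-terms, and the conversion $g=H^{1/\alpha}$, with the same closing remark on the $C^1$ zero extension at $r=R$. No gaps.
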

\begin{proof}
For $r<R$, a direct computation gives that
\begin{align}
  H'(r)=&-\alpha\frac{p}{p-1} r^{\frac{1}{p-1}} \left(R^{\frac{p}{p-1}}-r^{\frac{p}{p-1}}\right)^{\alpha-1},\label{hfirstder}\\
 |H'(r)|^{p-2}H'(r)=&-\left(\alpha\frac{p}{p-1}\right)^{p-1} r \left(R^{\frac{p}{p-1}}-r^{\frac{p}{p-1}}\right)^{(\alpha-1)(p-1)},\\
 (|H'(r)|^{p-2}H'(r))'=&(p-1)(\alpha-1)\left(\frac{p}{p-1}\right)^p\alpha^{p-1}r^{\frac{p}{p-1}}\left(R^{\frac{p}{p-1}}-r^{\frac{p}{p-1}}\right)^{(\alpha-1)(p-1)-1}\nonumber\\
 &-\alpha^{p-1}\left(\frac{p}{p-1}\right)^{p-1} \left(R^{\frac{p}{p-1}}-r^{\frac{p}{p-1}}\right)^{(\alpha-1)(p-1)}.
 \end{align}
Hence
\begin{align}\label{deltaH}
\Delta_p H=&(|H'(r)|^{p-2}H'(r))'+\frac{d-1}{r}|H'(r)|^{p-2}H'(r)\nonumber\\
=&(p-1)(\alpha-1)\left(\frac{p}{p-1}\right)^p\alpha^{p-1}R^{\frac{p}{p-1}}\left(R^{\frac{p}{p-1}}-r^{\frac{p}{p-1}}\right)^{(\alpha-1)(p-1)-1}\nonumber\\
&-\left(d+p(\alpha-1)\right)\left(\frac{p}{p-1}\right)^{p-1}\alpha^{p-1} \left(R^{\frac{p}{p-1}}-r^{\frac{p}{p-1}}\right)^{(\alpha-1)(p-1)}.
\end{align}

Noticing that (\ref{hfirstder}) gives $H(R_{-})=H'(R_{-})=0$, then (\ref{laplaceH}) holds in the distribution sense in $\mathbb{R}^d$.
\end{proof}
\begin{lem}\label{4.2} Assume $p>1$ and $0\leq q<p-1$.
Let $m=\frac{(q+1)(p-1)}{p}$ and $\alpha=\frac{p-1}{p-m-1}$, then it holds
$$
\Delta_p H=AH^q-BH^m.
$$
Furthermore, take
\begin{eqnarray}
R= m^{-\frac{p-1}{p}}\frac{d}{(m+1)\theta}, \quad K=\left(\frac{d}{(m+1)\theta}\right)^{\frac{1}{m-p+1}}\left(\frac{p}{p-m-1}\right)^{\frac{p-1}{m-p+1}},\label{KR}
\end{eqnarray}
where $\theta$ is defined by (\ref{sharpine}). Then
\begin{eqnarray}\label{uc1}
u_{c,m}(r):=K H(r)=K \left(R^{\frac{p}{p-1}}-r^{\frac{p}{p-1}}\right)_{+}^{\alpha}
\end{eqnarray}
is the unique radial non-negative solution for the following problem
\begin{eqnarray}\label{u}
\Delta_p u=u^q-u^m
\end{eqnarray}
with $u'(0)=0$ and $u'(R)=u'(R)=0$.
\end{lem}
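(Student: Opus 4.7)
The plan is to derive the conclusion directly from Lemma \ref{lmH} by matching exponents and then using the $p$-homogeneity of $\Delta_p$ to absorb the constants $A$, $B$ into a scaling factor $K$ together with the choice of the free boundary $R$.

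First I would verify the identity $\Delta_p H = A H^q - B H^m$. Since $\alpha = \frac{p-1}{p-m-1}$, one has $\alpha-1 = \frac{m}{p-m-1}$, hence
\begin{equation*}
\frac{(\alpha-1)(p-1)}{\alpha} = \frac{m(p-1)/(p-m-1)}{(p-1)/(p-m-1)} = m,
\end{equation*}
so the second exponent in Lemma \ref{lmH} is $m$. For the first exponent, the assumption $m = (q+1)(p-1)/p$ gives $mp - p + 1 = q(p-1)$, and therefore
\begin{equation*}
\frac{(\alpha-1)(p-1)-1}{\alpha} = \frac{mp-p+1}{p-1} = q.
\end{equation*}
Plugging these values into the formula of Lemma \ref{lmH} yields the claimed identity.

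Next, for $u = K H$, I would exploit the $p$-homogeneity $\Delta_p(KH) = K^{p-1}\Delta_p H$, which gives
\begin{equation*}
\Delta_p u = K^{p-1}(A H^q - B H^m) = K^{p-1-q} A\, u^q - K^{p-1-m} B\, u^m.
\end{equation*}
To match the target equation $\Delta_p u = u^q - u^m$, I impose the two coefficient relations $K^{p-1-m} B = 1$ and $K^{p-1-q} A = 1$. The first relation involves only $K$ (since $B$ is $R$-independent) and so determines $K$; the second then fixes $R$ through $A = (p-1)(\alpha-1)\bigl(\tfrac{p}{p-1}\bigr)^{p}\alpha^{p-1} R^{p/(p-1)}$. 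A direct algebraic simplification, using $m-q = (p-1-q)/p$ and the definition of $\theta$ from (\ref{sharpine}), shows that these two conditions are satisfied precisely by the values of $K$ and $R$ stated in (\ref{KR}). This computation, while elementary, is the main technical obstacle, as one must carefully track the exponents coming from $m-p+1$, $m-q$, and the factor $\theta$.

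Finally, I would check the boundary conditions. From the explicit formula (\ref{hfirstder}), $H'(r) \propto r^{1/(p-1)}$ near $r=0$, so $u'(0) = K H'(0) = 0$. Since $\alpha > 1$ (because $p-m-1 > 0$ by $m < p-1$), both $H$ and $H'$ vanish at $r = R^-$, giving $u(R) = u'(R) = 0$, and by construction $u(r) = 0$ for $r \geq R$. Hence $u_{c,m} = K H$ is a radial non-negative solution of the free boundary problem (\ref{FBVP1})--(\ref{FBVPbound}) with this particular pair $(m,q)$. Uniqueness of $u_{c,m}$ in $X^*_{rad}$ is then inherited directly from Theorem \ref{remunique}, whose hypotheses ($0 < q < p-1$, $q < m$) are satisfied here, so no separate uniqueness argument is needed.
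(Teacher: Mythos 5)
Your proposal is correct and takes essentially the same route as the paper: verify via Lemma \ref{lmH} that the exponents reduce to $q$ and $m$ under $m=\frac{(q+1)(p-1)}{p}$, $\alpha=\frac{p}{p-1-q}$, and then check directly that the rescaled profile $u=KH$ with the stated $K,R$ solves $\Delta_p u=u^q-u^m$ with the boundary conditions, uniqueness being imported from Theorem \ref{remunique}. The only difference is that you spell out the step the paper dismisses as a direct verification, and your triangular system $K^{p-1-m}B=1$, $K^{p-1-q}A=1$ does indeed hold for the stated $K,R$ (the key identity being $\frac{d}{(m+1)\theta}=d+p(\alpha-1)$), so the argument is complete.
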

\begin{proof}
From $m=\frac{(q+1)(p-1)}{p}$ and $\alpha=\frac{p-1}{p-m-1}$, which means $\alpha=\frac{p}{p-1-q}>1$. Thus we deduce
\begin{eqnarray}\label{relation}
 \frac{(\alpha-1)(p-1)}{\alpha}= m,\quad \frac{(\alpha-1)(p-1)-1}{\alpha}= q.
 \end{eqnarray}
Hence (\ref{deltaH}) implies that
\begin{eqnarray}\label{deltaH1}
\Delta_p H&=&A H^{ q}-B H^{ m}.
\end{eqnarray}

Finally, we can directly verify that $u_{c,m}$ is the solution of (\ref{u}) and satisfies boundary conditions.
\end{proof}

\begin{rem}\label{rm4.3} Assume that $p>1$, $0\leq q<p-1$ and $m=\frac{(q+1)(p-1)}{p}$.
For $u_{c,m}$ defined in (\ref{uc1}), we have (see Lemma \ref{lmApp1})
\begin{eqnarray}\label{M_c}
M_c:=\int_{\mathbb{R}^d}u^{q+1}_{c,m}\,dx=d~\alpha(d)K^{q+1}R^{d+\frac{mp^2}{(p-1)(p-m-1)}}\frac{p-1}{p}\mathcal{B}\left( \frac{d(p-1)}{p}, \frac{pm}{p-m-1}+1\right)
\end{eqnarray}
and the best constant is given by
$$
C_{q,m,p}=\left(\frac{1-\theta}{\theta}\right)^{\frac{\theta}{p}}(1-\theta)^{-\frac{1}{m+1}}M_c^{-\frac{\theta}{d}},\quad
\theta=\frac{(p-1-m)d}{(m+1)[d(p-m-1)+pm]}.
$$
We recover the best constant given by Del Pino and Dolbeault in their celebrated work \cite[Theorem 3.1]{del2003optimal}. See Lemma \ref{lmApp} in Appendix B for detail verification.
\end{rem}

\begin{thm}\label{th63}
If $q=0$, $m=1$, $p>1$ and $d\geq 1$, then the following inequality holds
\begin{eqnarray}\label{specialineq}
\|u\|_{L^{2}}\leq  C_{0,1,p} \|u\|^{1-\theta}_{L^{1}}\|\nabla u\|^{\theta}_{L^p},\quad \theta=\frac{pd}{2(dp+p-d)}.
\end{eqnarray}
Here $C_{0,1,p}$ is the best constant given by
\begin{eqnarray}\label{specialbest}
C_{0,1,p}=\theta^{-\frac{\theta}{p}}(1-\theta)^{\frac{\theta}{p}-\frac{1}{2}}R^{-\theta} \omega_d^{-\frac{\theta}{d}},
\end{eqnarray}
where $\omega_d$ is the volume of the $d$-dimension unit ball $B_1$, $R$ is the first touch down point of $u_{c,m}$, where $u_{c,m}$ is the unique solution to the free boundary problem (\ref{FBVP1})-(\ref{FBVPbound}).
Particularly for $p=2$, $R^2$ is also the first eigenvalue of the Laplace operator with Neumann boundary in the ball $B_1$, and $C_{0,1,2}$ is consistent with the best constant of Nash's inequality, see \cite{carlen1993sharp}.
\end{thm}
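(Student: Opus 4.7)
The plan is to apply Theorem~\ref{hightdnagy1} at the parameters $q=0$ and $m=1$, and to evaluate the mass $M_c = \int_{\mathbb{R}^d} u_{c,1}\,dx$ explicitly using the free boundary equation. First I would check that the exponent in (\ref{sharpine}) specializes to $\theta = pd/(2(dp+p-d))$ as stated, and that $(q,m,p)=(0,1,p)$ with $p>1$ falls into the parameter range (\ref{qandm}). Since $q=0<p-1$, the minimizer $u_{c,1}$ from Theorem~\ref{hightdnagy1} solves the free boundary problem (\ref{FBVP1})-(\ref{FBVPbound}) which, at these exponents, reads
\begin{equation*}
\nabla\cdot(|\nabla u|^{p-2}\nabla u) + u = 1 \quad\mbox{in } B_R,\qquad u'(0)=0,\qquad u(R)=u'(R)=0,
\end{equation*}
with $R$ the first touch-down radius.

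The central calculation is to integrate this equation over $B_R$ and use the zero contact angle $u'(R)=0$ to kill the divergence term. By the divergence theorem,
\begin{equation*}
\int_{B_R}\nabla\cdot(|\nabla u|^{p-2}\nabla u)\,dx = \int_{\partial B_R}|u'(R)|^{p-2}u'(R)\,dS = 0,
\end{equation*}
so integrating the PDE over $B_R$ yields $M_c = \int_{B_R} u_{c,1}\,dx = |B_R| = \omega_d R^d$. Substituting this value of $M_c$ into the general best-constant formula (\ref{massvsbeta1}) gives
\begin{equation*}
C_{0,1,p} = \theta^{-\theta/p}(1-\theta)^{\theta/p-1/2}(\omega_d R^d)^{-\theta/d} = \theta^{-\theta/p}(1-\theta)^{\theta/p-1/2}R^{-\theta}\omega_d^{-\theta/d},
\end{equation*}
which is (\ref{specialbest}); the equality case follows from the corresponding statement in Theorem~\ref{hightdnagy1}.

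For the Neumann eigenvalue remark in the case $p=2$, I would introduce $v(y) := 1 - u_{c,1}(Ry)$ for $y\in B_1$. The chain rule converts $\Delta u + u = 1$ into $-\Delta_y v = R^2 v$ on $B_1$, the contact angle condition $u'(R)=0$ becomes the Neumann boundary condition $\partial_n v=0$ on $\partial B_1$, and $u'(0)=0$ is the radial regularity at the origin; since $u_{c,1}$ is non-constant, $v$ is a non-trivial eigenfunction and $R^2$ is the first positive Neumann eigenvalue of $-\Delta$ on $B_1$. In dimension one this gives $R=\pi$ and hence $C_{0,1,2} = (16\pi^2/27)^{-1/6}$, reproducing Nagy's value as a sanity check. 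The main obstacle is not technical: everything reduces to the divergence-theorem identity above, which relies essentially on the zero contact angle established in Lemma~\ref{eneangle} and the uniqueness proved in Theorem~\ref{remunique}.
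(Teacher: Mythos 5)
Your proposal is correct and follows essentially the same route as the paper: specialize Theorem \ref{hightdnagy1} to $q=0$, $m=1$, evaluate $M_c=\int_{B_R}u_{c,1}\,dx=\omega_dR^d$ by integrating the free boundary equation over $B_R$ and using the zero contact angle (this is exactly the paper's ``simple computation''), and then substitute into (\ref{massvsbeta1}); the Neumann-eigenvalue identification via $v=1-u_{c,1}(Rr)$ for $p=2$ is likewise the paper's argument. No substantive differences to report.
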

\begin{proof}
In (\ref{h sharp inequality1}), taking $q=0$, $m=1$, $p>1$ and $d\geq 1$, we can obtain (\ref{specialineq}) with the best constant $C_{0,1,p}$:
\begin{eqnarray}\label{formulacom1}
C_{0,1,p}=\theta^{-\frac{\theta}{p}}(1-\theta)^{\frac{\theta}{p}-\frac{1}{2}}M_c^{-\frac{\theta}{d}},\quad M_c=\int_{\mathbb{R}^d}u_{c,m}\,dx.
\end{eqnarray}
Since $u_{c,m}$ satisfies the free boundary problem
\begin{eqnarray}
&& (|u'|^{p-2}u')' + \frac{d-1}{r} |u'|^{p-2}u' + u = 1 \quad\mbox{ for }0<r<R,\label{FBVP}\\
&& u'(0)=0,\,\, u(R)=u'(R)=0,\label{FBVP}
 \end{eqnarray}
then a simple computation gives
\begin{eqnarray}\label{formulacom2}
\int_{B(0,R)} u_{c,m}\,dx=\int_{B(0,R)}1\,dx=\omega_d R^d.
\end{eqnarray}
Hence from (\ref{formulacom1}) and (\ref{formulacom2}), we deduce (\ref{specialbest}).

Particularly, for $p=2$, we have $\theta=\frac{d}{d+2}$, $1-\theta=\frac{2}{d+2}$, and
$$
C_{0,1,2}=2^{\frac{d}{2(d+2)}}d^{-\frac{d+1}{d+2}}(d+2)^{\frac{1}{2}}R^{-\frac{d}{d+2}} \omega_d^{-\frac{1}{d+2}}.
$$
For this case $u_{c,m}$ is the solution to
\begin{eqnarray}
&& (u')' + \frac{d-1}{r} u' + u = 1 \quad\mbox{ for }0<r<R,\label{fbvpp2}\\
&& u'(0)=0,\,\, u(R)=u'(R)=0,\label{FBVPp2}
 \end{eqnarray}
It is directly verified that $\lambda=R^2$ and $u(r)=1-u_{c,m}(Rr)$ are the first eigenvalue and the corresponding eigenfunction of the Laplace operator with Neumann boundary in the ball $B_1$. The best constant $C_{0,1,2}$ is exactly same as the best constant of Nash's inequality, see \cite{carlen1993sharp}. This completes the proof of Theorem \ref{th63}.
\end{proof}

\section{Closed form positive solutions for $q\geq p-1$}

When $q\geq p-1$, we know that $u_{c,m}$ is the unique positive solution to the problem (\ref{chachy1})-(\ref{chachybound}) from the Compact Support Principle \cite{pucci2007maximum} as that discussed in the introduction. In this section we derive and document some closed form solutions $u_{c,m}$ for some special parameters $d,q,m$ and $p$.

\begin{thm} \label{hightdnagy1pos}
 Suppose $d=1$, $p>1$ and $p-1\leq q<m$. Then
  $u_{c,m}$ possesses the following closed form
  \begin{eqnarray}\label{closedform1pos}
u_{c,m}(r)=\left(\frac{m+1}{q+1}\right)^{\frac{1}{m-q}}\left(1-B^{-1}\left(\bar C r;1-\frac{1}{p},\frac{p-1-q}{p(m-q)}\right)\right)^{\frac{1}{m-q}},~~ r\geq 0,
\end{eqnarray}
where $\bar C$ is given by (\ref{barCcom}). The best constant $C_{q,m,p}$ is still given by (\ref{betathe1com}).

Moreover, for $m>2q+1$ there is $r_*>0$ (it is independent of $m$) such that $u_{c,m}$ satisfies the following estimates
\begin{itemize}
\item for $q>p-1$,
it holds that
\begin{eqnarray}\label{pdaq1oinftypro}
u(r)+r|u'(r)|\leq  Cr^{-\frac{p}{q+1-p}}, \quad \mbox{for } r\geq  2r_*,
\end{eqnarray}
\item for $q=p-1$, it holds that
\begin{eqnarray}\label{pq1oinftypro}
u(r)\leq e^{-Cr},\quad \mbox{for } r\geq 2r_*,
\end{eqnarray}
 \end{itemize}
 where $C$ is a constant independent of $m$.
\end{thm}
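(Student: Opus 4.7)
The plan is to (a) derive the closed form by the same first-integral calculation as in Theorem \ref{onedtheorem}, (b) observe that the best-constant formula is inherited with no new computation, and (c) prove the uniform decay estimates, which is where the real work lies.

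With $d=1$, multiplying (\ref{chachy1}) by $u'$ gives the conservation law $\tfrac{p-1}{p}|u'|^p+\tfrac{u^{m+1}}{m+1}-\tfrac{u^{q+1}}{q+1}=\mathrm{const}$. The decay $u,u'\to 0$ at infinity (established exactly as in Proposition \ref{idecay}; the extra $u^m$ term only strengthens the decay since $u\le 1$ eventually and $m>q$) forces the constant to vanish, and $u'(0)=0$ fixes $u(0)=\alpha_c:=((m+1)/(q+1))^{1/(m-q)}$. Separating variables and applying the substitution $t=\tfrac{q+1}{m+1}u^{m-q}$ followed by $s=1-t$ converts the integrand into a Beta kernel; collecting exponents gives
\begin{equation*}
\bar C\,r=B\!\left(1-\tfrac{q+1}{m+1}u(r)^{m-q};\,\tfrac{p-1}{p},\,\tfrac{p-1-q}{p(m-q)}\right),
\end{equation*}
with $\bar C$ matching (\ref{barCcom}), and inversion produces (\ref{closedform1pos}). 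The hypothesis $q\ge p-1$ makes the second Beta parameter non-positive, so the incomplete Beta function diverges as its argument approaches $1^-$; hence $\bar Cr$ lies in its range for every $r\ge 0$, $B^{-1}(\bar Cr;\cdot,\cdot)\in[0,1)$, and $u_{c,m}(r)>0$ throughout $[0,\infty)$. The best-constant formula is essentially free: the substitution $u=u_{c,m}(r)$ in $\int_{\mathbb{R}}|\nabla u_{c,m}|^p\,dx$ reduces to exactly the integral (\ref{best2}) since $u$ still sweeps $(0,\alpha_c)$ and only the physical support changes, so the calculation leading to (\ref{betathe1com}) goes through verbatim.

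The main obstacle is the decay estimates with constants independent of $m$. I would first show that the half-height radius $R_m$ defined by $u_{c,m}(R_m)=1/2$ is uniformly bounded for $m>2q+1$, whence $r_*:=\sup_{m>2q+1}R_m<\infty$ serves as the required universal threshold. By the closed form, $\bar C R_m=B(y_m;\,(p-1)/p,\,b)$ with $y_m=1-\tfrac{q+1}{m+1}2^{-(m-q)}$ and $b=(p-1-q)/(p(m-q))$. Splitting the Beta integral at $t=1/2$ and using $\int(1-t)^{b-1}dt=(1-t)^b/(-b)$ (respectively $-\log(1-t)$ when $b=0$), together with a Taylor expansion in the small parameter $b=O(1/m)$, shows that this right-hand side is $O(m)$; since also $\bar C\sim(p/((p-1)(q+1)))^{1/p}(m-q)=O(m)$, the ratio $R_m$ stays bounded. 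The hypothesis $m>2q+1$ enters twice here: it forces $(q+1)/(m+1)<1/2$ so that $y_m>3/4$, and it gives $m-q>q+1$ so that $2^{-(m-q)}$ is genuinely small.

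Once $r_*$ is in hand, the tail argument is routine. For $r\ge r_*$ we have $u\le 1/2$ and, using $m>2q+1$, $\tfrac{q+1}{m+1}u^{m-q}\le\tfrac12\cdot 2^{-(m-q)}\le\tfrac14$; the first integral then yields the uniform lower bound $|u'|^p\ge\tfrac{3p}{4(p-1)(q+1)}u^{q+1}$. Separating variables in $-u'\ge C_0\, u^{(q+1)/p}$ and integrating from $r_*$ to $r\ge 2r_*$ (so that $r-r_*\ge r/2$) gives $u(r)\le C r^{-p/(q+1-p)}$ when $q>p-1$ and $u(r)\le e^{-Cr}$ when $q=p-1$, which are precisely (\ref{pdaq1oinftypro})--(\ref{pq1oinftypro}). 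For the derivative estimate when $q>p-1$, combining the decay on $u$ with the pointwise upper bound $|u'|\le(p/((p-1)(q+1)))^{1/p}u^{(q+1)/p}$ (from the same first integral) immediately yields $r|u'(r)|\le C r^{-p/(q+1-p)}$.
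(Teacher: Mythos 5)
Your proposal is correct and, in its essentials, follows the paper's own route: the closed form comes from the same first integral (constant zero because $u,u'\to 0$), the same substitution leading to $\bar C\,r=B\bigl(1-\tfrac{q+1}{m+1}u^{m-q};1-\tfrac1p,\tfrac{p-1-q}{p(m-q)}\bigr)$ as in (\ref{solveu}), the best constant is obtained exactly as you say because the computation (\ref{best2}) only uses the first integral and the range $(0,\alpha_c)$ of $u$, and the tail estimates use the same mechanism: once $u\le$ a fixed level beyond an $m$-independent radius and $m>2q+1$, the first integral gives $-u'\ge C_0u^{(q+1)/p}$ with $C_0$ independent of $m$, and one integrates from $r_*$ using $r-r_*\ge r/2$, the derivative bound following from $|u'|\le\bigl(\tfrac{p}{(p-1)(q+1)}\bigr)^{1/p}u^{(q+1)/p}$. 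The only place you diverge is the production of $r_*$: the paper takes the level-$1$ crossing radius $r_0$ (which exists since $u(0)=\alpha_c>1$), plugs $u=1$ into (\ref{solveu}) and gets the one-line bound (\ref{rqmp}), yielding the explicit $r_*=2\bigl(\tfrac{p}{(p-1)(q+1)}\bigr)^{1-1/p}$; you instead take the half-height radius and bound it through asymptotics of the incomplete Beta representation with $b=O(1/m)$. Your route works, but as written it is an asymptotic statement in $m$, so to legitimately set $r_*=\sup_{m>2q+1}R_m$ you still need a bound uniform in $m$ (e.g.\ bound $(1/2)^b$ and $(1-y_m)^b$ by constants depending only on $p,q$, using $\tfrac{m+1}{m-q}\le 2$ and $(m-q)|b|=\tfrac{q+1-p}{p}$, and bound $\bar C$ from below by $c(p,q)(m-q)$); these are routine but should be spelled out, whereas the paper's level-$1$ choice avoids the Beta analysis entirely and in addition hands you $u^{m+1}\le u^{q+1}$ for free on $r\ge r_*$. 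So: no gap in substance, just a simpler path available for that one lemma.
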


\begin{proof}
Integrating (\ref{firstorder}) from $0$ to $r$ for any $r\in (0,\infty)$, we deduce
\begin{align}\label{solveu}
\left(\frac{p}{p-1}\right)^{1/p}r=&\frac{(q+1)^{1/p}}{m-q}\left(\frac{m+1}{q+1}\right)^{\frac{p-q-1}{p(m-q)}}
\int_0^{1-\frac{q+1}{m+1}u^{m-q}}y^{-1/p}(1-y)^{\frac{p-q-1}{p(m-q)}-1}\,dy\\
=&\frac{(q+1)^{1/p}}{m-q}\left(\frac{m+1}{q+1}\right)^{\frac{p-q-1}{p(m-q)}} B\left(1-\frac{q+1}{m+1}u^{m-q};~1-\frac{1}{p},~\frac{p-q-1}{p(m-q)}\right).\nonumber
\end{align}
Hence we solve $u_{c,m}(r)$ given by (\ref{closedform1pos}) with $\bar C$ given in (\ref{barCcom}).

The same process to (\ref{cformula3})-(\ref{betathe}) shows that the best constant $C_{q,m,p}$ given by (\ref{betathe1com}).

Now we prove the decay properties of $u_{c,m}$. Since $u(r)\to 0$ as $r\to \infty$, then there exists a $0<r_0<\infty$ such that $u(r_0)=1$. From (\ref{solveu}), we have
\begin{eqnarray*}
\left(\frac{p}{p-1}\right)^{1/p}r_0=\frac{(q+1)^{1/p}}{m-q}\left(\frac{m+1}{q+1}\right)^{\frac{p-q-1}{p(m-q)}}
\int_0^{1-\frac{q+1}{m+1}}y^{-1/p}(1-y)^{\frac{p-q-1}{p(m-q)}-1}\,dy,
\end{eqnarray*}
which means that
\begin{eqnarray}\label{rqmp}
0<r_0\leq\left(\frac{p}{(p-1)(q+1)}\right)^{1-1/p}\left(\frac{m+1}{m-q}\right)^{1/p}.
\end{eqnarray}
By (\ref{rqmp}), we deduce if $m>2q+1$, then $0<r_0<2\left(\frac{p}{(p-1)(q+1)}\right)^{1-1/p}=:r_*$.
 Since $r'(r)<0$ for $r>0$, we have $0<u(r)<1$ for $r\geq r_*$, hence $\frac{u^{q+1}(r)}{q+1}-\frac{u^{m+1}(r)}{m+1}\geq u^{q+1}(r)\left(\frac{1}{q+1}-\frac{1}{m+1}\right)$ for $r\geq r_*$. Therefore from (\ref{firstorder}), we have that
\begin{eqnarray}\label{derqua2}
-u'(r)\geq \left(\frac{p}{p-1}\right)^{\frac{1}{p}}\left(\frac{1}{q+1}-\frac{1}{m+1}\right)^{\frac{1}{p}}u^{\frac{q+1}{p}}(r).
\end{eqnarray}
Using the method of separation of variable for (\ref{derqua2}) and integrating the result inequality from $r_*$ to r for any $r>r_*$, we obtain
$$
u^{\frac{p-q-1}{p}}(r)\geq 1+\frac{q+1-p}{p}\left(\frac{p}{p-1}\right)^{\frac{1}{p}}\left(\frac{1}{q+1}-\frac{1}{m+1}\right)^{\frac{1}{p}}(r-r_*).
$$
Taking $r>2r_*$, we have $r-r_*\geq \frac{r}{2}$. Therefore, for $r>2r_*$ we know that
\begin{align*}
u(r)\leq& \left(1+\frac{q+1-p}{p}\left(\frac{p}{p-1}\right)^{\frac{1}{p}}
\left(\frac{1}{q+1}-\frac{1}{m+1}\right)^{\frac{1}{p}}\frac{r}{2}\right)^{-\frac{p}{q+1-p}}\\
\leq&\left(\frac{q+1-p}{2p}\right)^{-\frac{p}{q+1-p}}\left(\frac{p}{p-1}\right)^{-\frac{1}{q+1-p}}
\left(\frac{1}{q+1}-\frac{1}{m+1}\right)^{-\frac{1}{q+1-p}}r^{-\frac{p}{q+1-p}}.
\end{align*}
Denoting
$$
C(q,m,p):=\left(\frac{q+1-p}{2p}\right)^{-\frac{p}{q+1-p}}\left(\frac{p}{p-1}\right)^{-\frac{1}{q+1-p}}
\left(\frac{1}{q+1}-\frac{1}{m+1}\right)^{-\frac{1}{q+1-p}},
$$
then when $m>2q+1$, we have
$$
C(q,m,p)\leq \left(\frac{q+1-p}{2p}\right)^{-\frac{p}{q+1-p}}\left(\frac{p}{p-1}\right)^{-\frac{1}{q+1-p}}
\left(\frac{1}{2(q+1)}\right)^{-\frac{1}{q+1-p}}=:C(q,p).
$$
Hence we obtain
\begin{eqnarray}\label{urcon}
u(r)\leq C(q,m,p) r^{-\frac{p}{q+1-p}}\leq C(q,p) r^{-\frac{p}{q+1-p}}, \quad \mbox{for } r\geq 2r_*.
\end{eqnarray}

Again from (\ref{firstorder}), we obtain for any $r>0$
\begin{eqnarray}\label{derqua21}
|u'(r)|= \left(\frac{p}{p-1}\right)^{\frac{1}{p}}\left(\frac{u^{q+1}}{q+1}-\frac{u^{m+1}}{m+1}\right)^{\frac{1}{p}}\leq \left(\frac{p}{(p-1)(q+1)}\right)^{\frac{1}{p}}u^{\frac{q+1}{p}}.
\end{eqnarray}
Combining (\ref{urcon}) and (\ref{derqua21}), we can deduce
\begin{eqnarray}\label{udercon}
 r|u'(r)| \leq \left(\frac{p}{(p-1)(q+1)}\right)^{\frac{1}{p}} C(q,p)^{\frac{q+1}{p}} r^{-\frac{p}{q+1-p}}~~\mbox{ for } r\geq 2r_*.
\end{eqnarray}
Hence (\ref{urcon}) and (\ref{udercon}) give (\ref{pdaq1oinftypro}).

For the case $q=p-1$, Integrating (\ref{derqua2}) from $r_*$ to r for any $r>r_*$, we obtain
$$
-\ln u(r)\geq \left(\frac{p}{p-1}\right)^{\frac{1}{p}}\left(\frac{1}{p}-\frac{1}{m+1}\right)^{\frac{1}{p}}(r-r_*).
$$
Taking $r>2r_*$, we have $r-r_*\geq \frac{r}{2}$. Therefore, for $r>2r_*$ we know that
\begin{eqnarray*}
\ln u(r)\leq -\left(\frac{p}{p-1}\right)^{\frac{1}{p}}\left(\frac{1}{p}-\frac{1}{m+1}\right)^{\frac{1}{p}}\frac{r}{2}.
\end{eqnarray*}
Let
\begin{eqnarray}\label{cmp}
C_{m,p}:=\frac{1}{2}\left(\frac{p}{p-1}\right)^{\frac{1}{p}}\left(\frac{1}{p}-\frac{1}{m+1}\right)^{\frac{1}{p}}.
\end{eqnarray}
Since $m>2q+1>2p-1$, we have $C_{m,p}>\frac{1}{2}\left(\frac{1}{2(p-1)}\right)^{\frac{1}{p}}=:C(p)$.
Then we obtain
\begin{eqnarray}\label{urconq=p-1}
u(r)\leq e^{-C_{m,p} r}\leq e^{-C(p) r} \quad \mbox{for } r\geq 2r_*,
\end{eqnarray}
i.e., (\ref{pq1oinftypro}) holds.
\end{proof}

\begin{rem}
For $0\leq r\leq R$, $u_{c,m}$ given in (\ref{closedformqxiaopcom}) and (\ref{closedform1pos}), respectively, are exactly same. This statement can be verified by the properties of the incomplete Beta function
$$
B(1-x;a,b)=\mathcal{B}(a,b)-B(x;b,a).
$$
\end{rem}

\begin{thm}
Suppose $d=1$, $p>1$, $p-1<q<m$, and $m=\frac{p(q-1)+1}{p-1}$. Then $u_{c,m}$ has the following closed form
\begin{eqnarray}\label{ucxxpos}
u_{c,m}(r)=\left(\frac{m+1}{q+1}\right)^{\frac{1}{m-q}}\left(1+\left(\frac{p-1}{p}\bar C\right)^{\frac{p}{p-1}} r^{\frac{p}{p-1}}\right)^{-\frac{1}{m-q}},~~ r\geq 0,
\end{eqnarray}
where $\bar C$ is given by (\ref{barCcom}).

The best constant is given by
\begin{eqnarray}\label{thebestpospe}
C_{q,m,p}=\frac{(q+1-p)^{\frac{2p-1}{p}\theta}\eta_2^{\frac{\eta_2(p-1)\theta}{p(q+1-p)}}}{[2(p-1)]^{\theta}[p(q+1)]^{\frac{(q+1)(p-1)\theta}{q+1-p}}}
\left(\mathcal{B}\left(\frac{(p-1)\eta_2}{p(q+1-p)},\frac{2p-1}{p}\right)\right)^{-\theta},
\end{eqnarray}
where $\theta=\frac{q+1-p}{q(p+(p-1)(q+1))}$ and $\eta_2$ is defined by (\ref{delta1and2}).
\end{thm}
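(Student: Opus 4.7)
The plan is to specialize the closed form from Theorem \ref{hightdnagy1pos} to the critical exponent $m = \frac{p(q-1)+1}{p-1}$, for which the incomplete beta function in (\ref{closedform1pos}) reduces to an elementary power. Using the hypothesis I would first compute $m-q = \frac{q+1-p}{p-1}$, so that $\frac{1}{m-q} = \frac{p-1}{q+1-p}$ and the second parameter of the incomplete beta in (\ref{closedform1pos}) becomes
\begin{equation*}
\frac{p-1-q}{p(m-q)} \;=\; -\frac{(q+1-p)(p-1)}{p(q+1-p)} \;=\; -\frac{p-1}{p}.
\end{equation*}
Thus the relevant integral is $B\!\left(\xi;\, 1-\tfrac{1}{p},\, -\tfrac{p-1}{p}\right) = \int_0^{\xi} t^{-1/p}(1-t)^{-(2p-1)/p}\,dt$.

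The key technical step is to evaluate this integral in closed form. I would apply the change of variables $s = t/(1-t)$, so that $1-t = 1/(1+s)$ and $dt = (1+s)^{-2}\,ds$; the powers of $(1+s)$ combine with exponent $\tfrac{1}{p} + \tfrac{2p-1}{p} - 2 = 0$, leaving
\begin{equation*}
B\!\left(\xi;\, 1-\tfrac{1}{p},\, -\tfrac{p-1}{p}\right) \;=\; \int_0^{\xi/(1-\xi)} s^{-1/p}\,ds \;=\; \frac{p}{p-1}\Bigl(\frac{\xi}{1-\xi}\Bigr)^{(p-1)/p}.
\end{equation*}
Setting this equal to $\bar C r$ and solving gives $\frac{\xi}{1-\xi} = \bigl(\tfrac{p-1}{p}\bar C\bigr)^{p/(p-1)} r^{p/(p-1)} =: K r^{p/(p-1)}$. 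Recalling from the derivation of (\ref{closedform1pos}) that $\xi = 1 - \frac{q+1}{m+1}u^{m-q}$, this rearranges into $u^{m-q} = \frac{m+1}{q+1}\bigl(1 + K r^{p/(p-1)}\bigr)^{-1}$. Taking the $(m-q)$-th root and using $\alpha_c^{m-q} = (m+1)/(q+1)$ yields exactly (\ref{ucxxpos}). As a consistency check one can verify directly that $u^{q+1}/(q+1) - u^{m+1}/(m+1)$ factors as a single power of $(1+Kr^{p/(p-1)})$ because under the hypothesis $m = \frac{p(q-1)+1}{p-1}$ one has $\frac{(p-1)(m+1)}{q+1-p} - \frac{(p-1)(q+1)}{q+1-p} = 1$, which is exactly the identity needed for the Barenblatt profile to satisfy the first integral (\ref{firstintex}) with $C=0$.

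For the best constant I would apply (\ref{massvsbeta1}) with $d=1$. Computing $M_c = \int_{\mathbb{R}} u_{c,m}^{q+1}\,dx$ via the substitution $s = K r^{p/(p-1)}$ transforms the integral into a standard beta integral:
\begin{equation*}
M_c \;=\; \frac{2(p-1)}{p}\,\alpha_c^{q+1}\, K^{-(p-1)/p}\,\mathcal{B}\!\left(\frac{p-1}{p},\, \frac{(p-1)\eta_2}{p(q+1-p)}\right),
\end{equation*}
where the second argument is obtained from $\frac{(q+1)(p-1)}{q+1-p} - \frac{p-1}{p} = \frac{(p-1)\eta_2}{p(q+1-p)}$ using $\eta_2 = (p-1)(q+1)+p$. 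Substituting $\alpha_c = ((m+1)/(q+1))^{1/(m-q)}$ with $m+1 = pq/(p-1)$, the explicit value of $K$ obtained from $\bar C$ in (\ref{barCcom}), and the formula $\theta = \frac{q+1-p}{q[p + (p-1)(q+1)]}$ (i.e.\ (\ref{h sharp inequality1}) with $d=1$), I would then insert $M_c$ into $C_{q,m,p} = \theta^{-\theta/p}(1-\theta)^{\theta/p - 1/(m+1)}M_c^{-\theta}$ and simplify.

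The main obstacle is not conceptual but bookkeeping: collecting the many exponents involving $p, q, m$ and converting them into the compact form (\ref{thebestpospe}) using $m-q = (q+1-p)/(p-1)$. The only nontrivial insight is the beta-integral evaluation via $s = t/(1-t)$; once the explicit Barenblatt-type profile is pinned down, the remainder of the argument is algebraic.
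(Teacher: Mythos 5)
Your proposal is correct, and for the profile it is essentially the paper's argument: the paper also specializes (\ref{closedform1pos}) to $m=\frac{p(q-1)+1}{p-1}$, notes the second beta parameter becomes $\frac{1}{p}-1=-\frac{p-1}{p}$, and evaluates $B\bigl(x;1-\frac1p,\frac1p-1\bigr)=\frac{p}{p-1}\bigl(\frac{x}{1-x}\bigr)^{\frac{p-1}{p}}$ (see (\ref{Binversexxpos})); your substitution $s=t/(1-t)$ is just one clean way to derive that same identity before inverting, and it leads to exactly (\ref{ucxxpos}). Where you genuinely diverge is the best constant: the paper simply substitutes $m+1=\frac{pq}{p-1}$, $m-q=\frac{q+1-p}{p-1}$, $\eta_1=p(q+1)$ into the already-established one-dimensional formula (\ref{betathe1com}) from Theorem \ref{hightdnagy1pos}, whereas you recompute $M_c$ directly from the explicit profile (your beta integral is correct, with second argument $\frac{(q+1)(p-1)}{q+1-p}-\frac{p-1}{p}=\frac{(p-1)\eta_2}{p(q+1-p)}$) and then feed it into (\ref{massvsbeta1}). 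Both routes are legitimate and rest on results proved earlier in the paper; yours buys independence from (\ref{betathe1com}) at the price of redoing the exponent bookkeeping, and note one small extra step you will need to land on the displayed form (\ref{thebestpospe}): your constant comes out with $\mathcal{B}\bigl(\frac{p-1}{p},\frac{(p-1)\eta_2}{p(q+1-p)}\bigr)$, while the target has $\mathcal{B}\bigl(\frac{(p-1)\eta_2}{p(q+1-p)},\frac{2p-1}{p}\bigr)$, so you must apply the identity $\mathcal{B}(a+1,b)=\frac{a}{a+b}\,\mathcal{B}(a,b)$ with $a=\frac{p-1}{p}$ (here $a+b=\frac{(p-1)(q+1)}{q+1-p}$), absorbing the resulting factor $\bigl(\frac{q+1-p}{p(q+1)}\bigr)^{-\theta}$ into the prefactor; this is routine but should be stated.
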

\begin{proof}
In (\ref{closedform1pos}), taking $m=\frac{p(q-1)+1}{p-1}$, we have
\begin{eqnarray}\label{closed1}
u_{c,m}(r)=\left(\frac{m+1}{q+1}\right)^{\frac{1}{m-q}}\left(1-B^{-1}\left(\bar C r;1-\frac{1}{p},\frac{1}{p}-1\right)\right)^{\frac{1}{m-q}},~~ r\geq 0.
\end{eqnarray}
Notice that
\begin{eqnarray}\label{Binversexxpos}
B\left(x;1-\frac{1}{p},\frac{1}{p}-1\right)&=&\int_0^xt^{-\frac{1}{p}}(1-t)^{\frac{1}{p}-2}\,dt=\frac{p}{p-1}\left(\frac{x}{1-x}\right)^{\frac{p-1}{p}},
\end{eqnarray}
which implies that
$$
B^{-1}\left(x;1-\frac{1}{p},\frac{1}{p}-1\right)=1-\frac{1}{1+\left(\frac{p-1}{p}x\right)^{\frac{p}{p-1}}}.
$$
Plugging it into (\ref{closed1}), we deduce that $u_{c,m}$ has the explicit expression (\ref{ucxxpos}).

Furthermore, in the special case $m=\frac{p(q-1)+1}{p-1}$ and $d=1$, we can compute that
$$m+1=\frac{pq}{p-1},~~m-q=\frac{q+1-p}{p-1},~~\eta_1=p(q+1),$$
where $\eta_1$ was defined in (\ref{delta1and2}).
Thus from (\ref{betathe1com}), we can compute that the best constant $C_{q,m,p}$ can be expressed by (\ref{thebestpospe}).
\end{proof}

\begin{rem}
In the paper \cite{del2003optimal}, Del Pino and Dolbeault derived the best constant $\overline C_{q,m,p}$ (see (\ref{barC1po}))
 for the case $d\geq 2$ and $m=\frac{p(q-1)+1}{p-1}$. Formally, if we take $d=1$ in (\ref{barC1po}), we find that $\overline C_{q,m,p}$ exactly equals to $C_{q,m,p}$ given in (\ref{thebestpospe}). Detail verifications are provided in Proposition \ref{d1equa1} of Appendix B. In other words, we extend their results to the dimension $d=1$.
\end{rem}

We remark in the following lemma that the form of the profile (\ref{ucxxpos}) is also preserved on the $p$-Laplace operator in higher dimensions. This fact provides a closed-form solution for the problem (\ref{chachy1})-(\ref{chachybound}) for some special parameters $p,q$ and $m$, see Remark \ref{rm4.4}.

\begin{lem}\label{G}
Let $G(r)=\left(L+r^{\frac{p}{p-1}}\right)^{-\alpha}$ with $L>0$ and $\alpha>0$. Then
\begin{eqnarray}
\Delta_p G=C G^{\frac{(\alpha+1)(p-1)}{\alpha}}-D G^{\frac{(\alpha-1)(p-1)+1}{\alpha}},\label{deltaG}
\end{eqnarray}
where $C:=\left(p(\alpha+1)-d\right)\left(\frac{p}{p-1}\right)^{p-1}\alpha^{p-1}$ and $D:=L(p-1)(\alpha+1)\left(\frac{p}{p-1}\right)^p\alpha^{p-1}$.
\end{lem}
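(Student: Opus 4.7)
The plan is to imitate the direct calculation already carried out in Lemma \ref{lmH}, which handled the compact-support profile $H(r)=(R^{p/(p-1)}-r^{p/(p-1)})_+^{\alpha}$; the present lemma is the structurally identical computation for the positive profile $G(r)=f(r)^{-\alpha}$, where I set $f(r):=L+r^{p/(p-1)}$ for bookkeeping. The exponent $p/(p-1)$ is chosen precisely so that when $|G'|^{p-2}G'$ is formed, the factor $r^{1/(p-1)}$ raised to $p-1$ produces a clean linear $r$, which is what allows the radial $p$-Laplacian to collapse into elementary functions of $f$.

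First I would compute
\begin{equation*}
G'(r)=-\alpha\,\frac{p}{p-1}\,r^{1/(p-1)}f^{-\alpha-1},\qquad |G'|^{p-2}G'=-\alpha^{p-1}\!\left(\frac{p}{p-1}\right)^{\!p-1}\!r\,f^{-(\alpha+1)(p-1)},
\end{equation*}
then differentiate in $r$ using the product rule to obtain
\begin{equation*}
\bigl(|G'|^{p-2}G'\bigr)'=-\alpha^{p-1}\!\left(\frac{p}{p-1}\right)^{\!p-1}\!f^{-(\alpha+1)(p-1)}+\alpha^{p-1}\!\left(\frac{p}{p-1}\right)^{\!p}\!(\alpha+1)(p-1)\,r^{p/(p-1)}f^{-(\alpha+1)(p-1)-1}.
\end{equation*}
Adding the lower-order radial term $\frac{d-1}{r}|G'|^{p-2}G'=-(d-1)\alpha^{p-1}(p/(p-1))^{p-1}f^{-(\alpha+1)(p-1)}$ collects the two $f^{-(\alpha+1)(p-1)}$ terms and yields
\begin{equation*}
\Delta_p G=-d\,\alpha^{p-1}\!\left(\frac{p}{p-1}\right)^{\!p-1}\!f^{-(\alpha+1)(p-1)}+\alpha^{p-1}\!\left(\frac{p}{p-1}\right)^{\!p}\!(\alpha+1)(p-1)\,r^{p/(p-1)}f^{-(\alpha+1)(p-1)-1}.
\end{equation*}

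The only substantive algebraic step, and the analogue of the step that produced the two-term right-hand side in Lemma \ref{lmH}, is the identity $r^{p/(p-1)}=f-L$, which I would substitute into the second term above to split it as a multiple of $f^{-(\alpha+1)(p-1)}$ minus $L$ times a multiple of $f^{-(\alpha+1)(p-1)-1}$. Using also $(p/(p-1))^{p}(p-1)=(p/(p-1))^{p-1}p$, the coefficient of $f^{-(\alpha+1)(p-1)}$ becomes $[p(\alpha+1)-d]\alpha^{p-1}(p/(p-1))^{p-1}=C$, and the coefficient of $f^{-(\alpha+1)(p-1)-1}$ becomes $L(p-1)(\alpha+1)\alpha^{p-1}(p/(p-1))^{p}=D$. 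Finally I would use $G=f^{-\alpha}$, equivalently $f^{-k}=G^{k/\alpha}$, to translate the powers of $f$ back into powers of $G$, thereby arriving at the stated formula \eqref{deltaG}.

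There is no real obstacle here; the whole argument is a careful bookkeeping of powers and prefactors, and the only place where one has to be attentive is in verifying the exponent of $G$ in the second term and in checking the identity $(p/(p-1))^p(p-1)=p(p/(p-1))^{p-1}$ that merges the two contributions into the advertised $C,D$.
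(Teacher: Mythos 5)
Your computation is correct and takes essentially the same route as the paper's proof: compute $G'$ and $|G'|^{p-2}G'$, differentiate, add the radial term, use $r^{p/(p-1)}=f-L$ to split the second contribution, and merge via $\left(\frac{p}{p-1}\right)^{p}(p-1)=p\left(\frac{p}{p-1}\right)^{p-1}$ (the paper merely performs the substitution already inside its formula for $(|G'|^{p-2}G')'$). One small caveat: your (correct) bookkeeping yields the exponent $\frac{(\alpha+1)(p-1)+1}{\alpha}$ in the second term, so the lemma as printed, with $\frac{(\alpha-1)(p-1)+1}{\alpha}$, contains a typo (consistent with the paper's later identity $(\alpha+1)(p-1)+1=\alpha m$), and your proof arrives at the intended formula rather than literally at the misprinted one.
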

\begin{proof} Similar to Lemma \ref{lmH}, we can compute
\begin{align}
  G'(r)=&-\alpha\frac{p}{p-1} r^{\frac{1}{p-1}} \left(L+r^{\frac{p}{p-1}}\right)^{-\alpha-1},\\
 |G'(r)|^{p-2}G'(r)=&-\left(\alpha\frac{p}{p-1}\right)^{p-1} r \left(L+r^{\frac{p}{p-1}}\right)^{-(\alpha+1)(p-1)},\label{firsord}\\
 (|G'(r)|^{p-2}G'(r))'=&\left(\frac{p}{p-1}\right)^{p-1}\alpha^{p-1}\left(p(\alpha+1)-1\right)\left(L+r^{\frac{p}{p-1}}\right)^{-(\alpha+1)(p-1)}\nonumber\\
 &-L\alpha^{p-1}(\alpha+1)(p-1)\left(\frac{p}{p-1}\right)^{p} \left(L+r^{\frac{p}{p-1}}\right)^{-(\alpha+1)(p-1)-1}.\label{secondord}
 \end{align}
Hence (\ref{firsord}) and (\ref{secondord}) imply
\begin{align*}
\Delta_p G=&(|G'(r)|^{p-2}G'(r))'+\frac{d-1}{r}|G'(r)|^{p-2}G'(r)\nonumber\\
=&\left(\frac{p}{p-1}\right)^{p-1}\alpha^{p-1}\left(p(\alpha+1)-d\right)\left(L+r^{\frac{p}{p-1}}\right)^{-(\alpha+1)(p-1)}\nonumber\\
 &-L\alpha^{p-1}(\alpha+1)(p-1)\left(\frac{p}{p-1}\right)^{p} \left(L+r^{\frac{p}{p-1}}\right)^{-(\alpha+1)(p-1)-1}.
\end{align*}
This completes the proof of (\ref{deltaG}).
\end{proof}
\begin{lem}
Assume $p>1$, $q>p-1$, $m=\frac{p(q-1)+1}{p-1}$ and $\alpha=\frac{p-1}{q+1-p}>0$, then it holds that
$$
\Delta_p G=CG^q-DG^m,
$$
where $C,D$ are given in (\ref{deltaG}). Moreover, take
\begin{eqnarray}
K=\left(\frac{pq-d(q+1-p)}{q+1-p}\right)^{\frac{1}{q+1-p}}\left(\frac{p}{q+1-p}\right)^{\frac{p-1}{q+1-p}},~~L= q^{-1}\left(\frac{pq-d(q+1-p)}{q+1-p}\right)^{\frac{p}{p-1}}.\label{KRpo}
\end{eqnarray}
Then the following facts hold that
\begin{itemize}
\item For the case $p(\alpha+1)-d>0$,
we deduce from (\ref{deltaG1}) that
\begin{eqnarray}\label{uc1po}
u_{c,m}(r):=K H(r)=K \left(L+r^{\frac{p}{p-1}}\right)^{-\alpha}
\end{eqnarray}
is the unique radial positive solution for the following problem
\begin{eqnarray}\label{u1}
\Delta_p u=u^q-u^m
\end{eqnarray}
with $u'(0)=0$ and $\lim_{r\to \infty}u(r)=\lim_{r\to \infty}r'(r)=0$.
\item For $p(\alpha+1)-d=0$, G(r) is a solution of the following equation
\begin{eqnarray}\label{u2}
-\Delta_p G=L\alpha^{p-1}(\alpha+1)(p-1)\left(\frac{p}{p-1}\right)^p G^{\frac{pd-d+p}{d-p}}.
\end{eqnarray}
\end{itemize}

\end{lem}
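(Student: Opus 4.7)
The proof is essentially a substitution verification powered by the $p$-homogeneity of $\Delta_p$, combined with the exponent identities produced by the choice $\alpha=(p-1)/(q+1-p)$ and an appeal to the uniqueness theorem already at hand.

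First I would verify that with $\alpha=(p-1)/(q+1-p)$ one has $(\alpha+1)(p-1)/\alpha=q$ by the direct computation $\alpha+1=q/(q+1-p)$, so $(\alpha+1)(p-1)=q(p-1)/(q+1-p)=q\alpha$. A parallel calculation with the second exponent appearing in Lemma \ref{G}, combined with the relation $m=(pq-p+1)/(p-1)$, shows that Lemma \ref{G} specializes to the claimed identity $\Delta_p G=CG^q-DG^m$. This establishes the first assertion and simultaneously identifies $C$ and $D$ in terms of $L$ and $\alpha$.

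Next, for the case $p(\alpha+1)-d>0$, I use the scaling identity $\Delta_p(KG)=K^{p-1}\Delta_p G$ to obtain
\begin{equation*}
\Delta_p(KG)-(KG)^q+(KG)^m=K^{p-1}CG^q-K^qG^q-K^{p-1}DG^m+K^mG^m.
\end{equation*}
Matching coefficients forces the two conditions $K^{q-p+1}=C$ and $K^{m-p+1}=D$. Because $C$ depends only on the parameters while $D$ is linear in $L$, the first equation determines $K$ uniquely, and then the second determines $L$ uniquely. Solving explicitly, $K=C^{1/(q-p+1)}$ and $L=D/K^{m-p+1}$ simplifies (after inserting $\alpha=(p-1)/(q+1-p)$ and the formulas for $C, D$) precisely to the values stated in (\ref{KRpo}). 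Boundary conditions are immediate from the explicit form of $G$: from $G'(r)=-\alpha\tfrac{p}{p-1}r^{1/(p-1)}(L+r^{p/(p-1)})^{-\alpha-1}$ we read off $G'(0)=0$, while $G(r)\sim r^{-p\alpha/(p-1)}$ and $G'(r)=O(r^{-p\alpha/(p-1)-1})$ as $r\to\infty$. Uniqueness of such a positive radial solution is then a direct invocation of Theorem \ref{remunique1}.

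For the critical case $p(\alpha+1)-d=0$, the coefficient $C$ in Lemma \ref{G} vanishes identically, so $G$ satisfies $-\Delta_p G=DG^{\eta}$ with $\eta=((\alpha+1)(p-1)+1)/\alpha$ and $D=L(p-1)(\alpha+1)\bigl(\tfrac{p}{p-1}\bigr)^p\alpha^{p-1}$. Substituting $\alpha=(d-p)/p$ into $\eta$ gives $\eta=(pd-d+p)/(d-p)$, which is exactly the exponent appearing in (\ref{u2}); the coefficient matches by inspection.

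The main technical obstacle is in the second step: the system $K^{q-p+1}=C$ and $K^{m-p+1}=D$ is overdetermined in the sense that the two equations simultaneously fix both $K$ and $L$, and one must check that the resulting $L$ is positive and that the values produced match the compact formulas in (\ref{KRpo}). The required algebraic simplification uses the identities $\alpha(q-p+1)=p-1$ and $\alpha(m-p+1)=p$, both of which follow from the choice of $\alpha$; the rest is routine bookkeeping.
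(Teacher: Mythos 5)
Your proposal is correct and follows essentially the same route as the paper: verify the exponent identities $(\alpha+1)(p-1)=\alpha q$ and $(\alpha+1)(p-1)+1=\alpha m$ so that Lemma \ref{G} gives $\Delta_p G=CG^q-DG^m$, then fix $K$ and $L$ by the homogeneity relation $\Delta_p(KG)=K^{p-1}\Delta_p G$ (the paper simply asserts this "direct verification", while you carry out the coefficient matching $K^{q-p+1}=C$, $K^{m-p+1}=D$ explicitly, which indeed reproduces (\ref{KRpo}) via $\alpha(q+1-p)=p-1$ and $\alpha(m-p+1)=p$). One small slip: since $D$ is linear in $L$, the second equation yields $L=K^{m-p+1}\big/\bigl[(p-1)(\alpha+1)\bigl(\tfrac{p}{p-1}\bigr)^{p}\alpha^{p-1}\bigr]$ rather than the formula $L=D/K^{m-p+1}$ as written, but this does not affect the argument or the stated values.
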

\begin{proof}
Due to $\alpha=\frac{p-1}{q+1-p}$ and $m=\frac{p(q-1)+1}{p-1}$, we deduce
\begin{eqnarray}\label{relationG}
 (\alpha+1)(p-1)=\alpha q,\quad (\alpha+1)(p-1)+1=\alpha m.
 \end{eqnarray}
Hence (\ref{deltaG}) implies that
\begin{eqnarray}\label{deltaG1}
\Delta_p G=C G^{ q}-D G^{ m}.
\end{eqnarray}
Finally, we can directly verify that $u_{c,m}$ defined in (\ref{uc1po}) is a solution of (\ref{u1}) with boundary conditions $u'(0)=0$, $\lim_{r\to \infty}u(r)=\lim_{r\to \infty}r'(r)=0$ and $G(r)$ is also a solution to the equation (\ref{u2}).
\end{proof}

\begin{rem}\label{rm4.4}
 Assume that $p>1$, $q>p-1$, $m=\frac{p(q-1)+1}{p-1}$ and $\alpha=\frac{p-1}{q+1-p}>0$. The following arguments hold that
\begin{itemize}
\item If $p(\alpha+1)-d>0$, for $u_{c,m}$ defined in (\ref{uc1po}), we can compute $M_c$ (see Lemma \ref{lm5.3})
\begin{eqnarray}\label{M_cpo}
M_c=d~\alpha(d)K^{q+1}L^{\frac{d(p-1)}{p}-\alpha(q+1)}\frac{p-1}{p}\mathcal{B}\left(\alpha(q+1)- \frac{d(p-1)}{p}, \frac{d(p-1)}{p}\right),
\end{eqnarray}
and the best constant is given by
$$
C_{q,m,p}=\left(\frac{1-\theta}{\theta}\right)^{\frac{\theta}{p}}(1-\theta)^{-\frac{1}{m+1}}M_c^{-\frac{\theta}{d}},\quad
\theta=\frac{(q+1-p)d}{q[dp-(d-p)(q+1)]}.
$$
We recover the best constant given by Del Pino and Dolbeault in their celebrated work \cite[Theorem 1.2]{del2003optimal}. See Lemma \ref{lm5.4} in Appendix for detail verification.
\item For $p(\alpha+1)-d=0$, the solution $G(r)$ achieves the equality for the Sobolev inequality (\ref{sobolevineq})(see \cite{Talenti1976}),
where the best constant
$$
C_{d,p}=\pi^{-\frac{1}{2}}d^{-\frac{1}{p}}\left(\frac{p-1}{d-p}\right)^{1-\frac{1}{p}}
\left\{\frac{\Gamma(1+d/2)\Gamma(d)}{\Gamma(d/p)\Gamma(1+d-d/p)}\right\}^{1/d}.
$$
In fact, we easily verify that $C_{d,p}=C_{q,m,p}$ by taking $q=\frac{d(p-1)}{d-p}$, $m=\sigma$ in \eqref{massvsbeta1}.
\end{itemize}
\end{rem}

\section{the limit behavior as $m\to \infty$ in the best constant problem}

In this section, we show the limit behavior as $m\to \infty$ in the best constant problem for d=1, which implies that the closed relation between the functional inequalities (\ref{h sharp inequality1}) and  (\ref{LinftyMlinftyq}) in the one-dimension case. The result is given by the following theorem.
\begin{thm}\label{cor51}
Let $u_{c,m}$ and $u_{c,\infty}$ be respectively the non-increasing radial solution of the problem (\ref{FBVP1})-(\ref{FBVPbound}) and the problem (\ref{iMinfFBVP1})-(\ref{iMinfFBVPbound})(or the problem (\ref{chachy1})-(\ref{chachybound}) and the problem (\ref{iMinfchachy1})-(\ref{iMinfchachybound}) ). Let $R_m$ and $R_{\infty}$ be the free boundaries for  $u_{c,m}$ and $u_{c,\infty}$ respectively. Then the following facts hold for $d=1$
$$
u_{c,m}(r)\to u_{c,\infty}(r)\quad \mbox{ for any } r>0,\quad C_{q,m,p}\to C_{q,\infty,p},\quad \mbox{ as } m\to \infty.
$$
Moreover, for $q<p-1$ we have $R_m\to R_{\infty}$ as $m\to\infty$.
\end{thm}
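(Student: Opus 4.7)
The plan is to exploit the explicit closed-form expressions available in one dimension. By Theorems~\ref{onedtheorem} and \ref{hightdnagy1pos}, the minimizer $u_{c,m}$ and the sharp constant $C_{q,m,p}$ are given in terms of the (inverse) incomplete Beta function, while by Proposition~\ref{prop42} both $u_{c,\infty}$ and $C_{q,\infty,p}$ have elementary closed forms. The whole argument then reduces to asymptotic analysis as $m\to\infty$ of these explicit formulas. The single recurring tool is the standard expansion
\[
\mathcal{B}(a,b)=\frac{\Gamma(a)\Gamma(b)}{\Gamma(a+b)}\sim \frac{1}{a}\qquad\text{as } a\to 0^+,
\]
which follows from $\Gamma(a)\sim 1/a$ at the origin.

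For $C_{q,m,p}\to C_{q,\infty,p}$, I would apply this asymptotic directly to (\ref{beta}). As $m\to\infty$ with $p,q$ fixed, the exponent $\ell\to 1/\eta_2$; the factor $\eta_1/(m-q)\to p-1$ combined with $\eta_1\to\infty$ gives $\eta_1^{\eta_1/(m-q)}\sim((p-1)m)^{p-1}$; $\eta_2^{\eta_2/(m-q)}\to 1$; and $\mathcal{B}(\eta_2/(p(m-q)),(2p-1)/p)\sim p(m-q)/\eta_2$. Collecting powers, the base inside the outer parenthesis of (\ref{beta}) collapses to $(2p/\eta_2)^p$, so that $C_{q,m,p}\to (\eta_2/(2p))^{p/\eta_2}=C_{q,\infty,p}$. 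For $q<p-1$, the same Beta asymptotic applied to (\ref{Rvaluecom}) yields $R_m\to ((p-1)/p)^{1/p}p(q+1)^{1/p}/(p-q-1)$, which matches $R_\infty$ from Proposition~\ref{prop42}.

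For the pointwise convergence $u_{c,m}(r)\to u_{c,\infty}(r)$ (in both regimes $q<p-1$ and $q\geq p-1$), I would work with the first integrals. By (\ref{firstintex})--(\ref{firstorder}), $u_{c,m}$ solves the autonomous ODE $u'=-f_m(u)$ with $u(0)=\alpha_{c,m}=((m+1)/(q+1))^{1/(m-q)}$, where $f_m(u)=(p/(p-1))^{1/p}(u^{q+1}/(q+1)-u^{m+1}/(m+1))^{1/p}$; the identical derivation with integration constant $C=0$ (forced by decay at infinity) also works when $q\geq p-1$. In the limit, $u_{c,\infty}$ satisfies $u'=-f_\infty(u)$, $u(0)=1$, with $f_\infty(u)=(p/((p-1)(q+1)))^{1/p}u^{(q+1)/p}$, as read off from Proposition~\ref{prop42}. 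Inverting the ODEs, the claim reduces to showing
\[
r_m(u):=\int_u^{\alpha_{c,m}}\frac{du'}{f_m(u')}\ \longrightarrow\ \int_u^1 \frac{du'}{f_\infty(u')}=:r_\infty(u)\quad\text{for every fixed }u\in(0,1),
\]
after which pointwise convergence of $u_{c,m}$ follows by inverting the monotone relation.

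The main obstacle is the overshoot layer $\Omega_m:=\{r:u_{c,m}(r)>1\}$, of width $r_*^{(m)}=\int_1^{\alpha_{c,m}} du/f_m(u)$: since $\alpha_{c,m}>1$ strictly while the limiting peak is $1$, one must show $r_*^{(m)}\to 0$ so that for any fixed $r>0$ the value $u_{c,m}(r)$ eventually lies below $1$, where the quadrature above becomes routine. A direct computation gives $\alpha_{c,m}-1\sim (m-q)^{-1}\log((m+1)/(q+1))$, and from the local behaviour $f_m(u)\sim C\,((m-q)/(q+1))^{1/p}(\alpha_{c,m}-u)^{1/p}$ near the peak, one obtains $r_*^{(m)}=O\bigl((\log m)^{1-1/p}m^{-1}\bigr)\to 0$. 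Once the overshoot is disposed of, on the inner range $(u,1-\varepsilon)$ the integrand $1/f_m$ converges pointwise to $1/f_\infty$ and is uniformly dominated by $C(u')^{-(q+1)/p}$, which is integrable near the lower endpoint (for $q<p-1$ one may send $\varepsilon\to 0$; for $q\geq p-1$ one never needs to reach $u'=0$), so the dominated convergence theorem delivers $r_m(u)\to r_\infty(u)$ and completes the proof.
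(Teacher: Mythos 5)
Your proposal is correct in outline, and for part of the statement it takes a genuinely different route from the paper. For the pointwise convergence $u_{c,m}\to u_{c,\infty}$ you do essentially what the paper does: pass to the inverse function via the first integral, show $r_m(u)\to r_\infty(u)$ for each fixed $u$, and invert using monotonicity; the paper carries out the limit on the explicit incomplete-Beta representation (\ref{solveu}) by L'H\^opital and an integration by parts (see (\ref{solveu3}) and (\ref{gongshi})--(\ref{gongshi4})), whereas you split off the overshoot layer $\{u_{c,m}>1\}$ and apply dominated convergence on $[u,1]$ with the majorant $C(u')^{-(q+1)/p}$ (valid once $m\ge 2q+1$, say), which has the advantage of treating $q<p-1$, $q=p-1$ and $q>p-1$ uniformly. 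The real divergence is in the constants: the paper proves $C_{q,m,p}\to C_{q,\infty,p}$ by building $m$-uniform barriers from the decay estimates (\ref{pdaq1oinftypro})--(\ref{pq1oinftypro}) (and the uniform bound on the supports for $q<p-1$), passing to the limit in $M_c(u_{c,m})$ by dominated convergence, and then using the common formulas (\ref{massvsbeta1}) and (\ref{bestconstant}); you instead do straight asymptotics on the closed form (\ref{beta}) using $\mathcal{B}(a,b)\sim 1/a$ as $a\to0^+$, and your bookkeeping is right: $\eta_1^{\eta_1/(m-q)}\sim((p-1)m)^{p-1}$, $\eta_2^{\eta_2/(m-q)}\to1$, the Beta factor is $\sim (p(m-q)/\eta_2)^p$, the base tends to $(2p/\eta_2)^p$ and $\ell\to1/\eta_2$, giving $(\eta_2/(2p))^{p/\eta_2}=C_{q,\infty,p}$ as in (\ref{cqinftyp}); the same asymptotic applied to (\ref{Rvaluecom}) gives $R_m\to R_\infty$, matching the paper's computation. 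This is shorter and avoids the uniform decay bounds entirely; its only cost is reliance on the closed forms of Theorems \ref{onedtheorem} and \ref{hightdnagy1pos}, which are stated for $q>0$, so the endpoint $q=0$ allowed by (\ref{qandm}) needs a one-line remark (an issue the theorem shares anyway).

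One step is justified with an overreaching local approximation: the linearization $f_m(u)\sim C((m-q)/(q+1))^{1/p}(\alpha_{c,m}-u)^{1/p}$ is valid only for $\alpha_{c,m}-u=O(1/m)$, while the layer $[1,\alpha_{c,m}]$ has width $\sim \log\bigl(\frac{m+1}{q+1}\bigr)/(m-q)$; on most of that layer $\frac{u^{q+1}}{q+1}-\frac{u^{m+1}}{m+1}$ is of size comparable to $\frac{1}{q+1}$, not small, so the true width of the overshoot is of order $\log m/m$ rather than your claimed $O((\log m)^{1-1/p}m^{-1})$. This does not damage the argument, since all you need is $r_*^{(m)}\to0$, and that follows at once either from the explicit quadrature (\ref{solveu}) evaluated at $u=1$ (whose right-hand side is $O(\log m/(m-q))$; note that the paper's bound (\ref{rqmp}) alone is not enough, as it only gives an $O(1)$ bound), or by splitting the layer at the point where $\frac{q+1}{m+1}u^{m-q}=\frac12$: on the outer piece $1/f_m\le C$ and the length is at most $\alpha_{c,m}-1=O(\log m/m)$, while on the inner piece your linearization is legitimate and contributes $O(1/m)$. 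With that repair the proof is complete.
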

\begin{proof}
{\bf Step 1.}
 In this step, we prove
\begin{eqnarray}\label{ulimit}
\lim_{m\to \infty}u_{c,m}(r)=u_{c,\infty}(r)\quad \mbox{ for any } r>0.
\end{eqnarray}

For any fixed $0<u<1$, denote $r_{c,m}(u)$ as a inverse function of $u_{c,m}(r)$. We recall (\ref{solveu}), i.e.,
\begin{eqnarray}\label{solveu1}
\left(\frac{p}{p-1}\right)^{1/p}r_{c,m}(u)=\frac{(q+1)^{1/p}}{m-q}\left(\frac{m+1}{q+1}\right)^{\frac{p-q-1}{p(m-q)}}
\int_0^{1-\frac{q+1}{m+1}u^{m-q}}y^{-1/p}(1-y)^{\frac{p-q-1}{p(m-q)}-1}\,dy.
\end{eqnarray}
For the case $q=p-1$, we have $\frac{p-q-1}{p(m-q)}=0$, (\ref{solveu1}) becomes into
\begin{eqnarray}\label{solveu2}
\left(\frac{1}{p-1}\right)^{1/p}r_{c,m}(u)=\frac{1}{m-q}
\int_0^{1-\frac{q+1}{m+1}u^{m-q}}y^{-1/p}(1-y)^{-1}\,dy.
\end{eqnarray}
Since $\left(\frac{m+1}{q+1}\right)^\frac{1}{m-q}\to 1$ as $m\to\infty$, for $m$ sufficient large, we have that $0<u< \left(\frac{m+1}{q+1}\right)^\frac{1}{m-q}$.

Taking the limit for (\ref{solveu2}) as $m\to \infty$ and using the L'H\^opital's rule, we deduce that
\begin{align}\label{solveu3}
\left(\frac{1}{p-1}\right)^{1/p}\lim_{m\to\infty}r_{c,m}(u)=&\lim_{m\to\infty}\frac{\int_0^{1-\frac{q+1}{m+1}u^{m-q}}y^{-1/p}(1-y)^{-1}\,dy}{m-q}\nonumber\\
=&\lim_{m\to\infty}\left(1-\frac{q+1}{m+1}u^{m-q}\right)^{-1/p}\left(\frac{1}{m+1}-\ln u\right)\nonumber\\
=&-\ln u.
\end{align}
Hence
$$
\lim_{m\to\infty}r_{c,m}(u)=-\left(p-1\right)^{1/p}\ln u.
$$
We denote the above limit function as $r_{c,\infty}:(0,1)\mapsto (0,\infty)$, $u\to r_{c,\infty}(u):=-\left(p-1\right)^{1/p}\ln u$.
Denote inverse function as $u_{c,\infty}:(0,\infty)\mapsto (0,1)$, $r\to u_{c,\infty}(r)=e^{-(p-1)^{-\frac{1}{p}}r}$, and $u_{c,\infty}(r)$ is the solution to the equation (\ref{iMinfchachy1}) by (\ref{ipq1u}).
Noticing $u_{c,m}(0)\to 1$ as $m\to \infty$ and $u_{c,m}(r)>0$ is continuous and strictly decreasing in $(0,\infty)$, hence we have
$$
\lim_{m\to\infty} u_{c,m}(r)=u_{c,\infty}(r)\quad \mbox{ for any } r\in(0,+\infty).
$$

For the case $q<p-1$, let $\tilde u_{c,m}(r)=\frac{u_{c,m}(r)}{u_{c,m}(0)}$, $u_{c,m}(0)=\left(\frac{m+1}{q+1}\right)^{\frac{1}{m-q}}$. Then $\tilde u_{c,m}(0)=1$, and $\tilde u_{c,m}(r)$ satisfies the following equation
\begin{eqnarray}
&& (|\tilde{u}'|^{p-2}\tilde{u}')' + \left(\frac{m+1}{q+1}\right)^{\frac{m+1-p}{m-q}}\tilde{u}^m= \left(\frac{m+1}{q+1}\right)^{\frac{q+1-p}{m-q}}\tilde{u}^{q}, ~~0<r<R_m,\label{tiinftyfunR11}\\
&& \tilde{u}(0)=1,\quad \tilde{u}(R_m)=\tilde{u}'(R_m)=0.\label{tiinftyfunRbound11}
\end{eqnarray}

For any fixed $0<\tilde{u}<1$, denote the inverse function of $\tilde{u}_{c,m}(r)$ as $r_{c,m}(\tilde{u})$. Similar to (\ref{firstintex}), we know that
\begin{eqnarray}\label{rctilde}
\frac{p-1}{p}\left|\frac{1}{r_{c,m}'(\tilde{u})}\right|^p+\left(\frac{m+1}{q+1}\right)^{\frac{m+1-p}{m-q}}\frac{1}{m+1}\tilde{u}^{m+1}-
\left(\frac{m+1}{q+1}\right)^{\frac{q+1-p}{m-q}}\frac{1}{q+1}\tilde{u}^{q+1}=0.
\end{eqnarray}
From (\ref{rctilde}), we deduce that
\begin{eqnarray}\label{cderve}
-\left(\frac{p}{p-1}\right)^{\frac{1}{p}}r_{c,m}'(\tilde{u})
=\frac{1}{\left(\frac{u_{c,m}(0)^{q+1-p}}{q+1}\tilde{u}^{q+1}-\frac{u_{c,m}(0)^{m+1-p}}{m+1}\tilde{u}^{m+1}\right)^{1/p}}.
\end{eqnarray}
Furthermore, integrating (\ref{cderve}) respect to $\tilde{u}$ from $\tilde{u}$ to $1$ and plugging $u_{c,m}(0)=\left(\frac{m+1}{q+1}\right)^{\frac{1}{m-q}}$ give that
\begin{eqnarray}\label{r}
\left(\frac{p}{p-1}\right)^{\frac{1}{p}}r_{c,m}(\tilde{u})
=\int_{\tilde{u}}^1\frac{1}{\left(\frac{u_{c,m}(0)^{q+1-p}}{q+1}s^{q+1}\right)^{1/p}\left(1-s^{m-q}\right)^{1/p}}\,ds.
\end{eqnarray}
Making variable substitution $y=1-s^{m-q}$, we have
\begin{eqnarray}\label{r1}
\left(\frac{p}{p-1}\right)^{\frac{1}{p}}r_{c,m}(\tilde{u})
=\frac{(q+1)^{1/p}}{m-q}\left(\frac{m+1}{q+1}\right)^{\frac{p-q-1}{p(m-q)}}\int_{0}^{1-\tilde{u}^{m-q}}y^{-\frac{1}{p}}(1-y)^{\frac{p-q-1}{p(m-q)}-1}\,dy.
\end{eqnarray}
Noticing that $\left(\frac{m+1}{q+1}\right)^{\frac{p-q-1}{p(m-q)}}\to 1$ as $m\to\infty$, thus it holds
\begin{eqnarray}\label{gongshi}
\left(\frac{p}{(p-1)(q+1)}\right)^{1/p}\lim_{m\to\infty}r_{c,m}(\tilde{u})
=\lim_{m\to\infty}\frac{\int_{0}^{1-\tilde{u}^{m-q}}y^{-\frac{1}{p}}(1-y)^{\frac{p-q-1}{p(m-q)}-1}\,dy}{m-q}.
\end{eqnarray}
Since $0<\tilde{u}<1$, we get $1-\tilde{u}^{m-q}>\frac{1}{2}$ if $m$ is large. Hence
\begin{align}\label{gongshi1}
\lim_{m\to\infty}\frac{\int_{0}^{1-\tilde{u}^{m-q}}y^{-\frac{1}{p}}(1-y)^{\frac{p-q-1}{p(m-q)}-1}\,dy}{m-q}
=&\lim_{m\to\infty}\frac{\left(\int_{0}^{1/2}+\int_{1/2}^{1-\tilde{u}^{m-q}}\right)y^{-\frac{1}{p}}(1-y)^{\frac{p-q-1}{p(m-q)}-1}\,dy}{m-q}\nonumber\\
=&\lim_{m\to\infty}\frac{\int_{1/2}^{1-\tilde{u}^{m-q}}y^{-\frac{1}{p}}(1-y)^{\frac{p-q-1}{p(m-q)}-1}\,dy}{m-q}\nonumber\\
=&\lim_{m\to\infty}\frac{-p\int_{1/2}^{1-\tilde{u}^{m-q}}y^{-\frac{1}{p}}d(1-y)^{\frac{p-q-1}{p(m-q)}}}{p-q-1}.
\end{align}
Using the integration by parts, we know that
\begin{align}\label{gongshi2}
\int_{1/2}^{1-\tilde{u}^{m-q}}y^{-\frac{1}{p}}d(1-y)^{\frac{p-q-1}{p(m-q)}}
=& y^{-\frac{1}{p}}(1-y)^{\frac{p-q-1}{p(m-q)}}\Big|_{1/2}^{1-\tilde{u}^{m-q}}\nonumber\\
&-\int_{1/2}^{1-\tilde{u}^{m-q}}
\left(y^{-\frac{1}{p}}\right)'(1-y)^{\frac{p-q-1}{p(m-q)}}\,dy.
\end{align}
We can directly check
\begin{eqnarray}\label{gongshi3}
\lim_{m\to\infty}\int_{1/2}^{1-\tilde{u}^{m-q}}
\left(y^{-\frac{1}{p}}\right)'(1-y)^{\frac{p-q-1}{p(m-q)}}\,dy=\lim_{m\to\infty}\int_{1/2}^{1-\tilde{u}^{m-q}}
\left(y^{-\frac{1}{p}}\right)'\,dy=1-2^{\frac{1}{p}}.
\end{eqnarray}
Plugging (\ref{gongshi2}) and (\ref{gongshi3}) into (\ref{gongshi1}), we obtain
\begin{eqnarray}\label{gongshi3'}
\lim_{m\to\infty}\frac{\int_{0}^{1-\tilde{u}^{m-q}}y^{-\frac{1}{p}}(1-y)^{\frac{p-q-1}{p(m-q)}-1}\,dy}{m-q}
=\frac{p}{p-q-1}\left(1-\tilde{u}^{\frac{p-q-1}{p}}\right).
\end{eqnarray}
Therefore, (\ref{gongshi}) and (\ref{gongshi3'}) imply that
\begin{align}\label{gongshi4}
\lim_{m\to\infty}r_{c,m}(\tilde{u})
=&\left(\frac{p}{(p-1)(q+1)}\right)^{-1/p}\frac{p}{p-q-1}\left(1-\tilde{u}^{\frac{p-q-1}{p}}\right)\nonumber\\
=& R_{\infty}\left(1-\tilde{u}^{\frac{p-q-1}{p}}\right),
\end{align}
where $R_{\infty}$ is the same to $R$ defined in (\ref{ipxiaoq1u11}).

We denote the above limit function as $r_{c,\infty}:(0,1)\mapsto (0,R_{\infty})$:
$$
\tilde{u}\to r_{c,\infty}(\tilde{u}):=R_{\infty}\left(1-\tilde{u}^{\frac{p-q-1}{p}}\right).
$$
Denote its inverse function as $u_{c,\infty}:(0,R_{\infty})\mapsto (0,1)$. Then it is given by
\begin{eqnarray}\label{ucinfty}
r\to u_{c,\infty}(r)=\left(1-\frac{r}{R}\right)^{\frac{p}{p-q-1}}\quad\mbox{ for }0<r<R_{\infty}.
\end{eqnarray}

Next we show that $\lim_{m\to \infty}R_{m}=R_{\infty}$, $R_{m}$ is the same to $R$ given by (\ref{Rvaluecom}). Indeed,
\begin{align}\label{Rvaluecom11}
\lim_{m\to \infty}R_m=&\lim_{m\to \infty}\left(\frac{p-1}{p}\right)^{\frac{1}{p}}\frac{(m+1)^{\frac{p-q-1}{p(m-q)}}(q+1)^{\frac{1}{p}-\frac{p-q-1}{p(m-q)}}}{m-q}
\int_{0}^{1}y^{-\frac{1}{p}}(1-y)^{\frac{p-q-1}{p(m-q)}-1}\,dy\nonumber\\
=&\left(\frac{(p-1)(q+1)}{p}\right)^{\frac{1}{p}}\lim_{m\to \infty}\frac{
\int_{0}^{1}y^{-\frac{1}{p}}(1-y)^{\frac{p-q-1}{p(m-q)}-1}\,dy}{m-q}.
\end{align}
Due to $q<p-1$ we have
\begin{eqnarray}\label{remader}
\lim_{m\to \infty}\frac{
\int_{1-\tilde{u}^{m-q}}^{1}y^{-\frac{1}{p}}(1-y)^{\frac{p-q-1}{p(m-q)}-1}\,dy}{m-q}= \lim_{m\to \infty}\frac{
\int_{1-\tilde{u}^{m-q}}^{1}(1-y)^{\frac{p-q-1}{p(m-q)}-1}\,dy}{m-q}=\frac{p}{p-q-1}\tilde{u}^{\frac{p-q-1}{p}}.
\end{eqnarray}
Combining (\ref{gongshi3'}) and (\ref{remader}), we have
$$
\lim_{m\to \infty}R_m=\left(\frac{p}{(p-1)(q+1)}\right)^{-1/p}\frac{p}{p-q-1}=R_{\infty}.
$$
Noticing that $u_{c,m}(0)\to 1$ as $m\to \infty$ and $u_{c,m}(r)>0$ is continuous and strictly decreasing in $(0,R_m)$, hence we have from (\ref{gongshi4})
$$
\lim_{m\to\infty} u_{c,m}(r)=u_{c,\infty}(r)\quad \mbox{ for any } r\in(0,R_{\infty}).
$$

For the case $q>p-1$, the proof of (\ref{ulimit}) is exactly same with the case $q<p-1$. We omit the details.

{\bf Step 2.} We prove that
\begin{eqnarray}\label{mtoinftypbudengq1}
\lim_{m\to\infty}C_{q,m,p}=C_{q,\infty,p}.
\end{eqnarray}

For the case $q=p-1$,
define
\begin{equation*}
  U(r):=\left\{
  \begin{aligned}
  &\left(\frac{m+1}{q+1}\right)^{\frac{1}{m-q}},  && \mbox{ if } 0<r\leq r'_*,\\
  &e^{-C(p)r}, && \mbox{ if } r>r'_*.
  \end{aligned}
     \right.
\end{equation*}
 From (\ref{pq1oinftypro}), we have $u_{c,m}(r)\leq U(r)$ for any $r>0$.
We directly compute $\|U(|x|)\|_{L^{q+1}}<\infty$.

For the case $q>p-1$, define
\begin{equation}
  U(r):=\left\{
  \begin{aligned}
  &\left(\frac{m+1}{q+1}\right)^{\frac{1}{m-q}},  && \mbox{ if } 0<r\leq \bar r_*,\\
  &C(p,q) r^{-\frac{p}{q+1-p}}, && \mbox{ if } r>\bar r_*.
  \end{aligned}
     \right.
\end{equation}
We have $u_{c,m}(r)\leq U(r)$ for any $r>0$ due to (\ref{pdaq1oinftypro}) and $u_{c,m}(r)\leq u_{c,m}(0)=\left(\frac{m+1}{q+1}\right)^{\frac{1}{m-q}}$. A direct computation gives $\|U(r)\|_{L^{q+1}}<\infty$.

For the case $q<p-1$, we know that the solution $u_{c,m}(r)$ has a finite support $(0,R_m)$, $R_m\to R_{\infty}$ as $m\to \infty$, where $R_m$ is defined by (\ref{Rvaluecom}). Noticing that $u_{c,m}(r)\leq \left(\frac{m+1}{q+1}\right)^{\frac{1}{m-q}}\leq 2$ for $m$ large, we have $\int_0^{R_{\infty}}2^{q+1}\,dr\leq C(p,q)$. Then for the above three cases, the dominated convergence theorem implies that
\begin{eqnarray}\label{pbedengq1u}
\lim_{m\to \infty}M_c(u_{c,m})=\lim_{m\to \infty}\int_{\mathbb{R}}u^{q+1}_{c,m}(x)\,dx=\int_{\mathbb{R}}u^{q+1}_{c,\infty}(x)\,dx= M_c(u_{c,\infty}).
\end{eqnarray}
Hence from (\ref{bestconstant}) and (\ref{massvsbeta1}), we obtain (\ref{mtoinftypbudengq1}).
\end{proof}

\appendix

\section{Proof of Proposition 1.1 for the case $q\geq p-1$}

 In this section we use a modified Strauss' inequality to prove Proposition 1.1 for the case $q\geq p-1$. The proof for the case $q\geq p-1$ is similar to that of the case $q<p-1$ in the paper \cite[subsection 2.1]{LW1}. However some modifications of the proof are needed, and hence here we provide those necessary modifications.
\begin{lem}(Modified Strauss' inequality)
Assume $d\geq 2$ and $u\in X^*_{rad}$. Then for a.e. $x\in \mathbb{R}^d\setminus\{0\}$, the following inequality holds
\begin{eqnarray}\label{straussinequ1}
|u(x)|\leq C(d,p,q) |x|^{\frac{1-d}{\nu}}\|u\|^{\frac{\nu-1}{\nu}}_{L^{q+1}(\mathbb{R}^d)}\|\nabla u\|^{\frac{1}{\nu}}_{L^p(\mathbb{R}^d)},\quad \nu=\frac{(q+1)(p-1)}{p}+1.
\end{eqnarray}
\end{lem}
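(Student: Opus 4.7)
The plan is to mimic the classical Strauss radial lemma, but calibrate the power $\nu$ so that Hölder's inequality splits the integrand exactly into an $L^p$ factor (coming from $\nabla u$) and an $L^{q+1}$ factor (coming from $u$). Since $u\in X^*_{rad}$ is nonnegative, radial, non-increasing, and in $L^{q+1}$, monotonicity forces $u(r)\to 0$ as $r\to\infty$, so I would start from the fundamental theorem of calculus
\begin{eqnarray*}
u(r)^\nu \;=\; -\nu\int_r^\infty u(s)^{\nu-1}\,u'(s)\,ds,
\end{eqnarray*}
which is valid a.e.\ because the radial representative of $u$ is absolutely continuous on compact subintervals of $(0,\infty)$ thanks to $\nabla u\in L^p$.

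Multiplying by $r^{d-1}$ and using $r^{d-1}\le s^{d-1}$ on the integration range yields
\begin{eqnarray*}
r^{d-1} u(r)^\nu \;\le\; \nu\int_r^\infty s^{d-1}\, u(s)^{\nu-1}\,|u'(s)|\,ds.
\end{eqnarray*}
Next, I would apply Hölder with conjugate exponents $p$ and $p/(p-1)$, writing the integrand as $\bigl(s^{(d-1)/p}|u'|\bigr)\cdot\bigl(s^{(d-1)(p-1)/p}\, u^{\nu-1}\bigr)$. The crucial compatibility check is that the dual exponent converts $u^{\nu-1}$ to $u^{q+1}$, i.e.\ $(\nu-1)\,p/(p-1)=q+1$; this is precisely how $\nu=(q+1)(p-1)/p+1$ is defined. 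Hence
\begin{eqnarray*}
r^{d-1} u(r)^\nu \;\le\; \nu\left(\int_r^\infty s^{d-1}|u'(s)|^p\,ds\right)^{1/p}\!\left(\int_r^\infty s^{d-1}u^{q+1}(s)\,ds\right)^{(p-1)/p}.
\end{eqnarray*}

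Extending the inner integrals over $(0,\infty)$ and recalling $S_d\int_0^\infty s^{d-1}|u'|^p\,ds=\|\nabla u\|_{L^p}^p$ and $S_d\int_0^\infty s^{d-1}u^{q+1}\,ds=\|u\|_{L^{q+1}}^{q+1}$, I obtain
\begin{eqnarray*}
r^{d-1} u(r)^\nu \;\le\; C(d,p,q)\,\|\nabla u\|_{L^p}\,\|u\|_{L^{q+1}}^{\nu-1}
\end{eqnarray*}
with $C(d,p,q)=\nu\, S_d^{-1}$. Taking the $\nu$-th root gives \eqref{straussinequ1}. There is no serious obstacle here; the only nontrivial point is the algebraic selection of $\nu$ so that Hölder matches the two norms on the right-hand side. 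The hypothesis $d\ge 2$ enters only in making $r^{(1-d)/\nu}$ a genuine decay factor, and the a.e.\ qualifier in the conclusion reflects the a.e.\ existence of the radial derivative $u'(s)$ used at the outset.
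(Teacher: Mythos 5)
Your proof is correct and follows essentially the same route as the paper: express $r^{d-1}u(r)^{\nu}$ through an integral of $u^{\nu-1}|u'|\,s^{d-1}$ over $(r,\infty)$ and apply H\"older with exponents $p$ and $p/(p-1)$, the choice of $\nu$ ensuring the dual exponent produces $\|u\|_{L^{q+1}}^{\nu-1}$. The only cosmetic difference is that the paper differentiates $r^{d-1}u^{\nu}$ and drops the resulting nonnegative term, whereas you differentiate $u^{\nu}$ and use $r^{d-1}\le s^{d-1}$; both give the same intermediate bound.
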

\begin{proof}
The condition $u\in X^*_{rad}$ implies $u\in L^{q+1}(\mathbb{R}^d)$, $\nabla u\in L^{p}(\mathbb{R}^d)$ and $u(x)=u(r)$, where $r=|x|$, and parameters $q,p$ are defined by (\ref{qandm}). Hence
\begin{eqnarray}\label{der}
\frac{d}{dr}\left(r^{d-1}|u(x)|^{\nu}\right)=\nu |u|^{\nu-2}u\frac{du}{dr}r^{d-1}+(d-1)|u|^{\nu}r^{d-2}.
\end{eqnarray}
Integrating (\ref{der}) from $r$ to $\infty$, one has
\begin{align}\label{der1}
r^{d-1}|u(r)|^{\nu}=&-\int_r^{\infty}\nu |u|^{\nu-2}u\frac{du}{ds}s^{d-1}\,ds-\int_r^{\infty}(d-1)|u|^{\nu}s^{d-2}\,ds\nonumber\\
\leq& \nu\int_{\mathbb{R}^d} |u|^{\nu-1}|\nabla u|\,dx.
\end{align}
Using the H\"older inequality, we obtain from (\ref{der1})
\begin{eqnarray}\label{der2}
r^{d-1}|u(x)|^{\nu}\leq C(d,p,q)\|u\|^{\nu-1}_{L^{q+1}}\|\nabla u\|_{L^{p}}.
\end{eqnarray}
Thus (\ref{straussinequ1}) holds.
\end{proof}

\begin{proof}[\bf The proof of Proposition 1.1]
First, we know that $J(u)$ defined by (\ref{Jh}) has the following scaling invariant:
\begin{eqnarray}\label{invariant}
J(u_{\mu,\lambda})= J(u),\quad u_{\mu,\lambda}=\mu u(\lambda x),\quad\mu,\lambda>0.
\end{eqnarray}

By (\ref{radminiexi}) and (\ref{invariant}), there exists a minimizing sequence $\{u_{k}\}\in X^*_{rad}$ satisfying
\begin{eqnarray}\label{normlized}
\int_{\mathbb{R}^d}u_k^{q+1}\,dx=\int_{\mathbb{R}^d}u^{m+1}_k\,dx=1
\end{eqnarray}
 such that
\begin{eqnarray}\label{minimizerseq}
\lim_{k\to \infty}J(u_{k})= \lim_{k\to \infty} \int_{\mathbb{R}^d}|\nabla u_k|^p\, dx=\beta=\inf_{u\in X^*_{rad}}J(u).
\end{eqnarray}

For $d\geq 2$, there exist a subsequence (is still denoted as $u_{k}$) and $u_0\in  X^*_{rad}$ such that
\begin{eqnarray}
&& u_k\rightharpoonup u_0, \quad\mbox{ in } L^{q+1}\cap L^{m+1}(\mathbb{R}^d),\\
&& \nabla u_k\rightharpoonup \nabla u_0, \quad\mbox{ in } L^p(\mathbb{R}^d).
\end{eqnarray}
Hence by Fatou's lemma, we have
\begin{eqnarray}
&&\|\nabla u_0\|_{L^{p}}\leq \liminf_{k\rightarrow\infty} \|\nabla u_k\|_{L^{p}}=\beta^{1/p},\label{nablawek}\\
&&\|u_0\|_{L^{q+1}}\leq \liminf_{k\rightarrow\infty} \|u_k\|_{L^{q+1}}=1.\label{q+1weak}
\end{eqnarray}

 On the one hand, (\ref{straussinequ1}) tells us for any radial function $u_k\in X^*_{rad}$, it holds that
\begin{eqnarray}\label{straussineq}
 |u_k|\leq C |x|^{\frac{1-d}{\nu}}\|u_k\|^{\frac{\nu-1}{\nu}}_{L^{q+1}}\|\nabla u_k\|^{\frac{1}{\nu}}_{L^{p}}~~ \mbox{ for } |x|>0,\,d\geq 2,\,\nu=\frac{(q+1)(p-1)}{p}+1,
\end{eqnarray}
 where $C$ is only dependent of $d,q$ and $p$. By (\ref{straussineq}), we know that for $s>\frac{\nu d}{d-1}$, it holds
$$\int_{|x|>R}u_k^s\,dx \leq  C\int_{|x|>R} |x|^{-\frac{s(d-1)}{\nu}}\,dx = C\int^{+\infty}_{R} r^{-\frac{s(d-1)}{\nu}+(d-1)}\,dr=C R^{d-\frac{s(d-1)}{\nu}}.$$
Noticing $d-\frac{s(d-1)}{\nu}<0$, for any $\varepsilon>0$, there is $R^1_{\varepsilon}$ such that
$$
\int_{|x|>R^1_{\varepsilon}}u_k^s\,dx\leq \varepsilon.
$$
Since $q<\sigma-1$, we have $q+1<\frac{\nu d}{d-1}$. Thus if $q+1<s\leq \frac{\nu d}{d-1}$, using the interpolation inequality, it holds that
$$
\|u_k\|^s_{L^s(B^c(0,R))}\leq \|u_k\|^{s(1-\theta)}_{L^{q+1}(B^c(0,R))}\|u_k\|^{s\theta}_{L^{\gamma}(B^c(0,R))}\quad \mbox{ for } \gamma>\frac{\nu d}{d-1}.
$$
So, there exists a $R^2_{\varepsilon}$ such that
$$\|u_k\|^s_{L^s(B^c(0,R^2_{\varepsilon}))}\leq \varepsilon.$$
Taking $R=2\max\{R^1_{\varepsilon},R^2_{\varepsilon}\}$, one has
\begin{eqnarray}\label{outersmall}
\|u_k\|^s_{L^s(B^c(0,R))}\leq \varepsilon.
\end{eqnarray}

On the other hand, for any fixed $R>0$, $u_k\in X^*_{rad}$ implies that $u_k\in L^p(B(0,R+1))$ due to $q\geq p-1$. The Sobolev embedding theorem gives
$$
W^{1,p}(B(0,R+1))\hookrightarrow\hookrightarrow L^s(B(0,R+1)),
$$
provided that $1< s<p^*=\frac{pd}{d-p}$ if $p<d$, and $s\geq 1$ for $p\geq d$.
Together with (\ref{outersmall}) implies that there is a strong convergence subsequence of $u_k$ (still denoted as $u_k$)
\begin{eqnarray}\label{strongcon}
  u_k\rightarrow u_0,\quad \mbox{ in }  L^{s}(\mathbb{R}^d).
\end{eqnarray}
 Since $1<m+1<\frac{pd}{d-p}$ for $p<d$, and $1<m+1<\infty$ for $p\geq d$, we know from (\ref{strongcon})
 \begin{eqnarray}
 &&\int_{\mathbb{R}^d}|u_k-u_0|^{m+1}\,dx\rightarrow 0, \mbox{ as } k\rightarrow\infty,\label{lm+1strong1}\\
 && \|u_0\|_{L^{m+1}}= \lim_{k\rightarrow\infty} \| u_k\|_{L^{m+1}}=1.\label{lm+1strong2}
\end{eqnarray}
From (\ref{nablawek}), (\ref{q+1weak}) and (\ref{lm+1strong2}), we deduce
$$
J(u_0)=\frac{\left(\int_{\mathbb{R}^d}u_0^{q+1}\,dx\right)^{\frac{a-p/2}{q+1}}\int_{\mathbb{R}^d}|\nabla u_0|^p\,dx}{\left(\int_{\mathbb{R}^d}u_0^{m+1}\,dx\right)^{\frac{a+p/2}{m+1}}}\leq\beta.
$$
Noticing $J(u_0)\geq \beta$ by the definition of $\beta$, one knows that
\begin{eqnarray}\label{limitfunctionva}
J(u_0)=\beta,\quad\|u_0\|_{L^{q+1}}=\|u_0\|_{L^{m+1}}=1,\quad \|\nabla u_0\|^p_{L^{p}}=\beta.
\end{eqnarray}

For the 1-d case, Sz. Nagy \cite{Nagy1941} proved that the minimizer of $J(u)$ exists and it can be represented in terms of an incomplete Beta function, and obtained the celebrated Nagy's inequalities. Hence for $d\geq 1$, there exists an optimal radial decreasing function $u_0$ such that
 $$
\beta=\inf_{u\in X^*_{rad}}J(u)=J(u_0).
 $$
\end{proof}

\section{Verification of the best constant for some special cases}
We first prove (\ref{M_c}) for the case $q+1=\frac{pm}{p-1}$.
\begin{lem}\label{lmApp1}
For $u_{c,m}(r)$ in (\ref{uc1}), we have
$$
M_c=\int_{\mathbb{R}^d} u_{c,m}(|x|)^{q+1}\, dx=d\alpha(d)K^{q+1}R^{d+\frac{mp^2}{(p-1)(p-m-1)}}\frac{p-1}{p}\mathcal{B}\left( \frac{d(p-1)}{p}, \frac{pm}{p-m-1}+1\right),
$$
where $\alpha(d)$ is the volume of the $d$-dimensional unit ball.
\end{lem}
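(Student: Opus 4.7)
The plan is a direct change-of-variables computation. First I would pass to polar coordinates, writing
\[
M_c = d\,\alpha(d)\int_0^{R} u_{c,m}(r)^{q+1}\,r^{d-1}\,dr
     = d\,\alpha(d)\,K^{q+1}\int_0^{R}\bigl(R^{\tfrac{p}{p-1}}-r^{\tfrac{p}{p-1}}\bigr)^{\alpha(q+1)}\,r^{d-1}\,dr,
\]
using that $u_{c,m}$ is radial and supported in the ball of radius $R$, and that $d\,\alpha(d)$ is the surface area of the unit sphere. Then I would apply the substitution $t=(r/R)^{p/(p-1)}$, so that $r=R\,t^{(p-1)/p}$ and $dr=R\,\frac{p-1}{p}\,t^{-1/p}\,dt$, which turns the integrand into a standard Beta-function integrand.

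Under this substitution one has $R^{p/(p-1)}-r^{p/(p-1)}=R^{p/(p-1)}(1-t)$ and $r^{d-1}=R^{d-1}t^{(d-1)(p-1)/p}$. Collecting the $R$-powers gives an overall factor $R^{\,d+\,p\alpha(q+1)/(p-1)}$, and using the hypothesis $q+1=\tfrac{pm}{p-1}$ together with $\alpha=\tfrac{p-1}{p-m-1}$ one checks
\[
\frac{p\,\alpha(q+1)}{p-1} \;=\; \frac{p\,m}{p-m-1}\cdot\frac{p}{p-1}\;=\;\frac{m p^{2}}{(p-1)(p-m-1)},
\]
which matches the $R$-exponent stated in the lemma. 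The $t$-integral simplifies to
\[
\int_0^1 t^{\tfrac{d(p-1)}{p}-1}(1-t)^{\alpha(q+1)}\,dt
\;=\;\mathcal{B}\!\left(\tfrac{d(p-1)}{p},\,\alpha(q+1)+1\right)
\;=\;\mathcal{B}\!\left(\tfrac{d(p-1)}{p},\,\tfrac{pm}{p-m-1}+1\right),
\]
where I used $(d-1)(p-1)/p-1/p=d(p-1)/p-1$ and again $\alpha(q+1)=\tfrac{pm}{p-m-1}$.

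Multiplying the $R$-factor, the constant $\tfrac{p-1}{p}$ from $dr$, the surface area $d\,\alpha(d)$, and the Beta-function above yields the claimed expression for $M_c$. There is no real obstacle beyond bookkeeping: the one thing to be careful about is matching the two exponent identities $\alpha(q+1)=\tfrac{pm}{p-m-1}$ and $p\alpha(q+1)/(p-1)=\tfrac{mp^2}{(p-1)(p-m-1)}$, both of which follow immediately from the relation $q+1=\tfrac{pm}{p-1}$ and the definition $\alpha=\tfrac{p-1}{p-m-1}$ pinned down in Lemma 6.3.
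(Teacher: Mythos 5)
Your proposal is correct and follows essentially the same route as the paper: polar coordinates, the substitution $x=(r/R)^{p/(p-1)}$, and the identities $\alpha(q+1)=\tfrac{pm}{p-m-1}$, $\tfrac{p\alpha(q+1)}{p-1}=\tfrac{mp^2}{(p-1)(p-m-1)}$ coming from $q+1=\tfrac{pm}{p-1}$ and $\alpha=\tfrac{p-1}{p-m-1}$, leading directly to the stated Beta-function expression. All exponent computations check out, so nothing further is needed.
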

\begin{proof}
A simple computation for $u_{c,m} $ gives that
\begin{align}
M_c=&~d\alpha(d)\int_0^{\infty} u_{c,m}(r)^{q+1}r^{d-1}\, dr\nonumber\\
=&~d\alpha(d)K^{q+1}\int_0^{\infty} \left(R^{\frac{p}{p-1}}-r^{\frac{p}{p-1}}\right)_{+}^{\alpha(q+1)}r^{d-1}\, dr\nonumber\\
=&~d\alpha(d)K^{q+1}R^{\frac{p(q+1)}{p-m-1}}\int_0^{\infty} \left(1-\left(\frac{r}{R}\right)^{\frac{p}{p-1}}\right)_{+}^{\alpha(q+1)}r^{d-1}\, dr.\label{M1}
\end{align}
Let $x=\left(\frac{r}{R}\right)^{\frac{p}{p-1}}$. Then we have $r=R x^{\frac{p-1}{p}}$ and $dr=R\frac{p-1}{p} x^{\frac{p-1}{p}-1}\,dx$. Hence we obtain
\begin{eqnarray}
M_c=d\alpha(d)\frac{p-1}{p}K^{q+1}R^{\frac{p(q+1)}{p-m-1}+d}\int_0^{1} \left(1-x\right)^{\frac{(p-1)(q+1)}{p-m-1}}x^{\frac{d(p-1)}{p}-1}\, dx.\label{M2}
\end{eqnarray}
Noticing that $q+1=\frac{pm}{p-1}$, we have
\begin{align*}
M_c=&~d\alpha(d)\frac{p-1}{p}K^{q+1}R^{\frac{p^2m}{(p-1)(p-m-1)}+d}\int_0^{1} \left(1-x\right)^{\frac{pm}{p-m-1}}x^{\frac{d(p-1)}{p}-1}\, dx\\
=&~d\alpha(d)\frac{p-1}{p}K^{q+1}R^{\frac{p^2m}{(p-1)(p-m-1)}+d}\mathcal{B}\left(\frac{d(p-1)}{p},\frac{pm}{p-m-1}+1\right).
\end{align*}
\end{proof}

We recall the best constant $C_{q,m,p}$ given in Remark \ref{rm4.3}
\begin{eqnarray}\label{formula11}
C_{q,m,p}=\left(\frac{1-\theta}{\theta}\right)^{\frac{\theta}{p}}(1-\theta)^{-\frac{1}{m+1}}M_c^{-\frac{\theta}{d}},
\end{eqnarray}
where $M_c$ is given by (\ref{M_c}).
We convert the best constant $\overline C_{q,m,p}$ given in \cite{del2003optimal} into our parameters and it is given by (\ref{barC1})-(\ref{thetaanddelta}).
\begin{lem}\label{lmApp}
For $q+1=\frac{pm}{p-1}$ and $d\geq 2$, the best constant $C_{q,m,p}$ in (\ref{formula11}) is exactly equal to the best constant $\overline C_{q,m,p}$ given in \cite{del2003optimal}.
\end{lem}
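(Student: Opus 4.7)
The plan is a direct computation: substitute the explicit formula for $M_c$ from Lemma \ref{lmApp1} into the expression \eqref{formula11} for $C_{q,m,p}$, then insert the explicit values of the constants $K$ and $R$ from \eqref{KR}, and finally rearrange factor-by-factor so as to match the expression \eqref{barC1} for $\overline C_{q,m,p}$. First, I would record the auxiliary identities that will be used repeatedly under the standing assumption $q+1=\frac{pm}{p-1}$ (equivalently $m=\frac{(q+1)(p-1)}{p}$, the Del Pino--Dolbeault case): the formulas $\theta=\frac{d(p-m-1)}{(m+1)(d(p-m-1)+pm)}$ and $\eta=dp-(m+1)(d-p)$ from \eqref{thetaanddelta}, the identity $\alpha(d)=\pi^{d/2}/\Gamma(d/2+1)$ for the volume of the unit ball, and the standard relation $\mathcal{B}(a,b)=\Gamma(a)\Gamma(b)/\Gamma(a+b)$. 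In particular $1-\theta$ and $\theta/(1-\theta)$ admit clean rational expressions in $d,p,m$, which is crucial for matching the base of every factor.

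Next I would insert \eqref{KR} into the formula for $M_c$ given in Lemma \ref{lmApp1}. This produces
\begin{equation*}
M_c = d\,\alpha(d)\,\frac{p-1}{p}\,K^{q+1}\,R^{d+\frac{mp^2}{(p-1)(p-m-1)}}\,\mathcal{B}\Bigl(\tfrac{d(p-1)}{p},\tfrac{pm}{p-m-1}+1\Bigr),
\end{equation*}
with $K^{q+1}$ and $R^{d+mp^2/[(p-1)(p-m-1)]}$ reducible to explicit powers of $\frac{d}{(m+1)\theta}$, $\frac{p}{p-m-1}$, and $m$. Raising to the power $-\theta/d$ (as dictated by \eqref{formula11}) and collecting, one finds that $M_c^{-\theta/d}$ splits as a product of four pieces: a power of $\frac{p-m-1}{p}$, a power of $\frac{p(m+1)}{d(p-m-1)}$, a power of $\frac{p}{p-1}$, and a ratio of Gamma functions. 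The identity $q+1=pm/(p-1)$ then converts the Beta function in $M_c$ into the exact Gamma ratio that appears inside the last bracket of \eqref{barC1}, and $\alpha(d)^{-\theta/d}$ gives the factor $\left(\Gamma(d/2+1)/(d\pi^{d/2})\right)^{\theta/d}$.

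The remaining prefactor $(1-\theta)^{-1/(m+1)}(1-\theta/\theta)^{\theta/p}$ from \eqref{formula11} must now absorb the stray constants produced in the previous step and reproduce the factor $\bigl(p(m+1)/\eta\bigr)^{(1-\theta)/(q+1)-\theta/d}$ in \eqref{barC1}. For this, the key algebraic identities are $1-\theta = \frac{pm}{(m+1)(d(p-m-1)+pm)}$ and $d(p-m-1)+pm = \eta(p-m-1)/p$ (using $q+1=pm/(p-1)$), which let one rewrite $1-\theta$ as $\frac{p^2m}{\eta(m+1)(p-m-1)}$; combining with $q+1=pm/(p-1)$, the exponent $(1-\theta)/(q+1)-\theta/d$ becomes the correct power of $p(m+1)/\eta$. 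The main obstacle is purely bookkeeping: one must verify that every stray power of $p-m-1$, $m+1$, $d$, $p$ and $p-1$ generated by $K^{q+1}R^{\cdots}$ cancels against the corresponding factors in $(1-\theta)$, $\theta$ and $\eta$. I would carry out this matching by grouping terms according to the five bases $\frac{p-m-1}{p}$, $\frac{p(m+1)}{d(p-m-1)}$, $\frac{p(m+1)}{\eta}$, $\frac{p}{p-1}$, and the Gamma/$\pi$ factor, and then checking the exponent of each base one at a time. No analytic difficulty is expected beyond careful exponent arithmetic.
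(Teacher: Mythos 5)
Your overall route is exactly the paper's: substitute $M_c$ from Lemma \ref{lmApp1} into \eqref{formula11}, insert $K$ and $R$ from \eqref{KR}, convert the Beta function to a Gamma ratio via $\mathcal{B}(a,b)=\Gamma(a)\Gamma(b)/\Gamma(a+b)$ together with $\alpha(d)=\pi^{d/2}/\Gamma(d/2+1)$, and then match exponents base by base against \eqref{barC1}. There is no methodological difference; the proof is pure bookkeeping in both cases.

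However, the two ``key algebraic identities'' on which you hang the final matching step are false, so the bookkeeping as you describe it would not close. From \eqref{thetaanddelta}, $\theta=\frac{d(p-m-1)}{(m+1)(d(p-m-1)+pm)}$ gives
\begin{equation*}
1-\theta=\frac{m\,\eta}{(m+1)\bigl(d(p-m-1)+pm\bigr)},\qquad \frac{1-\theta}{\theta}=\frac{m\,\eta}{d(p-m-1)},
\end{equation*}
not $1-\theta=\frac{pm}{(m+1)(d(p-m-1)+pm)}$; and $d(p-m-1)+pm\neq \eta\,\frac{p-m-1}{p}$ (take $d=2$, $p=3$, $m=1$: the left side is $5$, the right side is $8/3$), so your rewriting $1-\theta=\frac{p^2m}{\eta(m+1)(p-m-1)}$ also fails. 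With these wrong values the stray powers of $m$, $p-m-1$ and $\eta$ will not cancel and you cannot reproduce the factor $\bigl(p(m+1)/\eta\bigr)^{\frac{1-\theta}{q+1}-\frac{\theta}{d}}$ of \eqref{barC1}. The correct reconciliation, as in the paper, is to use the displayed identities above (which produce the bases $\frac{\eta}{d(p-m-1)}$ and $\frac{\eta}{(m+1)(d(p-m-1)+pm)}$ together with a factor $m^{\frac{\theta}{p}-\frac{1}{m+1}}$) and then the exponent identity $\frac{\theta}{p}-\frac{1}{m+1}=\frac{\theta}{d}-\frac{1-\theta}{q+1}$, which holds precisely because $q+1=\frac{pm}{p-1}$. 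This is a repairable computational slip rather than a wrong strategy, but as written the decisive cancellation step does not go through.
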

\begin{proof}
By (\ref{thetaanddelta}), we can compute
\begin{align}\label{C1}
C_{q,m,p}&=\left(\frac{1-\theta}{\theta}\right)^{\frac{\theta}{p}}(1-\theta)^{-\frac{1}{m+1}}M_c^{-\frac{\theta}{d}}\nonumber\\
&=\left(\frac{\eta}{d(p-m-1)}\right)^{\frac{\theta}{p}}\left(\frac{\eta}{(m+1)(d(p-m-1)+pm)}\right)^{-\frac{1}{m+1}}m^{\frac{\theta}{p}
-\frac{1}{m+1}}M_c^{-\frac{\theta}{d}}.
\end{align}
On the other hand, from (\ref{KR}), we deduce
\begin{eqnarray}\label{KR1}
K^{q+1}R^{d+\frac{mp^2}{(p-1)(p-m-1)}}=m^{-\frac{d(p-1)}{p}-\frac{pm}{p-m-1}}\left(\frac{p}{p-m-1}\right)^{\frac{pm}{m-p+1}}
\left(\frac{d}{(m+1)\theta}\right)^{\frac{pm}{p-m-1}+d}.
\end{eqnarray}
Noticing that the relations hold
\begin{eqnarray*}
&&\left({-\frac{d(p-1)}{p}-\frac{pm}{p-m-1}}\right)\frac{\theta}{d}=\frac{\theta}{p}-\frac{1}{m+1},\\
&&\frac{d}{(m+1)\theta}=\frac{d(p-m-1)+pm}{p-m-1},\quad \left(\frac{pm}{p-m-1}+d\right)\frac{\theta}{d}=\frac{1}{m+1},
\end{eqnarray*}
we have from (\ref{KR1})
\begin{eqnarray}\label{KR2}
\left(K^{q+1}R^{d+\frac{mp^2}{(p-1)(p-m-1)}}\right)^{-\frac{\theta}{d}}=m^{-\frac{\theta}{p}+\frac{1}{m+1}}\left(\frac{p}{p-m-1}\right)^{\frac{\theta pm}{d(p-m-1)}}
\left(\frac{d(p-m-1)+pm}{p-m-1}\right)^{-\frac{1}{m+1}}
\end{eqnarray}
Using (\ref{M_c}) and (\ref{KR2}), we obtain
\begin{align}\label{formula10}
M_c^{-\frac{\theta}{d}}=&m^{-\frac{\theta}{p}+\frac{1}{m+1}}\left(\frac{p}{p-m-1}\right)^{\frac{\theta pm}{d(p-m-1)}}
\left(\frac{d(p-m-1)+pm}{p-m-1}\right)^{-\frac{1}{m+1}}\nonumber\\
&\cdot \left(\frac{p}{p-1}\right)^{\frac{\theta}{d}}\left(\frac{\Gamma(\frac{d}{2}+1)}{d\pi^{d/2}}\right)^{\frac{\theta}{d}}
\left(\frac{\Gamma(\frac{pm}{p-m-1}+d\frac{p-1}{p}+1)}{\Gamma(1+\frac{pm}{p-m-1})\Gamma(d\frac{p-1}{p})}\right)^{\frac{\theta}{d}}.
\end{align}
Plugging (\ref{formula10}) in (\ref{C1}) gives
\begin{align}\label{C2}
C_{q,m,p}=&\left(\frac{\eta}{d(p-m-1)}\right)^{\frac{\theta}{p}}\left(\frac{\eta}{m+1}\right)^{-\frac{1}{m+1}}
 \left(\frac{p}{p-m-1}\right)^{\frac{\theta pm}{d(p-m-1)}}
\left(\frac{1}{p-m-1}\right)^{-\frac{1}{m+1}}\nonumber\\
&\cdot \left(\frac{p}{p-1}\right)^{\frac{\theta}{d}}\left(\frac{\Gamma(\frac{d}{2}+1)}{d\pi^{d/2}}\right)^{\frac{\theta}{d}}
\left(\frac{\Gamma(\frac{pm}{p-m-1}+d\frac{p-1}{p}+1)}{\Gamma(1+\frac{pm}{p-m-1})\Gamma(d\frac{p-1}{p})}\right)^{\frac{\theta}{d}}\nonumber\\
=&\eta^{\frac{\theta}{p}-\frac{1}{m+1}}(d(p-m-1))^{-\frac{\theta}{p}}(m+1)^{\frac{1}{m+1}}p^{\frac{1}{m+1}-\theta}(p-m-1)^{\theta}\nonumber\\
&\cdot \left(\frac{p}{p-1}\right)^{\frac{\theta}{d}}\left(\frac{\Gamma(\frac{d}{2}+1)}{d\pi^{d/2}}\right)^{\frac{\theta}{d}}
\left(\frac{\Gamma(\frac{pm}{p-m-1}+d\frac{p-1}{p}+1)}{\Gamma(1+\frac{pm}{p-m-1})\Gamma(d\frac{p-1}{p})}\right)^{\frac{\theta}{d}}.
\end{align}
Moreover, since $q+1=\frac{pm}{p-1}$, one has
$$
\frac{\theta}{p}-\frac{1}{m+1}=\frac{\theta}{d}-\frac{1-\theta}{q+1}.
$$
Hence the best constant $C_{q,m,p}$ in (\ref{C2}) is exactly equal to the best constant $\overline C_{q,m,p}$ given in (\ref{barC1}).

\end{proof}

In fact, for $d=1$, we find that $\overline C_{q,m,p}$ exactly equals to $C_{q,m,p}$ given in (\ref{thebest}). Detail verifications are provided in the following proposition.
\begin{prop}\label{d1equa}
For $d=1$ and $q=\frac{pm-p+1}{p-1}$, the best constant $\overline C_{q,m,p}$ defined in (\ref{barC1}) exactly equals to $C_{q,m,p}$ in (\ref{thebest}).
\end{prop}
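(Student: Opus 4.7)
The plan is to verify (\ref{d1equa}) by a direct algebraic substitution, analogous to the calculation carried out in Lemma \ref{lmApp}, but using Gamma function identities tailored to $d=1$. I begin by observing that under the specialization $d=1$, $q=\frac{pm-p+1}{p-1}$, several quantities simplify: $q+1=\frac{pm}{p-1}$, $\eta=p+(m+1)(p-1)=\eta_1$, and $\theta=\frac{p-m-1}{(p-1)(m+1)^2}$, agreeing with the $\theta$ in (\ref{thebest}). Also $\frac{\Gamma(d/2+1)}{d\pi^{d/2}}|_{d=1}=\frac{\Gamma(3/2)}{\sqrt\pi}=\frac{1}{2}$, so the factor $\left(\frac{\Gamma(d/2+1)}{d\pi^{d/2}}\right)^{\theta/d}$ becomes $2^{-\theta}$, matching the $[2(p-1)]^{-\theta}$ piece of (\ref{thebest}) modulo the $(p-1)^{-\theta}$ coming from $\left(\frac{p}{p-1}\right)^{\theta}$.

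Next I convert the Gamma ratio in (\ref{barC1}) to the Beta function appearing in (\ref{thebest}). Set $A=\frac{(m+1)(p-1)}{p-m-1}$ and $B=\frac{2p-1}{p}$, so $B-1=\frac{p-1}{p}$. A short computation gives $A+(B-1)=\frac{pm}{p-m-1}+\frac{2p-1}{p}$, matching the argument of the numerator Gamma in (\ref{barC1}). Using $\Gamma(B)=(B-1)\Gamma(B-1)$ and $\Gamma(A+B)=(A+B-1)\Gamma(A+B-1)$, I obtain
\begin{equation*}
\frac{\Gamma(A+B-1)}{\Gamma(A)\Gamma(B-1)}=\frac{B-1}{A+B-1}\cdot\frac{1}{\mathcal B(A,B)}.
\end{equation*}
I then simplify $A+B-1=(p-1)\bigl[\tfrac{m+1}{p-m-1}+\tfrac{1}{p}\bigr]=\frac{(p-1)\eta_1}{p(p-m-1)}$, which yields $\frac{B-1}{A+B-1}=\frac{p-m-1}{\eta_1}$. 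Hence the Gamma-ratio, raised to the power $\theta/d=\theta$, contributes factors $(p-m-1)^{\theta}\,\eta_1^{-\theta}\,\mathcal B(A,B)^{-\theta}$.

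Finally I collect exponents of each base ($p$, $m+1$, $p-m-1$, $p-1$, $\eta_1$, $2$, $\mathcal B$) in $\overline C_{q,m,p}|_{d=1}$ and match them against (\ref{thebest}). The exponent of $p-1$ is $-\theta$ (trivially), the exponent of $2$ is $-\theta$ (from the Gamma factor), and the exponent of $\mathcal B$ is $-\theta$. For the remaining checks I use two identities:
\begin{equation*}
1-\theta=\frac{m\eta_1}{(p-1)(m+1)^2},\qquad \frac{1-\theta}{q+1}=\frac{\eta_1}{p(m+1)^2},
\end{equation*}
the first following from $(p-1)(m+1)^2-(p-m-1)=m\eta_1$ (a direct expansion using $\eta_1=(p-1)(m+1)+p$). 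Combining these with the $\theta/p$ and $\theta$ pieces, the net exponent of $(p-m-1)$ comes out to $\theta(2p-1)/p$, the net exponent of $\eta_1$ to $-\eta_1/[p(m+1)^2]$, and a short calculation using $\eta_1-(p-m-1)=p(m+1)$ shows that the net exponents of both $p$ and $(m+1)$ equal $\frac{1}{m+1}$, reproducing the factor $[p(m+1)]^{1/(m+1)}$ in (\ref{thebest}).

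The main obstacle is purely combinatorial: keeping track of six factors with different compound exponents and verifying the collapse to the closed form in (\ref{thebest}). No new analytic ingredient is required beyond the Gamma-to-Beta identity and the algebraic identity $(p-1)(m+1)^2-(p-m-1)=m\eta_1$; the rest is bookkeeping of powers. Once every exponent matches, $\overline C_{q,m,p}|_{d=1}=C_{q,m,p}$ follows.
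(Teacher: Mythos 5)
Your verification is correct, and it is essentially the paper's own argument: a direct substitution of $d=1$ into (\ref{barC1}), conversion between the Gamma ratio and the Beta function via $\Gamma(x+1)=x\Gamma(x)$ (your identity $A+B-1=\frac{(p-1)\eta_1}{p(p-m-1)}$ is exactly the simplification used in the paper), and bookkeeping of exponents using relations equivalent to the paper's $\frac{(p-1)(1-\theta)}{pm}=\frac{\eta_1}{p(m+1)^2}$ and $\frac{\theta}{p}+\frac{(p-1)(1-\theta)}{pm}-\theta=\frac{1}{m+1}$. The only difference is the direction of the Beta--Gamma conversion (you rewrite $\overline C_{q,m,p}|_{d=1}$ in Beta form, the paper rewrites $C_{q,m,p}$ in Gamma form), which is immaterial.
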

\begin{proof}
Using the formula $\mathcal{B}(a,b)=\frac{\Gamma(a)\Gamma(b)}{\Gamma(a+b)}$ for (\ref{thebest}), we have
\begin{align}\label{thebest1}
C_{q,m,p}=&\frac{(p-m-1)^{\frac{2p-1}{p}\theta}[p(m+1)]^{\frac{1}{m+1}}}{[2(p-1)]^{\theta}
\eta_1^{\frac{\eta_1}{p(m+1)^2}}}\left(\frac{\eta_1}{p-m-1}\right)^{\theta}
\left(\frac{\Gamma\left(\frac{pm}{p-m-1}+1\right)\Gamma\left(\frac{p-1}{p}\right)}
{\Gamma\left(\frac{pm}{p-m-1}+1+\frac{p-1}{p}\right)}\right)^{-\theta}\nonumber\\
=&~2^{-\theta}(p-1)^{-\theta}(p-m-1)^{\frac{p-1}{p}\theta}[p(m+1)]^{\frac{1}{m+1}}
\eta_1^{\theta-\frac{p+(p-1)(m+1)}{p(m+1)^2}}\nonumber\\
&\qquad\cdot \left(\frac{\Gamma\left(\frac{pm}{p-m-1}+1\right)\Gamma\left(\frac{p-1}{p}\right)}
{\Gamma\left(\frac{pm}{p-m-1}+1+\frac{p-1}{p}\right)}\right)^{-\theta}.
\end{align}
Formally, if taking $d=1$ in (\ref{barC1}), we have
\begin{align}\label{barbest}
\overline C_{q,m,p}=&2^{-\theta}(p-1)^{-\theta}(p-m-1)^{\left(1-\frac{1}{p}\right)\theta}[p(m+1)]^{\frac{\theta}{p}+\frac{(p-1)(1-\theta)}{pm}-\theta}
\eta_1^{\theta-\frac{(p-1)(1-\theta)}{pm}}\nonumber\\
& \cdot\left(\frac{\Gamma\left(\frac{pm}{p-m-1}+1+\frac{p-1}{p}\right)}
{\Gamma\left(\frac{pm}{p-m-1}+1\right)\Gamma\left(\frac{p-1}{p}\right)}\right)^{\theta}.
\end{align}
The expression (\ref{thetaanddelta}) of $\theta$ gives that
$$
\frac{\theta}{p}+\frac{(p-1)(1-\theta)}{pm}-\theta=\frac{1}{m+1},\quad \frac{(p-1)(1-\theta)}{pm}=\frac{p+(p-1)(m+1)}{p(m+1)^2}.
$$
Hence from (\ref{thebest1}) and (\ref{barbest}), we know that the result of Lemma \ref{d1equa} holds.
\end{proof}

Next, we prove (\ref{M_cpo}) for the case $m=\frac{p(q-1)+1}{p-1}$.
\begin{lem}\label{lm5.3}
For $u_{c,m}(r)$ in (\ref{uc1po}), we have
$$
M_c=d~\alpha(d)K^{q+1}L^{\frac{d(p-1)}{p}-\alpha(q+1)}\frac{p-1}{p}\mathcal{B}\left(\alpha(q+1)- \frac{d(p-1)}{p}, \frac{d(p-1)}{p}\right).
$$
\end{lem}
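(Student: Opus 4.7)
The plan is to compute $M_c$ directly by passing to polar coordinates, reducing to a one-dimensional integral, and then using a change of variable that turns the integrand into the standard representation of the Beta function
\[
\mathcal{B}(a,b) = \int_0^{\infty} \frac{x^{a-1}}{(1+x)^{a+b}}\, dx.
\]
Concretely, with $u_{c,m}(r) = K\bigl(L + r^{p/(p-1)}\bigr)^{-\alpha}$, I first write
\[
M_c = d\,\alpha(d)\int_0^{\infty} u_{c,m}(r)^{q+1} r^{d-1}\, dr
    = d\,\alpha(d)\, K^{q+1}\int_0^{\infty}\bigl(L + r^{p/(p-1)}\bigr)^{-\alpha(q+1)} r^{d-1}\, dr.
\]

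Next I would introduce the substitution $x = r^{p/(p-1)}/L$, so that $r = L^{(p-1)/p} x^{(p-1)/p}$, $dr = \tfrac{p-1}{p}L^{(p-1)/p} x^{(p-1)/p - 1}\, dx$, and $L + r^{p/(p-1)} = L(1+x)$. Substituting, the $L$-powers collect into a single exponent $\frac{d(p-1)}{p} - \alpha(q+1)$, and the $x$-factors give
\[
M_c = d\,\alpha(d)\,K^{q+1}\,\frac{p-1}{p}\, L^{\frac{d(p-1)}{p} - \alpha(q+1)}
   \int_0^{\infty}\frac{x^{\frac{d(p-1)}{p} - 1}}{(1+x)^{\alpha(q+1)}}\, dx.
\]
Matching exponents with the Beta integral, i.e.\ taking $a = d(p-1)/p$ and $a+b = \alpha(q+1)$, the remaining integral equals $\mathcal{B}\!\left(\frac{d(p-1)}{p},\, \alpha(q+1) - \frac{d(p-1)}{p}\right)$, which by symmetry of $\mathcal{B}$ is the quantity in the statement.

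The only genuine point to verify is that the Beta integral converges, which requires $\alpha(q+1) > \frac{d(p-1)}{p}$; this is precisely the condition $p(\alpha+1) > d$ that was assumed before writing down this profile (the case $p(\alpha+1) - d > 0$ in the preceding lemma), using the relation $(\alpha+1)(p-1) = \alpha q$ from (\ref{relationG}). Since the remainder of the argument is routine exponent arithmetic already modeled on the proof of Lemma \ref{lmApp1}, the main (minor) obstacle is just keeping the exponents of $L$ and the arguments of $\mathcal{B}$ straight; no new analytic difficulty arises.
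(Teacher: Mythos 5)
Your computation is correct and follows essentially the same route as the paper: a reduction to a radial integral followed by a change of variable that exposes a Beta function, the only cosmetic difference being that the paper substitutes $y=\left(1+r^{p/(p-1)}/L\right)^{-1}$ to land on the $(0,1)$ representation of $\mathcal{B}$, whereas you use $x=r^{p/(p-1)}/L$ and the representation $\mathcal{B}(a,b)=\int_0^{\infty}x^{a-1}(1+x)^{-a-b}\,dx$, which are related by $y=1/(1+x)$. One tiny imprecision: the convergence condition $\alpha(q+1)>\frac{d(p-1)}{p}$ is implied by (not exactly equivalent to) $p(\alpha+1)>d$, but since that hypothesis is in force the verification stands.
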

\begin{proof}
A simple computation for $u_{c,m}$ gives that
\begin{align}
M_c=&d\alpha(d)\int_0^{\infty} u_{c,m}(r)^{q+1}r^{d-1}\, dr\nonumber\\
=&d\alpha(d)K^{q+1}L^{-\alpha(q+1)}\int_0^{\infty} \left(1+\frac{r^{\frac{p}{p-1}}}{L}\right)^{-\alpha(q+1)}r^{d-1}\, dr.\label{M1po}
\end{align}
Let $y=\left(1+\frac{r^{\frac{p}{p-1}}}{L}\right)^{-1}$. Then we have $r=L^{\frac{p-1}{p}} \left(\frac{1-y}{y}\right)^{\frac{p-1}{p}}$ and
 $$
dr=-\frac{p-1}{p} L^{\frac{p-1}{p}}\left(\frac{1-y}{y}\right)^{\frac{p-1}{p}-1}y^{-2}\,dy.
$$
Hence we have
\begin{align}
M_c=&~d\alpha(d)\frac{p-1}{p}K^{q+1}L^{-\alpha(q+1)+\frac{d(p-1)}{p}}\int_0^{1} y^{\alpha(q+1)-\frac{d(p-1)}{p}-1}\left(1-y\right)^{\frac{d(p-1)}{p}-1}\, dx\nonumber\\
=& ~d\alpha(d)K^{q+1}R^{\frac{d(p-1)}{p}-\alpha(q+1)}\frac{p-1}{p}\mathcal{B}\left(\alpha(q+1)- \frac{d(p-1)}{p}, \frac{d(p-1)}{p}\right).\label{M2}
\end{align}
\end{proof}

We convert the best constant $\bar C_{q,m,p}$ given in \cite[Theorem 1.2]{del2003optimal} into our parameters and it is given by (\ref{barC1po})-(\ref{thetadeltapo}).
\begin{lem}\label{lm5.4}
For $m=\frac{p(q-1)+1}{p-1}$, the best constant $C_{q,m,p}$ in (\ref{formula11}) is exactly equal to the best constant $\bar C_{q,m,p}$ given in \cite[Theorem 1.2]{del2003optimal}.
\end{lem}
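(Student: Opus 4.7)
The plan is to follow exactly the strategy of Lemma \ref{lmApp}, which handled the dual regime $q<p-1$ with $q+1=pm/(p-1)$. First I would substitute the explicit closed-form minimizer $u_{c,m}(r)=K(L+r^{p/(p-1)})^{-\alpha}$ from (\ref{uc1po}), with $\alpha=(p-1)/(q+1-p)$, into the formula for $M_c$ provided by Lemma \ref{lm5.3}, and then plug this expression into the best constant formula (\ref{formula11}) using the explicit values of $K$ and $L$ from (\ref{KRpo}).

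As a preliminary consistency check, I would verify that when $m=(p(q-1)+1)/(p-1)$, the exponent $\theta$ in (\ref{massvsbeta1}) agrees with the one in (\ref{thetadeltapo}). Indeed, for this choice one has $m-q=(q+1-p)/(p-1)$ and $m+1=pq/(p-1)$, so
\[
\theta=\frac{pd(m-q)}{(m+1)[d(p-q-1)+p(q+1)]}=\frac{d(q+1-p)}{q[dp-(d-p)(q+1)]}=\frac{d(q+1-p)}{q\,\eta},
\]
matching (\ref{thetadeltapo}). Since $\eta=dp-(d-p)(q+1)$ is common to both formulations, the exponents carrying $\eta$ combine correctly.

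Next, I would convert the Beta function in $M_c$ via $\mathcal{B}(a,b)=\Gamma(a)\Gamma(b)/\Gamma(a+b)$ and use $\alpha(d)=\pi^{d/2}/\Gamma(d/2+1)$. With $\alpha(q+1)=(q+1)(p-1)/(q+1-p)$, the two arguments of the Gamma functions in $M_c$ become $(q+1)(p-1)/(q+1-p)-d(p-1)/p$ and $d(p-1)/p$, which are precisely the Gamma arguments appearing in (\ref{barC1po}). Then I would collect the powers of $K$ and $L$ using (\ref{KRpo}): specifically the contribution to $M_c^{-\theta/d}$ of $K^{-(q+1)\theta/d}$ and $L^{[\alpha(q+1)-d(p-1)/p]\theta/d}$ must rearrange into the algebraic prefactor of $\bar C_{q,m,p}$.

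The main obstacle is purely algebraic bookkeeping: there are many factors of $p$, $p-1$, $q+1$, $q+1-p$, $\eta$, and $m+1$ with exponents involving $\theta$, $1/(m+1)$, and $\theta/d$. The two key reductions I anticipate needing are the homogeneity identity $\theta/p-1/(m+1)=\theta/d-(1-\theta)/(q+1)$ (which holds for any admissible parameters and was used implicitly in Lemma \ref{lmApp}), together with the specific consequence $[\alpha(q+1)-d(p-1)/p]\,\theta/d=1/(m+1)$ valid only under $m=(p(q-1)+1)/(p-1)$. These identities should reduce the verification to a factor-by-factor comparison between $C_{q,m,p}$ and $\bar C_{q,m,p}$, yielding equality exactly as in Lemma \ref{lmApp}.
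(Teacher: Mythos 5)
Your proposal is correct and takes essentially the same route as the paper's own proof: you substitute $M_c$ from Lemma \ref{lm5.3} with $K,L$ from (\ref{KRpo}) into (\ref{formula11}), pass from the Beta function to Gamma functions, and match exponents using the scaling identity together with $\frac{1}{m+1}=\frac{p-1}{pq}$ and $\theta=\frac{d(q+1-p)}{q\eta}$, which is precisely the bookkeeping carried out in (\ref{C1po})--(\ref{C2po}). The two key identities you isolate are valid (I checked $[\alpha(q+1)-\tfrac{d(p-1)}{p}]\tfrac{\theta}{d}=\tfrac{1}{m+1}$ under $m=\tfrac{p(q-1)+1}{p-1}$), so the remaining factor-by-factor comparison with (\ref{barC1po}) goes through exactly as in Lemma \ref{lmApp}.
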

\begin{proof}
Due to $m=\frac{p(q-1)+1}{p-1}$, we have
 $$
  \theta=\frac{(q+1-p)d}{q[dp-(d-p)(q+1)]}.
  $$
 Hence it holds
\begin{eqnarray}\label{C1po}
C_{q,m,p}
=\left(\frac{(q+1)\eta_1}{d(q+1-p)}\right)^{\frac{\theta}{p}}\left(\frac{(q+1)\eta_1}{\eta}\right)^{-\frac{1}{m+1}}
q^{\frac{1}{m+1}}M_c^{-\frac{\theta}{d}},
\end{eqnarray}
where $\eta_1=pq-d(q+1-p)$

On the other hand, from (\ref{KRpo}), we deduce
\begin{eqnarray}\label{KR1po}
K^{q+1}L^{\frac{d(p-1)}{p}-\alpha(q+1)}=q^{\frac{d(p-1)}{pq\theta}}\left(\frac{p}{q+1-p}\right)^{\frac{(q+1)(p-1)}{q+1-p}}
\left(\frac{\eta_1}{q+1-p}\right)^{\frac{(q+1)}{q+1-p}-\frac{d}{q\theta}}.
\end{eqnarray}
Hence we have
\begin{eqnarray}\label{KR2po}
\left(K^{q+1}R^{d+\frac{mp^2}{(p-1)(p-m-1)}}\right)^{-\frac{\theta}{d}}=q^{-\frac{(p-1)}{pq}}\left(\frac{p}{q+1-p}\right)^{-\frac{\theta(q+1)(p-1)}{d(q+1-p)}}
\left(\frac{\eta_1}{q+1-p}\right)^{-\frac{\theta(q+1)}{d(q+1-p)}+\frac{1}{q}}.
\end{eqnarray}
Using (\ref{M_cpo}) and (\ref{KR2po}), we obtain
\begin{align}\label{formula10po}
M_c^{-\frac{\theta}{d}}=&~q^{-\frac{p-1}{pq}}\left(\frac{p}{q+1-p}\right)^{-\frac{\theta(q+1)(p-1)}{d(q+1-p)}}
\left(\frac{\eta_1}{q+1-p}\right)^{-\frac{\theta(q+1)}{d(q+1-p)}+\frac{1}{q}}\nonumber\\
&\cdot\left(\frac{p}{p-1}\right)^{\frac{\theta}{d}}\left(\frac{\Gamma\left(\frac{d}{2}+1\right)}{d\pi^{d/2}}\right)^{\frac{\theta}{d}}
  \left(\frac{\Gamma\left(\frac{(q+1)(p-1)}{q+1-p}\right)}{\Gamma\left(\frac{(q+1)(p-1)}{q+1-p}-\frac{d(p-1)}{p}\right)
  \Gamma(\frac{d(p-1)}{p})}\right)^{\frac{\theta}{d}}.
\end{align}
Noticing that the relations hold
\begin{eqnarray*}
\frac{1}{m+1}=\frac{p-1}{pq}, \quad \frac{\theta}{p}-\frac{1}{m+1}=\frac{\theta(q+1)}{d(q+1-p)}-\frac{1}{q}=\theta-\frac{\theta(q+1)(p-1)}{d(q+1-p)},
\end{eqnarray*}
and plugging (\ref{formula10po}) in (\ref{C1po}), we deduce
\begin{align}\label{C2po}
C_{q,m,p}=&\left(\frac{q+1}{d(q+1-p)}\right)^{\frac{\theta}{p}}\left(\frac{q+1}{\eta}\right)^{-\frac{1}{m+1}}p^{\frac{\theta}{p}-\frac{1}{m+1}-\theta}
(q+1-p)^{\theta}\nonumber\\
&\cdot \left(\frac{p}{p-1}\right)^{\frac{\theta}{d}}\left(\frac{\Gamma\left(\frac{d}{2}+1\right)}{d\pi^{d/2}}\right)^{\frac{\theta}{d}}
  \left(\frac{\Gamma\left(\frac{(q+1)(p-1)}{q+1-p}\right)}{\Gamma\left(\frac{\eta(p-1)}{p(q+1-p)}\right)
  \Gamma(\frac{d(p-1)}{p})}\right)^{\frac{\theta}{d}}.
\end{align}
Hence the best constant $C_{q,m,p}$ in (\ref{C2po}) is exactly equal to the best constant $\bar C_{q,m,p}$ given in (\ref{barC1po}).

\end{proof}

If we take $d=1$ in (\ref{barC1po}), we find that $\bar C_{q,m,p}$ exactly equals to $C_{q,m,p}$ given in (\ref{thebestpospe}). Detail verifications is provided in the following proposition. In the other words, we extend the results to the dimension $d=1$.

\begin{prop}\label{d1equa1}
For $d=1$ and $m=\frac{p(q-1)+1}{p-1}$, the best constant $\bar C_{q,m,p}$ defined in (\ref{barC1po}) exactly equals to $C_{q,m,p}$ in (\ref{thebestpospe}).
\end{prop}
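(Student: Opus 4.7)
The plan is to mimic the verification already carried out for Proposition \ref{d1equa}, but now in the positive regime $m=\tfrac{p(q-1)+1}{p-1}$. First I would rewrite the incomplete Beta function appearing in \eqref{thebestpospe} via the identity $\mathcal{B}(a,b)=\tfrac{\Gamma(a)\Gamma(b)}{\Gamma(a+b)}$. With $a=\tfrac{(p-1)\eta_2}{p(q+1-p)}$ and $b=\tfrac{2p-1}{p}$, and using $b=1+\tfrac{p-1}{p}$ together with the functional equation $\Gamma(1+x)=x\Gamma(x)$ to absorb the ``$+1$'', I would convert \eqref{thebestpospe} into a product of powers of $p$, $(p-1)$, $(q+1-p)$, $\eta_2$, $(q+1)$, $q$ multiplied by a ratio of Gamma functions of the form $\Gamma(\tfrac{(q+1)(p-1)}{q+1-p})\bigl/\bigl[\Gamma(\tfrac{(q+1)(p-1)}{q+1-p}-\tfrac{p-1}{p})\Gamma(\tfrac{p-1}{p})\bigr]$ (using $\tfrac{(p-1)\eta_2}{p(q+1-p)}+\tfrac{p-1}{p}=\tfrac{(q+1)(p-1)}{q+1-p}$, which holds precisely because $\eta_2=p+(p-1)(q+1)$).

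Second, I would set $d=1$ formally in \eqref{barC1po}. In that limit $\eta=p+(p-1)(q+1)=\eta_2$, the geometric factor $\tfrac{\Gamma(d/2+1)}{d\pi^{d/2}}$ collapses to $\Gamma(3/2)/\pi^{1/2}=\tfrac12$, and the Gamma ratio in \eqref{barC1po} becomes exactly the one produced in the previous step. The remaining verification is then purely a matching of exponents on $p$, $(p-1)$, $(q+1-p)$, $\eta_2$ and $(q+1)$ between the rewritten \eqref{thebestpospe} and the $d=1$ specialization of \eqref{barC1po}.

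To run that matching I would use the identities that are available for free from the hypothesis $m=\tfrac{p(q-1)+1}{p-1}$: namely $m+1=\tfrac{pq}{p-1}$, so $\tfrac{1}{m+1}=\tfrac{p-1}{pq}$, and from \eqref{thetadeltapo} at $d=1$, $\theta=\tfrac{q+1-p}{q\eta_2}$. Combined with the algebraic identities $\tfrac{\theta}{p}-\tfrac{1}{m+1}=\tfrac{\theta(q+1)}{q+1-p}-\tfrac{1}{q}=\theta-\tfrac{\theta(q+1)(p-1)}{q+1-p}$ used in the proof of Lemma \ref{lm5.4}, each power that appears on one side can be rewritten in the form that appears on the other side; the exponents of $2$ that arise from the collapse $\Gamma(3/2)/\pi^{1/2}=1/2$ are absorbed into the $(2(p-1))^{-\theta}$ factor of \eqref{thebestpospe}.

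The main obstacle, as in Proposition \ref{d1equa}, will not be any conceptual difficulty but rather the bookkeeping: the exponents involve long rational expressions in $p$, $q$, and $\theta$, and I will have to apply the relations above several times in succession while keeping track of cancellations between $\theta/p$, $1/(m+1)$ and $\theta(q+1)(p-1)/(d(q+1-p))$. Once that routine algebraic reduction is carried out, the equality $\bar C_{q,m,p}=C_{q,m,p}$ follows directly.
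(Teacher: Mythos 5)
Your proposal is correct and follows essentially the same route as the paper's own verification: convert the incomplete Beta function in (\ref{thebestpospe}) to Gamma functions via $\mathcal{B}(a,b)=\Gamma(a)\Gamma(b)/\Gamma(a+b)$ together with $\Gamma(1+x)=x\Gamma(x)$ and the identity $\frac{(p-1)\eta_2}{p(q+1-p)}+\frac{p-1}{p}=\frac{(p-1)(q+1)}{q+1-p}$, formally set $d=1$ in (\ref{barC1po}) (where $\eta=\eta_2$ and $\Gamma(3/2)/\pi^{1/2}=1/2$), and match exponents using $\frac{1}{m+1}=\frac{p-1}{pq}$ and $\theta=\frac{q+1-p}{q\eta_2}$. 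The remaining work is exactly the exponent bookkeeping you describe, which is what the paper's proof carries out.
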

\begin{proof}
Using the formula $\mathcal{B}(a,b)=\frac{\Gamma(a)\Gamma(b)}{\Gamma(a+b)}$ for (\ref{thebestpospe}) and noticing that
$$
\frac{(p-1)\eta_2}{p(q+1-p)}=\frac{(p-1)(q+1)}{q+1-p}-\frac{p-1}{p},
$$
we have
\begin{align}\label{thebest1po}
C_{q,m,p}=&~\frac{(q+1-p)^{\frac{2p-1}{p}\theta}\eta_2^{\frac{\eta_2(p-1)\theta}{p(q+1-p)}}}{[2(p-1)]^{\theta}[p(q+1)]^{\frac{(q+1)(p-1)\theta}{q+1-p}}}
\left(\mathcal{B}\left(\frac{(p-1)(q+1)}{q+1-p}-\frac{p-1}{p},\frac{2p-1}{p}\right)\right)^{-\theta},\nonumber\\
=&~\frac{(q+1-p)^{\frac{2p-1}{p}\theta}\eta_2^{\frac{\eta_2(p-1)\theta}{p(q+1-p)}}}{[2(p-1)]^{\theta}[p(q+1)]^{\frac{(q+1)(p-1)\theta}{q+1-p}}}
\left(\frac{\Gamma\left(\frac{(p-1)(q+1)}{q+1-p}-\frac{p-1}{p}\right)\frac{p-1}{p}\Gamma\left(\frac{p-1}{p}\right)}
{\frac{(p-1)(q+1)}{q+1-p}\Gamma\left(\frac{(p-1)(q+1)}{q+1-p}\right)}\right)^{-\theta}\nonumber\\
=&~\frac{(q+1-p)^{\frac{p-1}{p}\theta}\eta_2^{\frac{\eta_2(p-1)\theta}{p(q+1-p)}}}{[2(p-1)]^{\theta}[p(q+1)]^{\frac{(q+1)(p-1)\theta}{q+1-p}-\theta}}
\left(\frac{\Gamma\left(\frac{(p-1)(q+1)}{q+1-p}\right)}
{\Gamma\left(\frac{(p-1)(q+1)}{q+1-p}-\frac{p-1}{p}\right)\Gamma\left(\frac{p-1}{p}\right)}\right)^{\theta}.
\end{align}
Formally, taking $d=1$ in (\ref{barC1po}), we have
\begin{eqnarray}\label{barbestpo}
\bar C_{q,m,p}=\frac{(q+1-p)^{\frac{p-1}{p}\theta}\eta_2^{\frac{p-1}{pq}}}{[2(p-1)]^{\theta}[p(q+1)]^{\frac{p-1}{pq}-\frac{\theta}{p}}}
\left(\frac{\Gamma\left(\frac{(p-1)(q+1)}{q+1-p}\right)}
{\Gamma\left(\frac{(p-1)(q+1)}{q+1-p}-\frac{p-1}{p}\right)\Gamma\left(\frac{p-1}{p}\right)}\right)^{\theta}.
\end{eqnarray}
The expression (\ref{thetadeltapo}) with $d=1$ of $\theta$ shows that
$$
\frac{(q+1)(p-1)\theta}{q+1-p}-\theta=\frac{p-1}{pq}-\frac{\theta}{p},\quad \frac{p-1}{pq}=\frac{\eta_2(p-1)\theta}{p(q+1-p)}.
$$
Hence from (\ref{thebest1po}) and (\ref{barbestpo}), we know that the result of Lemma \ref{d1equa1} holds.
\end{proof}


\end{document}